\documentclass[leqno,11pt]{amsart}

\usepackage[letterpaper,margin=1in]{geometry}
\usepackage{arydshln}
\usepackage[all]{xy} % diagrams
%\xyoption{all}
\usepackage{amsmath, amssymb, amsfonts, latexsym, mdwlist, amsthm, amscd,mathabx,braket}
\usepackage{subfig}
\usepackage{graphicx}
\usepackage{wrapfig}
\usepackage{etex}
\usepackage{tikz-cd}
%\usepackage[bookmarks, colorlinks, breaklinks, pdftitle={},
%pdfauthor={}]{hyperref}
%\hypersetup{linkcolor=blue,citecolor=blue,filecolor=black,urlcolor=blue}
\usepackage[colorlinks=true, citecolor=blue, urlcolor=blue, linkcolor=blue, pagebackref]{hyperref}
%\usepackage{cite}

%Uncomment this for final versions:
%\usepackage{showkeys}

%\sloppy

%The following are tools for tikz:
\usepackage{tikz}
\usetikzlibrary{calc,trees,positioning,arrows,chains,shapes.geometric,%
    decorations.pathreplacing,decorations.pathmorphing,shapes,%
    matrix,shapes.symbols}

\tikzset{
>=stealth',
  punktchain/.style={
    rectangle,
    rounded corners,
    % fill=black!10,
    draw=black, thick,
    minimum height=3em,
    text centered,
    on chain},
  line/.style={draw, thick, <-},
  eLement/.style={
    tape,
    top color=white,
    bottom color=blue!50!black!60!,
    minimum width=8em,
    draw=blue!40!black!90, very thick,
    text width=10em,
    minimum height=3.5em,
    text centered,
    on chain},
  every join/.style={->, thick,shorten >=1pt},
  decoration={brace},
  tuborg/.style={decorate},
  tubnode/.style={midway, right=2pt},
}
%\usepackage{pgfplots}
%\pgfplotsset{width=6cm,compat=1.9}

%alphabetical enumerate
\usepackage{paralist}
\usepackage{enumitem} 
\setdefaultenum{(1)}{(a)}{(i)}{}
\setlist[enumerate,1]{label={\upshape(\arabic*)}}
\setlist[enumerate,2]{label={\upshape(\alph*)},ref=\theenumi\upshape(\alph*)}
\setlist[enumerate,3]{label={\upshape(\roman*)},ref=\theenumi\theenumii\upshape(\roman*)}
\usepackage[capitalize]{cleveref}
\crefname{Prop}{Proposition}{Propositions}
\crefname{Thm}{Theorem}{Theorems}
\crefname{Lem}{Lemma}{Lemmas}
%\crefname{theorem}{Theorem}{Theorems}
\crefname{enumi}{Case}{Cases}

% for space-saving description environments}

%%%%%%%%%%%%%%%%%%%% Some abbreviations %%%%%%%%%%%%%%

%\def\L{\ensuremath{\mathbb{L}}}

\def\dim{\mathop{\mathrm{dim}}\nolimits}

 % means local Ext

%\def\lHom{\mathop{\underline{\mathrm{Hom}}}\nolimits} % means local Hom

\def\min{\mathop{\mathrm{min}}\nolimits}

\def\MG13{\ensuremath{{\mathcal M}_{\Gamma_1(3)}}}
\def\tildeMG13{\ensuremath{\widetilde{\mathcal M}_{\Gamma_1(3)}}}

%%%%%Macro-added%%%%%%%%%%%

%%%%%%%%%%%%%%%%%%%%%%%

\newcommand\TFILTB[3]{%
%  #1  an object to filtrate
%  #2  quotients
%  #3  end of the filtration
%  Example \TFILTB E A n
\xymatrix@=1pc{
{0 = {#1}_0} \ar[rr]&&
{{#1}_1} \ar[rr]\ar[ld] &&
{{#1}_2} \ar[r]\ar[ld] &
{\cdots} \ar[r] & { {#1}_{#3-1}} \ar[rr] &&
{{#1}_{#3} = {#1}} \ar[ld]
\\
& *{{#2}_1} \ar@{.>}[ul] &&
{{#2}_2} \ar@{.>}[ul] & &&&
{{#2}_{{#3}}} \ar@{.>}[ul]
}}

%\newcommand{\com}{{\scriptscriptstyle\bullet}}

% Allows for repeating a theorem number:
\makeatletter
\newtheorem*{rep@theorem}{\rep@title}
\newcommand{\newreptheorem}[2]{%
\newenvironment{rep#1}[1]{%
 \def\rep@title{#2 \ref{##1}}%
 \begin{rep@theorem}}%
 {\end{rep@theorem}}}
\makeatother

%\swapnumbers
\newtheorem{Thm}{Theorem}[section]
\newreptheorem{Thm}{Theorem}
\newtheorem{Prop}[Thm]{Proposition}

\newtheorem{Lem}[Thm]{Lemma}

\newtheorem{Cor}[Thm]{Corollary}
\newreptheorem{Cor}{Corollary}

\newreptheorem{Con}{Conjecture}

\newtheorem*{theorem*}{Theorem}
\newtheorem*{lemma*}{Lemma}
\newtheorem*{proposition*}{Proposition}
\newtheorem*{conjecture*}{Conjecture}
\newtheorem*{corollary*}{Corollary}
\newtheorem*{problem*}{Problem}

\newtheorem{Thm-int}{Theorem}

\theoremstyle{definition}
\newtheorem{Def-s}[Thm]{Definition}
\newtheorem{Def}[Thm]{Definition}
\newtheorem{Rem}[Thm]{Remark}

\newtheorem{Ex}[Thm]{Example}

%\def\i{\mathbf{i}}

%This command creates a box marked ``To Do'' around text.
%To use type \todo{  insert text here  }.

\setcounter{tocdepth}{1}

\newcommand{\ignore}[1]{}
 \maxdeadcycles=200
\begin{document}

\title{Multi-rigidity of Schubert classes in partial flag varieties}
\author{Yuxiang Liu ${}^{1}$, Artan Sheshmani${}^{1,2,3}$ and  Shing-Tung Yau$^{2,4}$}

\address{${}^1$ Beijing Institute of Mathematical Sciences and Applications, No. 544, Hefangkou Village, Huaibei Town, Huairou District, Beijing 101408}

\address{${}^2$  Massachusetts Institute of Technology, IAiFi Institute, 77 Massachusetts Ave, 26-555. Cambridge, MA 02139, artan@mit.edu}

\address{${}^3$ National Research University Higher School of Economics, Russian Federation, Laboratory of Mirror Symmetry, NRU HSE, 6 Usacheva str.,Moscow, Russia, 119048}
\address{${}^4$ Yau Mathematical Sciences Center, Tsinghua University, Haidian District, Beijing, China}

%\author{Yuxiang Liu}
%\author{Artan Sheshmani}
%\author{Shing-Tung Yau}

\begin{abstract}
In this paper, we study the multi-rigidity problem in rational homogeneous spaces. A Schubert class is called multi-rigid if every multiple of it can only be represented by a union of Schubert varieties. We prove the multi-rigidity of Schubert classes in rational homogeneous spaces. In particular, we characterize the multi-rigid Schubert classes in partial flag varieties of type A, B and D. Moreover, for a general rational homogeneous space $G/P$, we deduce the rigidity and multi-rigidity from the corresponding generalized Grassmannians (correspond to maximal parabolics). When $G$ is semisimple, we also deduce the rigidity and multi-rigidity from the simple cases.
\end{abstract}

\maketitle
\noindent{\bf MSC codes:} 14M15, 14M17, 51M35, 32M10.

\noindent{\bf Keywords:} Schubert variety, Schubert class, Partial flag variety, Rigidity, Multi-rigidity. 

\tableofcontents

\section{Introduction}
The integer cohomology of a rational homogeneous variety has a free basis in terms of Schubert classes. A Schubert class is called multi-rigid if every multiple of it can only be represented by a union of Schubert varieties. In this paper, we study the multi-rigidity problem in rational homogeneous spaces. In particular, we characterize the multi-rigid Schubert classes in partial flag varieties of type A, B and D.

There is another type of rigidity: a Schubert class is called rigid if every representative of it can only be represented by Schubert varieties. In prequel to current article, we investigated the rigidity problem in rational homogeneous spaces \cite{YL3}. Particularly, we classified the rigid Schubert classes for partial flag varieties and orthogonal partial flag varieties. The current article will focus on the multi-rigidity problem, which is a stronger property than rigidity, therefore, the results of the current article do also imply the rigidity of Schubert classes.

The classical way to deal with the multi-rigidity problem is to make use of the Schur differential systems, which were introduced by Walters \cite{Walter} and Bryant \cite{RB2000} in 2000. Based on their works, Hong \cite{Ho1}, \cite{Ho2}. Robles and The \cite{RT} characterized the multi-rigid Schubert classes in compact Hermitian symmetric spaces, including Grassmannians $G(k,n)$ and spinor varieties $OG(k,2k)$. In this paper, we will use an algebro-geometric approach and make no use of differential systems. 

Let $G$ be a connected, simply connnected and semisimple algebraic group over $\mathbb{C}$. Let $T$ be a maximal torus in $G$ and let $B$ be a Borel subgroup containing $T$. Let $P$ be a parabolic subgroup containing $B$. The homogeneous space $X=G/P$ has a structure of projective variety. 

The semi-simple Lie groups are classified by the Dynkin diagrams. We first study the cases when $G$ is a group of classical type, then we generalize the results to general rational homogeneous spaces. First we introduce partial flag varieties  explored in current article.

\subsection{Partial flag varieties of type A}\label{introductionA}
Let $V$ be a complex vector space of dimension $n$. The partial flag variety $F(d_1,...,d_k;n)$ parameterizes all $k$-step partial flags $\Lambda_1\subset...\subset \Lambda_k$ in $V$, where $\dim(\Lambda_i)=d_i$, $1\leq i\leq k$. It is a smooth projective variety of dimension $\sum_{i=1}^k d_i(d_{i+1}-d_{i})$. (Here we set $d_{k+1}=n$.) The partial flag variety $F(d_1,...,d_k;n)$ can be identified with the homogeneous space $G/P$, where $G=SL(V)$. It is explained in \S\ref{sec-prelim}.

The Schubert varieties in $F(d_1,...,d_k;n)$ can be defined using a double index $a^\alpha$, where $a$ is a strictly increasing sequence of positive integers of length $d_k$:
$$1\leq a_1<...< a_{d_k}\leq n,$$
and $\alpha$ is a sequence of the same length taking the values from $\{1,...,k\}$ such that $\#\{j|\alpha_j\leq i\}=d_i$, $1\leq i\leq k$. Set $\mu_{i,t}:=\#\{s|a_s\leq a_i,\alpha_s\leq t\}$. Given a partial flag $F_\bullet$:
$$F_{a_1}\subset...\subset F_{a_{d_k}},$$ where $\dim(F_{a_i})=a_i$, the corresponding Schubert variety is defined as the following locus:
$$\Sigma_{a^\alpha}(F_\bullet):=\{(\Lambda_1,...,\Lambda_k)|\dim(F_{a_i}\cap \Lambda_t)\geq \mu_{i,t}\}.$$

\begin{Def}
Let $\sigma_{a^\alpha}$ be a Schubert class of $F(d_1,...,d_k;n)$. A sub-index $a_i$ is called {\em essential} if either $a_{i+1}\neq a_i+1$ or $\alpha_{i+1}>\alpha_{i}$. An essential $a_i$ is called {\em multi-rigid} if for every irreducible representative $X$ of $m\sigma_{a^\alpha}$, $m\in\mathbb{Z}^+$, there exists a linear subspace $F_{a_i}$ such that 
$$\dim(F_{a_i}\cap \Lambda_t)\geq \mu_{i,t}, \ \forall (\Lambda_1,...,\Lambda_k)\in X.$$
\end{Def}
The first main result of this paper is the classification of the multi-rigid subindices in partial flag varieties:
\begin{Thm}
Let $\sigma_{a^\alpha}$ be a Schubert class of $F(d_1,...,d_k;n)$. An essential $a_i$ is multi-rigid if and only if $a_i-a_{i-1}=1$ and one of the following conditions hold:
\begin{itemize}
\item $a_{i+1}-a_i\geq 3$;
\item $a_{i+1}-a_i=2$ and $\alpha_i<\alpha_{i+1}$;
\item $a_{i+1}-a_i=1$ and $\alpha_{i-1}<\alpha_{i+1}$ and either
\begin{enumerate}
\item $a_{i+2}-a_i\geq 3$; or
\item $\alpha_i<\alpha_{i+2}<\alpha_{i+1}$.
\end{enumerate}
\end{itemize}
\end{Thm}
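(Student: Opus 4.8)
The plan is to establish both directions by reducing to ``local models''---partial flag varieties of small dimension in which only the data $a_{i-1},a_i,a_{i+1},a_{i+2}$ and the nearby values of $\alpha$ survive---and then to treat these finitely many models by incidence-variety and degeneration arguments. \textbf{Reduction to local models.} The first step is to show that multi-rigidity of an essential $a_i$ in $F(d_1,\dots,d_k;n)$ is unchanged if one ``strips away'' the flag steps and ambient directions lying outside a window around $a_i$: using the forgetful fibrations between partial flag varieties, together with the observation that the image of an irreducible representative of $m\sigma_{a^\alpha}$ under such a fibration is, up to the generic degree of the restricted map, a representative of the corresponding Schubert class, one transports representatives downstairs, while pulling back a ``bad'' representative transports non-multi-rigidity upstairs. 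After this reduction the defining subspace $F_{a_i}$ lives in a fixed subquotient $F_{a_{i+2}}/F_{a_{i-1}}$, and one is left with a short explicit list of two- and three-step flag varieties, which matches the combinatorial split in the statement ($a_{i+1}-a_i\geq 3$; $a_{i+1}-a_i=2$; $a_{i+1}-a_i=1$, further split by $a_{i+2}$).

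\textbf{Necessity.} For every configuration absent from the list I would produce an irreducible representative $X$ of some $m\sigma_{a^\alpha}$ with no fixed $F_{a_i}$. When $a_i-a_{i-1}\geq 2$ the subindex already fails to be rigid, so the representatives built in \cite{YL3}---which move the candidate $F_{a_i}$ along directions below level $a_i$---work with $m=1$. The more delicate cases are those where $a_i$ is rigid but not multi-rigid, which forces $m\geq 2$: there the local model contains a $\P^1$-bundle, a smooth quadric, or a $\P^1\times\P^1$ of candidate subspaces, and one realizes $m\sigma_{a^\alpha}$ as a cone over, or a flat deformation of, the Schubert variety supported on this moving family, choosing $m$ to absorb the degree of the auxiliary quadric or product. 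These constructions parallel those used for Grassmannians and spinor varieties in \cite{Ho1,RT}.

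\textbf{Sufficiency, and the main obstacle.} Assume the stated conditions hold and let $X$ be an irreducible representative of $m\sigma_{a^\alpha}$ in a local model. Form the incidence variety $\widetilde X=\{(\Lambda_\bullet,F)\in X\times G(a_i,V):\dim(F\cap\Lambda_t)\geq\mu_{i,t}\ \forall t\}$; the goal is to show the projection $\widetilde X\to G(a_i,V)$ is constant. I would proceed by: (1) flatly degenerating $X$ through a general one-parameter subgroup of $T$ to an effective cycle $X_0$ supported on Schubert varieties of class $m\sigma_{a^\alpha}$, for which the candidate subspaces form a finite set of coordinate subspaces; (2) a propagation argument exploiting $a_i-a_{i-1}=1$: this tightness forces, at a general point of $X$, the subspace $F_{a_i}/F_{a_{i-1}}$ inside $F_{a_{i+2}}/F_{a_{i-1}}$ to be determined, and the hypotheses on $a_{i+1}-a_i$, on $a_{i+2}$, and the $\alpha$-inequalities are exactly what makes the projective subvariety swept out by these subspaces rigid, which I would check by a direct normal-bundle computation or by quoting the rigidity of linearly embedded sub-Grassmannians from \cite{YL3}; (3) combining (1)--(2) with irreducibility of $X$ to upgrade ``finitely many candidates'' to a single fixed $F_{a_i}$. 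The main obstacle is precisely this last upgrade, together with the borderline bookkeeping in the $a_{i+1}-a_i=1$ case---the dichotomy $\alpha_i<\alpha_{i+2}<\alpha_{i+1}$ versus $\alpha_{i+2}\geq\alpha_{i+1}$---since this is where multi-rigidity genuinely parts ways with rigidity and where the argument must control the flat limit tightly enough to pin down $F_{a_i}$ uniformly along the whole degenerating family.
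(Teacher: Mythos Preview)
Your proposal has a genuine gap in the sufficiency direction, and you have honestly flagged it yourself: the ``upgrade'' in step~(3). Degenerating $X$ along a one-parameter torus subgroup produces an effective cycle supported on several Schubert varieties, each with its \emph{own} coordinate subspace $F_{a_i}$; irreducibility of the original $X$ does not by itself force these to coalesce. No mechanism is given for selecting a single $F_{a_i}$ that works uniformly along the family, and the normal-bundle/rigidity statements you allude to concern smooth Schubert varieties, not arbitrary irreducible representatives of $m\sigma_{a^\alpha}$. The ``reduction to local models'' is also shaky: forgetful fibrations between partial flag varieties need not transport multi-rigidity in both directions, since the image of an irreducible representative may have a different multiplicity and the preimage of a bad representative may split.

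The paper's argument is quite different and avoids degenerations entirely. For sufficiency it projects $X$ to a single Grassmannian $G(d_t,n)$ chosen so that the pushforward class $(\pi_t)_*(\sigma_{a^\alpha})$ satisfies the known Grassmannian multi-rigidity criterion (Theorem~\ref{rigid in g}); this already produces a candidate $F_{a_i}$. The substance of the proof is then a direct dimension-count (Proposition~\ref{rigid of flag a}) showing that this same $F_{a_i}$ meets every $\Lambda_r$ correctly for all other $r$, by computing the class of fibres of $\pi_r|_X$ and deriving a contradiction from a putative $\Lambda_r$ with $\dim(F_{a_i}\cap\Lambda_r)<\mu_{i,r}$. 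The borderline case $a_{i+1}-a_i=2$, $\alpha_i<\alpha_{i+1}\le\alpha_{i-1}$ (where no single Grassmannian projection suffices) is handled separately via Lemma~\ref{degree}, which builds the required variety $Y$ of dimension $a_i-1$ by an induction on $a_i-i$ using hyperplane sections, and then a tangent-space argument (Proposition~\ref{rigid 2}) forces $Y$ to be linear. For necessity the paper gives explicit cone-over-a-plane-curve constructions and a duality trick; your sketch is in the same spirit there, though vaguer.
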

As a corollary, we classify the multi-rigid Schubert classes in partial flag varieties. A Schubert class is multi-rigid if and only if all essential sub-indices are multi-rigid and the corresponding flag elements are compatible. To be more precise, we define a relation `$\rightarrow$' between two sub-indices: $a_i\rightarrow a_j$ if $i<j$ and $a_j$ is essential in $(\pi_t)_*(\sigma_{a^\alpha})$ for some $t\geq \min(\alpha_i,\alpha_j)$. This relation extends to a strict partial order (which we also denote by `$\rightarrow$') on the set of essential sub-indices by transitivity.
\begin{Cor}
A Schubert class $\sigma_{a^\alpha}\in A(F(d_1,...,d_k;n))$ is multi-rigid if and only if all essential sub-indices are multi-rigid and the set of all essential sub-indices is strict totally ordered under the relation `$\rightarrow$'.
\end{Cor}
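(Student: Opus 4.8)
The plan is to reduce the statement to the (individual) multi-rigidity of the essential sub-indices plus a ``nesting'' analysis, and to keep the reformulation of multi-rigidity that already underlies the definition of a multi-rigid sub-index: $\sigma_{a^\alpha}$ is multi-rigid precisely when, for every $m\in\Z^+$ and every irreducible representative $X$ of $m\sigma_{a^\alpha}$, there is a \emph{single} partial flag $F_\bullet$ with $\dim(F_{a_i}\cap\Lambda_t)\ge\mu_{i,t}$ for all essential $a_i$, all $t$, and all $(\Lambda_1,\dots,\Lambda_k)\in X$. Indeed, granting such an $F_\bullet$ one gets $X\subseteq\Sigma_{a^\alpha}(F_\bullet)$, and since $\dim X=\dim\Sigma_{a^\alpha}(F_\bullet)$ and the latter is irreducible, $X=\Sigma_{a^\alpha}(F_\bullet)$, forcing $m=1$ and $X$ a Schubert variety; the converse inclusion is clear. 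So the corollary amounts to: the subspaces produced by the multi-rigidity of the individual essential $a_i$ can be arranged inside one flag, for every such $X$, if and only if all essential $a_i$ are multi-rigid and they are strictly totally ordered by $\to$. (Combined with the Theorem, ``all essential $a_i$ multi-rigid'' becomes the explicit list of conditions there.)

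For the ``only if'' direction: if some essential $a_i$ is not multi-rigid, the definition directly furnishes an irreducible $X$ admitting no valid $F_{a_i}$, hence no valid flag, so $\sigma_{a^\alpha}$ is not multi-rigid. If every essential $a_i$ is multi-rigid but the essential sub-indices are not totally ordered under $\to$, pick essential $a_i,a_j$ with $i<j$ and $a_i\not\to a_j$, so $a_j$ is not essential in $(\pi_t)_*(\sigma_{a^\alpha})$ for any $t\ge\min(\alpha_i,\alpha_j)$. I would exploit this ``decoupling'' along the projection to build an irreducible representative of a suitable multiple $m\sigma_{a^\alpha}$ on which the forced subspaces at positions $a_i$ and $a_j$ are not nested: starting from $\Sigma_{a^\alpha}(F_\bullet)$, deform the flag element $F_{a_j}$ to one in general position relative to $F_{a_i}$ while keeping the cycle class equal to a multiple of $\sigma_{a^\alpha}$ --- it is exactly the failure of the relation $\to$ that keeps the codimension of the resulting incidence locus from jumping. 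Such an $X$ lies in no $\Sigma_{a^\alpha}(F_\bullet')$, so $\sigma_{a^\alpha}$ is not multi-rigid; verifying the cycle-class/dimension count for this deformation is the delicate point of this direction.

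For the ``if'' direction, assume all essential $a_i$ are multi-rigid and the essential sub-indices form a chain $a_{i_1}\to a_{i_2}\to\cdots\to a_{i_r}$ in the transitive closure of $\to$. Let $X$ be an irreducible representative of $m\sigma_{a^\alpha}$. First I would upgrade multi-rigidity of each essential $a_i$ to \emph{uniqueness} of the subspace $F_{a_i}$: since $X$ is irreducible and $[X]=m\sigma_{a^\alpha}$ is a genuine multiple of a Schubert class, $X$ ``fills'' the relevant incidence locus, and any two subspaces of dimension $a_i$ with the required incidence on all of $X$ must coincide. Next comes the key step: given a basic relation $a\to a'$, realized at some level $t\ge\min$ of the two $\alpha$-values with $a'$ essential in $(\pi_t)_*(\sigma_{a^\alpha})$, apply multi-rigidity together with this uniqueness to the image $\pi_t(X)$ (which represents a multiple of $(\pi_t)_*(\sigma_{a^\alpha})$): inside Schubert loci for $(\pi_t)_*(\sigma_{a^\alpha})$ the flag elements at the positions $a$ and $a'$ are forced to be nested, and uniqueness transports this to $X$, giving $F_a\subseteq F_{a'}$. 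Running this along the chain and using transitivity of $\subseteq$ yields $F_{a_{i_1}}\subseteq\cdots\subseteq F_{a_{i_r}}$, i.e. a partial flag $F_\bullet$ with $X\subseteq\Sigma_{a^\alpha}(F_\bullet)$; by the dimension argument of the first paragraph $X$ is a Schubert variety and $m=1$, so $\sigma_{a^\alpha}$ is multi-rigid.

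The main obstacle I anticipate is in the ``if'' direction: making ``basic relation $\Rightarrow$ nestedness of the forced subspaces'' rigorous, i.e. establishing the uniqueness of $F_{a_i}$ and controlling $X$ under the projections $\pi_t$ --- in particular ensuring $\pi_t|_X$ is generically finite onto its image so that $(\pi_t)_*[X]$ is the expected multiple of $(\pi_t)_*(\sigma_{a^\alpha})$, and dealing with the degenerate cases where the image drops dimension. The secondary obstacle is the explicit deformation in the ``only if'' direction, whose class computation must be carried out case by case according to the size of $a_{i+1}-a_i$ (mirroring the cases in the Theorem).
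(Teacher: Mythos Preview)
Your overall architecture matches the paper's: reduce multi-rigidity of $\sigma_{a^\alpha}$ to (i) multi-rigidity of each essential sub-index and (ii) nestedness of the resulting subspaces, with the relation $\to$ governing (ii). But the mechanism you propose for (ii) is a detour that does not quite close.

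In the ``if'' direction, you try to obtain $F_{a_i}\subset F_{a_j}$ by first arguing that $\pi_t(X)$ is (a multiple of) a Schubert variety, reading off nestedness there, and then transporting it back via a uniqueness claim. This requires the pushforward class $(\pi_t)_*(\sigma_{a^\alpha})$ to be multi-rigid in $G(d_t,n)$, which is \emph{not} part of the hypothesis and need not hold even when every essential sub-index of $\sigma_{a^\alpha}$ is multi-rigid in the partial flag variety. The paper avoids this entirely: it invokes Lemma~\ref{coincide}, which says that for \emph{any} subvariety of $G(d_t,n)$ with class a multiple of a Schubert class, once two linear spaces $F_{a_i},F_{a_j}$ satisfy the incidence inequalities and $a_j$ is essential, one automatically has $F_{a_i}\subset F_{a_j}$. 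No uniqueness argument, no multi-rigidity of the image class, is needed. Likewise your worries about generic finiteness of $\pi_t|_X$ and ``degenerate images'' are moot: Proposition~\ref{class of pushforward} computes $[\pi_t(X)]$ by intersection products and gives $m'(\pi_t)_*(\sigma_{a^\alpha})$ regardless of fiber dimension.

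In the ``only if'' direction your idea is right, but you are making it harder than necessary by aiming for a representative of $m\sigma_{a^\alpha}$. The paper simply observes that when two essential sub-indices are not linked the class is not even \emph{rigid}: one takes a standard flag, replaces $F_{a_j}$ by a $G_{a_j}$ with $F_{a_{i-1}}\subset G_{a_j}\subset F_{a_{j+1}}$ and $\dim(F_{a_i}\cap G_{a_j})=a_i-1$, and checks the resulting incidence locus still has class $\sigma_{a^\alpha}$ but is not defined by any single flag. No ``delicate cycle-class computation for a multiple'' is required.
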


\subsection{Orthogonal Grassmannians}
Let $q$ be a non-degenerate symmetric bilinear form on $V$. Let $Q$ be the quadric hypersurface defined by $q$. A subspace $W$ is called isotropic if $q$ vanishes on it, or equivalently $\mathbb{P}(V)$ lies on $Q$. The orthogonal Grassmannian $OG(k,n)$ parametrizes all isotropic subspaces of dimension $k$, unless $n=2k$, in which case the parameter space has two irreducible components and we let $OG(k,2k)$ denote one of the components. The orthogonal Grassmannian $OG(k,n)$ can be identified with $G/P$, where $G=SO(V)$ and $P$ is a maximal parabolic subgroup when $k\neq \frac{n}{2}-1$.

Given an isotropic subspace $W$, we denote $W^\perp$ its orthogonal complement with respect to $q$. Let $F_\bullet=F_1\subset...\subset F_{\left[n/2\right]}$ be an isotropic flag in $V$. When $n$ is even, the orthogonal complement of $F_{n/2-1}$ intersects $Q$ into a union of two maximal isotropic subspaces, one is $F_{n/2}$ and the other one belongs to the different irreducible components than $F_{n/2}$. By abuse of notation, we denote also by $F_{n/2-1}^\perp$ the maximal isotropic subspace in the orthogonal complement of $F_{n/2-1}$ other than $F_{n/2}$. 
\begin{Def}
A Schubert index for $OG(k,n)$ consists of two increasing sequences of integers $1\leq a_1<...< a_s\leq \frac{n}{2}$ and $0\leq b_1<...< b_{k-s}\leq \frac{n}{2}-1$ such that $a_i\neq b_j+1$ for all $1\leq i\leq s,1\leq j\leq k-s$. When $n=2k$, we further require that $s$ and $k$ have the same parity, i.e.
$$ s \equiv k \pmod{2} $$
\end{Def}

\begin{Def}
Given a Schubert index $(a_\bullet;b_\bullet)$ and an isotropic flag $F_\bullet$, the Schubert variety $\Sigma_{a;b}(F_\bullet)$ is defined to be the closure of the following locus:
$$\Sigma_{a;b}(F_\bullet):=\{\Lambda\in OG(k,n)|\dim(\Lambda\cap F_{a_i})= i, \dim(\Lambda\cap F_{b_j}^\perp)= k-j+1, 1\leq i\leq s, 1\leq j\leq k-s\}.$$
\end{Def}

\begin{Def}
Let $\sigma_{a;b}$ be a Schubert class for $OG(k,n)$. A sub-index $a_i$ is called {\em essential} if one of the following holds:
\begin{itemize}
\item $i<s$ and $a_{i}<a_{i+1}-1$
\item $n$ is odd and $i=s$;
\item $n$ is even, $i=s$ and $a_s+b_{k-s}\neq n-2$.
\end{itemize}

A sub-index $b_j$ is called {\em essential} if either $j=1$ or $b_{j}\neq b_{j-1}+1$. 

An essential sub-index $a_i$ is called {\em multi-rigid} if for every irreducible representative $X$ of the class $m\sigma_{a;b}$, $m\in\mathbb{Z}^+$, there exists an isotropic subspace $F_{a_i}$ of dimension $a_i$ such that for every $k$-plane $\Lambda$ parametrized by $X$,
$$\dim(\Lambda\cap F_{a_i})\geq i.$$
Similarly, an essential sub-index $b_j$ is called {\em multi-rigid} if for every irreducible representative of the class $m\sigma_{a;b}$, $m\in\mathbb{Z}^+$, there exists an isotropic subspace $F_{b_j}$ of dimension $b_j$ such that for every $k$-plane $\Lambda$ parametrized by $X$, $$\dim(\Lambda\cap F_{b_j}^\perp)\geq k-j+1.$$
\end{Def}
In \S \ref{Type BD}, we obtain the following characterization of multi-rigid sub-indices in $OG(k,n)$:
\begin{Thm}\label{intror}
Let $\sigma_{a;b}$ be a Schubert class for $OG(k,n)$. An essential sub-index $a_i$ is multi-rigid if $a_i$ is rigid and one of the following conditions hold:
\begin{enumerate}
\item $i<s$ and $a_{i-1}+1=a_i\leq a_{i+1}-3$;
\item $i<s$, $a_i=b_j$ for some $1\leq j\leq k-s$, $a_i\leq a_{i+1}-3$ and $b_j\leq b_{j-1}+3$.
\item $n$ is even, $i=s$, $a_s\leq \frac{n}{2}-3$ and $a_s=a_{s-1}+1$;
\item $n$ is even, $i=s$, $a_s= \frac{n}{2}-1$, $a_s=a_{s-1}+1$ and $b_{k-s}\neq\frac{n}{2}-1$;
\item $n$ is even, $i=s$, $a_s= \frac{n}{2}-2$, $a_s=a_{s-1}+1$ and $b_{k-s}\neq \frac{n}{2}-1$;
\item $n$ is even, $i=s$, $a_s=\frac{n}{2}$ and $b_{k-s}\leq \frac{n}{2}-4$;
\item $n$ is odd, $i=s$, $a_s\neq\left[\frac{n}{2}\right]-1$ and $a_s=a_{s-1}+1$;
\item $n$ is odd, $i=s$, $a_s=\left[\frac{n}{2}\right]-1$, $a_s=a_{s-1}+1$ and $b_{k-s}\neq\left[\frac{n}{2}\right]-1$.
\end{enumerate}
An essential sub-index $b_j$ is multi-rigid if $b_j$ is rigid and one of the following conditions hold:
\begin{enumerate}
\item $b_j<\frac{n}{2}-2$, $b_j\neq a_i$ for all $1\leq i\leq s$ and $b_{j+1}-1=b_j\leq b_{j-1}+3$;
\item $b_j<\frac{n}{2}-2$, $b_j=a_i$ for some $1\leq i\leq s$, $a_i\leq a_{i+1}-3$ and $b_j\leq b_{j-1}+3$;
\item $n$ is even, $j=k-s$, $b_{k-s}=a_s=\frac{n}{2}-2$ and $a_{s-1}=\frac{n}{2}-3$;
\item $n$ is even, $j=k-s$, $b_{k-s}=\frac{n}{2}-1$ and $b_{k-s-1}\leq\frac{n}{2}-4$;
\end{enumerate}
\end{Thm}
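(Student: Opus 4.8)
The plan is to prove the theorem one essential sub-index at a time, since the asserted conclusion --- for each sub-index, a single linear subspace that works on all of a given representative --- is imposed on the sub-indices separately and the subspaces attached to distinct sub-indices may be produced independently. So fix an essential sub-index, say $a_i$, assume it is rigid and that one of the numerical conditions (1)--(8) holds, fix $m\in\Z^+$ and an irreducible representative $X$ of $m\sigma_{a;b}$, and aim to exhibit an isotropic $a_i$-plane $F_{a_i}$ with $\dim(\Lambda\cap F_{a_i})\ge i$ for every $\Lambda\in X$. The case $m=1$ is exactly rigidity, so the content is $m\ge 2$; moreover the dimension-theoretic core of the rigidity argument of \cite{YL3} is insensitive to the multiplicity $m$ (it bounds dimensions of sweep loci, not degrees), so what multi-rigidity additionally demands is that the Schubert subspace attached to $X$ --- a priori allowed to vary with $\Lambda$ --- is forced to be constant, and the numerical gap conditions are precisely the combinatorial input that produces this rigidity of the subspace.

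I would then separate two regimes. For the \emph{interior sub-indices} --- $a_i$ with $i<s$, and $b_j$ with $b_j<\frac n2-2$ --- only the partial-flag part of the incidence conditions is involved, and the analysis parallels the type A classification proved above: one forms the incidence correspondence $I=\{(\Lambda,F):\Lambda\in X,\ \dim(\Lambda\cap F)\ge i\}$ over $OG(a_i,n)$, uses rigidity of $a_i$ to see that the projection $I\to X$ is dominant, and then uses $[X]=m\sigma_{a;b}$ together with the gap hypotheses ($a_{i-1}+1=a_i\le a_{i+1}-3$ in case (1); $a_i\le a_{i+1}-3$ and $b_j\le b_{j-1}+3$ in the overlap case (2)) to force the image of $I$ in $OG(a_i,n)$ to be a single point $F_0$; combined with dominance over $X$, the closed condition $\dim(\Lambda\cap F_0)\ge i$ then holds on all of $X$. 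This dimension comparison is the orthogonal counterpart of the one carried out in type A. The genuinely new wrinkle in this regime is the overlap $a_i=b_j$, where the orthogonal condition $\dim(\Lambda\cap F_{b_j}^\perp)\ge k-j+1$ is coupled to $\dim(\Lambda\cap F_{a_i})\ge i$; here I would pass through the correspondence $W\leftrightarrow W^\perp$ (using $(F_{b_j}^\perp)^\perp=F_{b_j}$) and feed in the extra inequality $b_j\le b_{j-1}+3$ to reduce once more to the type A count.

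The heart of the proof --- and the step I expect to be the main obstacle --- is the \emph{extremal sub-index} $a_s$ (cases (3)--(8)), and symmetrically the boundary $b$-indices. Now $F_{a_s}$ is isotropic of dimension within a few steps of $\frac n2$, so the geometry of the quadric $Q\subset\P(V)$ and of its family of maximal isotropic subspaces --- a single ruling when $n$ is odd, two rulings when $n$ is even --- genuinely enters, which is why the case list forks on the parity of $n$, on the position of $a_s$ relative to $\frac n2$, and on whether $b_{k-s}$ is large. My plan here is: (i) from rigidity of $a_s$, attach to a general $\Lambda\in X$ an isotropic $a_s$-plane $F_{a_s}(\Lambda)$ realizing the condition; (ii) show the locus of candidate planes --- the image in $OG(a_s,n)$ of $\{(\Lambda,F):\Lambda\in X,\ \dim(\Lambda\cap F)\ge s\}$ --- is zero-dimensional, by using $[X]=m\sigma_{a;b}$ to bound the dimension of the subvariety of $X$ meeting a fixed such $F$ in the prescribed dimension and showing that a positive-dimensional family of valid $F$'s would sweep out more of $X$ (equivalently, more of the quadric $Q$) than the class allows; (iii) when $a_s$ lies within the last two steps before $\frac n2$, reduce the residual verification to near-maximal isotropic subspaces --- i.e.\ to a spinor variety $OG(\ell,2\ell)$, where the multi-rigid classes are known \cite{RT} and are also recovered directly by the same degeneracy-locus count --- the hypotheses $b_{k-s}\ne\frac n2-1$ in (4),(5),(8) and $b_{k-s}\le\frac n2-4$ in (6) being exactly what keeps us in a multi-rigid regime. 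Step (ii), specialized to these near-maximal subspaces, is where essentially all the difficulty concentrates: the sweep-dimension bound coming from the class must be shown incompatible with any one-parameter family of candidate subspaces, and this forces a delicate comparison of degeneracy-locus dimensions on $Q$, with the two rulings (for $n$ even) interacting.

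Finally, the $b_j$ statements are obtained by running the same scheme on orthogonal complements: after replacing $\Lambda$ by $\Lambda^\perp$ and using $(F_{b_j}^\perp)^\perp=F_{b_j}$, the condition $\dim(\Lambda\cap F_{b_j}^\perp)\ge k-j+1$ acquires the shape of an $a$-type condition, so the interior cases (1),(2) for $b_j$ go through the type A reduction and the boundary cases (3),(4) for $b_j$ fall under the spinor-variety analysis of the $a_s$ discussion. In each case the output is a single isotropic subspace, constant over the whole of the irreducible representative $X$ and of the prescribed dimension and intersection type, which is exactly what the definition of multi-rigidity of the sub-index asks for. Apart from the $a_s$ dimension count, the argument rests entirely on the type A classification proved above and on the rigidity results of \cite{YL3}, which furnish the $m=1$ input that is then spread over $X$.
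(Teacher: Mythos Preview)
Your proposal has a genuine gap, and the issue is concentrated in the two places you flag as the main work: the step ``force the image of $I$ in $OG(a_i,n)$ to be a single point'' and step (ii) of the extremal case.

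First, your use of rigidity is confused. Rigidity of $a_i$ says that for any representative of $1\cdot\sigma_{a;b}$ (an entire subvariety), there is a single $F_{a_i}$ realizing the incidence condition. It does not attach a subspace to an individual $\Lambda\in X$, nor to a general $\Lambda\in X$ when $[X]=m\sigma_{a;b}$ with $m\ge 2$. So ``from rigidity of $a_s$, attach to a general $\Lambda\in X$ an isotropic $a_s$-plane $F_{a_s}(\Lambda)$'' has no content, and the incidence correspondence you form has no reason to project to a single point of $OG(a_i,n)$: there are many $a_i$-planes meeting a fixed $\Lambda$ in dimension $\ge i$, and nothing you wrote cuts this down.

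The paper's mechanism is different and is what your sketch is missing. One does not cut by fixing $\Lambda$; one cuts by fixing a general point $p$ in the sweep $\Phi_X\subset Q$ and passes to $X_p=\{\Lambda\in X:p\in\mathbb P(\Lambda)\}$. The class of $X_p$ is computed explicitly (the $b$-sequence shifts by one, an extra $a'_1=1$ appears), and this is what sets up an \emph{induction} --- on $j$ for $b_j$, on $k-s$ or on a related counter for $a_s$ --- so that $X_p$ already satisfies the conclusion, producing a subspace $F^p$. Now varying $p$ gives a locus $I\subset OG(a_i,n)$ (or $OG(b_j,n)$) whose class one computes to be $m''\sigma_{1,\dots,b_1,b_1+2,\dots,a_i;b_1}$; the gap hypotheses are used \emph{here}, to put this auxiliary class in a regime (Corollary~\ref{an}, Proposition~\ref{2k+1}, Proposition~\ref{4.6}) where its top index is already known to be multi-rigid, yielding a single $F$ with $\dim(F\cap F^p)\ge a_i-1$ for all $p$, hence $\dim(F\cap\Lambda)\ge i$ for all $\Lambda$. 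For the overlap case $a_i=b_j$ (your case (2)), one first needs the existence of a variety $Y$ of dimension $a_i-1$ meeting every $\mathbb P(\Lambda)$ in dimension $\ge i-1$ (Lemma~\ref{degreegeneral}), and then a separate tangent-space argument using $\Phi_X=\Phi_p\subset p^\perp$ and $\dim\Phi_X=n-b_1-2$ to force $Y$ to be linear; the transfer between the $a$- and $b$-conditions goes through Lemma~\ref{atob}, not through ``replacing $\Lambda$ by $\Lambda^\perp$'' as you suggest. None of the inductive structure, the class computation for $X_p$ and for $I$, or the linearity-of-$Y$ argument appears in your sketch, and without them the ``dimension comparison'' you invoke does not close.
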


As an application, we obtain a classification of multi-rigid Schubert classes in $OG(k,n)$:
\begin{Cor}
Let $\sigma_{a;b}$ be a Schubert class for $OG(k;n)$. The Schubert class $\sigma_{a;b}$ is multi-rigid if all essential sub-indices satisfy one of the conditions in Theorem \ref{intror}. If $n$ is odd, then the converse is also true.
\end{Cor}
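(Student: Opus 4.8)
The plan is to deduce the statement about the class $\sigma_{a;b}$ from the sub-index analysis of Theorem~\ref{intror}, after the usual reduction to irreducible representatives. A representative of $m\sigma_{a;b}$ may be taken to be an effective cycle of pure dimension $\dim\Sigma_{a;b}$; write it $Z=\sum_i m_iZ_i$ with the $Z_i$ distinct irreducible and $m_i\in\Z^+$. By Kleiman transversality every effective cycle class on $OG(k,n)$ has nonnegative coordinates in the Schubert basis — each coordinate of $[Z]$ equals the intersection number of $[Z]$ with a general translate of a Poincar\'e-dual Schubert class, hence is $\geq 0$ — so from $\sum_i m_i[Z_i]=m\sigma_{a;b}$ one gets $[Z_i]=c_i\sigma_{a;b}$ with $c_i\in\Z^+$ for every $i$. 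Thus $\sigma_{a;b}$ is multi-rigid if and only if, for every $m\in\Z^+$, every \emph{irreducible} representative of $m\sigma_{a;b}$ is a single Schubert variety $\Sigma_{a;b}(F_\bullet)$ — which then forces $m=1$. It therefore suffices to prove this last statement under the hypothesis of the Corollary, and, when $n$ is odd, to establish its failure whenever some essential sub-index violates every condition of Theorem~\ref{intror}.

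For the ``if'' direction, let $X$ be an irreducible representative of $m\sigma_{a;b}$. By hypothesis each essential sub-index satisfies a condition of Theorem~\ref{intror}, hence is multi-rigid, so that theorem supplies: for each essential $a_i$, an isotropic $F_{a_i}$ with $\dim(F_{a_i})=a_i$ and $\dim(\Lambda\cap F_{a_i})\geq i$ for all $\Lambda\in X$; and for each essential $b_j$, an isotropic $F_{b_j}$ with $\dim(F_{b_j})=b_j$ and $\dim(\Lambda\cap F_{b_j}^\perp)\geq k-j+1$ for all $\Lambda\in X$. The crux is to organize these subspaces into a single isotropic flag $F_\bullet$. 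I would take the $F_{a_i}$ (and the $F_{b_j}^\perp$) in the canonical form produced inside the proof of Theorem~\ref{intror} — roughly, as spans over $\Lambda\in X$ of the intersection subspaces witnessing the incidences, which that proof shows have exactly the expected dimension — so that for $i<i'$ the inclusions $\Lambda\cap F_{a_i}\subseteq\Lambda\cap F_{a_{i'}}$ force $F_{a_i}\subseteq F_{a_{i'}}$, together with the matching relations between the $a$'s and the $b$'s (in particular $F_{a_i}=F_{b_j}$ when $a_i=b_j$). A chain of isotropic subspaces refines to a complete isotropic flag, and the non-essential sub-indices add nothing — e.g.\ if $a_i$ is non-essential because $a_{i+1}=a_i+1$, then $\dim(\Lambda\cap F_{a_{i+1}})\geq i+1$ already forces $\dim(\Lambda\cap F_{a_i})\geq i$. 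Hence $X\subseteq\Sigma_{a;b}(F_\bullet)$, and as both are irreducible of equal dimension, $X=\Sigma_{a;b}(F_\bullet)$ and $m=1$; so $\sigma_{a;b}$ is multi-rigid. I expect the compatibility of the flag elements to be the delicate point of this direction: it is the orthogonal analogue of the obstruction that forces the total-order hypothesis `$\rightarrow$' in the type $A$ Corollary, and it goes through here unconditionally precisely because the conditions of Theorem~\ref{intror} are so restrictive.

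For the converse when $n$ is odd, suppose $\sigma_{a;b}$ is multi-rigid. By the reduction above every irreducible representative of every $m\sigma_{a;b}$ equals some $\Sigma_{a;b}(F_\bullet)$, which carries all the incidences in its defining flag; reading off the dimension-$a_i$ (resp.\ dimension-$b_j$) element of that flag shows every essential sub-index is multi-rigid in the sense of the definition preceding Theorem~\ref{intror} (and in particular rigid). It remains to show that, for $n$ odd, such a sub-index must additionally satisfy one of the listed conditions — equivalently, that whenever an essential $a_i$ or $b_j$ violates all of them one can exhibit an irreducible representative of some $m\sigma_{a;b}$ that is not a union of Schubert varieties, contradicting multi-rigidity. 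This is the core of \S\ref{Type BD}, and I expect this constructive step to be the main obstacle of the whole circle of results: for each combinatorial configuration excluded by Theorem~\ref{intror} — including the boundary cases $i=s$ and $j=k-s$ — one deforms the flag element attached to the offending sub-index inside a nearby isotropic flag and, when the class of the deformation is not $\sigma_{a;b}$ itself, passes to a suitable multiple, obtaining an irreducible non-Schubert cycle whose class one then identifies. The restriction to odd $n$ enters precisely here: for even $n$ the sub-index $a_s$ near $\frac{n}{2}$ interacts with the two families of maximal isotropic subspaces and with the auxiliary subspace $F_{n/2-1}^\perp$, so the corresponding constructions — and hence the necessity of the even-$n$ conditions (3)--(6) for the $a$'s and (3)--(4) for the $b$'s in Theorem~\ref{intror} — are unavailable, which is why only the ``if'' direction is asserted in that case.
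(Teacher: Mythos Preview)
Your overall architecture is correct and matches the paper's (implicit) strategy: the paper states this corollary with no proof, treating the forward direction as immediate from Theorems~\ref{rigidb} and~\ref{aaaaa} (the sub-index multi-rigidity results) and the converse, for $n$ odd, as immediate from the explicit counter-example constructions in the Remarks following those theorems (Remarks~\ref{non-rigidex}, \ref{sharp}, and the remark after Theorem~\ref{aaaaa}). Your reduction to irreducible representatives and your identification of the converse as a case-by-case construction are exactly right.

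There is, however, a genuine gap in your compatibility step. You propose to take each $F_{a_i}$ ``as spans over $\Lambda\in X$ of the intersection subspaces witnessing the incidences'' and then argue that ``the inclusions $\Lambda\cap F_{a_i}\subseteq\Lambda\cap F_{a_{i'}}$ force $F_{a_i}\subseteq F_{a_{i'}}$''. This is circular: you cannot assert $\Lambda\cap F_{a_i}\subseteq\Lambda\cap F_{a_{i'}}$ before knowing $F_{a_i}\subseteq F_{a_{i'}}$, and the ``intersection subspaces witnessing the incidences'' are not defined independently of the $F_{a_i}$ you are trying to construct. Moreover, the actual constructions inside the proofs of Theorems~\ref{multirigid of b} and~\ref{aaa} are heterogeneous (singular loci of swept-out quadrics, inductive fibre arguments, etc.), so there is no single ``canonical form'' to invoke. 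The correct route---which the paper leaves implicit in the Grassmannian case---is the orthogonal analogue of Lemma~\ref{coincide}: once $\dim(\Lambda\cap F_{a_i})\geq i$ and $\dim(\Lambda\cap F_{a_{i'}})\geq i'$ hold for all $\Lambda\in X$ with $a_{i'}$ essential, one argues directly from the class $[X]=m\sigma_{a;b}$ (via an intersection-number computation with a suitable dual Schubert class) that $F_{a_i}\subset F_{a_{i'}}$; the mixed $a$--$b$ compatibilities are handled by Lemma~\ref{atob}. In the single-Grassmannian setting there is only one projection, so the total-order obstruction from the type~A partial-flag case does not arise---this is why no extra hypothesis appears---but the containment itself still requires that Lemma~\ref{coincide}-style argument, not the span reasoning you sketched.
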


\subsection{Orthogonal partial flag varieties}Let $V$ and $q$ be defined as before. The orthogonal partial flag variety $OF(d_1,...,d_k;n)$ parametrizes all $k$-step partial flags $(\Lambda_1,...,\Lambda_k)$, where $\Lambda_i\subset \Lambda_{i+1}$ and $\Lambda_i$ are isotropic subspaces of dimension $d_i$, unless $n=2d_k$, in which case the parameter space has two irreducible components and we let $OF(d_1,...,d_k;n)$ denote one of the components. It generalizes the orthogonal Grassmannian varieties and can be identified with $G/P$, where $G=SO(V)$ and $P$ is a parabolic subgroup.

\begin{Def}
A Schubert index for $OF(d_1,...,d_k;n)$ consists of two increasing sequences of integers $$1\leq a_1^{\alpha_1}<...<a^{\alpha_s}_s\leq \frac{n}{2}$$ and $$0\leq b^{\beta_1}_1<...<b^{\beta_{k-s}}_{k-s}\leq \frac{n}{2}-1,$$ of length $s$ and $k-s$ respectively, $0\leq s\leq k$, such that 
\begin{enumerate}
\item $a_i\neq b_j+1$ for all $1\leq i\leq s,1\leq j\leq k-s$, and 
\item $d_t-d_{t-1}=\#\{i|\alpha_i=t\}+\#\{j|\beta_j=t\}$ for every $1\leq t\leq k$. (Here we set $d_0=0$.)
\end{enumerate}
When $n=2d_k$, we further require $s$ and $d_k$ have the same parity, i.e.
$$ s \equiv d_k \pmod{2}.$$
\end{Def}

Given a Schubert index $(a^\alpha;b^\beta)$ and a flag of isotropic subspaces $F_1\subset ...\subset F_{\left[n/2\right]}$ in $V$, set
$$\mu_{i,t}:=\#\{c|a_c\leq a_i,\alpha_c\leq t\},$$
$$\nu_{j,t}:=\#\{d|\alpha_d\leq t\}+\#\{e|b_e\geq b_j,\beta_e\leq t\}.$$
The Schubert variety $\Sigma_{a^\alpha;b^\beta}$ is then defined to be the closure of the following locus:
\begin{eqnarray}
\Sigma_{a^\alpha;b^\beta}:=\{(\Lambda_1,...,\Lambda_k)\in OF&|&\dim(F_{a_i}\cap \Lambda_t)\geq \mu_{i,t}, \alpha_i\leq t,\nonumber\\
& &\dim(F^\perp_{b_j}\cap \Lambda_t)\geq \nu_{j,t},\beta_j\leq t\}.\nonumber
\end{eqnarray}
\begin{Def}\label{defofrigid}
Let $(a^\alpha,b^\beta)$ be a Schubert index in $OF(d_1,...,d_k;n)$. A sub-index $a_i$ or $b_j$ is called {\em essential} if it is essential with respect to the class $(\pi_t)_*(\sigma_{a^\alpha;b^\beta})$ in $OG(d_t,n)$ for some $1\leq t\leq k$.

An essential sub-index $a_i$ is called {\em multi-rigid} if for every irreducible representative $X$ of $m\sigma_{a^\alpha;b^\beta}$, $m\in\mathbb{Z}$, there exists an isotropic subspace $F_{a_i}$ of dimension $a_i$ such that 
$$\dim(F_{a_i}\cap\Lambda_t)\geq \mu_{i,t}, \forall (\Lambda_1,...,\Lambda_k)\in X,\alpha_i\leq t\leq k.$$
An essential sub-index $b_j$ is called {\em multi-rigid} if for every irreducible representative $X$, there exists an isotropic subspace $F_{b_j}$ of dimension $b_j$ such that
$$\dim(F_{b_j}^\perp\cap\Lambda_t)\geq \nu_{j,t}, \forall (\Lambda_1,...,\Lambda_k)\in X,\beta_j\leq t\leq k.$$
\end{Def}
In \S \ref{Type BD}, we prove a Schubert sub-index is multi-rigid if it is multi-rigid with respect to the push-forward class:
\begin{Thm}
Let $\sigma_{a^\alpha;b^\beta}$ be a Schubert class for $OF(d_1,...,d_k;n)$. An essential $a_i$ (or $b_j$ resp.) is multi-rigid if $a_i$ (or $b_j$ resp.) is multi-rigid with respect to the Schubert class $(\pi_i)_*(\sigma_{a^\alpha;b^\beta})$ for some $i$.
\end{Thm}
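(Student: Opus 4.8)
The plan is to reduce the multi-rigidity statement for $OF(d_1,\dots,d_k;n)$ to the already-established multi-rigidity statements for the generalized Grassmannians $OG(d_t,n)$, by exploiting the natural projection morphisms $\pi_t\colon OF(d_1,\dots,d_k;n)\to OG(d_t,n)$ that forget all but the $t$-th step of the flag. The key point is that multi-rigidity is, essentially by definition, a condition that can be read off one projection at a time: the defining property of a multi-rigid $a_i$ asks, for an irreducible representative $X$ of $m\sigma_{a^\alpha;b^\beta}$, for a single isotropic subspace $F_{a_i}$ forcing $\dim(F_{a_i}\cap\Lambda_t)\ge\mu_{i,t}$ for all $(\Lambda_1,\dots,\Lambda_k)\in X$ and all $t\ge\alpha_i$. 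First I would fix some $i$ for which $a_i$ is multi-rigid with respect to $(\pi_i)_*(\sigma_{a^\alpha;b^\beta})$ in $OG(d_i,n)$, where here $i$ indexes the relevant step (abusing notation as in the statement).

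Second, I would take an arbitrary irreducible representative $X\subset OF(d_1,\dots,d_k;n)$ of $m\sigma_{a^\alpha;b^\beta}$ and push it forward under $\pi_i$. The main technical lemma to establish — and I expect this to be the crux of the argument — is that $(\pi_i)_*[X] = m'\,(\pi_i)_*\sigma_{a^\alpha;b^\beta}$ in $A_*(OG(d_i,n))$ for some positive integer $m'$ (in fact $m' = m\cdot\deg(\pi_i|_X)$ when $\pi_i|_X$ is generically finite, and one must rule out, or separately handle, the case where $\pi_i$ contracts $X$ to lower dimension). This is where one needs that $\pi_i$ restricted to a Schubert variety $\Sigma_{a^\alpha;b^\beta}$ has image the Schubert variety $\Sigma_{(\pi_i)_*(a^\alpha;b^\beta)}$ with the fiber dimension matching up, together with the fact that effective cycles push forward to effective cycles and the Schubert basis is a basis of non-negative cone: since $X$ represents a multiple of a single Schubert class and $\pi_i$ is $G$-equivariant, the pushforward is supported on (a multiple of, plus possibly other) Schubert classes, but a dimension count combined with $[X] = m\sigma_{a^\alpha;b^\beta}$ pins it down to the expected multiple of $(\pi_i)_*\sigma_{a^\alpha;b^\beta}$. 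One subtlety is that $\pi_i$ need not be birational onto its image even on Schubert varieties — the fibers are themselves flag varieties — so I would phrase this via comparing $\dim X$, $\dim\pi_i(X)$, and the generic fiber dimension of $\pi_i$ over the open Schubert cell.

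Third, having arranged that $\pi_i(X)$ (with its reduced structure, or with multiplicity) represents a positive multiple of $(\pi_i)_*\sigma_{a^\alpha;b^\beta}$ and is irreducible, I apply the hypothesis: $a_i$ is multi-rigid in $OG(d_i,n)$, so there is an isotropic subspace $F_{a_i}$ of dimension $a_i$ with $\dim(F_{a_i}\cap\Lambda)\ge i$ (with the appropriate $OG$-indexing) for every $\Lambda$ parametrized by $\pi_i(X)$. Finally I transport this back: for every point $(\Lambda_1,\dots,\Lambda_k)\in X$ we have $\Lambda_i=\pi_i(\Lambda_1,\dots,\Lambda_k)$ lies in $\pi_i(X)$, hence $\dim(F_{a_i}\cap\Lambda_i)\ge i$, and then the flag inclusions $\Lambda_i\subset\Lambda_t$ for $t\ge\alpha_i$ together with the definition of $\mu_{i,t}$ and the compatibility condition $d_t-d_{t-1}=\#\{c\mid\alpha_c=t\}+\#\{e\mid\beta_e=t\}$ upgrade this to $\dim(F_{a_i}\cap\Lambda_t)\ge\mu_{i,t}$ for all such $t$. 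The argument for an essential $b_j$ is identical after replacing $F_{a_i}$ by $F_{b_j}^\perp$ and $\mu$ by $\nu$, using that $(\ \cdot\ )^\perp$ is order-reversing on isotropic flags and that $\pi_i$ commutes with taking orthogonal complements of the flag data. I expect the only real obstacle to be the pushforward computation in the second step — specifically, ruling out pathological degenerations where $\pi_i$ drops the dimension of $X$ or introduces extra Schubert components — and I would handle this using the equivariance of $\pi_i$ and a semicontinuity/dimension argument on the generic fiber, exactly as the analogous step was handled in the rigidity paper \cite{YL3}.
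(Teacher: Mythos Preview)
Your overall strategy is correct and matches the paper's: project via $\pi_{t_0}$, use Lemma~\ref{class of pushforward in of} to identify the pushforward class as $m'(\pi_{t_0})_*\sigma_{a^\alpha;b^\beta}$, apply multi-rigidity in $OG(d_{t_0},n)$ to obtain the isotropic $F_{a_i}$, and then transport the inequality back to every step of the flag. However, you have misidentified where the difficulty lies. The pushforward computation you flag as ``the crux'' and ``the only real obstacle'' is in fact routine and was already recorded as Lemma~\ref{class of pushforward in of}. The genuinely hard step is the one you dismiss in a single sentence: upgrading $\dim(F_{a_i}\cap\Lambda_{t_0})\ge\mu_{i,t_0}$ to $\dim(F_{a_i}\cap\Lambda_t)\ge\mu_{i,t}$ for \emph{all} $t$.

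Your proposed mechanism---``the flag inclusions $\Lambda_{t_0}\subset\Lambda_t$ together with the definition of $\mu_{i,t}$''---does not work in either direction. For $t>t_0$ the inclusion $\Lambda_{t_0}\subset\Lambda_t$ only yields $\dim(F_{a_i}\cap\Lambda_t)\ge\mu_{i,t_0}$, while $\mu_{i,t}$ can be strictly larger (there may be sub-indices $a_c\le a_i$ with $t_0<\alpha_c\le t$). For $t<t_0$ the inclusion goes the wrong way: $\Lambda_t\subset\Lambda_{t_0}$ tells you nothing about $\dim(F_{a_i}\cap\Lambda_t)$, which could a~priori be zero. The combinatorial compatibility condition on $(a^\alpha;b^\beta)$ constrains the Schubert index, not an arbitrary representative $X$ of $m\sigma_{a^\alpha;b^\beta}$. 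In the paper this transport step is precisely Theorem~\ref{orthogonal rigid index}, whose proof occupies several pages of case analysis: one argues by contradiction, computing the class of the fiber of $\pi_r|_X$ over a general $\Lambda_r$ via Lemma~\ref{class of pullback in of}, and then exhibits an isotropic subspace whose intersection behavior violates that class. The argument splits according to whether $t_0<r$ or $t_0>r$, and in the latter case further according to whether the relevant fiber index hits $\tfrac{n}{2}$ or $\tfrac{n}{2}-1$, invoking Lemma~\ref{atob} at several points. None of this is visible from flag inclusions alone.
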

As a corollary, we obtain the multi-rigidity of Schubert classes in orthogonal partial flag varieties:
\begin{Cor}
Let $\sigma_{a^\alpha;b^\beta}$ be a Schubert class of $OF(d_1,...,d_k;n)$. The Schubert class $\sigma_{a^\alpha;b^\beta}$ is multi-rigid if and only if all essential sub-indices are multi-rigid, and the corresponding isotropic subspaces form a partial flag.
\end{Cor}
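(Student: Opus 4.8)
The plan is to reduce multi-rigidity of the class to multi-rigidity of its essential sub-indices, which is already handled by the preceding Theorem together with Theorem~\ref{intror}. First I would pass to irreducible representatives: if $Z$ represents $m\sigma_{a^\alpha;b^\beta}$, decompose it into irreducible components $X_\ell$ of dimension $\dim\sigma_{a^\alpha;b^\beta}$ with multiplicities $n_\ell$; expanding each $[X_\ell]$ in the Schubert basis of the relevant homology group gives nonnegative coefficients (the Poincar\'e-dual basis is represented by opposite Schubert varieties, and general translates meet $X_\ell$ properly by Kleiman's theorem), so comparison with $m\sigma_{a^\alpha;b^\beta}=\sum_\ell n_\ell[X_\ell]$ forces $[X_\ell]=m_\ell\sigma_{a^\alpha;b^\beta}$ with $m_\ell\in\mathbb{Z}^+$. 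Thus it suffices to show that the two conditions in the statement are equivalent to: every irreducible $X$ with $[X]=m\sigma_{a^\alpha;b^\beta}$, $m\in\mathbb{Z}^+$, is a Schubert variety $\Sigma_{a^\alpha;b^\beta}(F_\bullet)$ (which then forces $m=1$).

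For the ``if'' direction, fix such an $X$. By hypothesis every essential $a_i$ is multi-rigid, yielding an isotropic subspace $F_{a_i}$ of dimension $a_i$ with $\dim(F_{a_i}\cap\Lambda_t)\ge\mu_{i,t}$ for all $(\Lambda_1,\dots,\Lambda_k)\in X$ and all $\alpha_i\le t\le k$; likewise every essential $b_j$ yields $F_{b_j}$ with $\dim(F_{b_j}^\perp\cap\Lambda_t)\ge\nu_{j,t}$. Using the hypothesis that the corresponding isotropic subspaces form a partial flag, I would choose these subspaces compatibly so that they are nested and complete them to a full isotropic flag $F_1\subset\cdots\subset F_{[n/2]}$, respecting the convention for $F_{n/2-1}^\perp$ when $n$ is even. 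The inequalities indexed by the essential sub-indices then hold on all of $X$ by construction, and those indexed by non-essential sub-indices follow from them by the standard dimension count along the maximal runs of consecutive sub-indices between essential ones --- this is exactly what ``essential'' encodes, cf.\ \cite{YL3} and \S\ref{Type BD}. Hence $X\subseteq\Sigma_{a^\alpha;b^\beta}(F_\bullet)$, and since $X$ is irreducible of the same dimension, $X=\Sigma_{a^\alpha;b^\beta}(F_\bullet)$; in particular $[X]=\sigma_{a^\alpha;b^\beta}$, $m=1$, and $X$ is a Schubert variety. Combined with the first paragraph, every representative of every multiple of $\sigma_{a^\alpha;b^\beta}$ is a union of Schubert varieties.

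For the ``only if'' direction, suppose $\sigma_{a^\alpha;b^\beta}$ is multi-rigid and let $X$ be an irreducible representative of $m\sigma_{a^\alpha;b^\beta}$. Then $X$ is a union of Schubert varieties, hence --- being irreducible --- a single Schubert variety $\Sigma_{a^\alpha;b^\beta}(F_\bullet)$ with $m=1$. Reading the members $F_{a_i}$ and $F_{b_j}$ off the flag $F_\bullet$ immediately shows that each essential sub-index is multi-rigid and that the corresponding isotropic subspaces, lying in a single isotropic flag, form a partial flag.

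I expect the main obstacle to lie in the ``if'' direction, in two intertwined points. The first is making the subspaces supplied separately by multi-rigidity of the individual sub-indices mutually compatible: this is precisely what the hypothesis ``the corresponding isotropic subspaces form a partial flag'' is meant to guarantee, and the real work is to translate that hypothesis into an explicit combinatorial condition on $(a^\alpha;b^\beta)$ --- the analogue, for types B and D, of the strict total order under ``$\rightarrow$'' appearing in the type~A Corollary. The second is checking that the essential conditions together with nestedness cut out the whole Schubert variety, with all the bookkeeping forced by the two components and the parity constraint $s\equiv d_k\pmod 2$ in the even orthogonal case.
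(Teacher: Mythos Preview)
The paper does not give a self-contained proof of this corollary in \S\ref{Type BD}; it is stated in the introduction as a consequence of the preceding results, and the argument is meant to parallel the type~A case (Corollary~\ref{rigid in f}). Your ``if'' direction is exactly that parallel and is correct: multi-rigidity of each essential sub-index supplies the isotropic subspaces, the flag hypothesis makes them compatible, the non-essential conditions follow automatically, and dimension forces $X=\Sigma_{a^\alpha;b^\beta}(F_\bullet)$.

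Your ``only if'' direction, however, departs from what the paper actually does in the analogous type~A proof, and your version is weaker. You argue: multi-rigid class $\Rightarrow$ every irreducible representative is a Schubert variety $\Rightarrow$ read off the flag. This establishes that each essential sub-index is multi-rigid, but the clause ``the corresponding isotropic subspaces form a partial flag'' is meant to be a \emph{combinatorial} condition on $(a^\alpha;b^\beta)$ (the orthogonal analogue of the strict total order under ``$\rightarrow$'' in Definition~\ref{link}), not merely the observation that the flag of a single Schubert representative is nested. The paper's method in Corollary~\ref{rigid in f} is contrapositive and constructive: if some essential sub-index fails the numerical multi-rigidity criterion, one builds an explicit irreducible non-Schubert representative (as in Remark~\ref{non-rigidex} and Remark~\ref{sharp}); if the compatibility/order condition fails, one builds a representative defined by an incompatible pair of flag elements, showing the class is not even rigid. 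Your abstract argument does not recover this combinatorial content, and without it the ``only if'' statement is close to vacuous --- you have shown the subspaces of any \emph{particular} Schubert representative form a flag, which is automatic. To match the paper you would need to invoke the counter-example constructions from \S\ref{Type BD} and \cite{YL3} rather than the tautology.
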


\subsection{General homogeneous varieties}Let $G$, $T$, $B$, $P$ defined as before. Let $N(T)$ be the normalizer of $T$ in $G$ and let $N_P(T)$ be the normalizer of $T$ in $P$. Let $W=N(T)/T$ and $W_P=N_P(T)/T$ be the Weyl group of $G$ and $P$ respectively. Each coset in $W/W_P$ has a unique element of minimal length. Let $W^P$ be the set of minimal representatives of $W/W_P$. For every $w\in W^P$, the Schubert variety $\Sigma_{w}$ is defined to be the Zariski closure of $BwP/P$ in $G/P$ with the canonical reduced scheme structure.

By a theorem of Borel and Remmert, the homogeneous space $G/P$ with $G$ semi-simple can be decomposed into a direct product 
$$G/P=G_1/P_1\times...\times G_k/P_k,$$
where $G=G_1\times ...\times G_k$, $G_i$ are simple and $P=P_1\times...\times P_k$, $P_i$ are parabolic subgroups of $G_i$. Let $\pi_i:G/P\rightarrow G_i/P_i$ be the natural projections. In \S \ref{general case}, we prove the rigidity and multi-rigidity of Schubert classes for $G/P$ can be deduced from the multi-rigidity of Schubert classes for $G_i/P_i$:
\begin{Thm}
A Schubert class $\sigma_{w}$ for $G/P$ is rigid (resp. multi-rigid) if $(\pi_i)_*(\sigma_{w})$ are rigid (resp. multirigid) for all $1\leq i\leq k$.
\end{Thm}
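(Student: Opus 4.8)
The plan is to reduce the statement to the case $k=2$, $G/P = G_1/P_1 \times G_2/P_2$, and then argue by a fibration/specialization argument. Suppose $X \subset G/P = G_1/P_1 \times G_2/P_2$ is an irreducible representative of $m\sigma_w$ (or of $\sigma_w$ in the rigidity case), where $\sigma_w = \sigma_{w_1} \times \sigma_{w_2}$ under the Künneth decomposition $A(G/P) = A(G_1/P_1) \otimes A(G_2/P_2)$, and correspondingly $w = (w_1, w_2)$ with $(\pi_i)_*(\sigma_w) = \sigma_{w_i}$. First I would analyze a general fiber: for generic $y \in \pi_2(X) \subset G_2/P_2$, the fiber $X_y := X \cap \pi_1^{-1}(y)$ is a subvariety of $G_1/P_1$, and by comparing classes in $A(G_1/P_1)$ — using that $\pi_1$ restricted to $X$ is either dominant or has image of strictly smaller dimension, and keeping track of the homology class of $X_y$ together with that of $\pi_2(X)$ — one sees the class of $X_y$ must be a multiple of $\sigma_{w_1}$. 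Here one uses that $(\pi_1)_*[X] = (\deg \pi_1|_X)\,[\pi_1(X)]$ when $\pi_1|_X$ is generically finite, and more generally a dimension count forces the fiber class and the base class to factor compatibly through the Künneth components. Then multi-rigidity (resp. rigidity) of $\sigma_{w_1}$ for $G_1/P_1$ applied to each fiber $X_y$ produces, for each generic $y$, a Schubert-type condition witnessed by an appropriate flag element in $G_1$; and similarly projecting the other way for $G_2$.

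The second step is to globalize: the flag data extracted fiberwise must be shown to be \emph{locally constant in $y$}, hence constant since $\pi_2(X)$ is irreducible, so that a single Schubert variety $\Sigma_{w_1} \times G_2/P_2$ (or a finite union of translates) contains $X$; symmetrically in the other factor. Combining the two containments and again comparing classes (the intersection of $\Sigma_{w_1}\times G_2/P_2$ with $G_1/P_1 \times \Sigma_{w_2}$ has class $\sigma_{w_1}\times\sigma_{w_2} = \sigma_w$, with the correct dimension) forces $X$, counted with multiplicity, to be exactly (a union of translates of) $\Sigma_{w_1}\times\Sigma_{w_2} = \Sigma_w$. The passage from $k=2$ to general $k$ is then a straightforward induction, writing $G/P = (G_1/P_1) \times (G_2/P_2 \times \cdots \times G_k/P_k)$ and noting that $(\pi_1)_*\sigma_w$ is a Schubert class in the first factor while the pushforward to the product of the remaining factors is a Schubert class there, to which the inductive hypothesis applies (using that rigidity/multi-rigidity of a product Schubert class has just been established to follow from that of its factors).

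The main obstacle I expect is the globalization step — showing the fiberwise Schubert-witnessing flags can be chosen to vary algebraically and are in fact constant. Concretely, multi-rigidity of $\sigma_{w_1}$ gives, for each generic $y$, the \emph{existence} of a flag element $F^{(y)}$ in $G_1$ imposing the incidence conditions on $X_y$; one must promote this to a rational section $y \mapsto F^{(y)}$ of the relevant flag bundle over $\pi_2(X)$, then use irreducibility of $\pi_2(X)$ together with the discreteness of the set of Schubert varieties of a fixed class through a general point to conclude $F^{(y)}$ is independent of $y$. This requires some care about whether the witnessing flag element is canonically determined by $X_y$ (it need not be, a priori), so one may need to work with the incidence variety $\{(y, F) : F \text{ witnesses the Schubert condition on } X_y\}$, show it dominates $\pi_2(X)$ with finite generic fibers, and take an irreducible component; a monodromy argument then pins down the class. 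The class-comparison bookkeeping in the first step, while not deep, also needs to be done carefully so that the multiplicities $m$ match up on both sides, ensuring the final conclusion is about $m\sigma_w$ and not merely $m'\sigma_w$ for some divisor $m'$ of $m$.
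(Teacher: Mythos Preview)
Your approach is workable in principle but takes a much harder road than necessary, and the ``globalization'' step you flag as the main obstacle is a genuine difficulty in your route that the paper's argument simply sidesteps.

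The paper's proof is direct and does not touch fibers at all. Given an irreducible representative $X$ of $m\sigma_w$ (or of $\sigma_w$), one just observes the trivial containment
\[
X \;\subset\; \bigcap_{i=1}^k \pi_i^{-1}\bigl(\pi_i(X)\bigr).
\]
Each image $\pi_i(X)$ is irreducible with class $m_i\sigma_{w_i}$, so (multi-)rigidity of $\sigma_{w_i}$ applied once to the \emph{image} $\pi_i(X)$ (not fiberwise) gives $\pi_i(X)=g_i\Sigma_{w_i}$ for some $g_i\in G_i$. Then the right-hand side above is exactly $g\Sigma_w$ for $g=(g_1,\dots,g_k)$, because $\Sigma_w=\Sigma_{w_1}\times\cdots\times\Sigma_{w_k}$. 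Since $X$ and $g\Sigma_w$ are both irreducible of the same dimension, the containment is an equality. No induction on $k$, no fiber classes, no monodromy, no globalization.

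Your detour through fibers $X_y$ recovers the same final containment $X\subset \Sigma_{w_1}\times G_2/P_2$, but at the cost of (i) computing the fiber class, (ii) applying rigidity fiber by fiber, and (iii) then arguing that the resulting family of translates $g_y\Sigma_{w_1}$ is constant in $y$. Step (iii) is exactly where you expect trouble, and rightly so: the witnessing translate need not be unique, so you are forced into the incidence-variety and monodromy machinery you describe. All of this evaporates once you realize that applying rigidity to $\pi_1(X)$ \emph{as a whole} already gives a single global $g_1$. Also note that your language of ``flag elements $F^{(y)}$'' is specific to the classical types treated earlier in the paper and has no direct meaning for general $G$; the correct invariant object here is simply the translate $g_i\Sigma_{w_i}$ itself.
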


Let $P_m$ be a maximal parabolic subgroup containing $P$. There is a natural morphism $$\pi:G/P\rightarrow G/P_m$$ which induces a correspondence between Schubert varieties. The next result shows that the rigidity of Schubert classes for homogeneous space $G/P$ with arbitrary parabolic subgroup $P$ can be deduced from the generalized Grassmannians (i.e. homogeneous spaces corresponding to maximal parabolic subgroups):
\begin{Thm}
Let $P$ be a parabolic subgroup with associated set of simple roots $I_P$. Let $w\in W^P$. The Schubert class $\sigma_w\in A^*(G/P)$ is rigid (resp. multi-rigid) if
\begin{enumerate}
\item $(\pi_\alpha)_*(\sigma_w):=\sigma_{w_\alpha}$ are rigid (resp. multi-rigid) for all $\alpha\in S:=\Phi\backslash I_P$, where $\pi_\alpha:G/P\rightarrow G/P_{\Phi\backslash\{\alpha\}}$ are natural projections; and
\item there exists an ordering on $S=\{\alpha_1,...\alpha_t\}$ such that 
$$E(w_{\alpha_i})\subset E(\pi_{\alpha_i*}\pi_{\alpha_{i+1}}^*(w_{\alpha_{i+1}})),\ \ 1\leq i\leq t-1$$
\end{enumerate}
\end{Thm}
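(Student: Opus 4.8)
The plan is to reduce the rigidity (resp. multi-rigidity) of $\sigma_w \in A^*(G/P)$ to the generalized Grassmannian cases $G/P_{\Phi\setminus\{\alpha\}}$ by a fiber-wise argument along the tower of projections, using the ordering in hypothesis (2) to glue the flag data produced on each factor. Write $S = \{\alpha_1,\dots,\alpha_t\}$ in the order guaranteed by (2). First I would set up the iterated fibration: $G/P$ maps to each $G/P_{\Phi\setminus\{\alpha_i\}}$ via $\pi_{\alpha_i}$, and the product map $G/P \to \prod_i G/P_{\Phi\setminus\{\alpha_i\}}$ is a closed embedding onto the image, since $P = \bigcap_i P_{\Phi\setminus\{\alpha_i\}}$. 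A Schubert variety $\Sigma_w$ is the closure of $BwP/P$, and $\pi_{\alpha_i}(\Sigma_w) = \Sigma_{w_{\alpha_i}}$; conversely $\Sigma_w = \bigcap_i \pi_{\alpha_i}^{-1}(\Sigma_{w_{\alpha_i}})$ set-theoretically. The key geometric input is that a Schubert variety in a generalized Grassmannian is cut out by a single linear-algebraic incidence condition relative to a partial flag (as spelled out in the type A, B, D sections), so "being a Schubert variety" is equivalent to "the incidence subspace exists and is constant along the representative."

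Next I would run the argument representative-by-representative. Let $X \subset G/P$ be an irreducible representative of $m\sigma_w$ (take $m=1$ for the rigidity statement). Pushing forward, $(\pi_{\alpha_i})_*[X] = m\,\sigma_{w_{\alpha_i}}$, and $\pi_{\alpha_i}(X)$ is an irreducible representative of $m\sigma_{w_{\alpha_i}}$ in $G/P_{\Phi\setminus\{\alpha_i\}}$ (after accounting for the degree of $\pi_{\alpha_i}|_X$ onto its image — here one uses that $\pi_{\alpha_i}$ restricted to a Schubert variety of the correct dimension is generically finite of the expected degree, so the class is a positive multiple of $\sigma_{w_{\alpha_i}}$; since $\sigma_{w_{\alpha_i}}$ is assumed multi-rigid this is exactly the hypothesis we need). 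By hypothesis (1), for each $i$ there is a subspace $F^{(i)}$ (an element of a partial flag, isotropic in types B, D) realizing the Schubert incidence condition for $w_{\alpha_i}$ along $\pi_{\alpha_i}(X)$, hence along $X$. So $X \subset \bigcap_i \pi_{\alpha_i}^{-1}(\Sigma_{w_{\alpha_i}}(F^{(i)}))$. The remaining point is that the right-hand side is a Schubert variety of $G/P$ — equivalently, that the collection $\{F^{(i)}\}$ assembles into a single (partial, isotropic) flag $F_\bullet$ of the shape prescribed by $w \in W^P$.

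That flag-compatibility is where hypothesis (2) enters, and it is the main obstacle. The condition $E(w_{\alpha_i}) \subset E(\pi_{\alpha_i *}\pi_{\alpha_{i+1}}^*(w_{\alpha_{i+1}}))$ says that every essential incidence of the coarser class $w_{\alpha_i}$ already appears as an essential incidence of (the pullback–pushforward of) $w_{\alpha_{i+1}}$; translated into linear algebra, the subspace $F^{(i)}$ produced on the $i$-th factor must coincide with the correspondingly-indexed term of the flag $F^{(i+1)}$ produced on the $(i+1)$-st factor — the two incidence conditions pin down the same subspace of $V$. I would make this precise by induction on $i$: assuming $F^{(1)},\dots,F^{(i)}$ have been merged into a partial flag, the inclusion of essential sets forces the new subspaces from $F^{(i+1)}$ to extend this flag (nesting and, in types B, D, isotropy are preserved because the incidence conditions are nested by construction of the $\mu_{i,t}$, $\nu_{j,t}$). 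Once all $t$ steps are done we obtain $F_\bullet$ with $\bigcap_i \pi_{\alpha_i}^{-1}(\Sigma_{w_{\alpha_i}}(F^{(i)})) = \Sigma_w(F_\bullet)$. Comparing cohomology classes, $X$ and $\Sigma_w(F_\bullet)$ have the same dimension and $[X] = m[\Sigma_w]$; for $m=1$ irreducibility forces $X = \Sigma_w(F_\bullet)$, giving rigidity, and for general $m$ the same containment argument applied to each irreducible component of an arbitrary (not necessarily irreducible) representative — together with the fact that each component is forced into a translate $\Sigma_w(F_\bullet^{(c)})$ — yields that the representative is a union of Schubert varieties, i.e. multi-rigidity. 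The one technical subtlety to watch is the generic-finiteness/degree bookkeeping for $\pi_{\alpha_i}|_X$; this is handled exactly as in the type A, B, D proofs earlier in the paper, since those projections are the building blocks of $\pi_{\alpha_i}$.
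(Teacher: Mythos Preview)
Your outline---push forward to each generalized Grassmannian, invoke (multi-)rigidity there, then glue---matches the paper's strategy, but the gluing step has a genuine gap. You describe Schubert varieties in $G/P_{\Phi\setminus\{\alpha\}}$ as ``cut out by a single linear-algebraic incidence condition relative to a partial flag'' and then argue by matching the subspaces $F^{(i)}$. That description is specific to classical types; the theorem is stated for arbitrary semisimple $G$, and Schubert varieties in, say, $E_6/P$ or $F_4/P$ admit no such flag model. Even restricting to classical types, your compatibility claim (``the subspace $F^{(i)}$ must coincide with the correspondingly-indexed term of $F^{(i+1)}$'') is asserted rather than derived from hypothesis~(2); the essential-set condition is a statement about simple roots, and translating it into coincidence of particular flag elements requires an argument you have not supplied.

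The paper's proof avoids flag models entirely and works purely Lie-theoretically. Rigidity of each $\sigma_{w_{\alpha_i}}$ gives $\pi_{\alpha_i}(X)=g_i\Sigma_{w_{\alpha_i}}$ for some $g_i\in G$; the task is to find a single $g$ with $gg_i$ in the stabilizer $P_{I_{w_{\alpha_i}}}$ for all $i$. Hypothesis~(2) is read through stabilizers: since $E(w)=\Phi\setminus I_w$, the inclusion $E(w_{\alpha_i})\subset E(\pi_{\alpha_i*}\pi_{\alpha_{i+1}}^*(w_{\alpha_{i+1}}))$ says precisely that the stabilizer of $\Sigma_{w_{\alpha_i}}$ contains the stabilizer of $\pi_{\alpha_i}(\pi_{\alpha_{i+1}}^{-1}(\Sigma_{w_{\alpha_{i+1}}}))$. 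The key lemma, proved via Bruhat decomposition, is that if $\Sigma_{w_1}\subset\Sigma_{w_2}$ and $g\Sigma_{w_1}\subset\Sigma_{w_2}$ then $g\in P_{I_{w_1}\cup I_{w_2}}$; inducting on $|S|$ with this lemma forces the $g_i$ to be simultaneously normalizable. Finally, the identification $X=\Sigma_w$ once all $\pi_{\alpha_i}(X)$ are $B$-stable (your ``$\Sigma_w=\bigcap_i\pi_{\alpha_i}^{-1}(\Sigma_{w_{\alpha_i}})$'') is itself a lemma the paper proves, via surjectivity of the multiplication map on sections of line bundles and the extremal-weight characterization of Schubert varieties---you should not take it as obvious for general $G$.
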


\subsection*{Organization of the paper} In \S \ref{sec-prelim}, we review the basic facts about the partial flag varieties. In \S \ref{Type A}, we investigate the rigidity problem for partial flag varieties of type A. In particular, we characterize the multi-rigid Schubert classes in partial flag varieties of type A. In \S \ref{Type BD}, we investigate the rigidity problem in orthogonal Grassmannians as well as orthogonal partial flag varieties. In \S \ref{general case}, we study the rigidity problem in general homogeneous spaces.

\subsection*{Acknowledgments} We would like to thank Izzet Coskun, Yuri Tschinkel and others for valuable comments and corrections to improve our manuscript.  The second author is supported by a grant from Beijing Institute of Mathematical Sciences and Applications (BIMSA). The second author would also like to thank China's National Program of Overseas High Level Talent for generous support.

\section{Preliminaries}\label{sec-prelim}
In this section, we review some basic facts from Lie theory and provide the identification of partial flag varieties of classical type with the homogeneous spaces $G/P$. For a reference, see \cite{BL}.

\subsection{Root systems of algebraic groups}
Let $G$ be an algebraic group over an algebraically closed field $k$. The radical of $G$, denoted by $Rad(G)$, is the identity component of its maximal normal solvable subgroup. $G$ is called semi-simple if $Rad(G)$ is trivial. A torus in $G$ is an algebraic subgroup that isomorphic to $(\mathbb{C}^*)^n$. A maximal torus is one which is maximal among such subgroups. A Borel subgroup of $G$ is a maximal connected solvable algebraic subgroup. A parabolic subgroup $P$ of $G$ is a Zariski closed subgroup such that $G/P$ is a complete variety. A subgroup is parabolic if and only if it contains a Borel subgroup. 

Let $T$ be a maximal torus and $B$ be a Borel subgroup containing $T$. Let $\mathfrak{g}$ be the Lie algebra of $G$. The root system $R$ of $G$ relative to $T$ consists of all non-zero homomorphisms $\chi:T\rightarrow \mathbb{G}_m$, where $\mathbb{G}_m$ is the multiplication group of $k$, such that 
$$\mathfrak{g}_\chi:=\{v\in \mathfrak{g}|Ad(t)v=\chi(t)v,\forall t\in T\}\neq\emptyset.$$
The Weyl group is a subgroup generated by the reflections of elements in $R$. It is isomorphic to $N(T)/T$ where $N(T)$ is the normalizer of $T$ in $G$. The set of positive roots $R^+$ is a subset of $R$ that consists of all the elements such that 
$$\{v\in \mathfrak{b}|Ad(t)v=\chi(t)v,\forall t\in T\}\neq\emptyset,$$
where $\mathfrak{b}$ is the Lie algebra of $B$. The simple roots are the elements in $R^+$ that cannot be expressed as a positive sum of other elements in $R^+$. The set of simple roots is denoted by $\Phi$. The reflections corresponding to the simple roots are called simple reflections. The Weyl group $W$ is generated by the simple reflections. The length of an element $w\in W$ is defined to be the minimal length of an expression of $w$ as a product of simple reflections.

Let $P$ be a parabolic subgroup containing $B$. Let $Rad(P)$ be the radical of $P$. Let $Rad_u(P)$ be the subgroup of unipotent elements of $Rad(P)$. For each $\beta\in R$, there exists a unique connected $T$-stable subgroup $U_\beta$ of $G$ with $\mathfrak{g}_\beta$ as its Lie algebra. The set of positive roots $R^+_P$ associated to $P$ is defined by
$$R^+\backslash R^+_P=\{\beta\in R^+|U_\beta\subset Rad_u(P)\}.$$
The root system $R_P$ associated to $P$ is defined as the union $-R^+_P\cup R^+_P$. The set of simple roots $I_P$ associated to $P$ is the intersection $I_P=\Phi\cap R_P$. The Weyl group $W_P$ of $P$ is isomorphic to $N_P(T)/T$, where $N_P(T)$ is the normalizer of $T$ in $P$. In each coset of $W/W_P$, there exists a unique element of minimal length. Let $W^P$ be the set of minimal representatives of $W/W_P$. For every $w\in W^P$, the Schubert variety $\Sigma_{w}$ is defined as the Zariski closure of $BwP/P$ in $G/P$ with the canonical reduced scheme structure.

\subsection{Partial flag varieties (Type $A$)}
Let $V$ be a complex vector space of dimension $n$. The partial flag variety $F(d_1,...,d_k;n)$ parametrizes all $k$-step partial flags of subspaces of dimension $(d_1,...,d_k)$ in $V$. Fix a basis $(e_1,...,e_n)$ of $V$. Let $G=SL(V)\cong SL(n;\mathbb{C})$ be the special linear group of $V$. The partial flag variety $F(d_1,...,d_k;n)$ can be identified with the homogeneous space $G/P$ as follows:

Let $T$ be the maximal torus consisting of all the diagonal matrices in $G$. Let $B$ be the Borel subgroup consisting of all the upper triangular matrices $\begin{bmatrix}
    \lambda_1 & *  & \dots  & * \\
    0 & \lambda_2  & \dots  & * \\
    \vdots & \vdots  & \ddots & \vdots \\
    0 & 0 &  \dots  & \lambda_n
\end{bmatrix}$ in $G$. Let $P$ be the parabolic subgroup consisting of all the upper triangular block matrices of size $(d_1,d_2-d_1,...,d_k-d_{k-1},n-d_k)$:
$$P=\left\{\begin{bmatrix}
    A_{d_1\times d_1} &  * & \dots  & * \\
    0 & A_{(d_2-d_1)\times (d_2-d_2)}  & \dots  & * \\
    \vdots & \vdots  & \ddots & \vdots \\
    0 & 0 &  \dots  & A_{(n-d_k)\times (n-d_k)}
\end{bmatrix}\in G\right\}.$$
It is clear that $T\subset B\subset P$.

Let $W=N(T)/T$ be the Weyl group. It is easy to check that $N(T)$ consists of all the matrices that have exactly one non-zero entry in each row and in each column. Therefore $W$ is isomorphic to the symmetric group $S_n$ and we can choose the representatives in $W$ to be the permutation matrices. Let $W_P=N_P(T)/T$ be the Weyl group of $P$. Notice that $N_P(T)$ consists of all the block diagonal matrices of the size $(d_1,d_2-d_1,...,d_k-d_{k-1},n-d_k)$ in $N(T)$ and therefore $W_P\cong S_{d_1}\times...\times S_{n-d_k}$. A minimal representative of $W/W_P$ is an element $(w_1...w_n)\in S_n$ such that 
$$w_1<...<w_{d_1}, w_{d_1+1}<...<w_{d_2},...,w_{d_{k}+1}<...<w_n$$
There is a natural action of $G$ on $\mathbb{P}(\wedge^{d_i} \mathbb{C}^n)$ for each $1\leq i\leq k$, which induces an action of $G$ on $F(d_1,...,d_k;n)$. Let $E_d=<e_1,...,e_d>$ be the linear space spanned by $e_1,...,e_d$. Then $F(d_1,...,d_k;n)$ is the orbit of $(E_{d_1},...,E_{d_k})$ under $G$ with isotropy group $P$. The differential $T(G)_e\rightarrow T(F_{d_1,...,d_k;n})_e$ is surjective and therefore $F(d_1,...,d_k;n)$ can be identified with $G/P$. 

Let $w=(w_1...w_n)\in W^P$. Then the Schubert varieties $\Sigma_{w}$ can be identified with $\Sigma_{a^\alpha}(E_\bullet)$ defined as in \S\ref{introductionA}, where $a^\alpha$ is obtained by rearranging $w_1^1,...,w_{d_1}^1, w_{d_1+1}^2,...,w_{d_2}^2,...,w_{d_{k-1}+1}^k,...,w_{d_k}^k$ so that the lower indices are in the increasing order.

\subsection{Odd orthogonal partial flag varieties (Type $B$)}
Let $V$ be a complex vector space of odd dimension $n=2m+1$. Let $q$ be a non-degenerate symmetric bilinear form on $V$. The orthogonal partial flag variety $OF(d_1,...,d_k;n)$ parametrizes all $k$-step partial flags of isotropic subspaces of dimension $(d_1,...,d_k)$. 

Choose a basis $(e_1,...,e_n)$ of $V$ such that $q$ can be identified with an anti-diagonal matrix $Q$ with 1 along the anti-diagonal except at the middle the entry is 2. Let $G=SO(V)\cong SO(n;\mathbb{C})$ be the special orthogonal group of $V$. Note that $G$ is a subgroup of $SL(V)$ and an element $M\in SL(V)$ is contained in $G$ if and only if $M=Q^{-1}(M^T)^{-1}Q$.

Let $T^o$ be the maximal torus consisting of all the diagonal matrices in $SL(V)$. Let $B^o$ be the Borel subgroup consisting of all the upper triangular matrices in $SL(V)$. Let $P^o$ be the parabolic subgroup consisting of all the upper triangular block matrices in $SL(V)$ of size 
$$(d_1,d_2-d_1,...,d_k-d_{k-1},1+2(m-d_k),d_k-d_{k-1},...,d_1)$$

Let $T=T^o\cap G$, $B=B^o\cap G$, and $P=P^o\cap G$. It is easy to see that $T$ is a maximal torus in $G$, $B$ is a Borel subgroup in $G$ and $P$ is a parabolic subgroup in $G$. 

Let $M=\begin{bmatrix}
    \lambda_1 & *  & \dots  & * \\
    0 & \lambda_2  & \dots  & * \\
    \vdots & \vdots  & \ddots & \vdots \\
    0 & 0 &  \dots  & \lambda_n
\end{bmatrix}\in B$. The condition $M=Q^{-1}(M^T)^{-1}Q$ implies $\lambda_i=\lambda_{n+1-i}^{-1}$, $1\leq i\leq n$. In particular, $\lambda_m=1$.

Let $M'=\begin{bmatrix}
    A_{1} & *  & \dots  & * \\
    0 & A_{2}  & \dots  & * \\
    \vdots & \vdots  & \ddots & \vdots \\
    0 & 0 &  \dots  & A_{2k+1}
\end{bmatrix}\in P$, where $A_1$ and $A_{2k+1}$ are square matrices of size $d_1\times d_1$, $A_i$ and $A_{2k+2-i}$ are square matrices of size $(d_i-d_{i-1})\times (d_i-d_{i-1})$, $2\leq i\leq k$, and $A_{k+1}$ is a square matrix of size $[1+2(m-d_k)]\times [1+2(m-d_k)]$. The condition $M'=Q^{-1}(M'^T)^{-1}Q$ implies $A_{i}=A_{2k+2- i}^{-1}.$ In particular, $A_{k+1}^2$ is the identity matrix.

The maximal torus $T$ consists of all the diagonal matrices diag($t_1...t_n)$ such that $t_i=t_{n+1-i}^{-1}$, $1\leq i\leq n$. The normalizer $N(T)$ of $T$ consists of all the matrices that have exactly one non-zero entry in each row and in each column and where the entry at $(i,j)$-place is non-zero if and only if the entry at $(n+1-i,n+1-j)$-place is non-zero. Therefore the Weyl group $W=N(T)/T$ can be identified with
$$\{(w_1...w_n)\in S_n|w_i=n+1-w_{n+1-i}\}.$$
Notice that $(w_1...w_n)\in W$ is determined by the first $m$ entries $w_1,...,w_{m}$.

A minimal representative of $W/W_P$ is an element $(w_1...w_n)\in W$ whose first $m$ entries satisfy
\begin{enumerate}
\item $w_1<...<w_{d_1}, w_{d_1+1}<...<w_{d_2},...,w_{d_{k}+1}<...<w_m$;
\item $w_i\neq m+1$, $1\leq i\leq m$;
\item if $x\in\{w_1,...,w_m\}$, then $n+1-x\notin\{w_1,...,w_m\}$.
\end{enumerate}

Let $E_i=<e_1,...,e_i>$ for $1\leq i\leq m$. Then $E_i$ are isotropic subspaces with respect to $q$. The orthogonal partial flag variety $OF(d_1,...,d_k;n)$ is the orbit of $(E_{d_1},...,E_{d_k})$ under $G$ with isotropy group $P$. 

Let $w=(w_1...w_n)\in W^P$. The Schubert variety $\Sigma_{w}$ can be identified with $\Sigma_{a^\alpha;b^\beta}(E_\bullet)$ in $OF(d_1,...,d_k;n)$, where $a^\alpha$ is obtained from $w_1^1,...,w_{d_1}^1, w_{d_1+1}^2,...,w_{d_2}^2,...,w_{d_{k-1}+1}^k,...,w_{d_k}^k$ by first removing all the elements whose lower index is greater than $m$ and then rearranged so that the lower indices are in the increasing order, and $b^\beta$ is obtained from $(n-w_1)^1,...,(n-w_{d_1})^1, (n-w_{d_1+1})^2,...,(n-w_{d_2})^2,...,(n-w_{d_{k-1}+1})^k,...,(n-w_{d_k})^k$ by removing all the elements whose lower index is greater than $m-1$ and then rearranged so that the lower indices are in the increasing order.

\subsection{Even orthogonal partial flag varieties (Type $D$)}Let $V$ be a complex vector space of even dimension $n=2m$. Let $q$ be a non-degenerate symmetric bilinear form on $V$. The orthogonal partial flag variety $OF(d_1,...,d_k;n)$ parametrizes all $k$-step partial flags of isotropic subspaces of dimension $d_1,...,d_k$, unless $d_k=m$, in which case the parameter space has two irreducible components and we let $OF(d_1,...,d_k;n)$ denote one of the components. 

When $d_k=m-1$, for every isotropic subspace $F_{m-1}$ of dimension $m-1$, there exists a unique maximal isotropic subspace (contained in a fixed irreducible component of the space of maximal isotropic subspaces) containing $F_{m-1}$. Therefore $$OF(d_1,...,d_{k-1},m-1;n)\cong OF(d_1,...,d_{k-1},m-1,m;n).$$
From now on, we assume $d_k\neq m-1$.

Choose a basis $(e_1,...,e_n)$ of $V$ such that $q$ can be identified with an anti-diagonal matrix $Q$ with 1 along the anti-diagonal. Let $G=SO(V)\cong SO(n;\mathbb{C})$ be the special orthogonal group of $V$. Note that $G$ is a subgroup of $SL(V)$ and an element $M\in SL(V)$ is contained in $G$ if and only if $M=Q^{-1}(M^T)^{-1}Q$.

Let $T^o$ be the maximal torus consisting of all the diagonal matrices in $SL(V)$. Let $B^o$ be the Borel subgroup consisting of all the upper triangular matrices in $SL(V)$. Let $P^o$ be the parabolic subgroup consisting of all the upper triangular block matrices in $SL(V)$ of size $$(d_1,d_2-d_1,...,d_k-d_{k-1},n-2d_k,d_k-d_{k-1},...,d_2-d_1,d_1).$$
Let $T=T^o\cap G$, $B=B^o\cap G$, and $P=P^o\cap G$. Then $T$ is a maximal torus in $G$, $B$ is a Borel subgroup in $G$ and $P$ is a parabolic subgroup in $G$. 

Let $M'=\begin{bmatrix}
    A_{1} & *  & \dots  & * \\
    0 & A_{2}  & \dots  & * \\
    \vdots & \vdots  & \ddots & \vdots \\
    0 & 0 &  \dots  & A_{2k+1}
\end{bmatrix}\in P$, where $A_1$ and $A_{2k+1}$ are square matrices of size $d_1\times d_1$, $A_i$ and $A_{2k+2-i}$ are square matrices of size $(d_i-d_{i-1})\times (d_i-d_{i-1})$, $2\leq i\leq k$, and $A_{k+1}$ is a square matrix of size $[n-2d_k]\times [n-2d_k]$. (If $n=2d_k$, then we omit $A_{k+1}$.) The condition $M'=Q^{-1}(M'^T)^{-1}Q$ implies $A_{i}=A_{2k+2- i}^{-1}.$

One can also easily check that $T$ consists of all the diagonal matrices diag($t_1...t_n)$ such that $t_i=t_{n+1-i}^{-1}$, $1\leq i\leq n$. The Weyl group $W$ of $G$ can be identified with
$$\{(w_1...w_n)\in S_n|w_i=n+1-w_{n+1-i},\#\{i|w_i>m,1\leq i\leq m\}\text{ is even}\}.$$
Notice that $(w_1...w_n)\in W$ is determined by the first $m$ entries $w_1,...,w_{m}$.

A minimal representative of $W/W_P$ is an element $(w_1...w_n)\in W$ whose first $m$ entries satisfy
\begin{enumerate}
\item $w_1<...<w_{d_1}, w_{d_1+1}<...<w_{d_2},...,w_{d_{k}+1}<...<w_m$;
\item if $x\in\{w_1,...,w_m\}$, then $n+1-x\notin\{w_1,...,w_m\}$.
\end{enumerate}

Let $E_i=<e_1,...,e_i>$ for $1\leq i\leq m$. Let $w=(w_1...w_n)\in W^P$. The Schubert variety $\Sigma_{w}$ can be identified with $\Sigma_{a^\alpha;b^\beta}(E_\bullet)$ in $OF(d_1,...,d_k;n)$, where $a^\alpha$ is obtained from the sequence $$w_1^1,...,w_{d_1}^1, w_{d_1+1}^2,...,w_{d_2}^2,...,w_{d_{k-1}+1}^k,...,w_{d_k}^k$$ by first removing all the elements whose lower index is greater than $m+1$ and then rearranged so that the lower indices are in the increasing order, and $b^\beta$ is obtained from the sequence 
$$(n-w_1)^1,...,(n-w_{d_1})^1, (n-w_{d_1+1})^2,...,(n-w_{d_2})^2,...,(n-w_{d_{k-1}+1})^k,...,(n-w_{d_k})^k$$ by removing all the elements whose lower index is greater than $m$ and then rearranged so that the lower indices are in the increasing order.

\section{Partial flag varieties of type A}\label{Type A}
In this section, we study the multi-rigidity problem in partial flag varieties that generalize Grassmannian varieties. In particular, we classify the multi-rigid Schubert classes in partial flag varieties.

\begin{Def}
A Schubert class is called {\em multi-rigid} if every multiple of it can only be represented by a union of Schubert varieties.
\end{Def}
Equivalently, a Schubert class $\sigma$ is multi-rigid if and only if $\sigma$ is rigid and $m\sigma$ cannot be represented by irreducible subvarieties for any $m\geq 2$. 

We first recall the rigidity results from Grassmannians. 

\begin{Thm}\cite{Ho2,RT}
A Schubert class $\sigma_{a}$ in $G(k,n)$ is multi-rigid if and only if for all essential $a_i<n$ (i.e. $a_{i+1}\neq a_i+1$),
$$a_{i-1}+1= a_i\leq a_{i+1}-3$$
(Here we set $a_0=0$, and $a_{k+1}=\infty$.)
\end{Thm}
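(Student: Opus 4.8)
The plan is to treat this as the one-step ($k=1$) case of the first main theorem of the paper, where it serves as the base of the induction; below I outline a self‑contained argument, split according to the two directions of the equivalence. For the ``only if'' direction I would produce explicit non‑Schubert representatives of multiples of $\sigma_a$. Suppose some essential $a_i$ violates one of the inequalities. If $a_{i-1}+1<a_i$, keep all the defining incidence conditions of $\Sigma_a(F_\bullet)$ except the $i$‑th and replace it by incidence to the cone, with vertex $F_{a_{i-1}}$, over a rational normal curve of degree $d:=a_i-a_{i-1}-1\ge 2$ spanning a fixed $a_i$‑dimensional subspace; if instead $a_{i-1}+1=a_i$ but $a_{i+1}=a_i+2$, replace the pencil of lines cut out by the three conditions indexed $i-1,i,i+1$ in the relevant $\mathbb P^2$ by the family of lines tangent to a smooth conic. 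These are the cone‑over‑rational‑normal‑curve constructions used for the rigidity problem in \cite{YL3} and the references therein; a direct check shows the resulting locus is irreducible, of dimension $\dim\Sigma_a$, and of class $d\sigma_a$ with $d\ge 2$, so $\sigma_a$ is not multi‑rigid. The failure of $a_{i-1}+1=a_i$ (resp.\ of $a_{i+1}\ge a_i+3$) is exactly what leaves the numerical room for a degree $\ge 2$ curve.

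For the ``if'' direction, assume every essential $a_i$ satisfies $a_{i-1}+1=a_i\le a_{i+1}-3$ and let $X\subseteq G(k,n)$ be irreducible with $[X]=m\sigma_a$, $m\ge 1$. The first step is a reduction to a single‑index statement: for each essential $a_i$ the locus
\[
Z_i:=\{\,W\in G(a_i,V)\ :\ \dim(\Lambda\cap W)\ge i\ \text{ for all }\ \Lambda\in X\,\}
\]
is nonempty. Granting this for all essential indices, the $Z_i$ contain nested members (immediate in the one‑step case; concretely, if no nested choice existed the $k$‑planes parametrized by $X$ would all lie in a proper subspace too small to carry the class $m\sigma_a$), so one can find a complete flag $F_\bullet$ with $F_{a_i}\in Z_i$ for every essential $i$; then $X\subseteq\Sigma_a(F_\bullet)$, and since $[X]=m\sigma_a$ forces $\dim X=\dim\Sigma_a$ while $\Sigma_a(F_\bullet)$ is irreducible, $X=\Sigma_a(F_\bullet)$ and $m=1$.

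To prove nonemptiness of $Z_i$ the natural device is the incidence variety $\Phi_i:=\overline{\{(\Lambda,W)\in X\times G(a_i,V):\dim(\Lambda\cap W)=i\}}$, with projections $p_i\colon\Phi_i\to X$ and $q_i\colon\Phi_i\to G(a_i,V)$; here $p_i$ is dominant with irreducible general fibre of dimension $\delta_i:=i(k-i)+(a_i-i)(n-a_i)$, so the component of $\Phi_i$ dominating $X$ has dimension $\dim X+\delta_i$, and $Z_i\ne\emptyset$ amounts to the existence of a fibre of $q_i$ of dimension $\dim X$. Plain dimension bookkeeping does not give this — the map $q_i$ is typically dominant with small general fibre, already when $X=\Sigma_a(F_\bullet)$, the relevant $Z_i$ being the ``jump‑up'' locus of $q_i$ rather than an obstruction to dominance — so the global hypothesis $[X]=m\sigma_a$ and the inequalities must be used geometrically. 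I would do this by a flag‑degeneration argument in the style of \cite{YL3}: degenerate a general flag to one adapted to a candidate $W$ and follow the flat limits, through the degeneration, of the intersection of $X$ with a general translate of the Schubert variety Poincaré‑dual to $\sigma_a$ (and of the chain of restriction varieties joining the two); the equality $a_{i-1}+1=a_i$ keeps the $(i-1)$‑st incidence datum from absorbing the $i$‑th, while $a_{i+1}\ge a_i+3$ is the precise threshold below which the degeneration could carry $X$ transversally off of $W$ — above it, $X$ is pinned to a member of $Z_i$.

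The hard part will be exactly the nonemptiness of $Z_i$, and within it the passage from the \emph{global} datum $[X]=m\sigma_a$ — rather than merely the tangent space to $X$ at a general point — to the conclusion that $X$ meets one fixed $a_i$‑plane along $\ge i$ dimensions, with both inequalities sharp. In the classical approach via the Schubert (Schur) differential systems this is the computation that the prolongation of the relevant system is trivial precisely under these inequalities (\cite{Ho1,Ho2,RT}, after \cite{Walter,RB2000}); in the degeneration approach adopted in this paper it is a flag‑specialization argument that must additionally be run uniformly in the multiplicity $m$, so that reducible or non‑reduced representatives of $m\sigma_a$ are controlled component by component. The remaining ingredients — the cone and tangent‑conic computations, the fibre‑dimension count $\delta_i$, and the nesting of the $Z_i$ — are routine.
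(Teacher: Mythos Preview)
The paper does not prove this theorem: it is quoted from \cite{Ho2,RT}, and the closely related sub-index version (Theorem~\ref{rigid in g}) is quoted from \cite{YL2}. Both are used as black boxes feeding the paper's extension to partial flag varieties. So there is no ``paper's own proof'' to compare against; I can only assess whether your sketch stands on its own.

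Your ``only if'' direction is fine and matches the constructions the paper itself deploys later (proof of Theorem~\ref{rigid3}, Remark~\ref{non-rigidex}): cone over a plane curve when $a_{i-1}+1<a_i$, and the tangent-conic / dual construction when $a_{i+1}=a_i+2$.

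Your ``if'' direction has a real gap. You correctly isolate the hard step as nonemptiness of $Z_i$, and you honestly note that the fibre-dimension count on $\Phi_i$ does not give it. But the ``flag-degeneration argument'' you then propose is not an argument: you say to degenerate a general flag to one adapted to a candidate $W$ and follow flat limits, but you never explain what quantity is preserved under the degeneration, why the limit is forced to lie in $Z_i$, or how the inequality $a_{i+1}\ge a_i+3$ enters concretely. This is exactly the step where the cited proofs do substantial work, and by quite different mechanisms than what you describe. In \cite{Ho2,RT} the input is a prolongation computation for the Schur differential system. The algebro-geometric route taken in \cite{YL2}, and visible in this paper as Lemma~\ref{degree}, is an induction on $a_i-i$ by general hyperplane sections: one produces an irreducible variety $Y$ of dimension $a_i-1$ such that $\dim(\mathbb{P}(\Lambda)\cap Y)\ge i-1$ for all $\Lambda\in X$, where the bound $\dim Y\le a_i-1$ comes from an intersection-product contradiction against the dual Schubert class (this is precisely where $a_{i+1}\ge a_i+3$ is used). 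One then shows $Y$ is linear, using $a_{i-1}+1=a_i$. Neither of these is a flag degeneration in your sense, and your sketch does not supply a substitute.

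A secondary issue: your nesting argument for the $Z_i$ (``if no nested choice existed the $k$-planes would lie in a proper subspace too small'') is not correct as stated; compatibility of the flag elements is a genuine lemma (see Lemma~\ref{coincide} in the paper), not an automatic dimension count.
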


To generalize this result, it would be helpful to consider the multi-rigidity of each flag element. To be more precise,
\begin{Def}
Let $\sigma_a$ be a Schubert class in $G(k,n)$. An essential sub-index $a_i$ is called {\em multi-rigid} if for any irreducible representative $X$ of $m\sigma_a$, $m\in\mathbb{Z}^+$, there exists a linear subspace $F_{a_i}$ such that $$\dim(\Lambda\cap F_{a_i})\geq i, \forall\Lambda\in X$$
\end{Def}
In \cite{YL2}, we characterized the multi-rigid sub-indices in Grassmannians:
\begin{Thm}\cite[Theorem 4.1.5]{YL2}\label{rigid in g}
Let $\sigma_a$ be a Schubert class in $G(k,n)$. Set $a_0=0$ and $a_{n+1}=\infty$. An essential sub-index $a_i$ is multi-rigid if and only if 
\begin{enumerate}
\item either $a_{i-1}+1=a_i\leq a_{i+1}-3$; or
\item $a_i=n$.
\end{enumerate}
\end{Thm}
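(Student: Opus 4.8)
The plan is to split the statement into the "if" direction (sufficiency) and the "only if" direction (necessity), and to treat the two listed cases separately. For case (2), when $a_i = n$, the condition $\dim(\Lambda \cap F_{a_i}) \geq i$ is vacuous: every $k$-plane $\Lambda$ satisfies $\dim(\Lambda \cap V) = k \geq i$ (since the index $a_i$ with $a_i = n$ forces $i \leq k$, and in fact $i = k$ when $n$ is the last entry, but more generally if $a_i = n$ appears, one checks $i$ is determined). So $a_i = n$ is automatically multi-rigid with $F_{a_i} = V$; this is immediate and requires no work beyond bookkeeping. Thus the substance is case (1): showing $a_i$ is multi-rigid precisely when $a_{i-1}+1 = a_i \leq a_{i+1} - 3$.

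For the sufficiency in case (1), I would argue as follows. Let $X$ be an irreducible representative of $m\sigma_a$. Consider the incidence-variety / Gauss-map technique standard in rigidity arguments: for a general point $\Lambda \in X$, look at the Zariski tangent space $T_\Lambda X \subseteq T_\Lambda G(k,n) = \Hom(\Lambda, V/\Lambda)$, and analyze which "directions" are forced. The key local computation is that, under the numerical hypotheses $a_{i-1} = a_i - 1$ and $a_{i+1} \geq a_i + 3$, any tangent space of a subvariety of the correct dimension representing a positive multiple of $\sigma_a$ must preserve a fixed $a_i$-dimensional subspace meeting $\Lambda$ in dimension $\geq i$. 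One reduces this to the analogous statement for the sub-Grassmannian or uses the "restriction to a general point" idea: the normal directions that would move $F_{a_i}$ are obstructed by the fact that the "gap below" has size exactly $1$ (so $F_{a_{i-1}} = F_{a_i} \cap (\text{something rigid})$ pins things down) while the "gap above" has size $\geq 3$ (so there is enough room that the Schur-type differential condition becomes an equality, forcing the plane condition). Here I would lean on Theorem \ref{rigid in g}'s predecessors from \cite{Ho2, RT, YL2} and the multi-rigidity of the full Schubert class to bootstrap: if a single sub-index failed to be rigid, one could perturb within its "flex" and contradict multi-rigidity of an associated (sub)class. Concretely, I would factor the problem through a projection $G(k,n) \dashrightarrow G(k', n')$ that isolates the relevant sub-index, apply the known Grassmannian multi-rigidity theorem there, and pull back.

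For the necessity, I would produce explicit irreducible non-Schubert representatives of $m\sigma_a$ whenever the condition in (1) fails and $a_i \neq n$. There are two failure modes: $a_{i-1} + 1 < a_i$ (the gap below is $\geq 2$), or $a_{i+1} - a_i \leq 2$ (the gap above is $\leq 2$). In each case one constructs a family: take a pencil (or higher-dimensional linear system) of subspaces $F_{a_i}$ varying in a suitable way — e.g. letting $F_{a_i}$ move inside a fixed $F_{a_i + 1}$ while $F_{a_i - 1}$ is not pinned — and form the union of the corresponding Schubert cycles; one shows the resulting variety is irreducible, has the right class $m\sigma_a$, but admits no single $F_{a_i}$ working for all its points. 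This is the classical "obstruction to rigidity" construction; the gap conditions $\geq 2$ and $\leq 2$ are exactly what make enough moduli of such $F_{a_i}$ available while keeping the cycle irreducible and of the correct dimension and class. I would verify the class computation by intersecting with complementary Schubert cycles (or via a degeneration/limit argument).

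The main obstacle I expect is the sufficiency argument for case (1): making rigorous the claim that the local tangential/differential condition at a general point of $X$ genuinely forces the existence of a \emph{single global} $F_{a_i}$ (not just pointwise-varying ones), including handling the multiplicity $m \geq 2$ uniformly. The passage from "infinitesimally rigid at the general point" to "globally a cone/contains a fixed subspace" typically requires an irreducibility-plus-monodromy or a deformation-to-the-Schubert-cycle argument, and the bookkeeping of which sub-indices of the push-forward remain essential (so that the inductive/projection step is legitimate) is delicate. I would isolate this as the core lemma and prove it by reduction to the already-established Grassmannian case \cite{Ho2, RT, YL2} together with the characterization of rigid sub-indices, so that the new content is the combinatorial reduction rather than a fresh differential-geometric estimate.
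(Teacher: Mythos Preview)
The paper does not prove this theorem: it is quoted verbatim from \cite[Theorem 4.1.5]{YL2} and used as input for the partial flag variety results. So there is no proof in the paper to compare your proposal against.

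That said, your sufficiency sketch has a genuine gap. You propose to analyze $T_\Lambda X \subset \Hom(\Lambda,V/\Lambda)$ at a general point and argue that the numerical conditions force preservation of a fixed $a_i$-plane, but you do not carry out any such computation; instead you write that you would ``lean on Theorem \ref{rigid in g}'s predecessors from \cite{Ho2,RT,YL2}'' and ``reduce to the already-established Grassmannian case \cite{Ho2,RT,YL2}.'' This is circular: \cite{YL2} is the source of the very statement you are trying to prove, and the results of \cite{Ho2,RT} concern multi-rigidity of entire Schubert \emph{classes}, not of individual sub-indices. The passage from ``the class $\sigma_a$ is multi-rigid'' to ``the specific sub-index $a_i$ is multi-rigid'' is exactly the content that needs an argument, and you have not supplied one. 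Moreover, the paper explicitly avoids differential systems and tangent-space methods; its own closely related arguments (e.g.\ Lemma~\ref{degree}, which produces a variety $Y$ of the correct dimension via induction on $a_i-i$ using hyperplane sections and intersection-theoretic contradictions) suggest that the intended proof in \cite{YL2} is algebro-geometric rather than infinitesimal.

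Your necessity outline is closer to the mark and matches the style of the counter-examples the paper constructs in the proof of Theorem~\ref{rigid3}: when $a_{i-1}\neq a_i-1$ one takes a cone over a degree-$m$ plane curve with vertex $\mathbb{P}(F_{a_i-2})$, and the case $a_{i+1}-a_i=2$ is handled by duality. But you should write these constructions down explicitly and verify the class computation, rather than gesture at ``enough moduli.''
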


Now we move to the case of partial flag varieties. 
\begin{Def}
Let $\sigma_{a^\alpha}$ be a Schubert class in $F(d_1,...,d_k;n)$. A sub-index $a_i$ is called {\em essential} if either $a_{i+1}\neq a_i+1$ or $\alpha_{i+1}>\alpha_{i}$. An essential $a_i$ is called {\em multi-rigid} if for every irreducible representative $X$ of $m\sigma_{a^\alpha}$, $m\in\mathbb{Z}^+$, there exists a linear subspace $F_{a_i}$ such that 
$$\dim(F_{a_i}\cap \Lambda_j)\geq \mu_{i,j}, j=1,...,k, \ \forall (\Lambda_1,...,\Lambda_k)\in X.$$
\end{Def}

We start with an example:

\begin{Ex}
Consider the partial flag variety $F(2,3;4)$. It admits two natural projections $\pi_1:F(2,3;4)\rightarrow G(2,4)$ and $\pi_2:F(2,3;4)\rightarrow G(3,4)$. Consider the Schubert class $\sigma_{1^1,2^1,4^2}$. It is taken to the Schubert class $\sigma_{1,2}$ in $G(2,4)$ under the first projection $\pi_1$, and is taken to the Schubert class $\sigma_{1,2,4}$ in $G(3,4)$ under the second projection $\pi_2$. By Theorem \ref{rigid in g}, the Schubert class $\sigma_{1,2}$ is multi-rigid. We claim that the sub-index $2$ is also multi-rigid for the Schubert class $\sigma_{1^1,2^1,4^2}$.

Let $X$ be an irreducible subvariety with class $m\sigma_{1^1,2^1,4^2}$, $m\in\mathbb{Z}$. The image $\pi_1(X)$ is also irreducible and has class $m'\sigma_{1,2}$ in $G(2,4)$ for some $m'|m$. By Theorem \ref{rigid in g}, $m'=1$ and $\pi_1(X)$ consists of a unique subspace $F_2$ of dimension 2. It is also obvious that $\dim(\Lambda_2\cap F_2)=\dim(\Lambda_1\cap F_2)=2$ for all $(\Lambda_1,\Lambda_2)\in X$. Therefore $2$ is multi-rigid.
\end{Ex}

From this example, one may expect a sub-index to be multi-rigid if it is multi-rigid in some components.

Recall that the push-forward of a Schubert class under the canonical projection $$\pi_t:F(d_1,...,d_k;n)\rightarrow G(d_t,n)$$ is well-defined and is obtained by removing all the sub-indices whose upper-index is greater than $t$:

\begin{Prop}\label{class of pushforward}
Let $X$ be a representative of $m\sigma_{a^\alpha}$, $m\in\mathbb{Z}^+$ and set $X_t:=\pi_t(X)$. Then $X_t$ has class $m'(\pi_t)_*(\sigma_{a^\alpha})$ in $G(d_t,n)$ for some $m'\in\mathbb{Z}^+$.
\end{Prop}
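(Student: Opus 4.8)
The plan is to show that the image of an irreducible representative under the projection $\pi_t$ is an irreducible subvariety whose cohomology class is a positive multiple of the Schubert class $(\pi_t)_*(\sigma_{a^\alpha})$. First I would observe that since $X$ is irreducible, its set-theoretic image $X_t = \pi_t(X)$ is an irreducible closed subvariety of $G(d_t,n)$ (the map $\pi_t$ is proper, being a morphism of projective varieties, so the image is closed; irreducibility is preserved by continuous images). Next I would compute $\dim X_t$. There are two cases: either $\dim X_t = \dim X$, in which case $\pi_t|_X$ is generically finite onto its image, or $\dim X_t < \dim X$. The key point is to compare $\dim X_t$ with the codimension of $(\pi_t)_*(\sigma_{a^\alpha})$ in $G(d_t,n)$.

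The heart of the argument is the compatibility of the pushforward on cohomology with the combinatorial "removal of sub-indices with upper index $>t$'' recipe. Recall $\pi_t$ is a fiber bundle whose fibers are themselves partial flag varieties, so $(\pi_t)_*$ is surjective on Chow groups and sends a Schubert class either to a Schubert class of complementary dimension or to zero, according to whether $\dim \Sigma_{a^\alpha} - \dim(\text{fiber}) $ equals $\dim \Sigma_{(\pi_t)_*(a^\alpha)}$; this is exactly the stated combinatorial recipe, and it is a standard fact about projections of flag varieties that I would invoke (or cite from \cite{BL}). Writing $[X] = m\,\sigma_{a^\alpha}$ in $A_*(F(d_1,\dots,d_k;n))$, the projection formula gives $(\pi_t)_*[X] = m\,(\pi_t)_*(\sigma_{a^\alpha})$. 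On the other hand, $(\pi_t)_*[X] = (\deg \pi_t|_X)\,[X_t]$ when $\pi_t|_X$ is generically finite, and $(\pi_t)_*[X]=0$ when $\dim X_t < \dim X$. Since $m \neq 0$ and $(\pi_t)_*(\sigma_{a^\alpha}) \neq 0$, the second possibility is excluded, so $\pi_t|_X$ is generically finite of some degree $e \geq 1$, and $[X_t] = \frac{m}{e}\,(\pi_t)_*(\sigma_{a^\alpha})$. It remains to check that $m' := m/e$ is a positive integer: it is positive because both $[X_t]$ and $(\pi_t)_*(\sigma_{a^\alpha})$ are effective (hence have nonnegative, in fact positive, coefficient), and it is an integer because $A_*(G(d_t,n))$ is freely generated by Schubert classes and $[X_t]$ has integer coordinates in that basis — so its coefficient along the single Schubert class $(\pi_t)_*(\sigma_{a^\alpha})$ is an integer.

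The step I expect to be the main obstacle is establishing cleanly that the projection $\pi_t$ restricted to $X$ cannot drop dimension while still having $(\pi_t)_*[X] \neq 0$ — i.e., ruling out the degenerate case — and more generally pinning down that $(\pi_t)_*(\sigma_{a^\alpha})$ is genuinely nonzero and equals the claimed Schubert class. This requires the precise statement that $\pi_t : F(d_1,\dots,d_k;n)\to G(d_t,n)$ is a locally trivial fibration with fiber a product of partial flag varieties, together with the dimension bookkeeping $\dim \Sigma_{a^\alpha} = \dim \Sigma_{(\pi_t)_*(a^\alpha)} + \dim(\text{fiber})$ precisely when no essential sub-index is lost — which is exactly the content of the "removing sub-indices with upper index $>t$'' description. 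Once this geometric/combinatorial input is in hand, the projection-formula argument above is routine.
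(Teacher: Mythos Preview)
Your projection-formula approach has a genuine gap. You conflate two different pushforwards: the Chow-theoretic proper pushforward $(\pi_t)_*$ on cycle classes, and the paper's combinatorial pushforward, which by the sentence preceding the proposition means the class of the \emph{image} Schubert variety $\pi_t(\Sigma_{a^\alpha})$. These disagree whenever $\pi_t|_{\Sigma_{a^\alpha}}$ has positive-dimensional fibers: in that case the Chow pushforward vanishes, while the class of the image is a nonzero Schubert class $\sigma_{a'}$ of strictly smaller dimension. Concretely, take $\sigma_{2^1,3^2}$ in $F(1,2;4)$ and project to $G(1,4)$: the Schubert variety has dimension $2$ and maps onto a line $\mathbb{P}(F_2)$ with $\mathbb{P}^1$-fibers, so the Chow pushforward is $0$ in $A_2(\mathbb{P}^3)$, yet the combinatorial pushforward is $\sigma_2$, the class of a line. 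In this situation your key equation $e[X_t]=m(\pi_t)_*\sigma_{a^\alpha}$ reads $0=0$ and says nothing about $[X_t]$. Your last paragraph flags nonvanishing of $(\pi_t)_*\sigma_{a^\alpha}$ as the obstacle, but the proposed remedy --- the bookkeeping $\dim\Sigma_{a^\alpha}=\dim\Sigma_{a'}+\dim(\text{full fiber})$ --- is simply false in general: in the example above the restricted fiber has dimension $1$ while the ambient fiber has dimension $2$.

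The paper sidesteps this by never invoking the Chow pushforward. It expands $[X_t]=\sum_\gamma\alpha_\gamma\sigma_\gamma$ and tests each coefficient via \emph{nonemptiness} of the set-theoretic intersection $X_t\cap\Sigma_\gamma^\vee$ for a general dual Schubert variety, which is equivalent to nonemptiness of $X\cap\pi_t^{-1}(\Sigma_\gamma^\vee)$. The latter depends only on $[X]=m\sigma_{a^\alpha}$, so one may replace $X$ by the Schubert variety $\Sigma_{a^\alpha}$ itself and read off that $\alpha_\gamma\neq 0$ exactly when $\gamma=a'$. This argument works uniformly, regardless of fiber dimension, and in fact simultaneously pins down $\dim X_t=\dim\Sigma_{a'}$. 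To repair your route you would first have to establish $\dim X_t=\dim\Sigma_{a'}$ by precisely this kind of intersection-nonemptiness argument, at which point the projection formula is no longer carrying the weight.
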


\begin{proof}
Use the method of undetermined coefficients. The Chow ring of $G(d_t,n)$ is freely generated by the Schubert classes and therefore we may write 
$$[X_t]=\sum_\gamma \alpha_\gamma\sigma_\gamma,$$
where $\alpha_\gamma\in\mathbb{Z}$ and $\sigma_{\gamma}$ are Schubert classes in $G(d_t,n)$. To find the coefficients $\alpha_\gamma$, we intersect $X_t$ with the Schubert varieties of the complementary dimension, which corresponds to intersecting $X$ with the pull-back of those Schubert varieties in $F(d_1,...,d_k,n)$. On the level of intersection product, we can presume $X$ to be a Schubert variety and it turns out that the intersection product is $0$ expect the dual Schubert class of $(\pi_t)_*(\sigma_{a^\alpha})$. Therefore $[X_t]=m'\gamma(\pi_t)_*(\sigma_{a^\alpha})$ for some $m'\in\mathbb{Z}^+$.
\end{proof}

Also recall that the class of a general fiber of a canonical projection can be computed as follows:
\begin{Prop}\cite[Corollary 3.11]{YL3}
Let $X$ be a representative of $\sigma_{a^\alpha}$. A general fiber of $\pi_i|_X$ has class $\sigma_{b^\beta}$ where $b_j=j$ and $\beta_j=\alpha_j$ for $1\leq j\leq d_i$; $b_{d_i+t}=a_t''+\#\{s|a_s>\alpha_t',\alpha_s\leq i\}$ and $\beta_{d_i+t}=\alpha_t'$ for $d_i+1\leq d_i+t\leq d_k$.
\end{Prop}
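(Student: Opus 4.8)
The plan is to compute a general fibre directly for the standard Schubert representative $X=\Sigma_{a^\alpha}(E_\bullet)$ and then to observe that the class of a general fibre does not depend on the representative chosen. Recall that $\pi_i\colon F(d_1,\dots,d_k;n)\to G(d_i,n)$ is a Zariski-locally trivial fibration whose fibre is
\[
\mathcal F\;\cong\;F(d_1,\dots,d_{i-1};d_i)\times F(d_{i+1}-d_i,\dots,d_k-d_i;\,n-d_i),
\]
the first factor recording the flag $\Lambda_1\subset\dots\subset\Lambda_{i-1}$ inside $\Lambda_i$ and the second the flag $\Lambda_{i+1}/\Lambda_i\subset\dots\subset\Lambda_k/\Lambda_i$ in $V/\Lambda_i$; in particular $A^*(F(d_1,\dots,d_k;n))$ is a free $A^*(G(d_i,n))$-module via $\pi_i^*$ (Leray--Hirsch), with a basis mapping onto the Schubert basis of $A^*(\mathcal F)$. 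For an irreducible representative $X$ of $\sigma_{a^\alpha}$ and a general point $p$ of $\pi_i(X)$, generic smoothness (we work over $\C$) makes $X$ meet the fibre $\pi_i^{-1}(p)$ transversally, so the class of the general fibre in $A^*(\mathcal F)$ equals $\iota_p^*[X]$ for the inclusion $\iota_p\colon\mathcal F\hookrightarrow F(d_1,\dots,d_k;n)$ of that fibre; expanding $[X]=\sigma_{a^\alpha}$ in the Leray--Hirsch basis and using $\iota_p^*\pi_i^*=0$ on classes of positive degree shows that $\iota_p^*[X]$ depends only on $\sigma_{a^\alpha}$. Hence we may take $X=\Sigma_{a^\alpha}(E_\bullet)$. (If only the statement for Schubert representatives is wanted, which is all that is used later, this reduction can be omitted.)

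For the standard Schubert variety, $\pi_i(\Sigma_{a^\alpha}(E_\bullet))$ is the Grassmannian Schubert variety $\Sigma_c(E_\bullet)\subset G(d_i,n)$, where $c_1<\dots<c_{d_i}$ enumerates $\{a_s:\alpha_s\le i\}$. For a general $\Lambda_i\in\Sigma_c(E_\bullet)$ one has $\dim(\Lambda_i\cap E_m)=\#\{s:a_s\le m,\ \alpha_s\le i\}$ for all $m$, so $E_\bullet$ induces a complete flag $\bar E_\bullet$ on $\Lambda_i$ with $\bar E_r=\Lambda_i\cap E_{c_r}$, and a complete flag $\widehat E_\bullet$ on $V/\Lambda_i$ with $\widehat E_m=(E_m+\Lambda_i)/\Lambda_i$ jumping exactly at the $m\notin\{c_1,\dots,c_{d_i}\}$. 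The fibre over $\Lambda_i$ consists of all $(\Lambda_1\subset\dots\subset\Lambda_{i-1}\subset\Lambda_i\subset\Lambda_{i+1}\subset\dots\subset\Lambda_k)$ in $\Sigma_{a^\alpha}(E_\bullet)$; for $t\le i-1$ one uses $E_{a_s}\cap\Lambda_t=(E_{a_s}\cap\Lambda_i)\cap\Lambda_t$ with $E_{a_s}\cap\Lambda_i=\bar E_{\mu_{s,i}}$, and for $t\ge i+1$ one uses $\dim(E_{a_s}\cap\Lambda_t)=\mu_{s,i}+\dim\big((E_{a_s}\cap\Lambda_t+\Lambda_i)/\Lambda_i\big)$. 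Substituting, the defining inequalities $\dim(E_{a_s}\cap\Lambda_t)\ge\mu_{s,t}$ turn into Schubert conditions on $(\Lambda_1,\dots,\Lambda_{i-1})$ relative to $\bar E_\bullet$ (for $t\le i-1$) and on $(\Lambda_{i+1}/\Lambda_i,\dots,\Lambda_k/\Lambda_i)$ relative to $\widehat E_\bullet$ (for $t\ge i+1$, with right-hand sides $\mu_{s,t}-\mu_{s,i}$). Thus the fibre is a product of two Schubert varieties, one in each factor of $\mathcal F$.

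It remains to express these two indices in the unified $(b^\beta)$ notation. The first factor contributes, as a direct check shows, the entries $b_j=j,\ \beta_j=\alpha_j$ for $1\le j\le d_i$; for the second factor the reference flag $\widehat E_\bullet$ of $V/\Lambda_i$ is indexed by the positions not occupied by $c$, so re-expressing its Schubert index in terms of dimensions in $V$ introduces a shift by $\#\{s:a_s>\alpha_t',\ \alpha_s\le i\}$, yielding exactly $b_{d_i+t}=a_t''+\#\{s:a_s>\alpha_t',\ \alpha_s\le i\}$ and $\beta_{d_i+t}=\alpha_t'$. Assembling the two factors gives $\sigma_{b^\beta}$. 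The genuinely delicate point is this last step: carrying the targets $\mu_{s,t}-\mu_{s,i}$ through the quotient $V/\Lambda_i$ and verifying that the re-indexing shift is precisely $\#\{s:a_s>\alpha_t',\ \alpha_s\le i\}$, and that the first-factor index collapses to $b_j=j,\ \beta_j=\alpha_j$. The fibre-bundle structure of $\pi_i$, its compatibility with Schubert varieties (each Schubert variety fibres over its image in a Schubert variety of $\mathcal F$), and the generic-smoothness argument for representative-independence are all standard.
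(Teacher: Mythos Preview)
The paper does not contain its own proof of this proposition; it is cited verbatim from the companion paper \cite{YL3}. Your approach---reduce to the standard Schubert representative and compute the fibre over a general point directly as a product of two Schubert varieties---is exactly the method used in this paper for the analogous orthogonal statement (Lemma~\ref{class of pullback in of}), whose proof begins ``We specialize $X$ to a Schubert variety defined by a partial flag $F_\bullet$. Then the fiber\dots''; so your strategy matches the paper's conventions.

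Two remarks. First, your Leray--Hirsch and generic-smoothness reduction is more elaborate than what the paper does: throughout, the authors justify specialization simply by noting that the class of the fibre is determined by intersection products, which can be computed on a Schubert representative (see the proof of Proposition~\ref{class of pushforward}). Your argument is cleaner but not required at the level of detail the paper operates at. Second, your last paragraph is honest in flagging that the re-indexing step (matching the quotient-flag Schubert index to $b_{d_i+t}=a_t''+\#\{s:a_s>\alpha_t',\ \alpha_s\le i\}$) is the only genuinely computational content, and you leave it as ``a direct check''. That is acceptable for a proposition quoted from elsewhere, but if you were writing a self-contained proof you would need to spell out which of the original conditions $\dim(E_{a_s}\cap\Lambda_t)\ge\mu_{s,t}$ become redundant upon fixing a general $\Lambda_i$ and which survive as the essential conditions defining $\sigma_{b^\beta}$.
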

The same statement is true for a multiple of a Schubert class, if we allow the coefficient to be different:
\begin{Cor}\label{class of fiber}
Let $X$ be a representative of $m\sigma_{a^\alpha}$, $m\in\mathbb{Z}^+$. Then a general fiber of $\pi_i|_X$ has class $m'\sigma_{b^\beta}$ where $m'\in\mathbb{Z}^+$, $b_j=j$ and $\beta_j=\alpha_j$ for $1\leq j\leq d_i$; $b_{d_i+t}=a_t''+\#\{s|a_s>\alpha_t',\alpha_s\leq i\}$ and $\beta_{d_i+t}=\alpha_t'$ for $d_i+1\leq d_i+t\leq d_k$.
\end{Cor}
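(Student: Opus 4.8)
The plan is to reduce Corollary~\ref{class of fiber} to Proposition~\cite[Corollary 3.11]{YL3} by a degeneration / specialization argument, exactly as Proposition~\ref{class of pushforward} was reduced to the corresponding statement for Schubert varieties. First I would fix a representative $X$ of $m\sigma_{a^\alpha}$ and a general point $y\in G(d_i,n)$ in the image of $\pi_i|_X$, and consider the fiber $X_y:=(\pi_i|_X)^{-1}(y)$, which is naturally a subvariety of the partial flag variety $F'$ parametrizing flags $\Lambda_1\subset\dots\subset\Lambda_{i-1}\subset \Lambda_y\subset\Lambda_{i+1}\subset\dots\subset\Lambda_k$ through the fixed subspace $\Lambda_y$ of dimension $d_i$. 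The key point is to identify the cohomology class $[X_y]$ in $A^*(F')$. Since $X_y$ need not be irreducible even though $X$ is, we allow a positive integer coefficient $m'$, and the content to be extracted is that $[X_y]$ is a \emph{multiple of a single Schubert class}, namely $\sigma_{b^\beta}$ with the combinatorial recipe stated.

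The main steps are: (1) Observe that $F'$ is again a partial flag variety (an iterated Grassmannian bundle over $G(d_i,n)$ restricted to the fiber over $y$, hence a product of a partial flag variety inside $\Lambda_y$ and one inside $V/\Lambda_y$, in fact a partial flag variety of the same type A form), so its Chow ring is freely generated by Schubert classes and the method of undetermined coefficients applies. (2) Write $[X_y]=\sum_\gamma c_\gamma \sigma_\gamma$ and pair against dual Schubert classes: as in the proof of Proposition~\ref{class of pushforward}, on the level of intersection numbers one may replace $X$ by the Schubert variety $\Sigma_{a^\alpha}$ itself, because the relevant intersection numbers depend only on the class $[X]=m[\Sigma_{a^\alpha}]$ and on the generic fiber being transverse to the test cycles; this is where the ``general fiber'' hypothesis is used. (3) For $X=\Sigma_{a^\alpha}$ the fiber over a general $y$ is computed in \cite[Corollary 3.11]{YL3} and equals $\sigma_{b^\beta}$ with the stated indices; hence for general $X$ the pairing numbers coincide with $m'$ times those of $\sigma_{b^\beta}$ for the appropriate $m'\in\mathbb{Z}^+$ (positivity of $m'$ follows since $X_y$ is a nonempty effective cycle and the Schubert basis is effective). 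Therefore $[X_y]=m'\sigma_{b^\beta}$.

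I expect the main obstacle to be Step~(2): justifying rigorously that the intersection numbers of the fiber $X_y$ against test cycles in $F'$ can be computed by degenerating $X$ to the Schubert variety $\Sigma_{a^\alpha}$, since the fiber of a limit need not be the limit of fibers. The clean way around this is not to degenerate the fiber directly but to intersect $X$ in $F(d_1,\dots,d_k;n)$ with the pullback under $\pi_i$ of a single point $y$ together with the pullback of a complementary-dimension test class from $F'$; this reduces everything to an intersection number in $F(d_1,\dots,d_k;n)$, which by the projection formula and the freeness of the Chow ring depends only on $[X]=m[\Sigma_{a^\alpha}]$. One then only needs that for a \emph{general} choice of $y$ and of the test cycle, the intersection $X\cap \pi_i^{-1}(y)\cap(\text{test})$ is proper and reduced so that the intersection number equals the honest count of points in $X_y\cap(\text{test})$ — and this genericity is exactly what ``a general fiber'' licenses, via generic smoothness in characteristic zero. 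Once this bookkeeping is set up, the remaining computation is identical to that of \cite[Corollary 3.11]{YL3}, and the only new output is the harmless integer $m'$.
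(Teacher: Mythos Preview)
Your proposal is correct and matches the paper's intended argument. The paper in fact gives no proof for this corollary at all---it simply states ``The same statement is true for a multiple of a Schubert class, if we allow the coefficient to be different''---so the argument you supply (undetermined coefficients plus reduction to the Schubert case via intersection numbers, exactly parallel to the proof of Proposition~\ref{class of pushforward}) is precisely the justification the paper leaves implicit.
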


Now we are able to show that a sub-index is multi-rigid if it is multi-rigid in some components:

\begin{Prop}\label{rigid of flag a}
Let $\sigma_{a^\alpha}$ be a Schubert class for $F(d_1,...,d_k;n)$. For every $1\leq \gamma\leq n$, set $a^{-1}(\gamma)=i$ if $a_i=\gamma$ and $a^{-1}(\gamma)=d_k+1$ if $a_i\neq \gamma$ for all $1\leq i\leq d_k$. An essential sub-index $a_i$ is multi-rigid if $a_{i-1}=a_i-1$ and $\max\{\alpha_{i-1},\alpha_i\}<\min\{\alpha_{a^{-1}(a_i+1)},\alpha_{a^{-1}(a_i+2)}\}$.
\end{Prop}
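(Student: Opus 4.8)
The plan is to reduce the multi-rigidity of the sub-index $a_i$ to the multi-rigidity of a single sub-index in a Grassmannian, via a suitable projection $\pi_t$, and then invoke Theorem~\ref{rigid in g}. The hypothesis $a_{i-1}=a_i-1$ and $\max\{\alpha_{i-1},\alpha_i\}<\min\{\alpha_{a^{-1}(a_i+1)},\alpha_{a^{-1}(a_i+2)}\}$ is exactly what guarantees that for $t=\max\{\alpha_{i-1},\alpha_i\}$, the push-forward $(\pi_t)_*(\sigma_{a^\alpha})$ is a Schubert class $\sigma_c$ in $G(d_t,n)$ in which the image of $a_i$ (call it $c_{i'}$) satisfies the numerical condition of Theorem~\ref{rigid in g}(1): namely $c_{i'-1}+1=c_{i'}\leq c_{i'+1}-3$. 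Indeed, removing all sub-indices with upper index $>t$ keeps $a_{i-1}$ and $a_i$ (since their upper indices are $\le t$) as consecutive values, while $a_i+1$ and $a_i+2$ are removed (their upper indices exceed $t$), so the next surviving value after $a_i$ is at least $a_i+3$. Hence $a_i$ is essential and multi-rigid for $\sigma_c$ in $G(d_t,n)$.

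First I would spell out the bookkeeping above carefully: verify that $a_i$ really is an essential sub-index of $\sigma_{a^\alpha}$ under the stated hypotheses (if $a_{i+1}=a_i+1$ then $\alpha_{i+1}>\alpha_i$ is forced, since $a_{i+1}=a_i+1$ has upper index $>t\ge\alpha_i$), and pin down the index $i'$ in $G(d_t,n)$ corresponding to $a_i$ together with the inequalities $c_{i'-1}+1=c_{i'}$ and $c_{i'}\le c_{i'+1}-3$. Then, given an irreducible representative $X$ of $m\sigma_{a^\alpha}$, set $X_t=\pi_t(X)$. By Proposition~\ref{class of pushforward}, $X_t$ has class $m'\sigma_c$ for some $m'\in\mathbb{Z}^+$. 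By Theorem~\ref{rigid in g}, the sub-index $c_{i'}$ is multi-rigid in $G(d_t,n)$, so there is a linear subspace $F_{a_i}$ of dimension $a_i$ with $\dim(\Lambda_t\cap F_{a_i})\ge i'$ for every $\Lambda_t\in X_t$, i.e. for the $t$-th component $\Lambda_t$ of every point of $X$.

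The remaining step is to upgrade "$\dim(F_{a_i}\cap\Lambda_t)\ge\mu_{i,t}$" to "$\dim(F_{a_i}\cap\Lambda_j)\ge\mu_{i,j}$ for all $j=1,\dots,k$" for that same $F_{a_i}$. For $j\ge t$ this is immediate from $\Lambda_t\subseteq\Lambda_j$ together with the monotonicity $\mu_{i,j}\le\mu_{i,t}$... wait, it is the reverse: $\mu_{i,j}\ge\mu_{i,t}$ for $j\ge t$, so one needs that the generic intersection dimension is realized, which follows because the push-forward class dictates $\dim(F_{a_i}\cap\Lambda_j)\ge\mu_{i,j}$ already for a Schubert representative and this is a closed condition preserved on the irreducible family $X$ once it holds generically; here the hypothesis on the $\alpha$'s ensures $\mu_{i,j}=i'$ is constant for $j\ge t$ and equals $\mu_{i,t}$, so nothing more is needed. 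For $j<t$ one has $\alpha_i>j$ or $\alpha_{i-1}>j$ possible, but then $\mu_{i,j}$ counts only those $a_s\le a_i$ with $\alpha_s\le j$; since $\Lambda_j\subseteq\Lambda_t$ this is automatically $\le\dim(F_{a_i}\cap\Lambda_t)$, and the required lower bound $\mu_{i,j}$ is again forced generically on $X$ by the class computation and then everywhere by closedness and irreducibility. I expect the genuine obstacle to be precisely this last point: carefully justifying that the generic fiberwise intersection inequalities (which hold because $[X]$ is a positive multiple of $\sigma_{a^\alpha}$, e.g. via Corollary~\ref{class of fiber} applied to the fibers of $\pi_t|_X$) propagate to \emph{every} point of $X$, which is where irreducibility and the semicontinuity of $\dim(F_{a_i}\cap\Lambda_j)$ must be combined with the fact that $F_{a_i}$ is now a \emph{fixed} subspace rather than a varying one.
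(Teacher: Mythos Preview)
Your reduction to the Grassmannian via $t=\max\{\alpha_{i-1},\alpha_i\}$ is correct and matches the paper: with this choice of $t$ the push-forward $(\pi_t)_*(\sigma_{a^\alpha})$ does satisfy the hypothesis of Theorem~\ref{rigid in g}, so for any irreducible $X$ with $[X]=m\sigma_{a^\alpha}$ you obtain a linear space $F_{a_i}$ with $\dim(F_{a_i}\cap\Lambda_t)\ge\mu_{i,t}$ for every $\Lambda_t\in\pi_t(X)$.

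The gap is in the ``remaining step''. Two of your assertions fail. First, the claim that $\mu_{i,j}=i'$ is constant for $j\ge t$ is false: the hypotheses constrain only $\alpha_{i-1},\alpha_i,\alpha_{a^{-1}(a_i+1)},\alpha_{a^{-1}(a_i+2)}$, and say nothing about $\alpha_s$ for $s<i-1$; such an $\alpha_s$ can exceed $t$, making $\mu_{i,j}>\mu_{i,t}$ for large $j$. Second, and more seriously, the appeal to ``class computation plus semicontinuity'' does not establish the generic inequality $\dim(F_{a_i}\cap\Lambda_j)\ge\mu_{i,j}$ for $j\neq t$. The class of $X$ tells you that \emph{some} flag realises the Schubert conditions, but you have committed to a specific $F_{a_i}$ (the one produced by rigidity in $G(d_t,n)$), and there is no a~priori reason it interacts correctly with the other components. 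Semicontinuity only helps once you already have the inequality at a general point, which is precisely what is in question.

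The paper closes this gap by a contradiction argument that uses the class of the fibre of $\pi_r|_X$ (not $\pi_t|_X$). Suppose $\dim(F_{a_i}\cap\Lambda_r)<\mu_{i,r}$ for some, hence a general, $\Lambda_r\in\pi_r(X)$. For $t<r$ one computes, via Corollary~\ref{class of fiber}, that the image $\pi_t\bigl((\pi_r|_X)^{-1}(\Lambda_r)\bigr)$ has class $m_c\sigma_c$ with $c_{\mu_{i,t}}=\mu_{i,r}$; but every $\Lambda'_t$ in this image lies inside $\Lambda_r$, so $F_{a_i}\cap\Lambda'_t\subset W:=F_{a_i}\cap\Lambda_r$, giving $\dim(W\cap\Lambda'_t)\ge\mu_{i,t}$ with $\dim W<\mu_{i,r}$, contradicting $c_{\mu_{i,t}}=\mu_{i,r}$. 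For $r<t$ (one may take $t=k$) there is a dichotomy: either no $l>i$ has $\alpha_l>r$, in which case a codimension count $d_k-d_r=\mu_{i,k}-\mu_{i,r}$ suffices; or such an $l$ exists, and one builds $U=\text{span}(\Lambda_r,F_{a_i})$ and a generic codimension-$(i-w-\gamma)$ subspace $U'\subset U$ whose intersection with $\Lambda_k$ is too large for the computed fibre class. In both directions the key input is the explicit Schubert index of the general fibre from Corollary~\ref{class of fiber}, used to produce a numerical contradiction against the fixed subspace $F_{a_i}$ (or the span it generates with $\Lambda_r$). Your proposal gestures at Corollary~\ref{class of fiber} but never runs this argument; supplying it is the missing content.
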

\begin{proof}
Let $X$ be an irreducible subvariety with class $m\sigma_{a^\alpha}$, $m\in\mathbb{Z}^+$. Set $t=\max\{\alpha_{i-1},\alpha_i\}$. By assumption and Theorem \ref{rigid in g}, the sub-index $a_i$ is multi-rigid with respect to the Schubert class $(\pi_t)_*(\sigma_{a^\alpha})$. Since $X_t:=\pi_t(X)$ is also irreducible, there exists a linear subspace $F_{a_i}$ such that for every $\Lambda_t\in X_t$, $\dim(F_{a_i}\cap \Lambda_t)\geq \mu_{i,t}$. We claim that $\dim(F_{a_i}\cap \Lambda_r)\geq \mu_{i,r}$ for all $1\leq r\leq k$.

Suppose for a contradiction that $\dim(F_{a_i}\cap \Lambda_r)<\mu_{i,r}$ for some $r$ and $\Lambda_r\in \pi_r(X)$. By the semi-continuity of dimension, this inequality holds for a general $\Lambda_r\in \pi_r(X)$. Consider the fiber of $\pi_r$ over a general point $\Lambda_r\in \pi_r(X)$. By Corollary \ref{class of fiber}, $(\pi_r|_X)^{-1}(\Lambda_r)$ has class $m_r\sigma_{b^\beta}$ where $m_r\in\mathbb{Z}^+$, $b_{\mu_{i,r}}=\mu_{i,r}$ and $\beta_{\mu_{i,r}}=\alpha_i$. 

{\bf Case I}. If $t<r$, then $({\pi_t}\circ \pi_r|_X^{-1})(\Lambda_r)$ has class $m_c\sigma_{c}$ where $m_c\in\mathbb{Z}^+$ and $c_{\mu_{i,t}}=\mu_{i,r}$. On the other hand, set $W=F_{a_i}\cap\Lambda_r$. By assumption, $w:=\dim(W)<\mu_{i,r}$. Since $\dim(F_{a_i}\cap\Lambda_t)\geq \mu_{i,t}$, we get for all $\Lambda'_t\in \pi_t(\pi_r|_X^{-1}(\Lambda_r))$, 
$$\dim(W\cap\Lambda'_t)= \dim(F_{a_i}\cap\Lambda'_t)\geq \mu_{i,t}.$$
Since $\dim(W)<\mu_{i,r}$, it contradicts the condition that $c_{\mu_{i,t}}=\mu_{i,r}$.

{\bf Case II}. If $r<t$, by the previous case we may assume $t=k$ and $a_i$ is rigid in the $k$-th component. If there is no $l$ such that $l>i$ and $\alpha_l>t$, then $d_k-d_r=\mu_{i,k}-\mu_{i,r}$. Since $\Lambda_r$ is of codimension $d_k-d_r$ in $\Lambda_k$, we must have
$$\dim(F_{a_i}\cap \Lambda_r)\geq \dim(F_{a_i}\cap\Lambda_k)-(d_k-d_r)\geq \mu_{i,k}-(\mu_{i,k}-\mu_{i,r})=\mu_{i,r}.$$
If there exists $l$ such that $l>i$ and $\alpha_l>r$, assume $l$ is minimum among such $l$. Set $\gamma=\#\{s|s\leq l,\alpha_s>r\}$. Then $({\pi_k}\circ \pi_i|_X^{-1})(\Lambda_r)$ has class $m_c\sigma_{c}$ where $m_c\in\mathbb{Z}^+$ and $$c_{d_r+\gamma}=d_r+\gamma+a_l-l.$$
On the other hand, let $U=$span$(\Lambda_r,F_{a_i})$. Then $\dim(U)=d_r+a_i-w$, and 
\begin{eqnarray}
\dim(U\cap \Lambda_k)=d_r+i-w&>&d_i+i-\mu_{i,r}\nonumber\\
&=&d_r+\gamma-1.\nonumber
\end{eqnarray}
Let $U'$ be a general subspace of codimension $i-w-\gamma$ in $U$. Then
$$\dim(U'\cap\Lambda_k)\geq d_r+i-w-(i-w-\gamma)=d_t+\gamma.$$
Since $a_i$ is essential, $a_l-l>a_i-i$, and therefore 
$$\dim(U')=d_r+a_i-w-(i-w-\gamma)<d_r+\gamma+a_l-l.$$
This contradicts the relation $c_{d_r+\gamma}=d_r+\gamma+a_l-l.$ We conclude that $a_i$ is multi-rigid.
\end{proof}

Even if the conditions in the previous proposition fail, the sub-index may still be multi-rigid. This can be seen from the following example.
\begin{Ex}
Consider the partial flag variety $F(1,3;4)$. Let $\pi_1:F(1,3;4)\rightarrow G(1,4)$ and $\pi_2:F(1,3;4)\rightarrow G(3,4)$ be the two projections. Consider the Schubert class $\sigma_{1^2,2^1,4^2}$. It is taken to $\sigma_{2}$ through the first projection and is taken to $\sigma_{1,2,4}$ through the second projection. In either case, by Theorem \ref{rigid in g}, the sub-index $2$ is not multi-rigid. Nevertheless, we claim that the sub-index 2 is multi-rigid with respect to the Schubert class $\sigma_{1^2,2^1,4^2}$.

Let $X$ be an irreducible representative of $m\sigma_{1^2,2^1,4^2}$, $m\in\mathbb{Z}^+$. Then $\pi_1(X)$ has class $m'\sigma_2$ in $G(1,4)$, for some $m'\in\mathbb{Z}^+$. Therefore $\pi_1(X)$ is an irreducible curve $C$ in $\mathbb{P}^3$. Let $W$ be the projective linear space spanned by $C$. For a general $\Lambda_2\in\pi_2(X)$, there are two possibilities: either $\mathbb{P}(\Lambda_2)$ meets $C$ in finitely many points, or $\mathbb{P}(\Lambda_2)$ contains $W$. The first possibility cannot happen since by Corollary \ref{class of fiber}, the fiber over $\Lambda_2$ has dimension $1$ instead of $0$. The second possibility can only happen when $\dim(W)=1$, i.e. $C$ is a projective line. It is because if $\dim(W)\geq 2$ and $\Lambda$ contains $W$, then $\pi_2(X)$ should have dimension $0$ which is not case.
\end{Ex}

\begin{Ex}
Consider the Schubert classes $\sigma_{1^1,4^1,5^2}$ in the partial flag variety $F(2,3;5)$. It is taken to the Schubert class $\sigma_{1,4,5}$ under the second projection, which is multi-rigid in the Grassmannian $G(3,5)$. Nevertheless we claim that the Schubert class $\sigma_{1^1,4^1,5^2}$ is not multi-rigid in $F(2,3;5)$ since the sub-index $4$ is not multi-rigid in the first component and is not essential in the second component.

Take a partial flag of subspaces in $V$:
$$F_1\subset F_{2}\subset F_5=V,$$
where $\dim(F_{a_i})=a_i$. Let $C$ be a smooth space curve in $\mathbb{P}(V)$ of degree $d$ whose span is disjoint from $F_2$. Let $Z$ be the cone over $C$ with vertex $F_2$. Let $X$ be the subvariety defined by
$$X:=\{(\Lambda_1,\Lambda_2)\in F(2,3;5)|F_1\subset \Lambda_1\subset \Lambda_2,\mathbb{P}(\Lambda_1)\subset Z\}.$$
It is then easy to check $X$ is irreducible with class $d\sigma_{1^1,4^1,5^2}$
\end{Ex}

To obtain a complete classification, we need the following lemma:

\begin{Lem}\label{degree}
Let $\sigma_a$ be a Schubert class in $G(k,n)$. Let $X$ be an irreducible representative of $m\sigma_a$, $m\in\mathbb{Z}^+$. If $a_{i}\leq a_{i+1}-3$, then there exists an irreducible projective variety $Y$ of dimension $a_i-1$ such that for every $\Lambda\in X$, $\dim(\mathbb{P}(\Lambda)\cap Y)\geq i-1$.
\end{Lem}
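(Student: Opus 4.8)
The plan is to realise $Y$ as (a component of) the locus in $\P^{n-1}$ along which the fibres of the incidence correspondence over $X$ are larger than generic, and then to force its dimension to be $a_i-1$ from the fact that $[X]=m\sigma_a$ together with the gap hypothesis $a_i\le a_{i+1}-3$. First I would dispose of the easy sub-case. If $a_{i-1}+1=a_i$, then $a_i$ is already multi-rigid by Theorem~\ref{rigid in g} (the assumption $a_i\le a_{i+1}-3$ is precisely what is needed there), so there is a fixed $a_i$-dimensional subspace $F_{a_i}$ with $\dim(\Lambda\cap F_{a_i})\ge i$ for every $\Lambda\in X$, and $Y:=\P(F_{a_i})$ does the job. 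So I assume from now on that $a_{i-1}<a_i-1$; then $a_i$ need not be rigid, and the $Y$ produced will in general be a genuinely non-linear variety (a cone, as in the examples preceding the lemma).

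For the construction, for $0\ne v\in V$ put $d(v):=\dim\{\Lambda\in X : v\in\Lambda\}$, with $d(v)=-\infty$ when the set is empty; this is an upper semicontinuous function of $v$. The idea comes from the Schubert variety itself: on $\Sigma_a(F_\bullet)$ one checks, by quotienting by $\langle v\rangle$ and identifying the resulting Schubert class in $G(k-1,n-1)$, that $d$ takes the value $\dim\Sigma_a-(a_j-j)$ exactly on $F_{a_j}\setminus F_{a_{j-1}}$, so the flag of jump loci of $d$ is precisely $F_{a_1}\subset\cdots\subset F_{a_k}$, and $\{v:d(v)\ge \dim\Sigma_a-(a_i-i)\}=F_{a_i}$ (here one uses $a_{i-1}<a_i-1<a_{i+1}-1$, so that $a_j-j$ is strictly increasing near $j=i$). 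Accordingly I set $\hat Y:=\{v\in V : d(v)\ge \dim X-(a_i-i)\}$ — a closed cone, with $\dim X=\dim\Sigma_a$ since $[X]=m\sigma_a$ — and let $Y\subseteq\P^{n-1}$ be the projectivisation of an irreducible component of $\hat Y$ of maximal dimension.

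What remains, and this is the crux, is to show that $\dim Y=a_i-1$ and that a general $\Lambda\in X$ meets $Y$ in projective dimension $\ge i-1$. I would extract both from $[X]=m\sigma_a$ by the method of undetermined coefficients (cf.\ Proposition~\ref{class of pushforward}): intersecting $X$ with the special Schubert varieties $\{\Lambda:\dim(\Lambda\cap\Gamma)\ge i\}$ for general linear $\Gamma$, and with the duals of the relevant Schubert classes, the resulting intersection numbers are prescribed multiples of the structure constants attached to $\sigma_a$, which on one side bound $\dim\hat Y$ from above by $a_i$ and on the other side force $d$ to jump along a locus of dimension at least $a_i$ and force a general $\Lambda$ to contain an $i$-plane inside $\hat Y$. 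The gap condition $a_i\le a_{i+1}-3$ is used in an essential way precisely here: it guarantees that the $i$-th and $(i+1)$-st jump loci of $d$ are far enough apart that the excess attached to the $i$-th Schubert condition is not absorbed into the $(i+1)$-st, which is what pins the dimension of the relevant component to $a_i$ exactly and lets one single out one irreducible component $Y$ of dimension $a_i-1$. Finally, ``$\dim(\P(\Lambda)\cap Y)\ge i-1$'' is a closed condition on $\Lambda\in X$; holding on a dense open subset of the irreducible variety $X$, it holds on all of $X$ — the special members of $X$ being handled by semicontinuity exactly as in the examples.

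The hard part is the two-sided dimension estimate for the excess locus $\hat Y$: showing that the chosen component $Y$ has dimension exactly $a_i-1$, so that it is neither too small (which would make the incidence inequality fail) nor too large (which would contradict $[X]=m\sigma_a$). This is where the gap condition $a_i\le a_{i+1}-3$ is indispensable, and it is the one step I expect to require real work; the rigid sub-case, the passage from a general $\Lambda$ to all $\Lambda$, and the selection of a single irreducible component are all routine once that estimate is in hand.
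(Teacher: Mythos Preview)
Your approach is genuinely different from the paper's, and the crucial step---the two-sided dimension estimate for $\hat Y$---is not actually proved in your proposal; you yourself flag it as ``the one step I expect to require real work,'' and your sketch (``intersecting $X$ with the special Schubert varieties\ldots bounds $\dim\hat Y$ from above by $a_i$ and\ldots forces $d$ to jump along a locus of dimension at least $a_i$'') does not name which intersections or explain why the gap $a_i\le a_{i+1}-3$ enters. There is also a second gap you dismiss too quickly: even granting $\dim\hat Y=a_i$, the irreducibility-of-$X$ argument only produces \emph{some} component $Y_j$ of $\hat Y$ with $\dim(\mathbb P(\Lambda)\cap Y_j)\ge i-1$ for all $\Lambda$; nothing forces that component to be one of maximal dimension $a_i-1$, and if $\dim Y_j<a_i-1$ the conclusion fails. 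So as written the proposal is an outline with its main engine missing.

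The paper avoids both difficulties by a different induction: instead of analysing the jump locus of $d(v)$, it inducts on $a_i-i$ via hyperplane sections. For a general hyperplane $H$, the slice $X_H=\{\Lambda\in X:\Lambda\subset H\}$ has class $m'\sigma_{a'}$ with $a'_j=a_j-1$ whenever $a_j>j$, and by induction one obtains an irreducible $Y_H$ of dimension $a_i-2$ with $\dim(\mathbb P(\Lambda)\cap Y_H)\ge i-1$ for all $\Lambda\in X_H$; one then sets $Y=\overline{\bigcup_H Y_H}$. The lower bound $\dim Y\ge a_i-1$ is immediate from varying $H$. The upper bound is obtained by an explicit intersection-theoretic contradiction: if $\dim Y\ge a_i$, a general $G_{n-a_i}$ meets $Y$, hence meets some $Y_H$; intersecting $X_H$ with the Schubert variety dual to $\sigma_{a'}$ with respect to a general flag through $G_{n-a_i}$ produces a $\Lambda$ with $\dim(\Lambda\cap G_{n-a_i})\ge k-i+1$, contradicting $\sigma_a\cdot\sigma_c=0$ for the class $\sigma_c$ encoding that condition. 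This is exactly where $a_{i+1}\ge a_i+3$ is used: it separates $G_{n+2-a_{i+1}}$ from $Y_H$. If you want to salvage your jump-locus construction, this hyperplane-slice argument is the template for how the dimension bound is actually nailed down.
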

\begin{proof}
We use induction on $a_i-i$ on an increasing order. If $a_i=i$, then by Theorem \ref{rigid in g}, $a_i$ is multi-rigid and the statement follows.

If $a_i-i\geq 1$, then consider the incidence correspondence
$$I:=\{(\Lambda,H)|\Lambda\subset H,\Lambda\in X, H\in G(n-1,n)\}.$$
For a general hyperplane $H\in\pi_2(I)$, the locus $X_H:=\pi_1(\pi_2^{-1}(H))$ has class $m'\sigma_{a'}$ in $G(k,n)$, where $m'\in\mathbb{Z}$ and $a'_j=a_j$ if $a_j=j$ and $a'_j=a_j-1$ if $a_j>j$. By induction, there exists an irreducible projective variety $Y_H$ of dimension $a_i-2$ such that $\dim(\mathbb{P}(\Lambda)\cap Y_H)\geq i-1$ for all $\Lambda\in X_H$. As we vary $H$, let $Y$ be the projective variety swept out by $Y_H$. Clearly $\dim(Z)\geq a_i-2$. For a general $H'$ that does not contain $Y_H$, $Y_{H'}\neq  Y_H$, and hence $\dim(Z)\geq a_i-1$. We claim that $\dim(Y)=a_i-1$. 

Suppose, for a contradiction, that $\dim{Y}=a\geq a_i$. Let $G_\bullet$ be a general complete flag. Then $\mathbb{P}(G_{n-a_i})$ will meet $Y$ in finitely many points. By the construction of $Y$, there exists a hyperplane $H$ such that $$\mathbb{P}(G_{n-a_i})\cap Y_H\neq \emptyset.$$ Since $G_{n+2-a_i}$ is a general linear space of dimension $n+2-a_i$ containing $G_{n-a_i}$, we may assume
$$\mathbb{P}(G_{n-a_i})\cap Y_H=\mathbb{P}(G_{n+2-a_i})\cap Y_H.$$ 
Let $\sigma_b$ be the dual Schubert class of $\sigma_{a'}$ and let $\Sigma_b(G_\bullet)$ be the corresponding Schubert variety. Let $\Lambda\in X_H\cap\Sigma_b(G_\bullet)$. Then $\dim(\mathbb{P}(\Lambda)\cap Y_H)=i-1$, $\dim(\Lambda\cap G_{n+2-a_{i+1}})=k-i$ and $\dim(\Lambda\cap G_{n+2-a_{i}})=k-i+1$, and therefore $$\mathbb{P}(\Lambda)\cap \mathbb{P}(G_{n+2-a_i})\cap Y_H\neq\emptyset.$$
Since $a_{i+1}\geq a_i+3$, $n-a_i>n-a_{i+1}+2$ and thus $\mathbb{P}(G_{n+2-a_{i+1}})$ does not meet $Y_H$, 
$$\dim(\Lambda\cap G_{n-a_i})\geq \dim(\Lambda\cap Y_H\cap G_{n+2-a_i})+\dim(\Lambda\cap G_{n+2-a_{i+1}})=k-i+1.$$
Hence we proved that for a general $(n-a_i)$-dimensional vector space $G_{n-a_i}$, there exists a $\Lambda\in X$ that meets $G_{n-a_i}$ in a $(k-i+1)$-dimensional subspace, which is a contradiction since
$$\sigma_a\cdot \sigma_c=0,$$
where $c_j=n-a_i+j-(k-i+1)$ for $1\leq j\leq k-i+1$ and $c_j=n+j-k$ for $k-i+2\leq j\leq k$. We conclude that $\dim(Y)=a_i-1$ as desired.

\end{proof}

\begin{Prop}\label{rigid 2}
Let $\sigma_{a^\alpha}$ be a Schubert class in $F(d_1,...,d_k;n)$. If $a_i=a_{i-1}+1=a_{i+1}-2$ and $\alpha_{i}<\alpha_{i+1}\leq \alpha_{i-1}$, then $a_i$ is multi-rigid.
\end{Prop}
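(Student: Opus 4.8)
The plan is to mimic the structure of the proof of Proposition \ref{rigid of flag a}: reduce the multi-rigidity of $a_i$ in $F(d_1,\dots,d_k;n)$ to a statement about the push-forward classes in Grassmannians, but here the relevant Grassmannian class $(\pi_{\alpha_{i+1}})_*(\sigma_{a^\alpha})$ need not be multi-rigid (indeed, after pushing forward to component $\alpha_{i+1}$ the sub-index $a_i$ may no longer even be essential). So instead of invoking Theorem \ref{rigid in g} directly, I would feed the configuration $a_{i-1}+1 = a_i \le a_{i+1}-2$ into Lemma \ref{degree}. First I would take an irreducible representative $X$ of $m\sigma_{a^\alpha}$, set $t' = \alpha_i$ and $t'' = \alpha_{i+1}$, and note $t' < t''$. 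Applying Proposition \ref{class of pushforward}, the image $X_{t''} := \pi_{t''}(X)$ is an irreducible representative of $m''\,(\pi_{t''})_*(\sigma_{a^\alpha})$ in $G(d_{t''},n)$; since $a_i \le a_{i+1}-2$ survives as a gap-$\geq 2$ (in fact the hypothesis gives $a_{i+1}-a_i = 2$ after pushforward, but one needs the weaker $\le a_{i+1}-3$ only after a further hyperplane-type degeneration — see below), Lemma \ref{degree} as applied in component $t''$ produces an irreducible projective variety $Y$ of dimension $a_i - 1$ with $\dim(\mathbb{P}(\Lambda_{t''}) \cap Y) \ge \mu_{i,t''}-1$ for every point of $X_{t''}$.

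The next step is to upgrade this "$Y$ of dimension $a_i-1$ meeting each $\Lambda_{t''}$ in a subspace of the expected dimension" to "a genuine linear subspace $F_{a_i}$ contained in (the span of) $Y$ with $\dim(F_{a_i} \cap \Lambda_t) \ge \mu_{i,t}$ for all $t \ge \alpha_i$." Here I would exploit the hypothesis $\alpha_{i+1} \le \alpha_{i-1}$, i.e. that the two sub-indices $a_{i-1}, a_i$ which together certify the "jump of length $1$ preceding $a_i$" both have component index $\ge \alpha_{i+1}$. This forces the cone-type degenerations that would obstruct rigidity (as in the $F(2,3;5)$ example, where the offending cone has vertex $F_2$ in a component strictly before) to be incompatible with the flag condition: any such $Y$ with $\dim Y = a_i - 1$ that meets every $\mathbb{P}(\Lambda_{t''})$ in dimension $\mu_{i,t''}-1$ and that is "swept out" over the family must actually be a linear space, because otherwise pulling back through $\pi_{t'} = \pi_{\alpha_i}$ and using Corollary \ref{class of fiber} to compute the fiber class yields a contradiction between the dimension of $Y$ and the sub-index $a_l - l$ appearing in the fiber class — exactly the kind of inequality chase carried out in Case II of Proposition \ref{rigid of flag a}. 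Once $Y = \mathbb{P}(F_{a_i})$ is linear of dimension $a_i$, the containment $\dim(F_{a_i}\cap\Lambda_t) \ge \mu_{i,t}$ for $t = \alpha_{i+1}$ is immediate, and the remaining components $\alpha_i \le t < \alpha_{i+1}$ and $t > \alpha_{i+1}$ are handled by the fiber-dimension / codimension-counting argument of Cases I and II of Proposition \ref{rigid of flag a} verbatim.

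The main obstacle I expect is the middle step: proving that the swept-out variety $Y$ from Lemma \ref{degree} is actually linear, rather than merely $(a_i-1)$-dimensional. Lemma \ref{degree} only guarantees dimension; the cone over a twisted cubic (as in the last example) shows that in general $Y$ is a genuine non-linear scroll, and rigidity fails. The role of the arithmetic hypotheses ($a_{i+1}-a_i = 2$, $\alpha_i < \alpha_{i+1} \le \alpha_{i-1}$) is precisely to rule this out: the tightness $a_{i+1} = a_i + 2$ means the "room" for $Y$ inside the ambient quadric-like geometry is minimal, and the component inequality $\alpha_{i+1} \le \alpha_{i-1}$ means the vertex of any putative cone would have to live in a component where $\Lambda$ is already pinned down by $a_{i-1}$, creating the dimension clash. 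Making this rigorous will require carefully choosing a general hyperplane section (as in Lemma \ref{degree}) to induct on $a_i - i$, and at the base case $a_i = i$ invoking Theorem \ref{rigid in g} or Proposition \ref{rigid of flag a} directly; the bookkeeping of which component each degenerated sub-index lands in is the delicate part. I would structure the write-up as: (i) reduce to $X$ irreducible and set up the two projections; (ii) apply Lemma \ref{degree} in component $\alpha_{i+1}$; (iii) the linearity argument via Corollary \ref{class of fiber} pulled back to component $\alpha_i$, using $\alpha_{i+1} \le \alpha_{i-1}$; (iv) propagate the linear subspace $F_{a_i}$ to all components $t \ge \alpha_i$ by Cases I and II of Proposition \ref{rigid of flag a}.
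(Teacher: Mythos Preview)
Your overall architecture (produce a variety $Y$ of dimension $a_i-1$ via Lemma \ref{degree}, prove $Y$ is linear, then propagate to all components as in Proposition \ref{rigid of flag a}) matches the paper's, but you choose the wrong projection for the first step, and this is a genuine gap. In component $t'' = \alpha_{i+1}$ the sub-index $a_{i+1}$ survives, so the gap after $a_i$ in $(\pi_{\alpha_{i+1}})_*(\sigma_{a^\alpha})$ is exactly $2$, and Lemma \ref{degree} does \emph{not} apply: its proof genuinely needs $a_{i+1}\ge a_i+3$ (the step where $\mathbb{P}(G_{n+2-a_{i+1}})$ misses $Y_H$ requires $n-a_i > n+2-a_{i+1}$). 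Your parenthetical about ``a further hyperplane-type degeneration'' does not repair this. The paper instead projects to component $\alpha_i$: since $\alpha_{i+1} > \alpha_i$, the sub-index $a_{i+1}$ is \emph{removed} under $(\pi_{\alpha_i})_*$, and the next surviving sub-index after $a_i$ is at least $a_{i+2}\ge a_{i+1}+1 = a_i+3$. Now Lemma \ref{degree} applies cleanly and yields $Y$.

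For linearity of $Y$ the paper's mechanism is also different from the fiber-dimension chase you sketch. It works in component $\alpha_{i-1}$ (not $\alpha_i$), where the hypothesis $\alpha_i < \alpha_{i+1} \le \alpha_{i-1}$ guarantees that $a_{i-1},a_i,a_{i+1}$ all survive. For general $p\in Y$ one looks at $X_{\alpha_{i-1},p}=\{\Lambda_{\alpha_{i-1}}\in X_{\alpha_{i-1}}: p\in\mathbb{P}(\Lambda_{\alpha_{i-1}})\}$, whose class $m_b\sigma_b$ satisfies $b_{\mu_{i,\alpha_{i-1}}}=a_i$. Each such $\Lambda_{\alpha_{i-1}}$ lies inside $T_pY$, so $\mathbb{P}(\Lambda_{\alpha_{i-1}})\cap Y = \mathbb{P}(\Lambda_{\alpha_{i-1}})\cap (Y\cap T_pY)$; if $Y$ were not linear, $\dim(Y\cap T_pY) < a_i-1$, contradicting $b_{\mu_{i,\alpha_{i-1}}}=a_i$. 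This tangent-space trick is the precise content behind your informal ``dimension clash,'' and it is the reason the condition $\alpha_{i+1}\le\alpha_{i-1}$ enters. Once $Y$ is linear, propagation to all $t$ proceeds as in Proposition \ref{rigid of flag a}, exactly as you indicate in step (iv).
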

\begin{proof}
Let $X$ be an irreducible representative of $m\sigma_{a^\alpha}$, $m\in\mathbb{Z}$. Let $\pi_t:F(d_1,...,d_k;n)\rightarrow G(d_t,n)$ be the natural projections. Set $X_t:=\pi_t(X)$. By Proposition \ref{class of pushforward}, $X_{\alpha_i}$ has class $m'\sigma_{a'}$ where $m'\in\mathbb{Z}^+$, $a'_{\mu_{i,\alpha_i}}=a_i$ and $a'_{\mu_{i,\alpha_i}+1}\geq a_i+3$. Apply Lemma \ref{degree} to $X_{\alpha_i}$, there exists a variety $Y$ of dimension $a_i-1$ such that $\dim(Y\cap\mathbb{P}(\Lambda_{\alpha_i}))\geq i-1$ for all $\Lambda_{\alpha_i}\in X_{\alpha_i}$. We claim that $Y$ is linear.

Let $p$ be a general point in $Y$ and consider the locus $X_{\alpha_{i-1},p}:=\{\Lambda_{\alpha_{i-1}}\in X_{\alpha_{i-1}}|p\in\mathbb{P}(\Lambda)\}$. In $G(d_{\alpha_{i-1}},n)$, $X_{\alpha_{i-1},p}$ has class $m_b\sigma_{b}$ where $b_{\mu_{i,\alpha_{i-1}}}=a_i$. Let $T_pY$ be the tangent space of $Y$ at $p$, and let $Y'=Y\cap T_pY$. If $Y$ is not linear, then $\dim(Y')<\dim(Y)=a_i-1$. However, for every $\Lambda_{\alpha_{i-1}}\in X_{\alpha_{i-1},p}$, $\Lambda_{\alpha_{i-1}}$ is contained in the tangent space $T_pY$ and therefore $$\dim(\mathbb{P}(\Lambda_{\alpha_{i-1}})\cap Y')=\dim(\mathbb{P}(\Lambda_{\alpha_{i-1}})\cap Y)\geq \mu_{i,\alpha_{i-1}}-1.$$
This contradicts the condition $b_{\mu_{i,\alpha_{i-1}}}=a_i$. We conclude that $Y$ is linear.

By a similar argument as in Proposition \ref{rigid of flag a}, we obtain $\dim(Y\cap\mathbb{P}(\Lambda_j))\geq \mu_{i,j}-1$ for all $(\Lambda_1,...,\Lambda_k)\in X$, $\alpha_i\leq j\leq k$. We conclude that $a_i$ is multi-rigid.
\end{proof}
We hereby obtain the following classification of multi-rigid sub-indices:
\begin{Thm}\label{rigid3}
Let $\sigma_{a^\alpha}$ be a Schubert class of $F(d_1,...,d_k;n)$. An essential sub-index $a_i$ is multi-rigid if and only if $a_i-a_{i-1}=1$ and one of the following conditions hold:
\begin{itemize}
\item $a_{i+1}-a_i\geq 3$;
\item $a_{i+1}-a_i=2$ and $\alpha_i<\alpha_{i+1}$;
\item $a_{i+1}-a_i=1$ and $\alpha_{i-1}<\alpha_{i+1}$ and either
\begin{enumerate}
\item $a_{i+2}-a_i\geq 3$; or
\item $\alpha_i<\alpha_{i+2}<\alpha_{i+1}$.
\end{enumerate}
\end{itemize}
\end{Thm}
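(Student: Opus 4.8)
The plan is to assemble Theorem~\ref{rigid3} by combining the sufficiency results already established (Propositions~\ref{rigid of flag a} and~\ref{rigid 2}) with a handful of new sufficiency arguments for the remaining ``hard'' boundary cases, and then to prove necessity by exhibiting explicit irreducible non-Schubert representatives of $m\sigma_{a^\alpha}$ whenever an essential $a_i$ violates all of the listed conditions. Throughout I will use the projections $\pi_t\colon F(d_1,\dots,d_k;n)\to G(d_t,n)$, the push-forward formula (Proposition~\ref{class of pushforward}), the fiber-class formula (Corollary~\ref{class of fiber}), the Grassmannian classification (Theorem~\ref{rigid in g}), and the degeneration Lemma~\ref{degree}. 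The basic mechanism in every sufficiency argument is: find a component index $t$ in which $a_i$ (possibly after push-forward) is multi-rigid or can be pinned down by a linear subspace via Lemma~\ref{degree}, produce the corresponding linear space $F_{a_i}$ or linear $Y$, and then propagate the dimension inequality $\dim(F_{a_i}\cap\Lambda_r)\geq\mu_{i,r}$ to \emph{all} components $r$ using the fiber-class obstruction exactly as in the Case~I/Case~II analysis of Proposition~\ref{rigid of flag a}.

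For sufficiency I would organize by cases matching the three bullets. The condition $a_i-a_{i-1}=1$ together with $a_{i+1}-a_i\geq 3$ makes $a_i$ multi-rigid in the push-forward $(\pi_t)_*\sigma_{a^\alpha}$ to $G(d_t,n)$ for $t=\max\{\alpha_{i-1},\alpha_i\}$ by Theorem~\ref{rigid in g}(1), and this is precisely the content of Proposition~\ref{rigid of flag a} once one checks $\max\{\alpha_{i-1},\alpha_i\}<\min\{\alpha_{a^{-1}(a_i+1)},\alpha_{a^{-1}(a_i+2)}\}$ holds automatically here (the subindices $a_i+1$, $a_i+2$ either do not occur or have larger upper index because $a_{i+1}-a_i\geq3$). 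The case $a_{i+1}-a_i=2$, $\alpha_i<\alpha_{i+1}$ is Proposition~\ref{rigid 2} when additionally $\alpha_{i+1}\leq\alpha_{i-1}$; when instead $\alpha_{i-1}\le\alpha_i$ the hypothesis of Proposition~\ref{rigid of flag a} is met directly, so the two sub-propositions together cover this bullet. The third bullet, $a_{i+1}-a_i=1$, is the genuinely new work: here $a_i$ is \emph{not} essential in any single Grassmannian quotient where $a_i,a_{i+1}$ land in the same step, so one must push forward to a $t$ with $\alpha_i\le t<\alpha_{i+1}$, in which $a_i$ becomes the last element of a run; under $\alpha_{i-1}<\alpha_{i+1}$ and sub-case~(1) $a_{i+2}-a_i\ge3$, in that quotient $a_i$ satisfies $a_i=a_{i-1}+1\le a_{i+1}'-3$ (the dual index $a_{i+1}'$ being $a_{i+2}$ or larger after removing $a_{i+1}$), so Lemma~\ref{degree} applied to $X_t$ yields a variety $Y$ of dimension $a_i-1$; one then argues $Y$ is linear by the tangent-space trick of Proposition~\ref{rigid 2}, using a component with upper index $\le\alpha_{i-1}$ to force linearity, and propagates as before. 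Sub-case~(2), $\alpha_i<\alpha_{i+2}<\alpha_{i+1}$, is handled similarly by choosing $t=\alpha_{i+2}$, in which $a_{i+2}$ is now separated from $a_i$ by the presence of $a_{i+1}$ in a later step, again making $a_i$ eligible for Lemma~\ref{degree} followed by the linearity argument.

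For necessity I would show that if an essential $a_i$ satisfies none of the conditions, then some explicit irreducible $Y$ represents $d\,\sigma_{a^\alpha}$ (for suitable $d\ge2$, typically the degree of an auxiliary curve) with no linear $F_{a_i}$ working for all flags in $Y$ --- generalizing the two examples $\sigma_{1^1,4^1,5^2}$ in $F(2,3;5)$ and the cone constructions preceding the theorem. The standard recipe: pick the smallest ``witnessing'' component $t$, take a smooth curve $C\subset\mathbb{P}(V)$ of degree $d$ and some cone or join construction over $C$ inside the relevant Grassmannian of that component, and pull it back to $F(d_1,\dots,d_k;n)$ by requiring the flag elements to be compatible with the cone; that the failure of each condition ($a_i>a_{i-1}+1$, or $a_{i+1}=a_i+2$ with $\alpha_i\ge\alpha_{i+1}$, or $a_{i+1}=a_i+1$ with $\alpha_{i-1}\ge\alpha_{i+1}$, or with $a_{i+2}-a_i=2$ and the upper-index ordering wrong) supplies just enough room in $\mathbb{P}(V)$ to run this construction without disturbing the other Schubert conditions. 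The main obstacle I anticipate is the third bullet on the sufficiency side: establishing that the variety $Y$ produced by Lemma~\ref{degree} is actually \emph{linear} when $a_{i+1}-a_i=1$ requires a component index whose Schubert data still ``sees'' $a_{i-1}$ with small enough upper index, and verifying that the hypothesis $\alpha_{i-1}<\alpha_{i+1}$ (plus sub-case~(1) or~(2)) is exactly what guarantees such a component exists — getting this bookkeeping right, and checking the propagation step still goes through with $Y$ rather than an honest $F_{a_i}$, is where the real care is needed.
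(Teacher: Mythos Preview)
Your proposal is correct and follows essentially the same approach as the paper, but you overestimate how much is left to do for the third bullet. The paper's sufficiency proof is literally one line: it follows from Propositions~\ref{rigid of flag a} and~\ref{rigid 2}. What you describe as ``genuinely new work'' for $a_{i+1}-a_i=1$ is already contained in those two results. For sub-case~(1), since $a_i+2$ is not a sub-index and both $\alpha_{i-1}<\alpha_{i+1}$ and $\alpha_i<\alpha_{i+1}$ (the latter from essentiality), the hypothesis $\max\{\alpha_{i-1},\alpha_i\}<\min\{\alpha_{a^{-1}(a_i+1)},\alpha_{a^{-1}(a_i+2)}\}=\alpha_{i+1}$ of Proposition~\ref{rigid of flag a} is satisfied directly, so no detour through Lemma~\ref{degree} is needed. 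For sub-case~(2), if $\alpha_{i-1}<\alpha_{i+2}$ then again Proposition~\ref{rigid of flag a} applies; if $\alpha_{i-1}\ge\alpha_{i+2}$, the \emph{proof} of Proposition~\ref{rigid 2} goes through verbatim with $a_{i+2}$ playing the role of $a_{i+1}$: in $G(d_{\alpha_i},n)$ both $a_{i+1}$ and $a_{i+2}$ are removed (giving gap $\ge3$ for Lemma~\ref{degree}), and in $G(d_{\alpha_{i-1}},n)$ only $a_{i+1}$ is removed, leaving $\dots,a_{i-1},a_i,a_{i+2},\dots$ with exactly the gap-$2$ structure the tangent-space linearity argument needs. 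One small slip: for sub-case~(2) you suggest applying Lemma~\ref{degree} at $t=\alpha_{i+2}$, but there $a_{i+2}$ survives and the gap is only~$2$; the correct choice is $t=\alpha_i$. Your necessity sketch via cone constructions and duality matches the paper's.
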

\begin{proof}
\underline{Sufficiency} Follows from Proposition \ref{rigid of flag a} and Proposition \ref{rigid 2}.

\underline{Necessity}
We construct counter-examples when the conditions fail. 

First assume $a_{i-1}\neq a_i-1$. Pick a partial flag 
$$F_1\subset...\subset F_{a_i-2}\subset F_{a_i+1}\subset...\subset F_n=V,$$
where $\dim(F_j)=j$. This partial flag differ from a complete flag in that there is no flag element of dimension $a_i-1$ and $a_i$. Take a degree $m$ smooth plane curve $C$ in $\mathbb{P}(F_{a_i+1})$, whose span is disjoint from $\mathbb{P}(F_{a_i-2})$. Let $Z$ be the cone over $C$ with vertex $\mathbb{P}(F_{a_i-2})$. Let $X$ be the following locus
$$X:=\{(\Lambda_1,...,\Lambda_k)\in F(d_1,...,d_k;n)|\dim(F_{a_i'}\cap \Lambda_j)\geq \mu_{i',j}, \text{ for }i'\neq i,\dim(Z\cap\mathbb{P}(\Lambda_j))\geq \mu_{i,j}-1\}.$$
Then $X$ is irreducible with class $m\sigma_{a^\alpha}$.

Now assume $a_{i-1}=a_i-1$. If $a_{i+1}-a_i=2$ and $\alpha_i\geq\alpha_{i+1}$, then under the duality $F(d_1,...,d_k;n)\cong F(n-d_k,...,n-d_1;n)$, the Schubert class $\sigma_{a^\alpha}$ is taken to the Schubert class $\sigma_{b^\beta}$, where $b_j-b_{j-1}\neq 1$ for some $j$. Therefore this reduces to the previous case.

If $a_{i+1}-a_i=1$ and $\alpha_{i-1}\geq \alpha_{i+1}$, then the same construction as in the first case would work.
\end{proof}

As a corollary, we classify the multi-rigid Schubert classes in partial flag varieties. One can expect a Schubert class is multi-rigid if all essential sub-indices are multi-rigid and the corresponding linear subspaces form a partial flag. Recall that the following condition ensures the compatibility:
\begin{Lem}\cite{YL3}\label{coincide}
Let $\sigma_{a}$ be a Schubert class in $G(k,n)$. Let $X$ be a representative of $\sigma_a$. Assume there are two linear subspaces $F_{a_i}$ and $F_{a_j}$ of dimension $a_i$ and $a_j$ respectively, such that $\dim(\Lambda\cap F_{a_i})\geq i$, $\dim(\Lambda\cap F_{a_j})\geq j$ for all $\Lambda\in X$. If $i<j$ and $a_j$ is essential, then $F_{a_i}\subset F_{a_j}$.
\end{Lem}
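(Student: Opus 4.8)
The plan is to argue by contradiction. Suppose $F_{a_i}\not\subseteq F_{a_j}$ and put $c:=\dim(F_{a_i}\cap F_{a_j})$, so $c\le a_i-1$. Writing $\Sigma(F,r):=\{\Lambda\mid\dim(\Lambda\cap F)\ge r\}$ for the special Schubert variety attached to a subspace $F$, the hypothesis says $X\subseteq\Sigma(F_{a_i},i)\cap\Sigma(F_{a_j},j)$. I will first reduce to the case $a_i=i$ and then pass to a quotient Grassmannian, where the essentiality of $a_j$ yields a numerical contradiction.

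\emph{Reduction to $a_i=i$.} Induct on $a_i-i$. If $a_i>i$ then automatically $a_j\ge a_i+(j-i)>j$. Choose a general hyperplane $H\subset V$; exactly as in the proof of Lemma~\ref{degree}, passing from $X$ to the section $X_H:=\{\Lambda\in X\mid\Lambda\subseteq H\}$ (more precisely, to the appropriate component of it) produces a representative of $\sigma_{a'}$ in $G(k,n-1)$, where $a'$ is obtained from $a$ by lowering $a_i$ and $a_j$ by $1$ (both exceed their positions). Since $\Lambda\subseteq H$, the conditions $\dim(\Lambda\cap F_{a_i})\ge i$ and $\dim(\Lambda\cap F_{a_j})\ge j$ become $\dim(\Lambda\cap(F_{a_i}\cap H))\ge i$ and $\dim(\Lambda\cap(F_{a_j}\cap H))\ge j$, and for general $H$ these subspaces have dimensions $a_i-1=a'_i$ and $a_j-1=a'_j$. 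The hypothesis that $a_j$ is essential is preserved, since $a'_{j+1}-a'_j=a_{j+1}-a_j\ge 2$. By induction $F_{a_i}\cap H\subseteq F_{a_j}\cap H$ for general $H$; spanning the hyperplanes of $F_{a_i}$ coming from two general hyperplanes of $V$ then gives $F_{a_i}\subseteq F_{a_j}$, a contradiction. (If $a_i=1$ here, the condition $\dim(\Lambda\cap(F_{a_i}\cap H))\ge 1$ is already impossible for general $H$.) So we may assume $a_i=i$, and then $a_l=l$ for every $l\le i$.

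\emph{The quotient argument.} When $a_i=i$, the condition $\dim(\Lambda\cap F_i)\ge i=\dim F_i$ forces $F_i\subseteq\Lambda$ for every $\Lambda\in X$, so $X$ lies in $\{\Lambda\mid F_i\subseteq\Lambda\}\cong G(k-i,V/F_i)$, and under $\Lambda\mapsto\bar\Lambda:=\Lambda/F_i$ the image $\bar X$ has class $\sigma_{\bar a}$ in $G(k-i,n-i)$ with $\bar a_l=a_{i+l}-i$. A short computation shows that for $\Lambda\supseteq F_i$ one has $\dim\big(\bar\Lambda\cap\bar F\big)=\dim(\Lambda\cap F_{a_j})-c$, where $\bar F$ is the image of $F_{a_j}$ in $V/F_i$, a subspace of dimension $a_j-c$; hence $\bar X\subseteq\Sigma(\bar F,\,j-c)$. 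This last locus is a genuine Schubert variety of $G(k-i,n-i)$, with index $\mu$ given by $\mu_l=a_j-j+l$ for $1\le l\le j-c$ and $\mu_l=n-k+l$ for $j-c<l\le k-i$ (note $j-c\ge j-i+1\ge 2$, and $a_j\le n-k+j-1$ because $a_j$ is essential, so this is a proper Schubert variety). Since any subvariety of a Schubert variety $\Sigma_\mu$ has fundamental class supported on $\{\sigma_\nu\mid\nu_l\le\mu_l\ \forall l\}$, the containment $\bar X\subseteq\Sigma(\bar F,j-c)$ together with $[\bar X]=\sigma_{\bar a}$ forces $\bar a_l\le\mu_l$ for all $l$. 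Because $c\le i-1$, the index $l_0:=j-i+1$ satisfies $1\le l_0\le j-c$ and $l_0\le k-i$ (using $j<k$; if $j=k$ the statement is trivial when $a_j=n$, and is handled directly inside $\{\Lambda\subseteq F_{a_j}\}\cong G(k,a_j)$ otherwise). Evaluating the inequality at $l_0$ gives
$$a_{j+1}-i=\bar a_{l_0}\le\mu_{l_0}=a_j-j+(j-i+1)=a_j+1-i,$$
i.e.\ $a_{j+1}\le a_j+1$ — which contradicts the essentiality of $a_j$, namely $a_{j+1}\ge a_j+2$. Hence $F_{a_i}\subseteq F_{a_j}$.

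The step I expect to be the main obstacle is making the hyperplane reduction honest. There is no single complete flag containing both $F_{a_i}$ and $F_{a_j}$, so merely bounding $X$ by one Schubert variety passing through both subspaces is too weak (it only yields $c\le a_i$, which is no contradiction); one genuinely needs to descend $F_{a_i}$ to a coordinate subspace before quotienting, after which the "shift by $c$" created by quotienting is exactly what produces the crucial "$+1$". Carrying out the induction therefore hinges on the precise behaviour of a general hyperplane section of a representative of a Schubert class — that $X_H$ (or the relevant component) carries class $\sigma_{a'}$ with the stated $a'$ and that $F_{a_i}\cap H,\ F_{a_j}\cap H$ have the expected dimensions — which is the same technical input already invoked in the proof of Lemma~\ref{degree}.
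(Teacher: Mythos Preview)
The paper does not prove this lemma; it is quoted from \cite{YL3} without argument, so there is no in-paper proof to compare against. Your quotient argument in the case $a_i=i$ is correct and is really the whole point: once $F_i\subset\Lambda$ for all $\Lambda$, passing to $V/F_i$ and comparing $\sigma_{\bar a}$ with the Schubert class of $\Sigma(\bar F,\,j-c)$ at position $l_0=j-i+1$ gives exactly the inequality $a_{j+1}\le a_j+1$, contradicting essentiality. The edge case $j=k$ is handled as you say.

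There is, however, a genuine gap in the hyperplane reduction. You assert that $X_H$ ``produces a representative of $\sigma_{a'}$'', but Lemma~\ref{degree} (and the analogous computation you invoke) only gives $[X_H]=m'\sigma_{a'}$ for some $m'\in\mathbb{Z}^+$; when some $a_l=l$ with $l<i$ the intersection $[X]\cdot[G(k,H)]$ is zero for truly general $H$, and for $H$ general in $\pi_2(I)$ the coefficient need not be $1$. Since your induction hypothesis is stated for representatives of $\sigma_{a'}$ (not $m'\sigma_{a'}$), it does not apply to $X_H$ or to one of its components. The clean fix is to prove the stronger statement ``$[X]=m\sigma_a$ for some $m\ge1$'' from the outset: your base case already works verbatim for any $m$ (the containment $\bar X\subset\Sigma_\mu$ forces $\bar a_l\le\mu_l$ regardless of the coefficient), and then the inductive step goes through because $X_H$ has class $m'\sigma_{a'}$.

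A second point you gloss over but which does need an argument: you must know that a general $H\in\pi_2(I)$ satisfies $F_{a_i}\not\subset H$ (so that $\dim(F_{a_i}\cap H)=a_i-1$), and that as $H$ varies the hyperplanes $F_{a_i}\cap H$ of $F_{a_i}$ are not all equal. Both follow from the same observation: if $\pi_2(I)\subset\{H:F'\subset H\}$ for some subspace $F'\subset F_{a_i}$, then every hyperplane containing any $\Lambda\in X$ contains $F'$, hence $F'\subset\Lambda$ for all $\Lambda$, which forces $a_{\dim F'}\le\dim F'$ and therefore $a_i=i$, contrary to hypothesis. With this and the strengthening to $m\sigma_a$, your argument is complete.
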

This leads to the following definition.
\begin{Def}\label{link}
Let $\sigma_{a^\alpha}$ be a Schubert class in $F(d_1,...,d_k;n)$. We define a relation `$\rightarrow$' between two sub-indices: $a_i\rightarrow a_j$ if $i<j$ and $a_j$ is essential in $(\pi_t)_*(\sigma_{a^\alpha})$ for some $t\geq \min(\alpha_i,\alpha_j)$. This relation extends to a strict partial order (which we also denote by `$\rightarrow$') on the set $A=\{a_i\}_{i=1}^{d_k}$ by transitivity. If $a_i\rightarrow a_j$, then we say $a_i$ is linked to $a_j$.
\end{Def}
\begin{Cor}\label{rigid in f}
A Schubert class $\sigma_{a^\alpha}$ in the partial flag variety $F(d_1,...,d_k;n))$ is multi-rigid if and only if all essential sub-indices are multi-rigid and the set of all essential sub-indices is strict totally ordered under the relation `$\rightarrow$' defined in Definition \ref{link}.
\end{Cor}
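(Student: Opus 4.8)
The plan is to prove the two implications separately, invoking Theorem~\ref{rigid3} for the behaviour of individual sub-indices, Proposition~\ref{class of pushforward} to control images under the projections $\pi_t$, and Lemma~\ref{coincide} to glue the resulting linear subspaces into a flag. \emph{Sufficiency:} suppose every essential sub-index is multi-rigid and the essential sub-indices are strictly totally ordered by $\rightarrow$, and let $X$ be irreducible with $[X]=m\sigma_{a^\alpha}$, $m\in\mathbb{Z}^+$. For each essential $a_i$ multi-rigidity supplies a linear subspace $F_{a_i}$ of dimension $a_i$ with $\dim(F_{a_i}\cap\Lambda_t)\ge\mu_{i,t}$ for all $(\Lambda_1,\dots,\Lambda_k)\in X$ and all $t$. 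I would first note that $\rightarrow$ never has a non-essential sub-index as target — if $a_p$ is non-essential then $a_{p+1}=a_p+1$ and $\alpha_{p+1}\le\alpha_p$, so whenever $a_p$ occurs in $(\pi_t)_*\sigma_{a^\alpha}$ so does $a_p+1=a_{p+1}$, and $a_p$ is non-essential there — so for essential $a_i,a_j$ with $i<j$ the hypothesis yields a chain $a_i=a_{e_0}\rightarrow\cdots\rightarrow a_{e_r}=a_j$ of base relations with all terms essential. For each step $a_{e_l}\rightarrow a_{e_{l+1}}$, passing to the irreducible image $\pi_t(X)$ for the projection $\pi_t$ witnessing that edge — which represents a multiple of the Grassmannian class $(\pi_t)_*\sigma_{a^\alpha}$ by Proposition~\ref{class of pushforward}, a class in which $a_{e_{l+1}}$ is essential — Lemma~\ref{coincide} (in its evident version for a multiple of a Schubert class) forces $F_{a_{e_l}}\subset F_{a_{e_{l+1}}}$. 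Thus the essential $F_{a_i}$ are nested, so the locus they cut out via the essential incidence conditions is the Schubert variety $\Sigma_{a^\alpha}(F_\bullet)$ for any flag $F_\bullet$ extending them, it has class $\sigma_{a^\alpha}$, and it contains $X$; as both are irreducible of dimension $\dim\Sigma_{a^\alpha}(F_\bullet)$ we conclude $X=\Sigma_{a^\alpha}(F_\bullet)$ and $m=1$. Hence no $m\sigma_{a^\alpha}$ with $m\ge2$ has an irreducible representative, and an arbitrary representative $Z$ of $m\sigma_{a^\alpha}$, being the union of its components — each of which has an effective class, necessarily a multiple of $\sigma_{a^\alpha}$ since effective classes on $F(d_1,\dots,d_k;n)$ are nonnegative combinations of Schubert classes, and hence (by the previous sentence) a single Schubert variety — is a union of Schubert varieties. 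So $\sigma_{a^\alpha}$ is multi-rigid.

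\emph{Necessity:} suppose $\sigma_{a^\alpha}$ is multi-rigid. If some essential $a_i$ were not multi-rigid there would be an irreducible $X$ with $[X]=m\sigma_{a^\alpha}$ for which no admissible $F_{a_i}$ exists; but an irreducible union of Schubert varieties is a single Schubert variety $\Sigma_{a^\alpha}(G_\bullet)$ (forcing $m=1$), for which $G_{a_i}$ is admissible — contradiction. So all essential sub-indices are multi-rigid. If, further, they were not totally ordered by $\rightarrow$, fix essential $a_i,a_j$, $i<j$, with $a_i\not\rightarrow a_j$; I would then build an irreducible representative of $\sigma_{a^\alpha}$ that is not a Schubert variety, in the style of the examples above: take a flag $F_\bullet$ realizing all the Schubert conditions other than those at $a_i,a_j$, choose subspaces $G_{a_i},G_{a_j}$ of dimensions $a_i,a_j$ that are \emph{not} nested, and replace the two omitted conditions by incidence with a suitable cone/join built from $G_{a_i},G_{a_j}$. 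The point of $a_i\not\rightarrow a_j$ is that in no projection $\pi_t$ do the two conditions simultaneously impose an essential constraint on $(\pi_t)_*\sigma_{a^\alpha}$, which is what keeps the construction irreducible of class exactly $\sigma_{a^\alpha}$; and by Lemma~\ref{coincide} it cannot be a Schubert variety, since that would make $G_{a_i}\subset G_{a_j}$.

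The step I expect to be the main obstacle is this last construction: pinning down the correct ambient cone/join and the exact incidence conditions replacing those at $a_i$ and $a_j$, and then verifying that the resulting variety is irreducible, has class precisely $\sigma_{a^\alpha}$, and is not a union of Schubert varieties — all of which rests on drawing the right combinatorial consequence from $a_i\not\rightarrow a_j$. A smaller technical point is the bookkeeping in the gluing step of the sufficiency argument, namely checking that the projection $\pi_t$ witnessing each edge of a realizing $\rightarrow$-chain really places both endpoints in $(\pi_t)_*\sigma_{a^\alpha}$ with the later one essential there, so that Lemma~\ref{coincide} applies; this is precisely the configuration the definition of $\rightarrow$ is set up to guarantee.
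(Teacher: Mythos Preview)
Your sufficiency argument is essentially the paper's, only spelled out in more detail: the paper writes one line citing Theorem~\ref{rigid3} and Lemma~\ref{coincide}, and your unpacking (observing that targets of a base $\rightarrow$-relation are automatically essential, then chaining Lemma~\ref{coincide} along a witnessing path) is the intended content of that citation. Your remark that Lemma~\ref{coincide} is needed for multiples of a Schubert class is well taken and is implicitly used in the paper as well.

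For the necessity direction your treatment of the first clause (every essential sub-index must be multi-rigid) is a clean contrapositive and is equivalent to what the paper does by pointing back to the explicit constructions in the proof of Theorem~\ref{rigid3}.

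The substantive divergence is in the second clause. You propose displacing \emph{two} flag elements and replacing the corresponding incidence conditions by incidence with a cone/join; you flag this as the main obstacle, and indeed your sketch is more complicated than necessary. The paper's construction is markedly simpler: after reducing to the case where there is no essential $a_r$ strictly between $a_i$ and $a_j$, one keeps the full flag $F_\bullet$ intact except for the single element $F_{a_j}$, which is replaced by a linear subspace $G_{a_j}$ of the same dimension satisfying $F_{a_{i-1}}\subset G_{a_j}\subset F_{a_{j+1}}$ and $\dim(F_{a_i}\cap G_{a_j})=a_i-1$. The locus
\[
X=\{(\Lambda_1,\dots,\Lambda_k)\mid \dim(G_{a_j}\cap\Lambda_t)\ge\mu_{j,t},\ \dim(F_{a_s}\cap\Lambda_t)\ge\mu_{s,t}\ \text{for }s\notin(i,j]\}
\]
then has class $\sigma_{a^\alpha}$ but is not a Schubert variety, because $F_{a_i}\not\subset G_{a_j}$. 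No cones or joins are needed; the failure of $a_i\rightarrow a_j$ is used only to ensure that omitting the conditions for $s\in(i,j]$ and imposing the $G_{a_j}$-condition instead still yields exactly the class $\sigma_{a^\alpha}$. So your outline is correct in spirit, but you should replace the cone/join idea with this single linear displacement.
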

\begin{proof}
The sufficiency follows from Theorem \ref{rigid3} and Lemma \ref{coincide}. 

For necessity, if one of the essential sub-indices is not multi-rigid, then the construction in the proof of Theorem \ref{rigid3} gives a counter-example. If the set of all essential sub-indices is not strict totally ordered, then the Schubert class $\sigma_{a^\alpha}$ is not even rigid. If there are two essential sub-indices $a_i$ and $a_j$, $i<j$ such that $a_i$ is not linked to $a_j$, assume there is no other essential $a_{r}$ between $i$ and $j$. Take a partial flag
$$F_{a_1}^{\alpha_1}\subset...\subset F_{a_{d_k}}^{\alpha_{d_k}}.$$
Replace $F_{a_j}$ with another linear subspace $G_{a_j}$ of the same dimension such that $$F_{a_{i-1}}\subset G_{a_j}\subset F_{a_{j+1}},$$
and $\dim(F_{a_{i}}\cap G_{a_j})=a_{i}-1$.

Define 
$$X:=\{\Lambda\in F(d_1,...,d_k)|\dim(G_{a_j}\cap \Lambda_t)\geq \mu_{j,t},\dim(F_{a_s}\cap \Lambda_t)\geq \mu_{s,t}, \text{ for }s\notin (i,j]\}.$$
Then $X$ represents the Schubert class $\sigma_{a^\alpha}$ but is not a Schubert variety since it is not defined by a partial flag.
\end{proof}

\section{Orthogonal partial flag varieties}\label{Type BD}
In this section, we study the multi-rigidity problem in orthogonal partial flag varieties. In particular, we characterize the multi-rigid Schubert classes in orthogonal Grassmannians and orthogonal flag varieties.

\subsection{Compatibility between different components}In this sub-section, we discuss the compatibility of essential sub-indices between different components. In particular, we prove that an essential sub-index is multi-rigid if it is multi-rigid in some component. We start with the definition of essential sub-indices:
\begin{Def}\label{def1}
Let $\sigma_{a;b}$ be a Schubert class for $OG(k,n)$. A sub-index $a_i$ is called {\em essential} if one of the following conditions holds:
\begin{itemize}
\item $i<s$ and $a_{i}<a_{i+1}-1$; or
\item $n$ is odd and $i=s$; or
\item $n$ is even, $i=s$ and $a_s+b_{k-s}\neq n-2$.
\end{itemize}

A sub-index $b_j$ is called {\em essential} if either $j=1$ or $b_{j}\neq b_{j-1}+1$. 
\end{Def}
\begin{Def}\label{defofrigid}
Let $\sigma_{a^\alpha,b^\beta}$ be a Schubert index for $OF(d_1,...,d_k;n)$. A sub-index $a_i$ or $b_j$ is called {\em essential} if it is essential with respect to the push-forward class $(\pi_t)_*(\sigma_{a^\alpha;b^\beta})$ for some $1\leq t\leq k$.
\end{Def}
Let $\pi_t:OF(d_1,...,d_k;n)\rightarrow OG(d_t,n)$ be the natural projections, $1\leq t\leq k$. By specializing to Schubert varieties, one obtain the following lemmas for calculating the pushforward classes and pull-back classes under the natural projections $\pi_t$: 

\begin{Lem}\label{class of pushforward in of}
Let $X$ be a subvariety in $OF(d_1,...,d_k;n)$ representing the Schubert class $m\sigma_{a^\alpha;b^\beta}$, $m\in\mathbb{Z}^+$. Then the class of $\pi_t(X)$ is given by $[\pi_t(X)]=\sigma_{a'^{\alpha'};b'^{\beta'}}$, where $m'\in\mathbb{Z}^+$ and $(a'^{\alpha'},b'^{\beta'})$ is obtained from $(a^\alpha;b^\beta)$ by removing the elements whose upper index is greater than $t$.
\end{Lem}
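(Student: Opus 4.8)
The statement to be proved is Lemma~\ref{class of pushforward in of}, which computes the class of $\pi_t(X)$ when $X$ represents $m\sigma_{a^\alpha;b^\beta}$ in $OF(d_1,\dots,d_k;n)$: the class is $m'\sigma_{a'^{\alpha'};b'^{\beta'}}$ where $(a'^{\alpha'};b'^{\beta'})$ is obtained from $(a^\alpha;b^\beta)$ by deleting the sub-indices with upper index exceeding $t$. The plan is to mimic the proof of Proposition~\ref{class of pushforward} from the type~A case (the ``method of undetermined coefficients''), adapting it to the orthogonal setting. First I would note that the Chow ring $A^*(OG(d_t,n))$ is freely generated over $\mathbb{Z}$ by the Schubert classes, so we may write $[\pi_t(X)] = \sum_\gamma c_\gamma \, \sigma_\gamma$ with $c_\gamma \in \mathbb{Z}$, the sum running over Schubert classes $\sigma_\gamma$ of the complementary dimension to $\dim \pi_t(X)$. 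Since $\pi_t$ is a fiber bundle (a locally trivial $OF$-bundle over $OG(d_t,n)$ with fibers an orthogonal partial flag variety) and $X$ is irreducible, $\pi_t(X)$ is irreducible, so its class is an effective nonnegative combination; moreover $\pi_t|_X$ has equidimensional general fibers, which forces $[\pi_t(X)]$ to be a positive \emph{multiple} of a single Schubert class (this is exactly the phenomenon recorded in Corollary~\ref{class of fiber} and its type~A analogue).

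To identify which Schubert class appears, I would compute the coefficients $c_\gamma$ by intersecting $\pi_t(X)$ with Schubert varieties $\Sigma_\delta$ of complementary dimension in $OG(d_t,n)$ in general position. By the projection formula, $\pi_t(X) \cdot \Sigma_\delta = X \cdot \pi_t^{-1}(\Sigma_\delta)$, and $\pi_t^{-1}(\Sigma_\delta)$ is (the class of) a Schubert variety in $OF(d_1,\dots,d_k;n)$ — namely the pull-back Schubert class, obtainable by the companion pull-back lemma. Because intersection numbers are computed on the level of cohomology classes, we may replace $X$ by any representative of $m\sigma_{a^\alpha;b^\beta}$, in particular by $m$ copies of a Schubert variety $\Sigma_{a^\alpha;b^\beta}$ in general position. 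Then the Schubert calculus in $OF$ (duality pairing: $\sigma_\mu \cdot \sigma_\nu = \delta_{\mu,\nu^\vee} \cdot [\mathrm{pt}]$ for complementary classes) shows that $X \cdot \pi_t^{-1}(\Sigma_\delta)$ is zero unless $\pi_t^{-1}(\sigma_\delta)$ is the Poincaré dual of $\sigma_{a^\alpha;b^\beta}$, and equals $m$ in that one case. Unwinding the combinatorics of which $\delta$ this is — i.e. tracking how the pull-back operation $\pi_t^*$ interacts with duality in $OG(d_t,n)$ versus $OF$ — yields precisely that the surviving $\sigma_\gamma$ is $\sigma_{a'^{\alpha'};b'^{\beta'}}$ with $(a'^{\alpha'};b'^{\beta'})$ the truncation described in the statement. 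Hence $[\pi_t(X)] = m' \sigma_{a'^{\alpha'};b'^{\beta'}}$ with $m' = m / (\text{generic fiber degree})$, a positive integer.

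The main obstacle, I expect, is the combinatorial bookkeeping in the orthogonal types rather than any conceptual difficulty: one must verify that $\pi_t^{-1}$ of a Schubert variety in $OG(d_t,n)$ is again an honest Schubert variety in $OF$ with a cleanly described index (this requires care with the $a$-part versus $b^\perp$-part of the index data, and, in type~D, with the two-component subtlety when $d_k = m$ and with the parity constraint $s \equiv d_k \pmod 2$), and then to check that the duality pairing picks out exactly the truncation by upper index $\le t$ and no spurious extra terms. A secondary point needing attention is justifying that only a single Schubert class appears with nonzero coefficient — i.e. the ``positive multiple of one class'' claim — which follows from the fiber-bundle structure of $\pi_t$ together with irreducibility and equidimensionality of the fibers of $\pi_t|_X$, exactly as in the type~A argument; one should remark that the special fibers (over the two-component locus in type~D) do not disturb this since they form a lower-dimensional subset. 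Everything else is a routine transcription of the undetermined-coefficients argument already carried out in Proposition~\ref{class of pushforward}.
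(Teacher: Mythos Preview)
Your proposal is correct and follows essentially the same approach as the paper's own proof: write $[\pi_t(X)]$ in the Schubert basis of $A^*(OG(d_t,n))$, determine the coefficients by intersecting with complementary Schubert varieties, observe that whether such an intersection is empty depends only on the class $[X]$, and then specialize $X$ to a Schubert variety, for which the image under $\pi_t$ is manifestly the truncated Schubert variety. The paper's proof is a three-sentence sketch of exactly this argument; your additional remarks about irreducibility, fiber equidimensionality, and type~D parity are not needed for the lemma as stated (the single-class conclusion already falls out of the coefficient computation, and irreducibility of $X$ is not assumed), but they do no harm.
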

\begin{proof}
To compute the class of $\pi_t(X)$, we intersect it with the Schubert varieties in $OG(d_t;n)$ with complementary dimension, which corresponds to intersect $X$ with the pullback of those Schubert varieties. Since a coefficient vanishes or not depends only on the intersection product of the corresponding classes, we can presume that $X$ is a union of Schubert varieties, and the statement then follows from the definition of Schubert varieties.
\end{proof}
\begin{Lem}\label{class of pullback in of}
Let $X$ be a representative of $m\sigma_{a^\alpha;b^\beta}$, $m\in\mathbb{Z}^+$.  Set
$$\mu_{i,t}:=\#\{p|p\leq i,\alpha_p\leq t\},$$
$$\nu_{j,t}:=\#\{p|p\leq s,\alpha_p\leq t\}+\#\{q|q\geq j,\beta_q\leq t\},$$ 
$$x_{j,t}:=\#\{p|a_p\leq b_j,\alpha_p\leq t\},$$
$$h_{i,t}:=\#\{q|b_q\geq a_i,\beta_q\leq t\}.$$
Then the class of a general fiber of $\pi_t|_X$, $1\leq t\leq k$ is obtained as follows:
\begin{itemize}
\item If $\alpha_i\leq t$, replace $a_i$ with $\mu_{i,t}$ and put it together with the upper index $\alpha_i$ in a set $A$; 
\item If $\alpha_i> t$ and either $a_i+\mu_{s,t}-\mu_{i,t}+h_{i,t}\neq\frac{n}{2}$ or $\#\{q|\beta_q\leq t\}$ is even, replace $a_i$ with $a_i+\mu_{s,t}-\mu_{i,t}+h_{i,t}$ and put it together with the upper index $\alpha_i$ in $A$; 
\item If $\alpha_i> t$ and $a_i=a_i+\mu_{s,t}-\mu_{i,t}+h_{i,t}=\frac{n}{2}$ and $\#\{q|\beta_q\leq t\}$ is odd, replace $a_i$ with $\frac{n}{2}-1$ and put it together with the upper index $\alpha_i$ in a set $B$; 
\item If $\beta_j\leq t$, replace $b_j$ with $\nu_{j,t}$, and put it together with the upper index $\beta_j$ in $A$; 
\item If $\beta_j>t$ and $b_j+\nu_{j,t}-x_{j,t}\neq\frac{n}{2}-1$, replace $b_j$ with $b_j+\nu_{j,t}-x_{j,t}$, and put it together with the upper index $\beta_j$ in $B$. 
\item If $\beta_j>t$ and $b_j=\frac{n}{2}-1$ and $\#\{q|\beta_q\leq t\}$ is even, put it together with the upper index $\beta_j$ in $B$. 
\item If $\beta_j>t$ and $b_j=\frac{n}{2}-1$ and $\#\{q|\beta_q\leq t\}$ is odd, replace $b_j$ with $\frac{n}{2}$, and put it together with the upper index $\beta_j$ in $A$. 
\item If $\beta_j>t$ and $b_j\neq\frac{n}{2}-1$ and $b_j+\nu_{j,t}-x_{j,t}=\frac{n}{2}-1$, after all other sub-indices have been modified and put in either $A$ or $B$, let $A'$ and $B'$ be a copy of $A$ and $B$ respectively. Put $\frac{n}{2}$ together with the upper index $\beta_j$ in $A'$ and put $\frac{n}{2}-1$ together with the upper index $\beta_j$ in $B'$.
\end{itemize}
Let $A_\bullet, B_\bullet, A'_\bullet,B'_\bullet$ be the sequence obtained by rearranging the elements in $A,B,A',B'$ respectively so that the lower indices are in an increasing order. 

{\bf Case I.} If there is no $j$ such that $b_j\neq \frac{n}{2}$ and $b_j+\nu_{j,t}-x_{j,t}=\frac{n}{2}-1$, then the class of a general fiber of $\pi_t|_X$ is given by $m'\sigma_{A;B}$ for some $m'\in\mathbb{Z}^+$. 

{\bf Case II.} If $n=2d_k$ and there exists $j$ such that $b_j\neq \frac{n}{2}$ and $b_j+\nu_{j,t}-x_{j,t}=\frac{n}{2}-1$, then the class of a general fiber of $\pi_t|_X$ is given by $m'\sigma_{A';B}+m''\sigma_{A;B'}$ where $m'=0$ if $|A'|\equiv s\ (\text{mod }2)$ and $m''=0$ if $|A|\equiv s\ (\text{mod }2)$

{\bf Case III.} If there exists $j$ such that $b_j\neq \frac{n}{2}$ and $b_j+\nu_{j,t}-x_{j,t}=\frac{n}{2}-1$, then 
the class of a general fiber of $\pi_t|_X$ is given by $m'\sigma_{A';B}+m''\sigma_{A;B'}$ for some $0\neq m',m''\in\mathbb{Z}^+$. 
\end{Lem}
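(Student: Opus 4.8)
The plan is to follow the template of the proof of Lemma~\ref{class of pushforward in of}: reduce everything to the case where $X$ is an honest Schubert variety, compute the fibre of $\pi_t$ over a general point of its image directly, and then transport the answer back. For the reduction, I would argue that each coefficient of $[(\pi_t|_X)^{-1}(\Lambda_t)]$ against the Schubert basis of the fibre $\pi_t^{-1}(\Lambda_t)$ (itself an orthogonal partial flag variety) can be written as an intersection number on $OF(d_1,\ldots,d_k;n)$: one intersects $X$ with $\pi_t^*$ of a complementary-dimensional Schubert variety of $(\pi_t)_*(\sigma_{a^\alpha;b^\beta})$ in $OG(d_t,n)$ and with a suitable relative Schubert cycle supported over general points of $\pi_t(X)$. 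Such a number depends only on the rational-equivalence class $m\,\sigma_{a^\alpha;b^\beta}$ of $X$, so we may replace $X$ by $m\,\Sigma_{a^\alpha;b^\beta}(F_\bullet)$; passing to the underlying reduced subscheme only changes the final answer by the scalar $m'\in\Z^+$ that the statement permits. Hence it suffices to treat $X=\Sigma_{a^\alpha;b^\beta}(F_\bullet)$.

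For a Schubert variety, fix a general $\Lambda_t$ in $\pi_t(X)=\Sigma_{a'^{\alpha'};b'^{\beta'}}(F_\bullet)$, whose index is described by Lemma~\ref{class of pushforward in of}. The structural point is that the fibre $\pi_t^{-1}(\Lambda_t)$ decomposes as a product
$$F(d_1,\ldots,d_{t-1};d_t)\times OF(d_{t+1}-d_t,\ldots,d_k-d_t;\,n-2d_t),$$
where the first factor parametrises flags $\Lambda_1\subset\cdots\subset\Lambda_{t-1}$ inside $\Lambda_t$ and the second parametrises isotropic flags in the nondegenerate quadratic space $\Lambda_t^\perp/\Lambda_t$. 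Restricting the incidence conditions defining $\Sigma_{a^\alpha;b^\beta}(F_\bullet)$ to this fibre, the conditions attached to sub-indices with upper index $\le t$ become Schubert conditions in the first factor relative to the induced flag $\{F_{a_i}\cap\Lambda_t\}\cup\{F^\perp_{b_j}\cap\Lambda_t\}$, and those with upper index $>t$ become Schubert conditions in the second factor relative to the flag that $\{F_{a_i}\}$ and $\{F^\perp_{b_j}\}$ induce on $\Lambda_t^\perp/\Lambda_t$. Thus $(\pi_t|_X)^{-1}(\Lambda_t)$ is generically a product of two (orthogonal) Schubert varieties, and one only needs to read off their indices. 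Because $\Lambda_t$ is general, the relevant intersection numbers take their expected values --- $\dim(F_{a_i}\cap\Lambda_t)=\mu_{i,t}$, $\dim(F^\perp_{b_j}\cap\Lambda_t)=\nu_{j,t}$, and the dimensions of the images of $F_{a_i}$ and $F^\perp_{b_j}$ in $\Lambda_t^\perp/\Lambda_t$ come out to $a_i+\mu_{s,t}-\mu_{i,t}+h_{i,t}$ and $b_j+\nu_{j,t}-x_{j,t}$, by a dimension count that uses general position together with the observation that $F_{a_i}\subseteq F^\perp_{b_j}$ exactly when $b_j\ge a_i$. Substituting these values into the two families of Schubert conditions produces exactly the sets $A$ (the ``lower'' conditions) and $B$ (the ``upper'' ones), and normalising lower indices gives $\sigma_{A;B}$; this is Case~I.

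The main obstacle is the type~$D$ bookkeeping producing Cases~II and~III. When $n$ is even the quotient $\Lambda_t^\perp/\Lambda_t$ is again an even quadratic space with two families of maximal isotropic subspaces, and a $b_j$-condition whose transported value equals $\tfrac n2-1$ forces an auxiliary maximal isotropic of the quotient into one of the two families; for general $\Lambda_t$ both choices occur, so the fibre breaks into two irreducible pieces, lying in the families recorded by $A'$ and $B'$. The surviving pieces are then cut down by the parity constraint that a Schubert index for a connected component of $OG(d_k,2d_k)$ must satisfy $s\equiv d_k\pmod 2$: when $n=2d_k$ this global constraint can annihilate one of the two terms (Case~II, where $m'=0$ or $m''=0$ according to the parity of $|A'|$ or $|A|$), whereas when $n$ is even but $n\ne 2d_k$ no such global obstruction is present and both components genuinely occur with positive multiplicity (Case~III). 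I expect the delicate part to be checking that the transported flag meets the quotient's maximal isotropics of each family in precisely the claimed dimensions, and matching up the parity conditions correctly.

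Finally, positivity of the coefficients is immediate: $X$ is irreducible, so $\pi_t(X)$ is irreducible and every component of the general fibre that actually occurs does so with strictly positive multiplicity, yielding $m',m''\in\Z^+$ in Cases~I and~III and the stated vanishing pattern in Case~II.
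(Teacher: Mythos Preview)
Your proposal is correct and follows essentially the same approach as the paper: specialize $X$ to a Schubert variety and compute the fibre over a general $\Lambda_t$ directly, tracking the induced flag and handling the type~$D$ splitting when a transported index hits $\tfrac{n}{2}-1$. The only cosmetic difference is that the paper works inside the ambient $V$ by replacing each $F_{a_i}$ with $\mathrm{span}(F_{a_i}\cap\Lambda_t^\perp,\Lambda_t)$ (and similarly for $F_{b_j}$), rather than passing to the quotient $\Lambda_t^\perp/\Lambda_t$ and invoking the product decomposition of the fibre; the dimension counts and the parity discussion are identical.
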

\begin{proof}
We specialize $X$ to a Schubert variety defined by a partial flag $F_\bullet$. Then the fiber of $\pi_t|_X$ over $\Lambda_t\in \pi_t(X)$ is the Schubert variety defined by the partial flag $G_\bullet:=\Lambda_{t\bullet}\subset F'_\bullet$, where $\Lambda_{t\bullet}$ is a complete flag in $\Lambda_t$ and $F'_\bullet$ is a partial flag obtained from $F_\bullet$ by replacing $F_{a_i}$ with $\text{span}(F_{a_i}\cap \Lambda_t^\perp,\Lambda_t)$ and replacing $F_{b_j}^\perp$ with $(\text{span}(F_{b_j}\cap \Lambda_t^\perp,\Lambda_t))^\perp$, and then remove redundant elements with no jumps of dimension. It is then straightforward to check
\begin{eqnarray}
\dim(\text{span}(F_{a_i}\cap \Lambda_t^\perp,\Lambda_t))&=&\dim(F_{a_i}\cap\Lambda_t^\perp)+\dim(\Lambda_t)-\dim(F_{a_i}\cap\Lambda_t)\nonumber\\
&=&\dim(F_{a_i})+\dim(F_{a_i}^\perp\cap\Lambda_t)-\dim(F_{a_i}\cap \Lambda_t)\nonumber\\
&=&a_i+\mu_{s,t}-\mu_{i,t}+h_{i,t}\nonumber
\end{eqnarray}
If $a_i+\mu_{s,t}-\mu_{i,t}+h_{i,t}=\frac{n}{2}$, then one needs to specify the irreducible component of $\text{span}(F_{a_i}\cap \Lambda_t^\perp,\Lambda_t)$ depending on the parity of $\#\{q|\beta_q\leq t\}$.

Similarly, \begin{eqnarray}
\dim(\text{span}(F_{b_j}\cap \Lambda_t^\perp,\Lambda_t))&=&\dim(F_{b_j})+\dim(F_{b_j}^\perp\cap\Lambda_t)-\dim(F_{b_j}\cap \Lambda_t)\nonumber\\
&=&b_j+\nu_{j,t}-x_{j,t}\nonumber
\end{eqnarray}
If $b_j=\frac{n}{2}-1$, then $\text{span}(F_{b_j}\cap \Lambda_t^\perp,\Lambda_t))$ is also a maximal isotropic subspace and one needs to specify the irreducible component depending on the parity of $\#\{q|\beta_q\leq t\}$.

If $b_j<\frac{n}{2}-1$ and $b_j+\nu_{j,t}-x_{j,t}=\frac{n}{2}-1$, then the orthogonal component of $\text{span}(F_{b_j}\cap \Lambda_t^\perp,\Lambda_t))$ intersects $Q$ into a union of two maximal isotropic subspaces belonging to different components, and therefore the class splits into a sum of two Schubert classes. When $n=2d_k$, only one of them is effective due to the requirement that $d_k\equiv s$ (mod 2).
\end{proof}

Also recall the following observation in orthogonal Grassmannians, which is useful in the later proof:
\begin{Lem}\label{atob}
Let $\sigma_{a;b}$ be a Schubert class for $OG(k,n)$. Let $X$ be a representative of $m\sigma_{a;b}$, $m\in\mathbb{Z}^+$. Assume $a_i=b_j$ for some $i$ and $j$. If there exists an isotropic subspace $F_{a_i}$ of dimension $a_i< \frac{n}{2}-1$ such that $\dim(F_{a_i}\cap \Lambda)\geq i$ for all $\Lambda\in X$, then 
$$\dim(F_{a_i}^\perp\cap \Lambda)\geq k-j+1,\forall \Lambda\in X.$$
Conversely, if there exists an isotropic subspace $F_{b_j}$ of dimension $b_j$ such that $\dim(F_{b_j}^\perp\cap \Lambda)\geq k-j+1$ for all $\Lambda\in X$, then 
$$\dim(F_{b_j}\cap \Lambda)\geq i,\forall \Lambda\in X.$$
\end{Lem}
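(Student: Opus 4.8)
The plan is to prove the first assertion and deduce the converse by symmetry. Set $W=F_{a_i}$, an isotropic subspace of dimension $a_i<\tfrac{n}{2}-1$, so that $W\subseteq W^{\perp}$ and $W^{\perp}/W$ is a non-degenerate quadratic space of dimension $n-2a_i>2$. Since $\Lambda\mapsto\dim(\Lambda\cap W^{\perp})$ is upper-semicontinuous on $X$, it is enough to prove $\dim(\Lambda\cap W^{\perp})\ge k-j+1$ at a general point $\Lambda$ of $X$. First I would record the elementary bound
\[
\dim(\Lambda\cap W^{\perp})\ \ge\ \dim(\Lambda\cap W)+k-a_i ,
\]
valid for every isotropic $\Lambda$: the form $q$ identifies $V/W^{\perp}$ with $W^{*}$, and the image of $\Lambda$ there annihilates $\Lambda\cap W$ because $\Lambda$ is isotropic, so this image has dimension at most $a_i-\dim(\Lambda\cap W)$. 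One also checks directly from the definition of the index that $a_i=b_j$ forces $a_i\ge i+j-1$: for $y\le a_i$, isotropy forbids both $y$ and $n+1-y$ from being jumps of a $k$-plane in $\sigma_{a;b}$. Thus the defect $\delta:=a_i-i-j+1\ge 0$ is defined, and when $\delta=0$ the elementary bound together with $\dim(\Lambda\cap W)\ge i$ already gives $\dim(\Lambda\cap W^{\perp})\ge i+k-a_i=k-j+1$.

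For $\delta\ge 1$ I would induct on $\delta$ by a hyperplane-section argument in the spirit of Lemma~\ref{degree}. Form the incidence variety $\{(\Lambda,H):\Lambda\in X,\ \Lambda\subseteq H\}$ with $H$ a general hyperplane of $V$, and let $X_{H}$ be the fibre over $H$. Identifying $\{\Lambda\in OG(k,n):\Lambda\subseteq H\}$ birationally with $OG(k,n-2)$ (the orthogonal Grassmannian of the non-degenerate quotient of $H$ by its radical), the image of $X_{H}$ has class $m'\sigma_{a';b'}$ for a pair $(a';b')$ obtained from $(a;b)$ by this section — by a computation parallel to the one in the proof of Lemma~\ref{degree} — in which the distinguished coincidence $a_i=b_j$ becomes $a_i-1=b_j-1$ at the same positions; in particular the new defect is $\delta-1$. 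Since $\Lambda\subseteq H$ gives $\Lambda\cap W=\Lambda\cap(W\cap H)$ with $W\cap H$ isotropic of dimension $a_i-1$, the hypothesis $\dim(\Lambda\cap W)\ge i$ descends to the reduced situation, while $\dim(\Lambda\cap W^{\perp})$ is recovered from $\dim\bigl(\Lambda\cap(W\cap H)^{\perp}\bigr)$ after the obvious bookkeeping of the section. Applying the inductive hypothesis to the reduced locus and then letting $H$ vary — the $X_{H}$ sweep out a dense subset of $X$ — would give $\dim(\Lambda\cap W^{\perp})\ge k-j+1$ at a general $\Lambda\in X$, and hence everywhere on $X$ by semicontinuity.

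I expect the inductive step to be the only genuine obstacle: one must track the Schubert class $m\sigma_{a;b}$ through the non-homogeneous degeneration from $X$ to $X_{H}$ and verify that the coincidence $a_i=b_j$ and the defect behave as claimed. That the global class of $X$ must really be used is clear, because the naive pointwise implication ``$\dim(\Lambda\cap W)\ge i\Rightarrow\dim(\Lambda\cap W^{\perp})\ge k-j+1$'' is false already for a single general isotropic $k$-plane through $W$ — for instance in $OG(2,6)$ with $a=b=(2)$, a general isotropic $2$-plane meeting a fixed isotropic plane $W$ along a line is not contained in $W^{\perp}$. The remaining points — identifying $\{\dim(\Lambda\cap W)\ge i\}$ and $\{\dim(\Lambda\cap W)\ge i,\ \dim(\Lambda\cap W^{\perp})\ge k-j+1\}$ as Schubert varieties, their mutual containment, and the transport of the $b_j$-jump across the section — are routine from the definitions in \S\ref{sec-prelim}--\S\ref{Type BD}. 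Finally the converse is proved by the same scheme applied to $W=F_{b_j}$: there the coisotropic subspace $F_{b_j}^{\perp}$ and the non-degenerate quotient $F_{b_j}^{\perp}/F_{b_j}$ replace $W$ and $W^{\perp}/W$, and the two dimensions $\dim(\Lambda\cap W)$ and $\dim(\Lambda\cap W^{\perp})$ exchange roles throughout.
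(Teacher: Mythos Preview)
Your base case is clean and correct: the elementary inequality
\[
\dim(\Lambda\cap W^{\perp})\ \ge\ \dim(\Lambda\cap W)+k-a_i
\]
is valid, and your combinatorial observation that $a_i=b_j$ forces $a_i\ge i+j-1$ (because $\{a_1,\dots,a_i\}$ and $\{b_1+1,\dots,b_{j-1}+1\}$ are disjoint subsets of $\{1,\dots,a_i\}$ by the rule $a_\ell\ne b_m+1$) is exactly right. So the case $\delta=0$ needs no further work.

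The inductive step, however, has a genuine gap. A general hyperplane $H$ in the incidence variety $\{(\Lambda,H):\Lambda\in X,\ \Lambda\subset H\}$ is \emph{not} tangent to $Q$: tangent hyperplanes through a fixed isotropic $\Lambda$ are parametrized by $Q\cap\mathbb{P}(\Lambda^{\perp})$, which has dimension $n-k-2$, while all hyperplanes through $\Lambda$ form a $\mathbb{P}^{n-k-1}$. Hence $q|_H$ is non-degenerate, its radical is zero, and there is no ``non-degenerate quotient of $H$ by its radical'' of dimension $n-2$; the slice lives in $OG(k,n-1)$, not $OG(k,n-2)$. If you pass instead to $OG(k,n-1)$, the $a$--index drops by one but the $b$--index does not, so the coincidence $a_i=b_j$ is destroyed and the induction collapses. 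If you try to force tangency by choosing $H=p^{\perp}$ with $p\in\Phi_X$, then for such $p$ one typically has $W\subset p^{\perp}$ (so $W\cap H=W$, and $a_i$ does not drop either), and in any case tracking the class of $X_H$ through the rational quotient $p^{\perp}\to p^{\perp}/\langle p\rangle$ is exactly the hard point you have not addressed.

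The paper's argument is organized quite differently and avoids this obstacle. For the forward implication it does \emph{not} cut by a hyperplane at all: it first treats $j=1$ by looking at the swept--out variety $\Phi_X\subset Q$, using Lemma~\ref{dim of q} to pin down $\dim\Phi_X=n-b_1-2$, and then an incidence in $OF(i,k;n)$ to force $\Phi_X\subset\mathbb{P}(\Lambda_i^{\perp})$ for every $i$--plane $\Lambda_i\subset F_{a_i}$, hence $\Phi_X\subset\mathbb{P}(F_{a_i}^{\perp})$. The hyperplane section is used only for the \emph{converse} implication, and there it works precisely because the hypothesis $\Lambda\subset F_{a_i}^{\perp}$ is already in hand: this confines $\Phi_{X_H}$ to the corank--$(a_i-1)$ quadric $\mathbb{P}(F_{a_i}^{\perp})\cap Q\cap H$, which sits inside a smooth quadric of dimension $n-4$ and lets one view $X_H$ inside $OG(k,n-2)$ with the class computed as claimed. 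Finally, the case $j>1$ is handled by a separate induction on $j$ via the locus $X_p$ through a general point $p\in\Phi_X$. In short, the asymmetry between the two directions is real, and the step you flagged as ``the only genuine obstacle'' is where your scheme breaks; the swept--out variety $\Phi_X$ together with Lemma~\ref{dim of q} is the missing ingredient for the forward direction.
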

\begin{proof}
If the statement is true for all irreducible components of $X$, then it is true for $X$. From now on, we assume $X$ is irreducible.

{\bf Case (A).} Assume there exists an isotropic subspace $F_{a_i}$ of dimension $a_i< \frac{n}{2}-1$ such that $\dim(F_{a_i}\cap \Lambda)\geq i$ for all $\Lambda\in X$. We use induction on $h_i$ by a decreasing order. 

If $a_i=i$, then $F_i\subset \Lambda$ implies $\Lambda\subset F_i^\perp$ for all $\Lambda\in X$. If $a_i-i>0$, let $\Phi_X$ be the variety swept out by $\mathbb{P}^{k-1}$ parametrized by $X$. 
\begin{Lem}\label{dim of q}
Let $X$ be a subvariety of $OG(k,n)$ with class $m\sigma_{a;b}$, $m\in\mathbb{Z}^+$. Let $\Phi_X$ be the variety swept out by the projective linear spaces $\mathbb{P}^{k-1}$ parametrized by $X$. If $b$ is non-empty, then $\dim(\Phi_X)=n-b_1-2$. If $b$ is empty, i.e. $s=k$, then $\dim(\Phi_X)=a_k-1$.
\end{Lem}
\begin{proof}
First assume $b$ is non-empty and $b_1\neq \frac{n}{2}-1$. Let $G_{b_1}$ be a general isotropic subspace of dimension $b_1$. Let $\Sigma$ be the Schubert variety that parametrizes all isotropic subspaces meeting $G_{b_1}$ in dimension at least 1. Let $\sigma=[\Sigma]$. Then from $\sigma\cdot \sigma_{a;b}=0$, we obtain that $\Sigma\cap X=\emptyset$. Therefore the dimension of $\Phi_X$ is at most $n-b_1-2$.

When $X$ is a Schubert variety, $\Phi_X$ is the intersection of $Q$ and $\mathbb{P}(F_{b_1}^\perp)$, in which cases $\Phi_X$ has dimension $n-b_1-2$. By semi-continuity, we conclude that $\dim(\Phi_X)=n-b_1-2$ for any $X$ of the class $m\sigma_{a;b}$.

Now assume $b$ is empty. Let $i:OG(k,n)\rightarrow G(k,n)$ be the natural inclusion. Then $i_*(\sigma_a)=\sigma_a$ where for the rightside we consider $a$ as a Schubert index for $G(k,n)$. Let $G_{n-a_s}$ and $G_{n-a_s+1}$ be general linear spaces in $V$ of dimension $n-a_s$ and $n-a_s+1$ respectively. Let $\Sigma'$ and $\Sigma''$ be the Schubert variety in $G(k,n)$ parametrizing all subspaces that meets $G_{n-a_s}$ and $G_{n-a_s+1}$ in dimension at least 1 respectively. From intersection product, we conclude that $\Sigma'\cap X=\emptyset$ while $\Sigma''\cap X\neq\emptyset$. Therefore $\dim(\Phi_X)=a_k-1$.
\end{proof}
By Lemma \ref{dim of q}, $\dim(\Phi_X)=n-b_1-2$. We prove $\Phi_X\subset \mathbb{P}(F^\perp_{a_i})$ by showing $\Phi_X\subset \mathbb{P}(F^\perp_{i})$ for any subspaces $\Lambda_i$ in $F_{a_i}$ of dimension $i$. Consider the following locus in $OF(i,k;n)$:
$$I:=\{(\Lambda_i,\Lambda)|\Lambda_i\subset(\Lambda\cap F_{a_i}),\Lambda\in X\}.$$
By specializing $X$ to a union of Schubert varieties, one can see the class of $I$ is given by $m'\sigma_{a^\alpha;b^\beta}$ where $m'\in\mathbb{Z}^+$, $\alpha_\mu=1$ if $\mu\leq i$, $\alpha_\mu=2$ if $\mu>i$ and $\beta_\nu=2$ for all $1\leq \nu\leq k-s$. Let $\Lambda_i$ be a general point in $\pi_1(I)$. By Lemma \ref{class of pushforward in of} and Lemma \ref{class of pullback in of}, the class of $(\pi_2\circ \pi_1^{-1}|_I)(\Lambda_i)$ is given by $m'\sigma_{a';b'}$ where $m'\in\mathbb{Z}^+$, $a'_\mu=\mu$ if $\mu\leq i$ and $b'_1=b_1$. Let $\Phi_i$ be the variety swept out by the projective linear spaces $\mathbb{P}^{k-1}$ parametrized by $(\pi_2\circ \pi_1^{-1}|_I)(\Lambda_i)$. It is then clear that $\Phi_i\subset\mathbb{P}(F_{i}^\perp)$. Apply Lemma \ref{dim of q} again, $\dim(\Phi_i)=\dim(\Phi_X)=n-b_1-2$. Since $X$ is irreducible, by the fiber dimension theorem, $\Phi_X$ is also irreducible. Therefore $\Phi_X=\Phi_i\subset \mathbb{P}(F_{i}^\perp)$ for a general $F_i$ in $\pi_1(I)$. By the semi-continuity of dimension, it holds for every subspace $F_i$ in $F_{a_i}$. We conclude that $\Lambda\subset F_{a_1}^\perp$ for all $\Lambda\in X$.

{\bf Case (B).} Conversely, assume $\Lambda\subset F_i^\perp$ for all $\Lambda\in X$. We use induction on $a_i-i$. For every irreducible component $X^r$ of $X$, consider the sub locus $X^r_S:=\{\Lambda\in X^r|F_i\subset\Lambda\}$. By specializing $X$ to a union of Schubert varieties, we obtain $\dim(X^r_S)=\dim(X^r)$, which implies $X^r_S=X^r$. Therefore $F_i\subset \Lambda$ for all $\Lambda\in X$. 

If $a_i-i>0$, consider the incidence correspondence
$$I:=\{(\Lambda,H)|\Lambda\in X, \Lambda\subset H,H\text{ is a hyperplane}\}.$$
Let $H$ be a general point in $\pi_2(I)$. Define $X_H:=\{\Lambda\in X|\Lambda\subset H\}$. Let $\Phi_H$ be the variety swept out by the projective linear spaces $\mathbb{P}^{k-1}$ parametrized by $X_H$. Then $\Phi_H\subset\mathbb{P}(F_{a_i}^\perp)\cap Q\cap H:=Q_H$, where the right side is a sub-quadric of $Q$ of dimension $n-a_i-3$ with corank $a_i-1$ and thus contained in a smooth quadric $Q'$of dimension $n-4$. Therefore $X_H$ can be viewed as a sub-variety of $OG(k,n-2)$. The class of $X_H$ in $OG(k,n-2)$ is given by $m'\sigma_{a',b'}$, where $m'\in\mathbb{Z}^+$, $a'_\mu=a_\mu$ if $a_\mu=\mu$, $a'_\mu=a_\mu-1$ if $a_\mu>\mu$ and $b'_\nu=b_\nu-1$ for all $1\leq \nu\leq k-s$. In particular, $a'_i=a_i-1=b_1-1=b'_1$. By Lemma \ref{dim of q}, $\dim(\Phi_H)=n-2-(b_1-1)-2=n-a_i-3=\dim(Q_H)$. Since the sub-quadric $Q_H$ is irreducible, $\Phi_H=Q_H$. Set $F_H=F_{a_i}\cap H$. $\dim(F_H)=a_i-1$. Notice that $F_H$ is contained in the singular locus of $Q_H$ and the maximal possible corank of a sub-quadric of dimension $n-a_i-3$ in $Q'$ is $a_i-1$. By induction, $\dim(F_{H}\cap \Lambda)\geq i$ for all $\Lambda\in X_H$. As varying $H$, $X_H$ cover $X$ and therefore $\dim(F_{a_i}\cap \Lambda)\geq i$ for all $\Lambda\in X$.

Now assume $j>1$. We use induction on $j$. Let $p$ be a general point in $\Phi_X$. The class of $X_p$ is given by $m'\sigma_{a';b'}$ where $a'_i=a_i=b'_{j-1}$. Set $F_{a_i}^p:=\text{span}(p,p^\perp\cap F_{a_i})$. $\dim(F_{a_i}^p)=a_i$. If $\dim(F_{a_i}\cap \Lambda)\geq i, \forall \Lambda\in X$, then for every $\Lambda\in X_p$, 
\begin{eqnarray}
\dim(\Lambda\cap F_{a_i}^p)&=&\dim(\Lambda\cap\text{span}(p,p^\perp\cap F_{a_i}))\nonumber\\
&=&1+\dim(\Lambda\cap F_{a_i})\nonumber\\
&\geq&i+1\nonumber
\end{eqnarray}
By induction, $\dim(\Lambda \cap (F_{a_i}^p)^\perp)\geq k-(j-1)+1=k-j+2$. Notice that
$$F_{a_i}^\perp\cap (F_{a_i}^p)^\perp=F_{a_i}^\perp\cap p^\perp\cap (p^\perp\cap F_{a_i})^\perp=F_{a_i}^\perp\cap p^\perp,$$
which is a codimension one subspace in $(F_{a_i}^p)^\perp$. Therefore 
\begin{eqnarray}
\dim(\Lambda\cap F_{a_i}^\perp)&\geq&\dim(\Lambda\cap (F_{a_i}^p)^\perp)-1\nonumber\\
 &\geq&n-j+1.\nonumber
\end{eqnarray}
Conversely, if $\dim(F_{a_i}^\perp\cap \Lambda)\geq k-j+1,\forall \Lambda\in X$, then 
\begin{eqnarray}
\dim(\Lambda\cap (F_{a_i}^p)^\perp)&=&\dim(\Lambda\cap p^\perp\cap(p^\perp\cap F_{a_i})^\perp)\nonumber\\
&=&\dim(\Lambda\cap (p^\perp\cap F_{a_i})^\perp)\nonumber\\
&\geq&\dim(\Lambda\cap p)+\dim(\Lambda\cap F_{a_i}^\perp)\nonumber\\
&\geq&k-j+2.\nonumber
\end{eqnarray}
By induction, $\dim(F_{a_i}^p\cap \Lambda)\geq i+1,\forall \Lambda\in X_p$. Since $$F_{a_i}\cap F_{a_i}^p=F_{a_i}\cap\text{span}(p,p^\perp\cap F_{a_i})=p^\perp\cap F_{a_i},$$
$\dim(F_{a_i}\cap \Lambda)\geq \dim(F_{a_i}^p\cap \Lambda)-1\geq i,\forall \Lambda\in X_p$. As varying $p$, we conclude that $\dim(F_{a_i}\cap \Lambda)\geq i,\forall \Lambda\in X$.
\end{proof}
\begin{Rem}
If $a_s=b_{k-s}=\frac{n}{2}-1$, then $F_{a_s}$ is a codimension 1 subspace of $F_{a_s}^\perp$. Therefore $\dim(F_{a_s}^\perp\cap\Lambda)\geq k-(k-s)+1=s+1$ implies $\dim(F_{a_s}\cap\Lambda)\geq s$.
\end{Rem}

Now we are able to prove the first main theorem of this section:
\begin{Thm}\label{orthogonal rigid index}
Let $X$ be a variety in $OF(d_1,...,d_k;n)$ with class $m\sigma_{a^\alpha;b^\beta}$, $m\in\mathbb{Z}^+$. Suppose $a_i$ (or $b_j$ resp.) is essential with respect to the Schubert class $(\pi_t)_*(\sigma_{a^\alpha;b^\beta})$ for some $t$ and there exists an isotropic subspace $F_{a_i}$ (or $F_{b_j}$ resp.) such that $\dim(F_{a_i}\cap\Lambda_t)\geq \mu_{i,t}$ (or $\dim(F^\perp_{b_j}\cap\Lambda_t)\geq \nu_{j,t}$ resp.) for all $\Lambda_t\in \pi_t(X)$, then the same inequality holds for all $1\leq t\leq k$.
\end{Thm}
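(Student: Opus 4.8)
The plan is to imitate the type-A argument in Proposition \ref{rigid of flag a} (and Proposition \ref{rigid 2}), transporting it to the orthogonal setting by means of the fiber-class computation of Lemma \ref{class of pullback in of}. Fix an essential $a_i$ (the $b_j$ case is symmetric, via Lemma \ref{atob} whenever $a_i=b_j$) and suppose $F_{a_i}$ with $\dim(F_{a_i}\cap\Lambda_t)\ge\mu_{i,t}$ for all $\Lambda_t\in\pi_t(X)$. First I would dispose of the easy reductions: by passing to irreducible components we may assume $X$ is irreducible, hence each $\pi_r(X)$ is irreducible. The claim is that $\dim(F_{a_i}\cap\Lambda_r)\ge\mu_{i,r}$ for every $r$; assume for contradiction that this fails for some $r$, so by semicontinuity it fails for a general $\Lambda_r\in\pi_r(X)$.

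The argument then splits into two cases exactly as in Proposition \ref{rigid of flag a}. \textbf{Case $t<r$:} consider the general fiber $(\pi_r|_X)^{-1}(\Lambda_r)$; by Lemma \ref{class of pullback in of} its class records a sub-index equal to $\mu_{i,r}$ with upper index $\alpha_i$ (the $a_i$ gets "frozen'' to $\mu_{i,r}$ as in the first bullet of that lemma, since $\alpha_i\le r$). Pushing this fiber forward by $\pi_t$ and using Lemma \ref{class of pushforward in of}, we get an irreducible variety whose $OG(d_t,n)$-class forces $\dim(W\cap\Lambda'_t)\ge\mu_{i,t}$ where $W:=F_{a_i}\cap\Lambda_r$; but $\dim W<\mu_{i,r}$ contradicts the value $\mu_{i,r}$ appearing in the pushforward class. \textbf{Case $r<t$:} by the previous case reduce to $t=k$, so $a_i$ is rigid (hence essential) in the top component. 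If no later sub-index exceeds $r$ in upper index one wins by the codimension count $\dim(F_{a_i}\cap\Lambda_r)\ge\dim(F_{a_i}\cap\Lambda_k)-(d_k-d_r)$; otherwise pick the minimal such $l$, set $\gamma=\#\{c\le l\mid \alpha_c>r\}$, and the pullback-class lemma gives a sub-index $d_r+\gamma+a_l-l$ in the relevant fiber class; spanning $U=\operatorname{span}(\Lambda_r,F_{a_i})$ and estimating $\dim(U\cap\Lambda_k)$ produces a subspace violating that value, since essentiality of $a_i$ gives $a_l-l>a_i-i$.

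When $a_i$ is one of the boundary sub-indices ($i=s$, or $a_i=\tfrac n2-1$, etc.) and the relevant representative has a "split'' fiber class (Case II/III of Lemma \ref{class of pullback in of}), I would handle the extra dimension bookkeeping by working on one irreducible component of the orthogonal complement at a time — this is where the isotropic structure genuinely intervenes, and where I would invoke Lemma \ref{atob} and the Remark following it to pass between the $\dim(F_{a_i}\cap\Lambda)$ and $\dim(F_{a_i}^\perp\cap\Lambda)$ formulations. For the $b_j$ statement the roles of $a$ and $b$ swap: the analogue of "freezing'' is governed by the $\nu_{j,t}$, $x_{j,t}$ bullets of Lemma \ref{class of pullback in of}, and the key inequality $\dim(F_{b_j}^\perp\cap\Lambda_r)\ge\dim(F_{b_j}^\perp\cap\Lambda_k)-(d_k-d_r)$ plays the role of the codimension count.

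The main obstacle I anticipate is precisely the case analysis around maximal isotropic subspaces: when $a_i+\mu_{s,t}-\mu_{i,t}+h_{i,t}$ or $b_j+\nu_{j,t}-x_{j,t}$ lands on $\tfrac n2-1$ or $\tfrac n2$, the span $\operatorname{span}(F\cap\Lambda_t^\perp,\Lambda_t)$ may be maximal isotropic or its orthogonal may break into two components, so the "class of the fiber'' is a sum of two Schubert classes and one must track which summand is effective (via the parity condition $s\equiv d_k\pmod 2$). Everything else is a faithful transcription of the type-A proof, with intersection-product vanishing ($\sigma\cdot\sigma^{\vee}=0$ for non-dual pairs) doing the same work as before.
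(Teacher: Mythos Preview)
Your plan captures the broad outline correctly---semicontinuity, split into $t<r$ and $r<t$, reduce to $t=k$, codimension count in the trivial sub-case---but the heart of the $r<t$ argument does \emph{not} transcribe faithfully from type~A, and this is where your proposal has a genuine gap.

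In type~A (Proposition~\ref{rigid of flag a}) the test subspace is $U=\operatorname{span}(\Lambda_r,F_{a_i})$, and the contradiction comes from $\dim(U')<c_{d_r+\gamma}$ for a general codimension-cut $U'\subset U$. In $OG$ this fails for two reasons. First, $U$ is not isotropic in general, so it cannot be compared against the Schubert index $a'_\gamma$ of the fiber class (which records dimensions of \emph{isotropic} subspaces). The paper instead uses $W=\operatorname{span}(F_{a_i}\cap\Lambda_r^\perp,\Lambda_r)$, which \emph{is} isotropic. Second, the fiber-class formula in Lemma~\ref{class of pullback in of} is not $d_r+\gamma+a_l-l$ as you write; it is $a'_\gamma=a_l+\mu_{s,r}-\mu_{l,r}+h_{l,r}$, where $h_{l,r}$ counts $b_q$'s with $b_q\ge a_l$ and $\beta_q\le r$. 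The $b$-sequence enters the numerics essentially, and the estimate $\dim W-\dim(W\cap\Lambda_k)\le a_i-d_r+\mu_{s,r}+h_{i,r}-i$ leads to the inequality $a_l-a_i-(l-i)\ge h_{i,r}-h_{l,r}$, which holds because of the Schubert-index constraint $a_p\neq b_q+1$. This gives $\ge 0$, not $>0$; strictness is then forced by a separate argument ruling out simultaneous equality via Lemma~\ref{atob}. None of this is the type-A inequality ``$a_l-l>a_i-i$'' that you invoke.

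Two further points. The $b_j$ case is \emph{not} symmetric to $a_i$ via Lemma~\ref{atob}; that lemma only applies when $a_i=b_j$ for some $i$, whereas a general essential $b_j$ need not equal any $a_i$. The paper treats Case~(B) on its own, with the analogous isotropic test subspace $W=\operatorname{span}(F_{b_j}\cap\Lambda_r^\perp,\Lambda_r)$ and the dual inequality $x_{j,r}-x_{h,r}\le b_j-b_h-(j-h)$. Finally, in the degenerate sub-cases where the relevant span has dimension exactly $\tfrac{n}{2}$ or $\tfrac{n}{2}-1$ (your ``main obstacle''), tracking parity alone is not enough: the paper passes to an auxiliary flag variety $OF(d_0,d_r,d_k;n)$ with a carefully chosen $d_0$ to break the degeneracy (Cases~B2-2-2 and~A2-2-2-3), a trick your proposal does not anticipate.
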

\begin{proof}
By passing to irreducible components if necessary, one can assume $X$ is irreducible. 

{\bf Case (B).} First consider the case of $b_j$. Suppose, for a contradiction, that $\dim(F_{b_j}^\perp\cap\Lambda_r)<\nu_{j,r}$ for some $r\neq t$ and $\Lambda_r\in\pi_r(X)$. Since $X$ is irreducible, the image $\pi_r(X)$ is also irreducible. By the semi-continuity of dimension, $\dim(F_{b_j}^\perp\cap \Lambda_r)<\nu_{j,r}$ for a general $\Lambda_r\in \pi_r(X)$. Consider $\Lambda_r$ as a general point in $\pi_r(X)$. Let $\sum m'\sigma_{a';b'}$ be the class of $\pi_t(\pi_r|_X^{-1}(\Lambda_r))$ in $OG(k_t,n)$.

{\bf Case (B1).} If $t<r$, then by Lemma \ref{class of pushforward in of} and Lemma \ref{class of pullback in of}, $a'_{\nu_{j,t}}=\nu_{j,r}$. On the other hand, let $U:= F_{b_j}^\perp\cap \Lambda_r$. Clearly $U$ is isotropic since $\Lambda_r$ is. By assumption, $\dim(U)<\nu_{j,r}$. For all $\Lambda_t\in \pi_t(\pi_r|_X^{-1}(\Lambda_r))$,
\begin{eqnarray}
\dim(W\cap \Lambda_t)&=&\dim(F_{b_j}^\perp\cap\Lambda_r)\cap\Lambda_t\nonumber\\
&=&\dim(F_{b_j}^\perp\cap\Lambda_t )\nonumber\\
&\geq&\nu_{j,t}\nonumber
\end{eqnarray}
This contradicts the relation $a'_{\nu_{j,t}}=\nu_{j,r}$.

{\bf Case (B2).} Assume $r<t$. Apply {\bf Case (B1)} if necessary, we assume $t=k$. 

{\bf Case (B2-1).} If $d_k-d_r=\nu_{j,k}-\nu_{j,r}$, then since $\Lambda_r$ is of codimension $d_k-d_r$ in $\Lambda_k$,
\begin{eqnarray}
\dim(F_{b_j}^\perp\cap\Lambda_k)\geq \nu_{j,k}\Rightarrow\dim(F_{b_j}^\perp\cap\Lambda_r)&\geq&\nu_{j,k}-(d_k-d_r)\nonumber\\
&=&\nu_{j,r}\nonumber
\end{eqnarray}

{\bf Case (B2-2).} If $d_k-d_r\neq\nu_{j,k}-\nu_{j,r}$, let $h:=\max\{q|q<j, \beta_q>r\}.$ Set $$\phi:=h-\#\{q|q<h,\beta_q\leq r\}=h-(d_r-\nu_{h,r}).$$ From the construction, $\nu_{h,r}-\nu_{j,r}=j-h-1$ and therefore
$$\phi=-d_r+\nu_{j,r}+j-1$$

{\bf Case (B2-2-1).} If $b_h+\nu_{h,r}-x_{h,r}\neq \frac{n}{2}-1$, by Lemma \ref{class of pushforward in of} and Lemma \ref{class of pullback in of}, 
$b'_{\phi}=b_h+\nu_{h,r}-x_{h,r}.$ We show a contradiction that there exists either a maximal isotropic subspace or an isotropic subspace of dimension greater than $b'_\phi$ whose orthogonal complement intersects every $k$-plane in $({\pi_k}\circ \pi_r|_X^{-1})(\Lambda_r)$ in dimension at least $d_k-\phi+1$. The later case equivalent to find an isotropic subspace $W$ such that $\dim(W^\perp\cap\Lambda_k)\geq d_k-\phi+1$, $\forall\Lambda_k\in ({\pi_k}\circ \pi_r|_X^{-1})(\Lambda_r)$ and
$$b'_\phi+d_k-\phi+1-[\dim(W)+\dim(W^\perp\cap \Lambda_k)]<0.$$ 

{\bf Case (B2-2-1-1).} If $b_j=\frac{n}{2}-1$, then let $W:=\text{span}(F^\perp_{b_j}\cap \Lambda_r^\perp,\Lambda_r)$. Since $F^\perp_{n/2-1}$ is a maximal isotropic subspace, so is $W$. 
For a general $\Lambda_k\in ({\pi_k}\circ \pi_r|_X^{-1})(\Lambda_r)$,
\begin{eqnarray}
\dim(W\cap \Lambda_k)&=&\dim(\text{span}(F^\perp_{b_j}\cap \Lambda_r^\perp,\Lambda_r)\cap \Lambda_k)\nonumber\\
&=&d_r+\dim(F_{b_j}^\perp\cap \Lambda_k)-\dim(F^\perp_{b_j}\cap\Lambda_r)\nonumber\\
&\geq& d_r+d_k-j+1-(\nu_{j,r}-1)=d_k-\phi+1\nonumber
\end{eqnarray}

{\bf Case (B2-2-1-2).} If $b_j<\frac{n}{2}-1$, let $W:=\text{span}(F_{b_j}\cap \Lambda_r^\perp,\Lambda_r).$  Then $$\dim(W)=d_r+\dim(F_{b_j}\cap\Lambda_r^\perp)-\dim(F_{b_j}\cap \Lambda_r),$$
and for a general $\Lambda_k\in ({\pi_k}\circ \pi_r|_X^{-1})(\Lambda_r)$,
\begin{eqnarray}
\dim(W^\perp\cap \Lambda_k)&=&\dim((F_{b_j}\cap\Lambda_r^\perp)^\perp\cap \Lambda_r^\perp\cap\Lambda_k)\nonumber\\
&=&d_k-\dim(F_{b_j}\cap \Lambda_r^\perp)+\dim(F_{b_j}\cap\Lambda_r^\perp\cap\Lambda_k^\perp)\nonumber\\
&=&d_k-(b_j-d_r+\dim(F_{b_j}^\perp\cap\Lambda_r))+(b_j-d_k+\dim(F_{b_j}^\perp\cap\Lambda_k))\nonumber\\
&=&d_r-\dim(F_{b_j}^\perp\cap\Lambda_r)+\dim(F_{b_j}^\perp\cap\Lambda_k)\nonumber\\
&\geq& d_r-\nu_{j,r}+1+d_k-j+1=d_k-\phi+1\nonumber
\end{eqnarray}
If $\dim(W)=\frac{n}{2}$, then $\dim(W\cap\Lambda_k)=\dim(W^\perp\cap \Lambda_k)\geq d_k-\phi+1$. If $\dim(W)\leq\frac{n}{2}-1$, then
\begin{eqnarray}
\dim(W)+\dim(W^\perp\cap\Lambda_k)&=&d_k+d_r+(b_j-d_k+\dim(F_{b_j}^\perp\cap\Lambda_k))-\dim(F_{b_j}\cap\Lambda_r)\nonumber\\
&=&d_r+b_j+d_k-j+1-\dim(F_{b_j}\cap\Lambda_r)\nonumber\\
&\geq&d_r+b_j+d_k-j+1-x_{j,r}
\end{eqnarray}
Note that by definition $a_i-b_j\neq 1$ for all $i$ and $j$, which implies 
\begin{eqnarray}
x_{j,r}-x_{h,r}&=&\#\{p|b_h< a_p\leq b_j,\alpha_p\leq r\}\nonumber\\
&\leq& b_j-b_h-(j-h).
\end{eqnarray}
Therefore 
\begin{eqnarray}
b'_\phi+d_k-\phi+1-[\dim(W)+\dim(W^\perp\cap \Lambda_k)]&\leq&b_h+\nu_{h,r}-x_{h,r}+d_k-(h-(d_r-\nu_{h,r}))+1\nonumber\\
& &-(d_r+b_j+d_k-j+1-x_{j,r})\nonumber\\
&=&b_h-b_j+x_{j,r}-x_{h,r}+j-h\leq0
\end{eqnarray}
We claim that the equalities in (1) and (2) can not be reached at the same time and therefore the inequality in (3) is strict. Assume $x_{j,r}-x_{h,r}=b_j-b_h-(j-h)$, then it is necessary that $b_j=a_i$ and $\alpha_i\leq r$ for some $1\leq i\leq s$. If the equality in (1) holds, i.e. $\dim(F_{b_j}\cap\Lambda_r)=x_{j,r}$ for a general $\Lambda_r$, then by the semi-continuity of dimension, $\dim(F_{b_j}\cap\Lambda_r)\geq x_{j,r}$ for all $\Lambda_r\in\pi_r(X)$. By Lemma \ref{atob}, $\dim(F_{b_j}^\perp\cap\Lambda_r)\geq\nu_{j,r}$ for all $\Lambda_r\in \pi_r(X)$. We reach a contradiction. 

{\bf Case (B2-2-2).} If $b_h+\nu_{h,r}-x_{h,r}=\frac{n}{2}-1$, then for every $\epsilon\in (b_h,\frac{n}{2}-1]$, either $\epsilon\in b_\bullet$ or $\epsilon+1\in a_\bullet$ with upper index $\leq r$. Since $b_j$ is essential, it is necessary that $b_j=a_i$ for some $i$ and $b_{j-1}\neq b_j+1$. Let $\pi:OF(d_1,...,d_k;n)\rightarrow OF(d_r,d_k;n)$ be the canonical projection. Let $X':=\pi(X)\subset OF(d_r,d_k;n)$ be the image of $X$. Consider the following locus in $OF(d_0,d_r,d_k;n)$, $d_0:=d_r-\nu_{j,r}+1$,
$$I:=\{(\Lambda_0\subset \Lambda_r\subset\Lambda_k)|\Lambda_0\subset \Lambda_r,\dim(\Lambda_0)=d_0, (\Lambda_r,\Lambda_k)\in X'\}$$
$I$ is the preimage of $X'$ under the projection $OF(d_0,d_r,d_k;n)\rightarrow OF(d_r,d_k;n)$. The class of $X'$ in $OF(d_r,d_k;n)$ is given by $m_{r,k}\sigma_{a^{\alpha^{r,k}};b^{\beta^{r,k}}}$, where $\alpha^{r,k}$ and $\beta^{r,k}$ are obtained from $\alpha$ and $\beta$ by changing the numbers less than $r$ to $r$ and changing the numbers greater than $r$ to $k$, respectively. The class of $I$ in $OF(d_0,d_r,d_k;n)$ is then given by $m_0\sigma_{a^{\alpha^0};b^{\beta^0}}$, where $\alpha^0=\alpha^{r,k}$, $\beta^0_{j'}=\beta^{r,k}_{j'}$ if $j'>j$ or $\beta^{r,k}_{j'}\neq r$, and $\beta^0_{j'}=0$ if $j'\leq j$ and $\beta^{r,k}_{j'}=r$. Consider the projection $OF(d_0,d_r,d_k;n)\rightarrow OG(d_0,n)$. Since $\alpha^0_i=r$, we obtain $b_h+\nu_{h,0}-x_{h,0}\neq \frac{n}{2}-1$. Apply {\bf Case (B2-2-1)}, we obtain $\dim(F_{b_j}^\perp\cap\Lambda_0)\geq\nu_{j,0}$ for all $\Lambda_0\in \pi_0(I)$. Since $d_0<d_r$, we apply {\bf Case (B1)} to conclude $\dim(F_{b_j}^\perp\cap\Lambda_r)\geq \nu_{j,r}$ for all $\Lambda_r\in\pi_r(X)$. 

{\bf Case (A).} Now we consider the case of $a_i$. Suppose for a contradiction that $\dim(F_{a_i}\cap \Lambda_r)<\mu_{i,r}$ for some $r$ and $\Lambda_r\in \pi_r(X)$. Since $X$ is irreducible, the image $\pi_r(X)$ is also irreducible. By the semi-continuity of dimension, $\dim(F_{a_i}\cap \Lambda_r)<\mu_{i,r}$ for a general $\Lambda_r\in \pi_r(X)$. Let $\sum m'\sigma_{a';b'}$ be the class of $\pi_t(\pi_r|_X^{-1}(\Lambda_r))$ in $OG(k_t,n)$.

{\bf Case (A1).} If $t<r$, by Lemma \ref{class of pushforward in of} and Lemma \ref{class of pullback in of}, $a'_{\mu_{i,t}}=\mu_{i,r}$. On the other hand, set $W=F_{a_i}\cap\Lambda_r$. By assumption, $\dim(W)<\mu_{i,r}$. Since $\dim(F_{a_i}\cap\Lambda_t)\geq \mu_{i,t}$ for all $\Lambda_t\in\pi_t(X)$, we obtain for all $\Lambda_t\in \pi_t(\pi_r|_X^{-1}(\Lambda_r))$,
$$\dim(W\cap\Lambda_t)=\dim(F_{a_i}\cap(\Lambda_r\cap\Lambda_t))= \dim(F_{a_i}\cap\Lambda_t)\geq \mu_{i,t}.$$
This contradicts the condition that $a'_{\mu_{i,t}}=\mu_{i,r}$.

{\bf Case (A2).} If $r<t$, by {\bf Case (A1)} we may assume $t=k$. Since $a_i$ is essential in the $k$-th component, it is necessary that $a_{i}\neq a_{i+1}-1$ and $a_i+b_{k-s}\neq n-2$. 

{\bf Case (A2-1).} If $d_k-d_r=\mu_{i,k}-\mu_{i,r}$, then since $\Lambda_r$ is of codimension $d_k-d_r$ in $\Lambda_k$, we must have
$$\dim(F_{a_i}\cap \Lambda_r)\geq \dim(F_{a_i}\cap\Lambda_k)-(d_k-d_r)\geq \mu_{i,k}-(\mu_{i,k}-\mu_{i,r})=\mu_{i,r}.$$

{\bf Case (A2-2).} If $d_k-d_r\neq \mu_{i,k}-\mu_{i,r}$, then there exists a sub-index after $a_i$ with the upper index greater than $r$.

{\bf Case (A2-2-1).} Assume such sub-indices exist in the sequence $a_\bullet$. Let $l$ be the minimal number such that $a_l>a_i$ and $\alpha_l>r$. Set $\gamma=:d_r+l-\mu_{l,r}$. By the construction, we have $\mu_{l,r}-\mu_{i,r}=l-i-1$ and therefore
$$\gamma=d_r+l-\mu_{l,r}=d_r+i-\mu_{i,r}+1.$$

{\bf Case (A2-2-1-1).} If either $a_l+\mu_{s,r}-\mu_{l,r}+h_{l,r}\neq\frac{n}{2}$ or $\#\{q|\beta_q\leq r\}$ is even, then by Lemma \ref{class of pushforward in of} and Lemma \ref{class of pullback in of}, $$a'_\gamma=a_l+\mu_{s,r}-\mu_{l,r}+h_{l,r}.$$
We show a contradiction by showing there exists an isotropic subspace of dimension less than $a'_\gamma$ that intersects every $k$-plane in $({\pi_k}\circ \pi_r|_X^{-1})(\Lambda_r)$ in dimension at least $\gamma$, or equivalently an isotropic subspace $W$ such that $\dim(W\cap\Lambda_k)\geq \gamma$, $\forall\Lambda_k\in ({\pi_k}\circ \pi_r|_X^{-1})(\Lambda_r)$ and
$$a'_\gamma-\gamma-[\dim(W)-\dim(W\cap \Lambda_k)]>0.$$
Let $W:=$span$(F_{a_i}\cap \Lambda_r^\perp,\Lambda_r)$. Then 
$$\dim(W)=d_r+\dim(F_{a_i}\cap\Lambda_r^\perp)-\dim(F_{a_i}\cap \Lambda_r),$$
and for a general $\Lambda_k\in ({\pi_k}\circ \pi_r|_X^{-1})(\Lambda_r)$,
\begin{eqnarray}
\dim(W\cap \Lambda_k)&=&d_r+\dim(F_{a_i}\cap\Lambda_k)-\dim(F_{a_i}\cap\Lambda_r)\nonumber\\
&\geq& d_r+i-\mu_{i,r}+1=\gamma\nonumber
\end{eqnarray}
Therefore 
\begin{eqnarray}
\dim(W)-\dim(W\cap\Lambda_k)&=&\dim(F_{a_i}\cap \Lambda_r^\perp)-\dim(F_{a_i}\cap\Lambda_k)\nonumber\\
&=&a_i-d_r+\dim(F_{a_i}^\perp\cap\Lambda_r)-i\nonumber\\
&\leq&a_i-d_r+\mu_{s,r}+h_{i,r}-i
\end{eqnarray}
Note that by definition $a_i-b_j\neq 1$ for all $i$ and $j$, which implies 
\begin{eqnarray}
h_{i,r}-h_{l,r}&=&\#\{q|a_i\leq b_q<a_l,\beta_q\leq r\}\nonumber\\
&\leq& a_l-a_i-(l-i).
\end{eqnarray}
Therefore 
\begin{eqnarray}
a'_\gamma-\gamma-[\dim(W)-\dim(W\cap \Lambda_k)]&\geq&a_l+\mu_{s,r}-\mu_{l,r}+h_{l,r}-(d_r+l-\mu_{l,r})\nonumber\\
& &-(a_i-d_r+\mu_{s,r}+h_{i,r}-i)\nonumber\\
&=&a_l-a_i-(l-i)+h_{l,r}-h_{i,r}\geq0
\end{eqnarray}
We claim that the equalities in (4) and (5) can not be reached at the same time and therefore the inequality in (6) is strict. Assume $h_{i,r}-h_{l,r}=a_l-a_i-(l-i)$. Since $a_{i+1}\neq a_i+1$, it is necessary that $a_i=b_j$ and $\beta_j\leq r$ for some $1\leq j\leq k-s$. If the equality in (4) holds, i.e. $\dim(F_{a_i}^\perp\cap\Lambda_r)=\mu_{s,r}+h_{i,r}$ for a general $\Lambda_r$, then by the semi-continuity of dimension, $\dim(F_{a_i}^\perp\cap\Lambda_r)\geq\mu_{s,r}+h_{i,r}$ for all $\Lambda_r\in\pi_r(X)$. By Lemma \ref{atob}, $\dim(F_{a_i}\cap\Lambda_r)\geq\mu_{i,r}$ for all $\Lambda_r\in \pi_r(X)$. We reach a contradiction. 

{\bf Case (A2-2-1-2).} If $a_l+\mu_{s,r}-\mu_{l,r}+h_{l,r}=\frac{n}{2}$ and $\#\{q|\beta_q\leq r\}$ is odd, then $b'_{k-\gamma+1}=\frac{n}{2}-1$ which corresponding to a maximal isotropic subspace (of dimension $\frac{n}{2}$) in the other component. Replace $a'_\gamma-\gamma$ with $\frac{n}{2}-\gamma$. Then it follows from an identical argument in {\bf Case (A2-2-1-1).}

{\bf Case (A2-2-2).} If there is no $l$ such that $a_l>a_i$ and $\alpha_l>r$, let $h:=\max\{q|\beta_q>r\}$. Notice that $d_k-h-i=\nu_{h,r}-\mu_{i,r}$. Set $$\phi:=h-\#\{q|q<h,\beta_q\leq r\}=h-(d_r-\nu_{h,r}).$$ Let $\sigma_{a';b'}$ and $W$ defined as before. 

{\bf Case (A2-2-2-1).} If $b_h+\nu_{h,r}-x_{h,r}\neq \frac{n}{2}-1$, by Lemma \ref{class of pushforward in of} and Lemma \ref{class of pullback in of}, $$s'=(|B|\text{ or }|B'|)\leq d_k-\phi$$ On the other hand, for every $\Lambda_k\in ({\pi_k}\circ \pi_r|_X^{-1})(\Lambda_r)$,
\begin{eqnarray}
\dim(W\cap \Lambda_k)&=&d_r+\dim(F_{a_i}\cap\Lambda_k)-\dim(F_{a_i}\cap\Lambda_r)\nonumber\\
&\geq& d_r+i-\mu_{i,r}+1\nonumber\\
&=&d_k-h+d_r-\nu_{h,r}+1=d_k-\phi+1\nonumber
\end{eqnarray}
Since $W$ is isotropic, this contradicts the condition $s'\leq d_k-\phi.$ 

{\bf Case (A2-2-2-2).} If $b_h+\nu_{h,r}-x_{h,r}=\frac{n}{2}-1$ and $a_i<\frac{n}{2}$, then since $a_i$ is essential, it is necessary that $a_i=b_j$ for some $j$ with $\beta_j\leq r$. By assumption and Lemma \ref{atob}, $\dim(F_{a_i}^\perp\cap\Lambda_k)\geq\nu_{j,k}$ for all $\Lambda_k\in\pi_k(X)$. Apply {\bf Case (B2-2-2)} to $b_j=a_i$, we obtain $\dim(F_{a_i}^\perp\cap\Lambda_r)\geq\nu_{j,r}$ for all $\Lambda_r\in\pi_r(X)$. Apply Lemma \ref{atob} again, we conclude $\dim(F_{a_i}\cap\Lambda_r)\geq\mu_{i,r}$ for all $\Lambda_r\in\pi_r(X)$.

{\bf Case (A2-2-2-3).} If $b_h+\nu_{h,r}-x_{h,r}=\frac{n}{2}-1$ and $a_i= \frac{n}{2}$, then since $a_i$ essential, $b_{k-s}\neq \frac{n}{2}-1$ and therefore it is necessary that $a_{i-1}=a_i+1$ and $\alpha_{i-1}\leq r$. We play a same trick as in {\bf Case (B2-2-2).} Let $\pi:OF(d_1,...,d_k;n)\rightarrow OF(d_r,d_k;n)$ be the canonical projection. Let $X':=\pi(X)\subset OF(d_r,d_k;n)$ be the image of $X$. Let $I$ is the preimage of $X'$ under the projection $OF(d_0,d_r,d_k;n)\rightarrow OF(d_r,d_k;n)$, where $d_0:=d_r-\mu_{i,r}+1$. The class of $X'$ in $OF(d_r,d_k;n)$ is given by $m_{r,k}\sigma_{a^{\alpha^{r,k}};b^{\beta^{r,k}}}$, where $\alpha^{r,k}$ and $\beta^{r,k}$ are obtained from $\alpha$ and $\beta$ by changing the numbers less than $r$ to $r$ and changing the numbers greater than $r$ to $k$, respectively. The class of $I$ in $OF(d_0,d_r,d_k;n)$ is then given by $m_0\sigma_{a^{\alpha^0};b^{\beta^0}}$, where $\beta^0=\beta^{r,k}$, $\alpha^0_{i'}=\alpha^{r,k}_{i'}$ if $i'>j$ or $\alpha^{r,k}_{j'}\neq r$, and $\alpha^0_{i'}=0$ if $i'\leq j$ and $\alpha^{r,k}_{i'}=r$. Consider the projection $OF(d_0,d_r,d_k;n)\rightarrow OG(d_0,n)$. Since $\alpha^0_{i-1}=r$, we obtain $b_h+\nu_{h,0}-x_{h,0}\neq \frac{n}{2}-1$. Apply {\bf Case (A2-2-2-1)}, we obtain $\dim(F_{a_i}\cap\Lambda_0)\geq\mu_{j,0}$ for all $\Lambda_0\in \pi_0(I)$. Since $d_0<d_r$, we apply {\bf Case (A1)} to conclude $\dim(F_{a_i}\cap\Lambda_r)\geq \mu_{i,r}$ for all $\Lambda_r\in\pi_r(X)$. 
\end{proof}

\subsection{Restriction varieties}
In the next subsection, we investigate the multi-rigidity of sub-indices in orthogonal Grassmannians. It turns out to be useful to consider the restriction varieties which generalizes Schubert varieties. In this subsection, we review some basic facts regarding restriction varieties. The bilinear form $q$ geometrically defines a quadric hypersurface $Q$ in $\mathbb{P}(V)$. Therefore in the definition of Schubert varieties, one may replace $F_{b_j}^\perp$ with the intersection $\mathbb{P}(F_{b_j}^\perp)\cap Q$ , which is a sub-quadric of $Q$ of corank $b_j$. One may want to release the restriction on the coranks of those sub-quadrics. The resulting locus of isotropic subspaces are so-called restriction varieties, as they can also be considered as a restriction of a Schubert variety in Grassmannians.

Let $Q_d^r$ denote a subquadric of $Q$ of corank $r$, which is obtained by restricting $Q$ to a $d$-dimensional linear subspace. It turns out that there are some constraints that we need to impose so that the restriction varieties are in good shape (e.g. irreducibility or refined partial flag).

\begin{Def}
\label{sequence}
Given a sequence consisting of isotropic subspaces $F_{a_i}$ of $V$ and subquadrics $Q_{d_j}^{r_j}$ :
$$F_{a_1}\subsetneq...\subsetneq F_{a_s}\subsetneq Q_{d_{k-s}}^{r_{k-s}}\subsetneq...\subsetneq Q^{r_1}_{d_1}$$ such that

$(1)$ For every $1\leq j\leq k-s-1$, the singular locus of $Q_{d_j}^{r_j}$ is contained in the singular locus of $Q_{d_{j+1}}^{r_{j+1}}$;

$(2)$ For every pair $(F_{a_i},Q_{d_j}^{r_j})$, $\dim(F_{a_i}\cap \text{Sing}(Q_{d_j}^{r_j}))=\min\{a_i,r_j\}$;

$(3)$ Either $r_i=r_1=n_{r_1}$ or $r_t-r_i\geq t-i-1$ for every $t>i$. Moreover, if $r_t=r_{t-1}>r_1$ for some $t$, then $d_i-d_{i+1}=r_{i+1}-r_i$ for every $i\geq t$ and $d_{t-1}-d_t=1$;

$(A1)$ $r_{k-s}\leq d_{k-s}-3$;

$(A2)$ $a_i-r_j\neq 1$ for all $1\leq i\leq s$ and $1\leq j\leq k-s$;

$(A3)$ Let $x_j=\#\{i|a_i\leq r_j\}$. For every $1\leq j\leq k-s$, $$x_j\geq k-j+1-\left[\frac{d_j-r_j}{2}\right].$$
We define the associated restriction variety as the following locus
$$\Gamma(F_\bullet,Q_\bullet):=\{\Lambda\in OG(k,n)|\dim(\Lambda\cap F_{a_i})\geq i,\dim(\Lambda\cap Q_{d_j}^{r_j})\geq k-j+1,1\leq i\leq s, 1\leq j\leq k-s\}.$$
\end{Def}
Since the Schubert classes form a free abelian basis of the Chow ring of $OG(k,n)$, it is possible to write the class of a restriction variety as a linear combination of Schubert classes. One may find the coefficients using specialization.

\subsection{Multi-rigidity of sub-indices} In this subsection, we study the multi-rigidity of essential sub-indices. In particular, we characterize all multi-rigid sub-indices and multi-rigid Schubert classes in orthogonal Grassmannians. We start with the definition of multi-rigid sub-indices:
\begin{Def}
Let $\sigma_{a;b}$ be a Schubert class for $OG(k,n)$. An essential sub-index $a_i$ is called {\em multi-rigid} if for every irreducible representative $X$ of the class $m\sigma_{a;b}$, $m\in\mathbb{Z}^+$, there exists an isotropic subspace $F_{a_i}$ of dimension $a_i$ such that for every $k$-plane $\Lambda$ parametrized by $X$,
$$\dim(\Lambda\cap F_{a_i})\geq i.$$
Similarly, an essential sub-index $b_j$ is called {\em multi-rigid} if for every irreducible representative of the class $m\sigma_{a;b}$, $m\in\mathbb{Z}^+$, there exists an isotropic subspace $F_{b_j}$ of dimension $b_j$ such that for every $k$-plane $\Lambda$ parametrized by $X$, $$\dim(\Lambda\cap F_{b_j}^\perp)\geq k-j+1.$$
\end{Def}

\begin{Def}
Let $\sigma_{a^\alpha;b^\beta}$ be a Schubert class for $OF(d_1,...,d_k;n)$. An essential sub-index $a_i$ is called {\em multi-rigid} if for every irreducible representative $X$ of $m\sigma_{a^\alpha;b^\beta}$, $m\in\mathbb{Z}^+$, there exists an isotropic subspace $F_{a_i}$ of dimension $a_i$ such that 
$$\dim(F_{a_i}\cap\Lambda_t)\geq \mu_{i,t}, \forall (\Lambda_1,...,\Lambda_k)\in X,\alpha_i\leq t\leq k.$$
An essential sub-index $b_j$ is called {\em multi-rigid} if for every irreducible representative $X$, there exists an isotropic subspace $F_{b_j}$ of dimension $b_j$ such that
$$\dim(F_{b_j}^\perp\cap\Lambda_t)\geq \nu_{j,t}, \forall (\Lambda_1,...,\Lambda_k)\in X,\beta_j\leq t\leq k.$$
\end{Def}
The following result follows directly from the definition and Theorem \ref{orthogonal rigid index}:
\begin{Cor}
Let $\sigma_{a^\alpha;b^\beta}$ be a Schubert class for $OF(d_1,...,d_k;n)$. An essential $a_i$ (or $b_j$ resp.) is multi-rigid if $a_i$ (or $b_j$ resp.) is multi-rigid with respect to the Schubert class $(\pi_i)_*(\sigma_{a^\alpha;b^\beta})$ for some $i$.
\end{Cor}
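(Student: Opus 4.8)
The statement is a direct consequence of Theorem~\ref{orthogonal rigid index} together with the behaviour of Schubert classes under the projections $\pi_t$, so the plan is simply to assemble these ingredients. I will treat the case of an essential sub-index $a_i$; the argument for $b_j$ is word-for-word the same after replacing $F_{a_i}$, $\mu_{i,t}$, $\alpha_i$ by $F_{b_j}^\perp$, $\nu_{j,t}$, $\beta_j$.

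Assume $a_i$ is multi-rigid with respect to $(\pi_t)_*(\sigma_{a^\alpha;b^\beta})$ for some $1\le t\le k$ (we write $t$ for the projection index to avoid clash with the subscript of $a_i$); in particular $a_i$ is essential with respect to that class, since multi-rigidity is only defined for essential sub-indices. Let $X$ be an arbitrary irreducible representative of $m\sigma_{a^\alpha;b^\beta}$ with $m\in\mathbb{Z}^+$. Since $\pi_t$ is a morphism, $\pi_t(X)$ is irreducible, and by Lemma~\ref{class of pushforward in of} its class equals $m'(\pi_t)_*(\sigma_{a^\alpha;b^\beta})$ for some $m'\in\mathbb{Z}^+$. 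Thus $\pi_t(X)$ is an irreducible representative of a positive multiple of $(\pi_t)_*(\sigma_{a^\alpha;b^\beta})$ in $OG(d_t,n)$, and multi-rigidity of $a_i$ with respect to that class yields an isotropic subspace $F_{a_i}$ of dimension $a_i$ with $\dim(F_{a_i}\cap\Lambda_t)\ge\mu_{i,t}$ for every $\Lambda_t\in\pi_t(X)$.

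Now I apply Theorem~\ref{orthogonal rigid index} to $X$ and this $F_{a_i}$: the hypotheses — $a_i$ essential with respect to $(\pi_t)_*(\sigma_{a^\alpha;b^\beta})$ and the inequality valid on $\pi_t(X)$ — are exactly the ones just verified, so the conclusion gives $\dim(F_{a_i}\cap\Lambda_r)\ge\mu_{i,r}$ for all $\Lambda_r\in\pi_r(X)$ and all $1\le r\le k$. In particular, for every $(\Lambda_1,\dots,\Lambda_k)\in X$ and every $\alpha_i\le r\le k$ we have $\dim(F_{a_i}\cap\Lambda_r)\ge\mu_{i,r}$, which is precisely the defining condition for $a_i$ to be multi-rigid in $OF(d_1,\dots,d_k;n)$; hence $a_i$ is multi-rigid, and likewise $b_j$. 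There is no genuine obstacle here: the only points needing a word of care are that $\pi_t(X)$ is indeed an irreducible representative of a multiple of the push-forward class (Lemma~\ref{class of pushforward in of}) and that the range $1\le r\le k$ supplied by Theorem~\ref{orthogonal rigid index} contains the range $\alpha_i\le r\le k$ demanded by the definition — both immediate.
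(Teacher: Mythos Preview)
Your proof is correct and is precisely the argument the paper has in mind: the paper states only that the corollary ``follows directly from the definition and Theorem~\ref{orthogonal rigid index}'', and you have simply unpacked that sentence by invoking Lemma~\ref{class of pushforward in of} for the pushforward and then applying Theorem~\ref{orthogonal rigid index}.
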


In \cite{YL3}, we have proved the multi-rigidity of $a_i$ under certain conditions:
\begin{Thm}\cite[Theorem 5.10]{YL3}\label{arigid1}
Let $\sigma_{a;b}$ be a Schubert class in $OG(k,n)$. Let $a_i$ be an essential sub-index. Set $x_j:=\#\{i|a_i\leq b_j\}$. Assume for all $j$ such that $b_j<a_{i+1}$, $$x_j\geq k-j+1-\left[\frac{n-b_j-(a_{x_j+1}-1)}{2}\right].$$
Then $a_i$ is multi-rigid if one of the following conditions hold:
\begin{itemize}
\item $i<s$ and $a_{i-1}+1=a_i\leq a_{i+1}-3$;
\item $i=s$, $a_s=a_{s-1}+1$ and $a_s\leq n-b_{k-s}-(s-x_j)-4$.
\end{itemize}
\end{Thm}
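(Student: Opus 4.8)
The plan is to transplant to the orthogonal Grassmannian the three‑step argument used in type $A$, namely Lemma~\ref{degree} $\Rightarrow$ Proposition~\ref{rigid 2} $\Rightarrow$ Theorem~\ref{rigid3}, with linear subspaces systematically replaced by the sub‑quadrics and restriction varieties of Definition~\ref{sequence}. First I would fix an irreducible representative $X$ of $m\sigma_{a;b}$ with $m\in\mathbb{Z}^+$; passing to components reduces the general case to this one. Throughout, let $\Phi_X$ be the subvariety of $\mathbb{P}(V)$ swept out by the $\mathbb{P}^{k-1}$'s parametrized by $X$, whose dimension is controlled by $(a;b)$ via Lemma~\ref{dim of q}. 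The standing hypothesis $x_j\ge k-j+1-\big[\tfrac{n-b_j-(a_{x_j+1}-1)}{2}\big]$ is the avatar of conditions $(A1)$--$(A3)$ of Definition~\ref{sequence}: it is what guarantees that every restriction variety produced during the induction is irreducible of the expected dimension.

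The first step is the orthogonal analog of Lemma~\ref{degree}. Under the gap inequality ($a_i\le a_{i+1}-3$ when $i<s$, resp. $a_s\le n-b_{k-s}-(s-x_j)-4$ when $i=s$) I claim there is an irreducible subvariety $Y\subset Q$ of dimension $a_i-1$ with $\dim(\mathbb{P}(\Lambda)\cap Y)\ge i-1$ for every $\Lambda\in X$. One proves this by induction on the defect $a_i-i$. When $a_i=i$ the class $m\sigma_{a;b}$ forces a common isotropic subspace $F_i\subset\Lambda$ for all $\Lambda\in X$ (this is the base case of Grassmannian multi‑rigidity, Theorem~\ref{rigid in g}, combined with $F_i\subset\Lambda\subset Q$), and one takes $Y=\mathbb{P}(F_i)$. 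When $a_i>i$ one forms the incidence correspondence $I=\{(\Lambda,H):\Lambda\subset H,\ H\text{ a hyperplane}\}$; for general $H\in\pi_2(I)$ the locus $X_H:=\pi_1(\pi_2^{-1}(H))$ sits inside a smooth quadric of smaller dimension, where it represents a multiple of a restriction (in particular Schubert) class of defect one smaller --- this is exactly the reduction carried out in Case~(B) of the proof of Lemma~\ref{atob}. Applying the inductive hypothesis inside $X_H$ gives $Y_H\subset Q$, and $Y$ is the variety swept out by the $Y_H$ as $H$ varies; that $\dim Y=a_i-1$ exactly (not larger) is forced by a vanishing $\sigma_{a;b}\cdot\sigma_c=0$ for the appropriate complementary class, copying the last paragraph of the proof of Lemma~\ref{degree}, and this is the one place the gap condition is essential.

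The second step, the crux, is the orthogonal version of Proposition~\ref{rigid 2}: $Y$ is in fact a linear space, hence --- being contained in $Q$ --- the projectivization of an isotropic subspace $F_{a_i}$ of dimension $a_i$. For a general point $p\in Y$ set $X_p=\{\Lambda\in X:p\in\mathbb{P}(\Lambda)\}$; using $a_{i-1}+1=a_i$ together with Lemma~\ref{class of pushforward in of} and Lemma~\ref{class of pullback in of}, its class is a multiple of a Schubert class whose index still has an entry equal to $a_i$, so every $\mathbb{P}(\Lambda)$ with $\Lambda\in X_p$ runs through $p$ and meets $Y$ in dimension $\ge i-1$, which forces $\mathbb{P}(\Lambda)\subset T_pY$. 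If $Y$ were not linear, $Y\cap T_pY$ would be a proper closed subvariety of $Y$, of dimension $<a_i-1$, contradicting the class of $X_p$ just described; hence $Y=\mathbb{P}(F_{a_i})$ with $\mathbb{P}(F_{a_i})\subset Q$, so $q|_{F_{a_i}}=0$. Since $\dim(\mathbb{P}(\Lambda)\cap\mathbb{P}(F_{a_i}))\ge i-1$ means $\dim(\Lambda\cap F_{a_i})\ge i$, this yields the multi‑rigidity of $a_i$ when $i<s$.

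The boundary case $i=s$ runs the same scheme inside the sub‑quadric $\mathbb{P}(F_{b_{k-s}}^\perp)\cap Q$ of dimension $n-b_{k-s}-2$ and corank $b_{k-s}$, which contains $\mathbb{P}(F_{a_s})$ because $a_s\neq b_{k-s}+1$; the inequality $a_s\le n-b_{k-s}-(s-x_j)-4$ plays the role of $a_i\le a_{i+1}-3$ (it supplies the room for the hyperplane induction inside that sub‑quadric), and the term $s-x_j$ corrects for the isotropic subspaces of $F_\bullet$ already lying in its singular locus. I expect the main obstacle to be the linearity step of the third paragraph together with its bookkeeping: one must keep track at each stage of the induction of which restriction variety appears, what its corank is, and --- since hyperplane and sub‑quadric sections can toggle the ambient quadric between even‑dimensional (type $D$) and odd‑dimensional (type $B$) --- which parity constraint applies, and one must verify that hypotheses $(A1)$--$(A3)$ persist so that all intermediate loci stay irreducible. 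Once these are in hand, the reductions labelled $t<r$ and $r<t$ in the proof of Theorem~\ref{orthogonal rigid index} propagate the conclusion from the favorable component to every $\Lambda\in X$.
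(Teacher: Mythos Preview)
The paper does not prove Theorem~\ref{arigid1}: it is quoted verbatim from \cite[Theorem~5.10]{YL3} and used as a black box, so there is no in-paper proof to compare against. That said, your outline is close in spirit to how the paper establishes the neighbouring result Proposition~\ref{-3}, and the first step (produce a variety $Y$ of dimension $a_i-1$ meeting every $\mathbb{P}(\Lambda)$ in dimension $\ge i-1$) is exactly what Lemma~\ref{degreegeneral} delivers after pushing forward along $OG(k,n)\hookrightarrow G(k,n)$; you do not need to rerun the hyperplane induction inside the quadric.

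There is, however, a genuine gap in your linearity step. From ``$p\in\mathbb{P}(\Lambda)$ and $\dim(\mathbb{P}(\Lambda)\cap Y)\ge i-1$'' you conclude ``$\mathbb{P}(\Lambda)\subset T_pY$'', but this does not follow: all you get is that the tangent space at $p$ of $\mathbb{P}(\Lambda)\cap Y$ lies in $\mathbb{P}(\Lambda)\cap T_pY$, which only yields $\dim(\mathbb{P}(\Lambda)\cap T_pY)\ge i-1$, not containment of the whole $\mathbb{P}(\Lambda)$. In Proposition~\ref{rigid 2} this step works for a different reason --- one is exploiting a \emph{second} component of the partial flag --- and that mechanism is absent in $OG(k,n)$. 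The paper's proof of Proposition~\ref{-3} shows what actually replaces it in the orthogonal setting: one uses the quadric, not $T_pY$. Since $\Lambda$ is isotropic and $p\in\mathbb{P}(\Lambda)$, one has $\mathbb{P}(\Lambda)\subset p^\perp=T_pQ$; combined with $\dim(\Phi_{X_p})=\dim(\Phi_X)$ from Lemma~\ref{dim of q} this gives $\Phi_X\subset\bigcap_{p\in Y}p^\perp=W^\perp$ where $W$ is the span of $Y$, and comparing $\dim(\Phi_X)=n-b_1-2$ with $\dim(W^\perp\cap Q)$ forces $W=Y$. If you replace your $T_pY$ argument with this $p^\perp$ argument, the sketch becomes viable and matches the method the paper uses for its closely related statements.
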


In particular, this implies the following results:
\begin{Cor}\label{an}
Let $\sigma_{a;b}=\sigma_{1,...,b,b+2,...,k;b}$ be a Schubert class for $OG(k,n)$. Assume $n\geq 2k+3$. The sub-index $a_{s}=k$ is multi-rigid if and only if $k\geq b+3$.
\end{Cor}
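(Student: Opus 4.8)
The statement concerns the single Schubert class $\sigma_{a;b}=\sigma_{1,\dots,b,b+2,\dots,k;b}$ in $OG(k,n)$ with $n\ge 2k+3$, and asks when the last $a$-index $a_s=k$ is multi-rigid. Here $s=k-1$ (one $b$-index, namely $b_1=b$, and $a$-sequence $1,2,\dots,b,b+2,\dots,k$ of length $k-1$), and $a_{s}=a_{k-1}=k$ while $a_{s-1}=a_{k-2}=b+1$ or, if $b=s-1$, we need to be careful; in any case $a_{s-1}=k-1$ unless the gap at $b$ already forces $a_{s-1}=b+1$. The plan is to apply Theorem \ref{arigid1} for sufficiency and to produce an explicit irreducible non-Schubert representative of $m\sigma_{a;b}$ for necessity.

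For the \emph{sufficiency} direction ($k\ge b+3\Rightarrow a_s=k$ multi-rigid): I would verify the hypotheses of Theorem \ref{arigid1} with $i=s$. First, $a_s=k=a_{s-1}+1$ needs $a_{s-1}=k-1$, which holds since the only gap in the $a$-sequence is between $b$ and $b+2$, so when $k\ge b+3$ the top of the sequence runs $\dots,k-1,k$ consecutively. Next the inequality $a_s\le n-b_{k-s}-(s-x_j)-4$: here $k-s=1$, $b_{1}=b$, and $x_1=\#\{i\mid a_i\le b\}=b$, so $s-x_1=(k-1)-b$, and the required bound reads $k\le n-b-((k-1)-b)-4 = n-k-3$, i.e. $n\ge 2k+3$, which is exactly our hypothesis. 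Finally one must check the ``$x_j\ge k-j+1-\lceil\cdots\rceil$''-type numerical condition from Theorem \ref{arigid1} for the single relevant $j=1$; with $b_1=b<a_{s+1}$ interpreted as $a_k=\infty$, this is a routine inequality that again reduces to $n\ge 2k+3$ together with $k\ge b+3$ (the latter guaranteeing $a_{x_1+1}=a_{b+1}=b+2$, so $n-b-(a_{x_1+1}-1)=n-b-(b+1)=n-2b-1$ is large). So Theorem \ref{arigid1} applies and $a_s$ is multi-rigid.

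For the \emph{necessity} direction ($k\le b+2\Rightarrow a_s=k$ not multi-rigid): since $a_s=k$ is essential we need $a_s+b_{k-s}\ne n-2$, i.e. $k+b\ne n-2$; generically under $n\ge 2k+3$ and $k\le b+2$ this holds, but one should note $k+b\le 2k$ whereas $n-2\ge 2k+1$, so indeed $k+b<n-2$ and $a_s$ is essential. Now if $k\le b+2$ then $a_{s-1}$ need not equal $a_s-1$ — concretely if $k=b+2$ then the top two $a$-indices are $b$ (at position $b$) and $b+2=k$ (at position $b+1=s$), so $a_{s-1}=b$ and $a_s-a_{s-1}=2$; if $k=b+1$ then $s=b$ and again the sequence stops at $b$ with a gap. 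The plan is to mimic the cone constructions used in the necessity proof of Theorem \ref{rigid3} and in the examples preceding it: take a general isotropic flag realizing all the essential conditions \emph{except} the one attached to $a_s$, choose an $(a_s-1)$-dimensional isotropic $F_{a_s-1}$, and replace the requirement ``$\dim(\Lambda\cap F_{a_s})\ge s$'' by ``$\dim(\Lambda\cap Z)\ge s-1$'' where $Z$ is the cone with vertex (a projectivization of) $F_{a_s-1}$ over a degree-$m$ isotropic curve (or a suitable degree-$m$ isotropic variety) sitting in an appropriate ambient isotropic subspace of the needed dimension. One then checks, using the intersection-theoretic dimension count (as in Lemma \ref{dim of q} and the Chow-ring computations via specialization), that the resulting locus $X$ is irreducible, has class $m\sigma_{a;b}$, and is not a union of Schubert varieties because no fixed $F_{a_s}$ works. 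The parity constraint $s\equiv k\pmod 2$ in the $n=2k$ case is not in play here since $n\ge 2k+3$.

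The main obstacle I anticipate is the necessity construction: one must arrange the cone $Z$ to lie inside the \emph{isotropic} locus (so that the general $\Lambda$ stays in $OG(k,n)$) and simultaneously degree $m$ so that $[X]=m\sigma_{a;b}$, while keeping $X$ irreducible and verifying that the other (multi-rigid) essential sub-indices are unaffected — this requires that the ``room'' condition $n\ge 2k+3$ leaves enough isotropic space around $F_{a_s-1}$ to build a positive-dimensional isotropic family of the right degree. Checking the class via specialization to a union of Schubert varieties (so the coefficient can be read off from an intersection product, exactly as in Proposition \ref{class of pushforward} and Lemma \ref{class of pushforward in of}) is the technical heart, and one must ensure no other Schubert class appears in $[X]$, which follows from the dimension of $Z$ and the vanishing of the relevant intersection products.
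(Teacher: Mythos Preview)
Your sufficiency argument via Theorem \ref{arigid1} is correct and matches the paper exactly; the numerical verification you outline is accurate.

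For necessity, your overall strategy (a cone construction producing an irreducible non-Schubert representative of $m\sigma_{a;b}$) is also what the paper does, but two details need correction. First, the case $k=b+1$ does not arise: if $b=k-1$ then the range $b+2,\dots,k$ is empty, so $a_s=b\neq k$ and the hypothesis of the corollary fails; the only necessity case is $k=b+2$. Second, and more substantively, your vertex is wrong. You propose the cone with vertex $\mathbb{P}(F_{a_s-1})=\mathbb{P}(F_{b+1})$, but $b+1$ is precisely the gap in the $a$-sequence, so there is no $F_{b+1}$ in the defining flag and this choice does not specialize correctly to give class $m\sigma_{a;b}$. The paper instead takes the vertex to be $\mathbb{P}(F_b)=\mathbb{P}(F_{a_{s-1}})$ (note $a_{s-1}=b$, which is not the same as $a_s-1=b+1$) and places the degree-$m$ smooth plane curve $C$ inside $\mathbb{P}(F_{b+3})$, disjoint from $\mathbb{P}(F_b)$; here $F_{b+3}$ can be chosen isotropic precisely because $n\ge 2k+3>2(b+3)$. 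The construction is then
\[
X=\overline{\{\Lambda\in OG(k,n)\mid F_b\subset\Lambda,\ \dim(\mathbb{P}(\Lambda)\cap Z)\ge k-2\}},
\]
where $Z$ is the cone over $C$ with vertex $\mathbb{P}(F_b)$. The condition $F_b\subset\Lambda$ already encodes all of the conditions $a_1,\dots,a_{s-1}$ and (via isotropy) the condition for $b_1=b$, so only the $a_s$ condition is being replaced by the cone. With the vertex corrected to $F_b$ your plan coincides with the paper's proof.
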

\begin{proof}
If $k\geq b+3$, then by Theorem \ref{arigid1}, the sub-index $a_s=k$ is multi-rigid.

Conversely, assume $k=b+2$. We construct an irreducible subvariety $X$ with class $m\sigma_{a;b}$, $m\in\mathbb{Z}^+$ which is not a Schubert variety. Pick an isotropic partial flag:
$$F_{1}\subset...\subset F_b\subset F_{b+3}.$$
It is possible since $n\geq 2k+3=2(b+3)$. Let $C$ be a smooth plane curve of degree $m$ in $\mathbb{P}(F_{b+3})$ which is disjoint from $\mathbb{P}(F_b)$. Let $Z$ be the cone over $C$ with vertex $\mathbb{P}(F_{b})$. Let $X$ be the Zariski closure of the following locus:
$$X^o:=\{\Lambda\in OG(k,n)|F_b\subset\Lambda, \dim(\mathbb{P}(\Lambda)\cap Z)\geq k-2\}.$$
Then $X$ is irreducible with class $m\sigma_{a;b}$ but is not a Schubert variety unless $m=1$.
\end{proof}

We prove Corollary \ref{an} is also true for $n\leq 2k+2$. Recall for the Spinor varieties $OG(k,2k)$, the classification of multi-rigid Schubert classes is obtained by Robles and The:
\begin{Thm}\cite[Theorem 4.1]{RT}\label{RTresult}
Let $\sigma_{a;b}$ be a Schubert class for $OG(k,2k)$. If $a_s=k$, let $a'=(a_1,...,a_{s-1})$ be a subsequence of $a$. Otherwise, let $a'=a$. The Schubert class $\sigma_{a;b}$ is multi-rigid if and only if all essential sub-indices of $a'$, considered as a Schubert index for Grassmannians, satisfy the condition
$$a'_{i-1}+1=a'_i\leq a'_{i+1}-3.$$
\end{Thm}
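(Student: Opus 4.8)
\emph{Proof strategy.} This is the Robles--The classification \cite{RT}, proved there by Schur differential systems; the plan here is to recover it from the results already obtained, in the algebro-geometric spirit of this paper. The main tool I would exploit is the exceptional isomorphism of projective varieties
\[ OG(k,2k) \;\cong\; OG(k-1,2k-1), \]
where the left‐hand side denotes one component (as in the statement), both realizing the spinor variety $\mathbb S_{k-1}$ in its two standard guises. This isomorphism matches the two stratifications by Schubert cells (the minimal coset representatives $W^P$ for the spinor node give the same Hasse diagram whether one uses $B_{k-1}$ or $D_k$), and multi-rigidity is an intrinsic property of a projective variety together with its effective classes, hence is transported by any isomorphism carrying Schubert varieties to Schubert varieties. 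Therefore the classification of multi-rigid Schubert classes on $OG(k,2k)$ coincides with the one on $OG(k-1,2k-1)$, and since $2k-1$ is odd the latter is completely settled by Theorem \ref{intror} together with the converse stated in the corollary following it.

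So the real content becomes bookkeeping: matching, under the isomorphism above, a Schubert index $(a;b)$ for $OG(k,2k)$ with its counterpart for $OG(k-1,2k-1)$, and checking that the odd‐orthogonal criterion (conditions (1),(2) and (7),(8) of Theorem \ref{intror}, with $[n/2]=k-1$) becomes exactly ``every essential sub-index of $a'$ satisfies $a'_{i-1}+1=a'_i\le a'_{i+1}-3$''. Two features of the dictionary I would flag in advance. First, the sub-index $a_s=k$ for $OG(k,2k)$ corresponds to the maximal-isotropic boundary situation on the odd side, where the incidence $\dim(\Lambda\cap F_k)\ge s$ is forced modulo $2$ (one has $\dim(\Lambda\cap F_k)\equiv k\equiv s\pmod 2$ for every $\Lambda$) and is never an obstruction; this is precisely why one passes from $a$ to $a'$. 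Second, the $b$-data of $(a;b)$ is absorbed into the $a$-data on the odd side (the $b_j$'s are the ``missing'' values of a single strict sequence), which is why no separate condition on the $b$'s appears; that each essential $b_j$ is automatically multi-rigid once $a'$ meets the stated inequalities can also be checked directly with Lemma \ref{atob}.

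Alternatively, to keep the argument self-contained without invoking the exceptional isomorphism, one can prove both directions in place. Sufficiency would follow from Theorem \ref{arigid1} once one verifies that its side hypothesis $x_j\ge k-j+1-\lfloor\tfrac{n-b_j-(a_{x_j+1}-1)}{2}\rfloor$ holds automatically on $OG(k,2k)$, a short computation from $n=2k$, the constraint $a_i\ne b_j+1$, and the parity constraint $s\equiv k\pmod 2$. Necessity would come from explicit irreducible non-Schubert representatives of multiples, built as in the necessity half of Theorem \ref{rigid3} and of Corollary \ref{an}: when $a'_{i-1}+1\ne a'_i$, a cone over a degree-$m$ plane curve whose isotropic vertex is placed inside an isotropic flag that skips the dimensions $a'_i-1$ and $a'_i$; when $a'_i>a'_{i+1}-3$, a reduction of the gap-one and gap-two cases to the cone case.

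The part I expect to be genuinely delicate is the boundary bookkeeping near the indices $k$ and $k-1$: there the isotropic flag runs out of room, the two families of maximal isotropic subspaces interact, and the linear cones of the direct approach must be replaced by cones over sub-quadrics — that is, by restriction varieties in the sense of Definition \ref{sequence} — while keeping the parity constraint $s\equiv k$ satisfied throughout. No new idea beyond the machinery of \S\ref{Type BD} is needed, but this case analysis is where the labor concentrates.
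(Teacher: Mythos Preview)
The paper does not prove this theorem at all: it is quoted verbatim from \cite[Theorem~4.1]{RT} and used as a black box (it is the input to Proposition~\ref{4.6}, which in turn feeds Proposition~\ref{2k+1} and Proposition~\ref{rigid of m}, and thence the even cases of Theorem~\ref{aaa}). So there is no ``paper's own proof'' to compare against.

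That said, your first strategy has a circularity hazard you must address head-on. You propose to transport the question to $OG(k-1,2k-1)$ and invoke Theorem~\ref{intror}, but in this paper Theorem~\ref{intror} (i.e.\ Theorems~\ref{aaaaa} and~\ref{rigidb}) is proved \emph{downstream} of Theorem~\ref{RTresult} via the chain $\text{Thm }\ref{RTresult}\Rightarrow\text{Prop }\ref{4.6}\Rightarrow\text{Prop }\ref{2k+1}\text{ (Case II)}\Rightarrow\text{Thm }\ref{aaa}$. A careful trace shows that the pieces actually needed for the maximal odd Grassmannian $OG(k-1,2k-1)$ --- Corollary~\ref{spinor}, Theorem~\ref{multirigid of b}, Corollary~\ref{an}, and Case~I of Proposition~\ref{2k+1} --- route through Theorem~\ref{rigid in g} rather than through Theorem~\ref{RTresult}, so the circle can in principle be broken; but you must make this dependency analysis explicit, not leave it implicit in ``completely settled by Theorem~\ref{intror}''. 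The index-matching under the spinor isomorphism (in particular that the $b$-sequence on the $D_k$ side becomes the complement of the $a$-sequence on the $B_{k-1}$ side, and that $a_s=k$ disappears) also needs to be written out, since the whole argument rests on translating the odd-side criterion back into the stated condition on $a'$.

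Your second, self-contained route is safer and closer in spirit to what the paper does elsewhere: sufficiency via Theorem~\ref{arigid1} (which is imported from \cite{YL3} and does not depend on \cite{RT}) once you verify the auxiliary inequality on $x_j$ holds automatically when $n=2k$, and necessity via explicit cone/restriction-variety constructions as in Remark~\ref{non-rigidex} and Remark~\ref{sharp}. This avoids all circularity and would genuinely supply an independent algebro-geometric proof of the Robles--The classification for spinor varieties, which the paper itself does not attempt.
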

\begin{Prop}\label{4.6}
Let $\sigma_{a;b}$ be a Schubert class for $OG(k,2k)$. 
\begin{itemize}
\item If $a_s=b_{k-s}=k-1$ and $a_{s-1}=k-2$, then the sub-index $b_{k-s}=k-1$ is multi-rigid.
\item If $a_s=k=a_{s-1}+1=a_{s-2}+2$, then the sub-index $a_s=k$ is multi-rigid.
\end{itemize}
\end{Prop}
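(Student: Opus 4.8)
The plan is to fix $m\ge 1$ and an irreducible representative $X$ of $m\sigma_{a;b}$ (arguing componentwise reduces to this case), and in each of the two statements to produce a single isotropic subspace meeting every $\Lambda\in X$ in the prescribed dimension; the conclusion for the ambient orthogonal flag variety then follows from Theorem \ref{orthogonal rigid index}, exactly the reduction used throughout \S\ref{Type BD}.

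A preliminary reduction treats the two statements in parallel. Since every $\Lambda\in X$ is a maximal isotropic subspace, $\Lambda=\Lambda^\perp$, so for any $(k-1)$-plane $F_{k-1}$ one has $F_{k-1}^\perp\cap\Lambda=(F_{k-1}+\Lambda)^\perp$, whence $\dim(F_{k-1}^\perp\cap\Lambda)=\dim(F_{k-1}\cap\Lambda)+1$ --- the codimension-one identity underlying the Remark after Theorem \ref{orthogonal rigid index}. Hence for $a_s=b_{k-s}=k-1$ it suffices to find an isotropic $F_{k-1}$ with $\dim(F_{k-1}\cap\Lambda)\ge s$ for all $\Lambda\in X$, and since $F_{k-1}\subset F_k\subset F_{k-1}^\perp$ for the maximal isotropic $F_k$ through $F_{k-1}$ in the component of $X$, with $\dim(F_k\cap\Lambda)\equiv k\equiv s\pmod 2$, this amounts in turn to finding such an $F_k$. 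So both statements reduce to: given the consecutive run $a_{s-1}=a_s-1$ (respectively $a_{s-2}=a_s-2=a_{s-1}-1$), and given that the sub-index in question is essential, every irreducible representative $X$ of a multiple of $\sigma_{a;b}$ admits an isotropic $F_{a_s}$ of dimension $a_s\in\{k-1,k\}$ with $\dim(F_{a_s}\cap\Lambda)\ge s$ for all $\Lambda\in X$.

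To produce $F_{a_s}$ I would first build an auxiliary swept variety. Realizing $X$ inside $G(k,2k)$ via $OG(k,2k)\hookrightarrow G(k,2k)$ --- where its class is the Schubert class of the Schubert variety of the same dimension, with Grassmannian index $\tilde a$ interleaving the $a_i$ and the numbers $2k-b_j$ --- one checks that whenever the sub-index is genuinely essential (so in particular $a_s+b_{k-s}\ne n-2$ in the case $a_s=k$), the gap $\tilde a_s\le\tilde a_{s+1}-3$ holds, so Lemma \ref{degree} yields an irreducible $Y$ of dimension $a_s-1$ with $\dim(\mathbb{P}(\Lambda)\cap Y)\ge s-1$ for all $\Lambda\in X$. (For $a_s=k-1$, where no such Grassmannian gap is available, one works instead with the $\mathbb{P}^{k-1}$-sweep $\Phi_X$ of Lemma \ref{dim of q}, using that $\mathbb{P}(F_{k-1}^\perp)\cap Q$ is a rank-two quadric whose two rulings are $\mathbb{P}(F_k)$ and the other maximal isotropic subspace, together with the parity of $\dim(F_k\cap\Lambda)$.) Then, exactly as in Proposition \ref{rigid 2}, cutting $Y$ by its tangent space at a general point $p$ and comparing with the class of $\{\Lambda\in X:\ p\in\mathbb{P}(\Lambda)\}$ forces $Y$ to be linear --- this is where the consecutive values $a_{s-1}=a_s-1$ (and, for $a_s=k$, also $a_{s-2}=a_s-2$) enter --- and I take $F_{a_s}$ to be its span.

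The remaining and hardest point is that $F_{a_s}$ is isotropic. If it were not, $\mathbb{P}(F_{a_s})\cap Q$ would be a proper sub-quadric of corank at most $a_s-1$; combining this with the corank and parity constraints (A1)--(A3) and (3) of Definition \ref{sequence} satisfied by the restriction-variety invariants extracted from $X$ by specialization (\S\ref{Type BD}), and, for $OG(k,2k)$, with the component constraint $s\equiv k\pmod 2$, yields a contradiction with the codimension of $X$; for $a_s=b_{k-s}=k-1$ the essential sub-index $b_{k-s}$ together with the identity above is precisely what forbids a nonzero corank. This step is delicate exactly because $a_s$ lies within $2$ of $n/2=k$, so the sub-quadrics in play have corank nearly equal to the dimension of their span and the usual corank estimates degenerate; one must keep track of which ruling of each degenerate quadric is selected. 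In any residual configuration where the swept-variety construction leaves $F_{a_s}$ ambiguous, I would instead invoke the Robles--The classification, Theorem \ref{RTresult}: the hypotheses force the truncated sequence $a'$ to satisfy $a'_{i-1}+1=a'_i\le a'_{i+1}-3$ at the relevant index, so any irreducible representative of a multiple is a Schubert variety of the appropriate sub-spinor variety and directly supplies $F_{a_s}$. Spreading the resulting inequality over all components by Theorem \ref{orthogonal rigid index} completes the proof.
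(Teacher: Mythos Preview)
Your proposal has genuine gaps and differs substantially from the paper's argument.

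The paper does \emph{not} try to build $F_{a_s}$ directly from a swept variety. Instead it runs an induction on the defect $\mu_a$ (the number of indices in $a_\bullet$ that fail to be consecutive from the top). The base case $\mu_a=0$ is exactly the Robles--The theorem. For the inductive step one passes through $OF(k-1,k;2k)$: pull $X$ back along $\pi_2$, push forward along $\pi_1$ to $OG(k-1,2k)$, apply the involution of $Q$ that swaps the two spinor components, and push back to $OG(k,2k)$. This produces a class whose sequence $a'''_\bullet$ has $\mu_{a'''}=\mu_a-1$, so induction applies; then Theorem \ref{orthogonal rigid index} transports the resulting flag element back to $X$. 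The point is that the flag-variety/involution maneuver strips off $a_1$ while keeping the top run $k{-}2,k{-}1,k$ (or $k{-}2,k{-}1$) intact; nothing like your swept-variety construction is used.

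Your approach breaks down at two places. First, the tangent-cut argument of Proposition \ref{rigid 2} needs a \emph{second} component of the partial flag in which the relevant $\Lambda$ is forced into $T_pY$; in $OG(k,2k)$ there is no such auxiliary component, and in fact for $a_s=k$ you would need $\mathbb P(\Lambda)=T_pY$ for every $\Lambda\in X_p$, which there is no reason to expect. Second, even granting linearity of $Y$, your isotropy argument is only a sketch: the restriction-variety inequalities you invoke do not by themselves force the span to lie on $Q$, and the corank degeneration you flag is precisely why the paper avoids this route near $n/2$. Finally, your Robles--The fallback does not cover the statement: the hypotheses of the proposition impose conditions only on $a_{s-2},a_{s-1},a_s$, so the truncated sequence $a'$ need not satisfy the gap conditions of Theorem \ref{RTresult} at the earlier essential indices, and the full class $\sigma_{a;b}$ need not be multi-rigid --- only the single sub-index is claimed to be.
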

\begin{proof}
Let $X$ be an irreducible representative of $m\sigma_{a;b}$, $m\in\mathbb{Z}^+$. If $a_s=k$, set $\mu_a:=\#\{i|a_{s-i}<k-i\}$. If $a_s=k-1$, set $\mu_a:=\#\{i|a_{s-i}<k-i-1\}$. We use induction on the $\mu_a$. If $\mu_a=0$, then $\sigma_{a;b}$ is multi-rigid by Theorem \ref{RTresult}.

If $\mu_a\geq 1$, consider the orthogonal partial flag variety $OF(k-1,k;2k)$. Let $\pi_1:OF(k-1,k;2k)\rightarrow OG(k-1,2k)$ and $\pi_2:OF(k-1,k;2k)\rightarrow OG(k,2k)$ be the two projections. Then the pullback of $\sigma_{a;b}$ under $\pi_2$ is the Schubert class $\sigma_{a^\alpha;b^\beta}$, where $\alpha_1=2$, $\alpha_i=1$ for $1<i<s$, $\beta_j=j$ for $1\leq j\leq k-s$. Therefore $Y:=(\pi_1\circ\pi_2^{-1})(X)$ has class $m'\sigma_{a';b'}$ where $m'\in\mathbb{Z}^+$, $a'$ is obtained from $a$ by removing $a_1$ and $b'=b$. 

First assume $a_s=b_{k-s}=k-1$. Let $Y'$ be the image of $Y$ under an involution of the quadric that interchanges the two irreducible components of the space of maximal isotropic subspaces. The class $m'\sigma_{a'';b''}$ of $Y'$ is obtained from $Y$ by adding $k$ to the sequence $a$ and deleting $k-1$ from the sequence $b$. Let $Z:=(\pi_2\circ\pi_1^{-1})(Y')\subset OG(k,2k)$. Then $Z$ has class $m'''\sigma_{a''';b'''}$ where $m'''\in\mathbb{Z}^+$, $a'''=a''$, $b'''_1=0$, $b'''_{j+1}=b_j$ for $1\leq j\leq k-s$. Notice that $\mu_{a'''}=\mu_a-1$. By induction, the sub-index $a'''_s=k$ is multi-rigid for the class $\sigma_{a''';b'''}$. Apply Theorem to \ref{orthogonal rigid index} to the class of $\pi_1^{-1}(Y')$, we obtain the sub-index $a''_s=k$ is multi-rigid for the Schubert class $\sigma_{a'';b''}$, which in turn shows that $b'_{k-s}=k-1$ is multi-rigid with respect to the class $\sigma_{a';b'}$. Apply Theorem to \ref{orthogonal rigid index} again to the class $\sigma_{a^\alpha;b^\beta}$, the sub-index $b_{k-s}=k-1$ is multi-rigid for the class $\sigma_{a;b}$. 

If $a_s=k$, then $a''$ is obtained from $a$ by deleting $k$ and $b''$ is obtained from $b$ by adding $k-1$. Then the same argument as in the previous case follows.
\end{proof}

\begin{Prop}\label{2k+1}
Consider the Schubert class $\sigma_{a;b}=\sigma_{1,...,b,b+2,...,k;b}$ for $OG(k,n)$, $n=2k+1$ or $2k+2$. The sub-index $a_{s}=k$ is multi-rigid if $k\geq b+3$.
\end{Prop}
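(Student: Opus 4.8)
The plan is to reduce the case $n = 2k+1$ or $2k+2$ to the already-known case $n \geq 2k+3$ (Corollary \ref{an}) by an embedding trick, together with Theorem \ref{orthogonal rigid index} to transport multi-rigidity between components of an orthogonal partial flag variety. Concretely, given an irreducible representative $X$ of $m\sigma_{a;b}$ in $OG(k,n)$ with $n \in \{2k+1, 2k+2\}$, I would like to view $X$ inside a larger orthogonal Grassmannian where the corresponding Schubert class has sufficiently large ambient dimension, apply the known multi-rigidity statement there, and descend.

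First I would make precise the geometric set-up. The key observation is that the sub-index $a_s = k$ of $\sigma_{1,\dots,b,b+2,\dots,k;b}$ being multi-rigid for $X$ means: for every irreducible representative $X$ of $m\sigma_{a;b}$, there is an isotropic $k$-plane $F_k$ (the whole $k$-plane, since $a_s = k$) with $\Lambda \subseteq F_k$ — wait, more carefully, $a_s = k$ requires a subspace $F_k$ of dimension $k$ with $\dim(\Lambda \cap F_k) \geq k$, i.e. $\Lambda = F_k$ for every $\Lambda \in X$; equivalently $X$ is a point. So the assertion is really that $m\sigma_{a;b}$ cannot be represented by an irreducible $X$ of positive dimension once the $a_s=k$ condition forces rigidity — no, that is not right either, because $X$ has positive dimension in general (it represents $m$ times a Schubert class of some positive-dimensional Schubert variety). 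Let me restate: $a_s = k$ is essential here because $k = b+2 < b+3$... no, the hypothesis is $k \geq b+3$. Under $k \geq b+3$ the gap $a_{s-1}+1 = a_s$, and $a_s = k$ with no further $a_{i+1}$; the multi-rigidity of $a_s$ says there is a $k$-dimensional isotropic $F_k$ with $\dim(\Lambda \cap F_k) \geq s$ for all $\Lambda \in X$, where $s = k$ here since all of $1,\dots,b,b+2,\dots,k$ are $a$-indices. So again $\Lambda = F_k$. Thus the content is: an irreducible $X$ representing $m\sigma$ with this index must be a single isotropic $k$-plane, hence $m = 1$ and $X$ is a Schubert variety.

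Given that, here is the plan. Step 1: embed $V$ isometrically into a larger quadratic space $\widetilde V$ of dimension $\widetilde n = 2k+3$ (add a hyperbolic plane, or appropriate split summand) so that $OG(k,n) \hookrightarrow OG(k,\widetilde n)$ and the Schubert class $\sigma_{1,\dots,b,b+2,\dots,k;b}$ pushes forward to the Schubert class with the same $a$-sequence and same $b$-sequence in $OG(k, \widetilde n)$ — one checks via specialization to Schubert varieties (as in Lemma \ref{class of pushforward in of}) that the pushforward is exactly $\sigma_{a;b}$ with the coefficient preserved, since the flag conditions only involve subspaces of dimension $\leq k$ and $b_j \leq b < k$. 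Step 2: an irreducible $X$ representing $m\sigma_{a;b}$ in $OG(k,n)$ maps to an irreducible representative $\widetilde X$ of $m'\sigma_{a;b}$ in $OG(k,\widetilde n)$ with $m' \mid m$; by Corollary \ref{an} (valid since $\widetilde n = 2k+3 \geq 2k+3$), the sub-index $a_s = k$ is multi-rigid there, so there is an isotropic $F_k \subseteq \widetilde V$ with $\dim(\Lambda \cap F_k) \geq k$, i.e. $\Lambda = F_k$, for every $\Lambda \in \widetilde X = X$. But every $\Lambda$ lies in $V$, so $F_k \subseteq V$ is isotropic of dimension $k$ in $V$, giving multi-rigidity of $a_s$ in $OG(k,n)$. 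Step 3 (if a direct embedding argument is unavailable because pushforward along the inclusion might not have the right coefficient): instead run the argument through $OF(k, k+1; \widetilde n)$ or a two-step orthogonal flag variety and invoke Theorem \ref{orthogonal rigid index} to move multi-rigidity between the $OG(k,\cdot)$ and $OG(k+1,\cdot)$ components, mirroring the proof strategy of Proposition \ref{4.6}.

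I expect the main obstacle to be Step 1: verifying that the inclusion $OG(k,n) \hookrightarrow OG(k,\widetilde n)$ carries the representative $X$ to a representative of a \emph{multiple of the same Schubert class} $\sigma_{a;b}$ (and not some other combination), and that the multi-rigidity conclusion downstairs — which produces an isotropic subspace of $\widetilde V$ — can be forced to lie inside $V$. The cleanest route is probably to not use the naive inclusion but to fix a general isotropic flag in $\widetilde V$ whose relevant pieces sit inside $V$, pull back the defining incidence conditions, and check by the undetermined-coefficients / specialization method (exactly as in the proof of Proposition \ref{class of pushforward} and Lemma \ref{class of pushforward in of}) that only the class $\sigma_{a;b}$ survives; the fact that $F_k \subseteq V$ then follows because $\Lambda = F_k$ for $\Lambda \in X$ and $\Lambda \subseteq V$. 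The dimension bookkeeping $n \geq 2(b+3)$ — which held automatically for $n \geq 2k+3$ in Corollary \ref{an} — is what fails for $n = 2k+1, 2k+2$, so one must be careful that the auxiliary cone/flag constructions used to \emph{build} counterexamples are exactly the ones ruled out here, and that the embedding argument does not secretly need that inequality.
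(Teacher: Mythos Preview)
Your proposal has two genuine problems.

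First, a miscount that undermines the descent argument: the $a$-sequence $(1,\dots,b,b+2,\dots,k)$ has $k-1$ entries (the value $b+1$ is omitted), so $s=k-1$, not $s=k$. Multi-rigidity of $a_s=k$ therefore asks for an isotropic $k$-plane $F_k$ with $\dim(\Lambda\cap F_k)\ge k-1$ for all $\Lambda\in X$, \emph{not} $\Lambda=F_k$. In particular $X$ is not a point, and your argument that the $F_k$ produced in $\widetilde V$ must lie in $V$ (``because $\Lambda=F_k$ and $\Lambda\subset V$'') breaks.

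Second, and more seriously, the embedding $OG(k,n)\hookrightarrow OG(k,\widetilde n)$ with $\widetilde n=2k+3$ does \emph{not} carry $\sigma_{a;b}$ to a multiple of $\sigma_{a;b}$. Write $\widetilde V=V\perp H$ with $H$ a hyperbolic plane; then the image of $OG(k,n)$ is the locus $\{\Lambda\subset V\}=\{\Lambda\subset e^\perp\cap f^\perp\}$ for an isotropic basis $e,f$ of $H$. This is a codimension-$2k$ subvariety cut out by \emph{two} independent orthogonality conditions against isotropic lines which are not part of a single isotropic flag (indeed $\langle e,f\rangle$ is non-degenerate). Consequently the image of $\Sigma_{a;b}\subset OG(k,n)$ sits strictly inside the Schubert variety $\Sigma_{a;b}\subset OG(k,\widetilde n)$ (the latter has larger dimension, since the $b$-condition $\Lambda\subset F_b^{\perp_{\widetilde V}}$ is weaker than $\Lambda\subset F_b^{\perp_V}$), and the pushforward class is a nontrivial combination of Schubert classes, not $m'\sigma_{a;b}$. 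You correctly flag this as the ``main obstacle'' but never compute the class or check that Corollary~\ref{an} (or anything else) applies to it; the fallback Step~3 via $OF(k,k+1;\widetilde n)$ is too vague, and Theorem~\ref{orthogonal rigid index} transfers rigidity between components for a \emph{fixed} ambient space, not between different ambient spaces.

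The paper's proof goes in the opposite direction. For $n=2k+1$ it slices $X$ by general hyperplanes $H$: the fiber $X_H$ is zero-dimensional and consists of finitely many pairs of $k$-planes in the smooth sub-quadric $Q\cap\mathbb P(H)$, each pair meeting in a $(k-1)$-plane $\Lambda_{k-1}$. The closure $Y\subset OG(k-1,2k+1)$ of these $(k-1)$-planes has class $m''\sigma_{1,\dots,b,b+2,\dots,k}$ with empty $b$-sequence, so it pushes forward unchanged to $G(k-1,2k+1)$; then Theorem~\ref{rigid in g} (the type-$A$ result, using $k\ge b+3$) produces an isotropic $F_k$ containing every $\Lambda_{k-1}\in Y$, whence $\dim(\Lambda\cap F_k)\ge k-1$ for all $\Lambda\in X$. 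For $n=2k+2$ the paper instead passes \emph{up} to the two maximal isotropic $(k+1)$-planes containing each $\Lambda$, lands in $OG(k+1,2k+2)$ with class $m\sigma_{1,\dots,b,b+2,\dots,k;\,k,b}$, and invokes Proposition~\ref{4.6} on each component to obtain $F_k$ as an intersection.
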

\begin{proof}
Let $X$ be an irreducible representative of $m\sigma_{a;b}$, $m\in\mathbb{Z}^+$. 

{\bf Case I.} First assume $n=2k+1$. Consider the incidence correspondence:
$$I:=\{(\Lambda,H)|\Lambda\in X,\Lambda\subset H,H\subset G(n-1,n)\}.$$
Let $\pi_1$ and $\pi_2$ be the two projections to $X$ and $G(n-1,n)$ respectively. For a general point $H\in\pi_2(I)$, consider the fiber $X_H:=\{\Lambda\in X|\Lambda\subset H\}$. For every $\Lambda\in X_H$, $\Lambda\subset \mathbb{P}(H)\cap Q:=Q'$, where $Q'$ is a codimension 1 smooth sub-quadric of $Q$. Let $OG=OG(k,2k)\cup OG(k,2k)'$ be the space of $k$-dimensional isotropic subspaces contained in $Q'$ which has two irreducible components. By specializing $X$ to a union of Schubert varieties, one see $X_H$ has dimension $0$, which consists of $m'$ pair of $k$-planes where two $k$-planes in each pair belongs to different components and intersect in a subspace $\Lambda_{k-1}$ of dimension $k-1$. Let $Y^o$ be the locus of such $\Lambda_{k-1}$ as we varying $H$ in $\pi_2(I)$ and let $Y$ be the Zariski closure of $Y^o$ in $OG(k-1,2k+1)$. Then $Y$ has class $m''\sigma_{1,...,b,b+2,...,k}$ for some $m''\in\mathbb{Z}^+$. Notice that $\sigma_{1,...,b,b+2,...,k}$ is of Grassmannian type, $i_*(\sigma_{1,...,b,b+2,...,k})=\sigma_{1,...,b,b+2,...,k}$ where $i$ is the inclusion $OG(k-1,2k+1)\hookrightarrow G(k-1,2k+1)$. By Theorem \ref{rigid in g}, the sub-index $k$ is multi-rigid for the class $\sigma_{1,...,b,b+2,...,k}$, i.e. there exists an isotropic subspace $F_{k}$ such that $\Lambda_{k-1}\subset F_k$ for all $\Lambda_{k-1}\in Y$. Since $X_H$ covers $X$ as we varying $H$, for every $\Lambda\in X$, $\Lambda\supset \Lambda_{k-1}$ for some $\Lambda_{k-1}\in Y$. Therefore 
$$\dim(F_k\cap\Lambda)\geq \dim(F_k\cap\Lambda_{k-1})=k-1.$$
We conclude that $a_s=k$ is multi-rigid.

{\bf Case II.} Now assume $n=2k+2$. Notice that the orthogonal complement of $\Lambda\in X$ intersects $Q$ into two maximal isotropic linear spaces belonging to different components. Let $$Y:=\{\Lambda_{k+1}|\mathbb{P}({\Lambda_{k+1}})\subset Q, \Lambda_{k+1}\supset \Lambda\text{ for some }\Lambda\in X\}=Y_1\cup Y_2$$
be a decomposition into irreducible components where $Y_1\subset OG(k+1,2k+2)$ and $Y'\subset OG(k+1,2k+2)'$. The class of $Y_1$ in $OG(k+1,2k+2)$ is given by $m\sigma_{1,...,b,b+2,...,k;k,b}$. By Proposition \ref{4.6}, the sub-index $k$ in the sequence $b_\bullet$ is multi-rigid, i.e. there exists a maximal isotropic subspace $F_{k}^\perp\in OG(k+1,2k+2)'$ such that $\dim(F_{k}^\perp\cap \Lambda_{k+1})\geq k$ for all $\Lambda_{k+1}\in Y_1$. By an involution of $Q$ that interchanges $OG(k+1,2k+2)$ and $OG(k+1,2k+2)'$, a same argument shows the existence of a maximal isotropic subspace $F_{k+1}\in OG(k+1,2k+2)$ such that $\dim(F_{k+1}\cap \Lambda'_{k+1})\geq k$ for all $\Lambda'_{k+1}\in Y_2$. Let $F_k=F_{k+1}\cap F^\perp_k$. By the construction of $Y$, $F_k$ has dimension $k$ and meets every $\Lambda\in X$ in dimension at least $k-1$. We conclude that $a_s=k$ is multi-rigid.
\end{proof}

Now we are able to prove the multi-rigidity of sub-indices in $b_\bullet$. We start with $j=1$:
\begin{Prop}\label{multirigid b1}
Let $\sigma_{a;b}$ be a Schubert class in $OG(k,n)$. Assume $b_1=b_2-1<\frac{n}{2}-2$. If the sub-index $b_1$ is rigid, then it is also multi-rigid.
\end{Prop}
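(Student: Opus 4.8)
Let $X$ be an irreducible representative of $m\sigma_{a;b}$, $m\in\mathbb{Z}^+$; I must produce an isotropic $F_{b_1}$ of dimension $b_1$ with $\dim(\Lambda\cap F_{b_1}^\perp)\geq k$, i.e. $\Lambda\subseteq F_{b_1}^\perp$, for every $\Lambda\in X$. Let $\Phi_X\subseteq\mathbb{P}(V)$ be the variety swept out by the $\mathbb{P}(\Lambda)$, $\Lambda\in X$. Since $b$ is non-empty and $b_1\neq\frac n2-1$, Lemma \ref{dim of q} gives $\dim\Phi_X=n-b_1-2$, which is exactly the dimension of a corank-$b_1$ subquadric $\mathbb{P}(F_{b_1}^\perp)\cap Q$. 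The whole game is therefore to show $\Phi_X=\mathbb{P}(W)\cap Q$ for a subspace $W$ whose $q$-orthogonal $W^\perp$ is isotropic of dimension $b_1$: then $F_{b_1}:=W^\perp$ works, because $\Lambda\subseteq W=(W^\perp)^\perp=F_{b_1}^\perp$. Note $\Phi_X\subseteq Q$ automatically, so once one knows $\langle\Phi_X\rangle=:W$ has dimension $n-b_1$ the equality $\Phi_X=\mathbb{P}(W)\cap Q$ follows by a dimension count (the right side is irreducible here, as its rank exceeds $3$ when $b_1<\frac n2-2$); the remaining issue is the corank.

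\textbf{Inductive step via hyperplane sections.} I plan to induct on $b_1$, the base $b_1=0$ being trivial. For $b_1\geq1$, form $I:=\{(\Lambda,H)\mid \Lambda\in X,\ \Lambda\subseteq H,\ H\text{ a hyperplane}\}$ and, for general $H$, set $X_H:=\{\Lambda\in X\mid\Lambda\subseteq H\}$. Specializing $X$ to a union of Schubert varieties as in Lemma \ref{atob} (Case (B)), $X_H$ has class $m'\sigma_{a';b'}$ in $OG(k,n-2)$ with $b'_1=b_1-1=b'_2-1<\frac{n-2}{2}-2$, and $b'_1$ is again rigid (this is precisely the hyperplane-section compatibility of the rigidity criterion for $b_1$ recorded in \cite{YL3}). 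By the inductive hypothesis $b'_1$ is multi-rigid, so there is an isotropic $F'_{b_1-1}$ with $\Lambda\subseteq (F'_{b_1-1})^\perp$ for all $\Lambda\in X_H$, i.e. $\Phi_{X_H}$ is contained in the corank-$(b_1-1)$ subquadric $\mathbb{P}((F'_{b_1-1})^\perp)\cap Q$ of $\mathbb{P}(H)\cap Q$. Since $\dim\Phi_{X_H}=(n-2)-(b_1-1)-2=\dim\Phi_X-1$ by Lemma \ref{dim of q}, and $\Phi_{X_H}\subseteq\Phi_X\cap\mathbb{P}(H)$, comparing irreducible varieties of equal dimension forces $\Phi_{X_H}=\Phi_X\cap\mathbb{P}(H)=\mathbb{P}(W\cap H)\cap Q$ with $W\cap H=(F'_{b_1-1})^\perp$. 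Letting $H$ vary: the $X_H$ cover $X$, so the sections $\mathbb{P}(W\cap H)$ sweep $\Phi_X$; a general linear-span (Bertini) argument then gives $\dim W=n-b_1$, and the fact that each general hyperplane section of the quadric $\Phi_X$ has corank exactly $b_1-1$ forces $\Phi_X$ itself to be a quadric of corank exactly $b_1$ (a general hyperplane section drops the corank of a quadric of controlled corank by exactly one). Hence $W^\perp$ is isotropic of dimension $b_1$, and $F_{b_1}:=W^\perp$ finishes the proof.

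\textbf{The main obstacle.} The delicate point is pinning the corank of $\Phi_X$ to be exactly $b_1$ rather than smaller: a priori $\Phi_X$ could be a family of $k$-planes covering a sub-quadric of corank $<b_1$, which admits no isotropic $F_{b_1}$ of the correct dimension, and this is exactly the scenario that rigidity of $b_1$ is invoked to exclude. The hyperplane-section induction handles this transparently once $b_1\geq 2$, but the first genuinely non-trivial instance — $b_1=1$, where $\Phi_X$ is a hyperplane section of $Q$ — must be treated directly: one argues, using that $b_1$ is rigid together with $b_1=b_2-1$ (so $b_2$ is not essential and a second flag element sits one dimension above $F_{b_1}$), that $\Phi_X$ cannot be a \emph{smooth} hyperplane section of $Q$, so the hyperplane is tangent and its pole is the isotropic $F_1$ sought; without the rigidity hypothesis this step genuinely fails. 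Along the way I also need the routine geometric facts that hyperplane sections of quadrics of bounded corank stay irreducible and lose exactly one unit of corank, and that linear spans behave well under general hyperplane section — all valid in the ranges of $b_1$ and $n$ permitted by the hypothesis $b_1=b_2-1<\frac n2-2$.
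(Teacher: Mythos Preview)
Your inductive step is miscalculated, and the induction on $b_1$ does not go through. For a \emph{general} hyperplane $H\subset V$, the restriction $Q\cap\mathbb{P}(H)$ is a \emph{smooth} quadric in the $(n-1)$-dimensional space $H$, so $X_H$ lands in $OG(k,n-1)$, not $OG(k,n-2)$. More damagingly, the Schubert index does not drop: for general $H$ one has $F_{b_1}\subset H$ and $F_{b_1}^{\perp_H}=F_{b_1}^\perp\cap H$, so the condition $\Lambda\subset F_{b_1}^\perp$ becomes $\Lambda\subset F_{b_1}^{\perp_H}$ with $\dim F_{b_1}=b_1$ unchanged; hence $b'_1=b_1$, not $b_1-1$, and your induction never moves. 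Your citation of Lemma~\ref{atob} Case~(B) is a misreading: there the hyperplane section is taken \emph{after} one already knows $\Lambda\subset F_{a_i}^\perp$, so $\Phi_H$ sits inside $\mathbb{P}(F_{a_i}^\perp)\cap Q\cap H$, a subquadric whose corank drops by one and which embeds in a smooth quadric of dimension $n-4$; that is precisely what produces the $OG(k,n-2)$ and the index shift. You are trying to use the section to \emph{prove} the containment $\Lambda\subset F_{b_1}^\perp$, so you cannot assume it in setting up the section.

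The paper's argument is essentially dual to your attempt: instead of general hyperplanes it uses general \emph{points} $p\in\Phi_X$. The locus $X_p=\{\Lambda\in X:p\in\mathbb{P}(\Lambda)\}$ has $b'_1=b_2=b_1+1$, and since every such $\Lambda$ lies in $T_p\Phi_X$, a dimension count via Lemma~\ref{dim of q} gives $\Phi_{X_p}=T_p\Phi_X\cap Q$. One then shows (using the rigidity hypothesis on $b_1$, which propagates to $b'_1$ via Lemma~\ref{oldrigid}) that $\Phi_{X_p}$ has the maximal corank $b_1+1$; this pins down the dimension of the Gauss image $\Gamma\subset G(n-b_1-1,n)$ and forces its class to be $m_\gamma\sigma_{1,\dots,b_1+1,b_1+3,\dots,n-b_1}$. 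Multi-rigidity of the top sub-index in Grassmannians (Theorem~\ref{rigid in g}) then makes $\Phi_\Gamma$ a single linear $\mathbb{P}^{n-b_1-1}$, so $\Phi_X=\Phi_\Gamma\cap Q$ is a genuine subquadric, and rigidity of $b_1$ forces its corank to be exactly $b_1$. The Gauss-map step is the substantive input your sketch is missing.
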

\begin{proof}
Let $X$ be an irreducible representative of $m\sigma_{a;b}$, $m\in\mathbb{Z}^+$. Let $\Phi_X$ be the variety swept out by the projective linear spaces $\mathbb{P}^{k-1}$ parametrized by $X$. By lemma \ref{dim of q}, $\dim(\Phi_X)=n-b_1-2$. We claim that $\Phi_X$ is a sub-quadric of the maximal possible corank $b_1$.

First, $\Phi_X$ is irreducible. Consider the incidence correspondence
$$I:=\{(p,\Lambda)|p\in\Lambda,\Lambda\in X\}\subset \mathbb{P}(V)\times OG(k,n).$$
Let $\pi_i$ be the canonical projections to the $i$-th component, $i=1,2$. Then $\Phi_X=\pi_1(I)$ and $X=\pi_2(I)$. Since $X$ is irreducible and the fibers of $\pi_2|_X$ are isomorphic to $\mathbb{P}^{k-1}$, by the fiber dimension theorem, $I$ is irreducible. Being the image of an irreducible variety, $\Phi_X$ is also irreducible.

Let $p$ be a general point in $\Phi_X$ and define $$X_p:=\{\Lambda\in X|p\in\mathbb{P}(\Lambda)\}.$$ Let $T_p\Phi_X$ be the tangent space of $\Phi_X$ at $p$. Then $$\dim(T_p\Phi_X)=\dim(\Phi_X)=n-b_1-2.$$ Varying $p$ in a dense open subset of $\Phi_X$, we get a collection of tangent spaces. Let $\Gamma$ be the closure of it in $G(n-b_1-1,n)$. We claim that $\Phi_X=\Phi_\Gamma\cap Q$. The class of $X_p$ is given by $m'\sigma_{a';b'}$, where $m'\in\mathbb{Z}^+$, $a'_1=1$, $a'_{i'+1}=a_i'+1$ if $a_{i'}\leq b_1$, $a'_{i'+1}=a'_{i'}$ if $b_1<a_{i'}<\frac{n}{2}$, $b'_{j'}=b_{j'+1}$ if $b_{j'+1}\neq \frac{n}{2}-1$, $1\leq i'\leq s$, $1\leq j'\leq k-s-1$. In particular, $b'_1=b_2=b_1+1$. Notice that for all $\Lambda\in X_p$, $\Lambda$ is contianed in the tangent space $T_p\Phi_X$ at $p$. Therefore $\Phi_{X_p}\subset T_p\Phi_X\cap Q$. Use Lemma \ref{dim of q} again, $$\dim(\Phi_{X_p})=n-b_2-2=n-b_1-3=\dim( T_p\Phi_X\cap Q).$$ Since $T_p\Phi_X\cap Q$ irreducible, we get $\Phi_{X_p}= T_p\Phi_X\cap Q$ and therefore $\Phi_{X}= \Phi_\Gamma\cap Q$. Since $\Phi_X$ is irreducible, $\Phi_\Gamma$ is also irreducible.

Now we show that $\Phi_\Gamma$ is a linear space and therefore $\Phi_X$ is a sub-quadric of $Q$. We compute the class $[\Gamma]=m_\gamma\sigma_\gamma$ in $G(n-b_1-1,n)$. Notice that $\Phi_{X_p}= T_p\Phi_X\cap Q$ is a sub-quadric of $Q$. We claim $\Phi_{X_p}$ has the maximal possible corank $b_1+1$. Recall the following characterization of rigid sub-index $b_1$ is rigid:
\begin{Lem}\cite[Proposition 5.11]{YL}\label{oldrigid}
Let $\sigma_{a;b}$ be a Schubert class for $OG(k,n)$. The sub-index $b_1$ is rigid if and only if either $b_1=0$ or there exists $i$ and $j$ such that $a_i=b_j$ and $x_j>k-j+b_j-\frac{n-1}{2}$, where $x_j:=\#\{i|a_i\leq b_j\}$.
\end{Lem} 
If $b_1=0$ and $b_2=1$, then clearly $\mathbb{P}(\Lambda)\subset p^\perp$ for all $\Lambda\in X_p$ and therefore $\Phi_{X_p}=p^\perp\cap Q$ has corank 1. If $b_1> 0$, then by Lemma \ref{oldrigid}, there exists $i$ and $j$ such that $a_i=b_j$ and $x_j>k-j+b_j-\frac{n-1}{2}$. Notice that the same inequality holds for $\sigma_{a';b'}$ and therefore $b'_1=b_2$ is rigid with respect to the Schubert class $\sigma_{a';b'}$. Suppose for a contradiction that $\Phi_{X_p}$ is not of the maximal corank $b_1+1$, then $\Phi_{X_p}$ is contained in some smooth quadric of dimension less than $Q$. Therefore $X_p$ can be view as a subvariety in $OG(k,n-1)$ and the class $\sigma_{a';b'}$ is the pushforward of some Schubert classes in $OG(k,n-1)$ via the inclusion $OG(k,n-1)\rightarrow OG(k,n)$, which implie $b_2$ is not rigid with respect to the Schubert class $\sigma_{a';b'}$. We reach a contradiction. We conclude that $\Phi_{X_p}$ has corank $b_1+1$. 

Let $S$ be the singular locus of $\Phi_{X_p}$. Let $q$ be another general point. The tangent space $T_{p}\Phi_X$ coincides with $T_{q}\Phi_X$ if and only if $q$ is contained in $S$. Therefore $$\dim(\Gamma)=\dim(\Phi_X)-b_1=n-2b_1-2.$$ 
Also, by the equality $\Phi_{X}= \Phi_\Gamma\cap Q$, we get 
$$\gamma_{n-b_1-1}-1=\dim(\Phi_\Gamma)=\dim(\Phi_X)+1=n-b_1-1.$$
These two conditions force the class of $\Gamma$ to be $m_\gamma\sigma_{1,2,...,b_1,b_1+1,b_1+3,...,n-b_1}$ with some $m_\gamma\in\mathbb{Z}^+$. Since $n\geq 2b_1+3$, $n-b_1-1>b_1+1$ and therefore the last sub-index $n-b_1$ is multi-rigid by Theorem \ref{rigid in g}. Since $\Phi_\Gamma$ is irreducible, we get $\Phi_\Gamma\cong \mathbb{P}^{n-b_1-1}$. Therefore $\Phi_X=\Phi_\Gamma\cap Q$ is a sub-quadric of $Q$. Since $b_1$ is rigid, this sub-quadric must have the maximal possible corank. Let $F_{b_1}$ be the singular locus of $\Phi_X$. Then for every $\Lambda\in X$, $\Lambda\subset F_{b_1}^\perp$.
\end{proof}

More generally we have

\begin{Thm}\label{multirigid of b}
Let $\sigma_{a;b}$ be a Schubert class in $OG(k,n)$. An essential sub-index $b_j$ is multi-rigid if it is rigid and $b_{j-1}+3\leq b_j=b_{j+1}-1<\frac{n}{2}-2$.
\end{Thm}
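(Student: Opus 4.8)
The plan is to induct on $j$, the base case $j=1$ being Proposition~\ref{multirigid b1}. The inductive step peels off one sub-quadric index: from an irreducible representative $X$ of $m\sigma_{a;b}$ ($m\in\mathbb{Z}^+$) I would pass to the family obtained by forcing the parametrized $k$-planes through a general point of the swept variety $\Phi_X$, whose class has $b_j$ as its $(j-1)$-st sub-quadric index, apply the inductive hypothesis there, and then descend.

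\textbf{The reduction.} Let $X$ be an irreducible representative of $m\sigma_{a;b}$. Form the incidence $I=\{(p,\Lambda)\mid p\in\mathbb{P}(\Lambda),\ \Lambda\in X\}\subset OF(1,k;n)$; then $\Phi_X=\pi_1(I)$ is irreducible of dimension $n-b_1-2$ by Lemma~\ref{dim of q}. Fix a general point $p\in\Phi_X$ and set $X_p=\{\Lambda\in X\mid p\in\mathbb{P}(\Lambda)\}$ (which we may take irreducible, passing to a component if necessary). By the class computation in the proof of Proposition~\ref{multirigid b1} — specialization of $X$ to a union of Schubert varieties together with Lemmas~\ref{class of pushforward in of} and~\ref{class of pullback in of} — the class of $X_p$ is $m'\sigma_{a';b'}$, $m'\in\mathbb{Z}^+$, where $b'$ is $b$ with $b_1$ deleted (so $b'_{j-1}=b_j$ unchanged, and $b'_j=b_{j+1}=b_j+1$ when $j\geq 2$) and $a'$ is $a$ with the entry $1$ prepended and the entries $\leq b_1$ incremented. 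Because $b_1<b_j<\tfrac n2-2$, no corank here reaches the critical value $\tfrac n2-1$, so we stay in Case~I of Lemma~\ref{class of pullback in of} and every locus occurring is a genuine sub-quadric of the expected corank. One then checks that the hypotheses of the theorem pass to $b'_{j-1}$ and $\sigma_{a';b'}$: the gap conditions $b'_{j-2}+3\leq b'_{j-1}=b'_j-1<\tfrac n2-2$ are immediate (for $j=2$ there is no lower neighbour and a single reduction reaches the base case); $b'_{j-1}$ stays essential since $b_j\geq b_{j-1}+3$; and $b'_{j-1}=b_j$ stays rigid for $\sigma_{a';b'}$ by the classification of rigid sub-indices in~\cite{YL3} (cf.\ Lemma~\ref{oldrigid}), as deleting the index $b_1<b_j$ and prepending small entries to $a$ leaves $n$ unchanged and preserves the witness ($a_i=b_j$ with the count $x_j$ large) forcing rigidity of $b_j$. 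The inductive hypothesis applied to $X_p$ and $\sigma_{a';b'}$ now yields an isotropic subspace $F^p$ of dimension $b_j$ with $\dim(\Lambda\cap(F^p)^\perp)\geq k-(j-1)+1=k-j+2$ for all $\Lambda\in X_p$.

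\textbf{The descent.} It remains to assemble $\{F^p\}_{p\in\Phi_X}$ into a single subspace. As in the descent arguments of Proposition~\ref{multirigid b1} and Case~(B) of Lemma~\ref{atob}, I would argue that $F^p=\mathrm{span}(F_{b_j}\cap p^\perp,\ p)$ for a \emph{fixed} isotropic $F_{b_j}$ of dimension $b_j$ — this uses uniqueness of the defining flag of the rigid sub-index $b'_{j-1}$ together with irreducibility of the incidence $\{(p,F^p)\}$ over $\Phi_X$, so that the $b_j$-plane $F_{b_j}$ (realized e.g.\ as the singular locus of the sub-quadric swept out by $X_p$, of the expected corank) is independent of $p$. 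Then $(F^p)^\perp=\mathrm{span}(F_{b_j}^\perp\cap p^\perp,\ p)$, and since $p\in\Lambda\subset p^\perp$ for $\Lambda\in X_p$ we get $\dim(\Lambda\cap(F^p)^\perp)=1+\dim(\Lambda\cap F_{b_j}^\perp)$; hence $\dim(\Lambda\cap F_{b_j}^\perp)\geq k-j+1$ on $X_p$, and letting $p$ vary over $\Phi_X$ — the $X_p$ cover $X$ — the same inequality holds on all of $X$. Thus $b_j$ is multi-rigid.

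\textbf{Main obstacle.} The crux is the descent: showing that the fibrewise subspaces $F^p$ coalesce into one fixed $F_{b_j}$ rather than a genuinely moving family. This is exactly where both hypotheses are used — $b_j=b_{j+1}-1$ makes the ``$=b_{j+1}-1$'' condition self-reproducing under each reduction and, via Lemma~\ref{dim of q} applied to $X_p$, forces the relevant swept loci to be sub-quadrics of precisely corank $b_j$, so their singular loci have dimension exactly $b_j$ (the same mechanism as in Proposition~\ref{multirigid b1}); and $b_j<\tfrac n2-2$ keeps all coranks below $\tfrac n2-1$, so no orthogonal complement splits into two components and the class bookkeeping stays in Case~I of Lemma~\ref{class of pullback in of}. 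Faithfully tracking rigidity of $b_j$ through the reduction (via~\cite{YL3}) is the secondary technical point.
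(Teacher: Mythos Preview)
Your overall strategy and reduction step match the paper's proof: induct on $j$, base case Proposition~\ref{multirigid b1}, and for $j\geq 2$ pass to $X_p$ for general $p\in\Phi_X$, whose class is $m'\sigma_{a';b'}$ with $b'_{j-1}=b_j$, so the inductive hypothesis produces an isotropic $F^p$ of dimension $b_j$ with $\dim(\Lambda\cap(F^p)^\perp)\geq k-j+2$ for all $\Lambda\in X_p$.

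The gap is in your descent. You assert that $F^p=\mathrm{span}(F_{b_j}\cap p^\perp,p)$ for a \emph{fixed} $F_{b_j}$, appealing only to ``uniqueness of the defining flag together with irreducibility of the incidence.'' But this is precisely the statement to be proved, and irreducibility of $\{(p,F^p)\}$ in no way forces such a specific form --- a priori the family $p\mapsto F^p$ could trace out a positive-dimensional locus in $OG(b_j,n)$ not arising from any fixed $b_j$-plane. Your suggested realization of $F_{b_j}$ as ``the singular locus of the sub-quadric swept out by $X_p$'' also fails for $j\geq 3$: that sub-quadric is $\Phi_{X_p}$, of dimension $n-b_2-2$, so its singular locus has dimension $b_2-1$, not $b_j-1$.

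The paper closes this gap differently. It takes the closure $I$ of $\{(p,F^p)\}$ in $\mathbb{P}(V)\times OG(b_j,n)$ and projects to $OG(b_j,n)$. A dimension count (general fibres of both projections) together with $\Phi_{\pi_2(I)}=\Phi_X$ pins down $[\pi_2(I)]=m_I\,\sigma_{1,\dots,b_1,b_1+2,\dots,b_j;\,b_1}$ in $OG(b_j,n)$. Then Corollary~\ref{an} (which requires $b_j\geq b_1+3$, guaranteed by $b_j\geq b_{j-1}+3$) gives a fixed isotropic $F_{b_j}$ with $\dim(F^p\cap F_{b_j})\geq b_j-1$ for every $F^p\in\pi_2(I)$; passing to orthogonal complements yields $\dim(\Lambda\cap F_{b_j}^\perp)\geq k-j+1$ for all $\Lambda\in X$. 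Note that this is where the hypothesis $b_{j-1}+3\leq b_j$ does real work in the descent, not only in propagating essentiality --- your argument never invokes it there, which is another sign that the descent as written cannot be complete.
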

\begin{proof}
We use induction on $j$. The case of $j=1$ reduces to Proposition \ref{multirigid b1}.

Assume $j\geq2$. Let $X$ be an irreducible representative of $m\sigma_{a;b}$, $m\in\mathbb{Z}^+$. Let $\Phi_X$, $X_p$, $\Phi_{X_P}$ be defined as in Proposition \ref{multirigid b1}. The class of $X_p$ is given by $m'\sigma_{a';b'}$, where $m'\in\mathbb{Z}^+$, $b'_{j'}=b_{j'+1}$ if $b_{j'+1}\neq \frac{n}{2}-1$, $1\leq j'\leq k-s-1$. It is easy to check that the same conditions in the statement also hold for $b'_{j-1}=b_j$ with respect to the Schubert class $\sigma_{a';b'}$. By induction, $b'_{j-1}$ is multi-rigid, i.e. there exists a unique isotropic subspace $F^p_{b_j}$ of dimension $b_j=b'_{j-1}$ such that for all $\Lambda\in X_p$, $\dim(\Lambda\cap (F_{b_j}^p)^\perp)\geq n-b_j+2$. Let $I^\circ$ be the collection of all possible pairs $(p,F_{b_j}^p)$ and let $I$ be the Zariski closure of $I^\circ$ in $\mathbb{P}(V)\times OG(b_j,n)$. Let $\pi_1:I\rightarrow \mathbb{P}(V)$, $\pi_2:I\rightarrow OG(b_j,n)$ be the two projections. $\pi_1(I)=\Phi_X$ and therefore $\dim(\pi_1(I))=n-b_1-2$. A general fiber of $\pi_1|_{\Phi_X}$ has dimension $0$. Hence $\dim(I)=n-b_1-2$. Let $q$ be another general point in $\Phi_X$. $F_{b_j}^q=F_{b_j}^p$ if and only if $q\in F_{b_j}^p$. Therefore a general fiber of $\pi_2|_{\pi_2(I)}$ has dimension $b_j-1$. Thus 
$$\dim(\pi_2(I))=n-b_1-b_j-1.$$
Notice the $\Phi_{\pi_2(I)}=\Phi_X$ and therefore $\dim(\Phi_{\pi_2(I)})=n-b_1-2$. We obtain that $[\pi_2(I)]=m_I\sigma_{1,...,b_1,b_1+2,...,b_j;b_1}$ for some $m_I\in\mathbb{Z}^+$. By Lemma \ref{an}, the sub-index $b_j$ is multi-rigid with respect to the class $[\pi_2(I)]$, i.e. there exists an isotropic subspace $F_{b_j}$ such that $\dim(F_{b_j}^p\cap F_{b_j})\geq b_j-1$ for all $F_{b_j}^p\in\pi_2(I)$. Now it is easy to check that for all $\Lambda\in X$, $\Lambda\in X_p$ for some $p\in\Phi_X$, and 
$$\dim(\Lambda\cap(F^p_{b_j})^\perp)\geq k-b_j+2\Rightarrow \dim(\Lambda\cap F_{b_j}^\perp)\geq k-b_j+1.$$
\end{proof}

\begin{Rem}\label{non-rigidex}
Let $\sigma_{a;b}$ be a Schubert class in $OG(k,n)$. If $b_j\neq a_i$ for all $1\leq i\leq s$ and either $b_j=b_{j-1}+2$ or $b_{j+1}\neq b_j+1$, then $b_j$ is not multi-rigid. We construct a counter-example when one of the conditions fails.

Consider the Schubert class $\sigma_{a_1,...,a_s,n-b_{k-s},...,n-b_1}$ for $G(k,n)$. Then by Theorem \ref{rigid in g}, the sub-index $n-b_j$ is not multi-rigid. Assume $b_j\neq b_{j+1}-1$ (The case when $b_{j-1}=b_j+2$ can be obtained from duality). Take a partial flag of subspaces:
$$F_{a_1}\subset...\subset F_{a_s}\subset F_{n-b_{k-s}}\subset...\subset F_{n-b_{j}-2}\subset F_{n-b_{j}+1}\subset...\subset F_{n-b_1}$$
such that
\begin{enumerate}
\item $F_{a_i}$ are isotropic, $1\leq i\leq s$;
\item $\dim(S_{\gamma})=\gamma-1$, $b_1\leq \gamma\leq b_{k-s}$, $\gamma\neq b_j,b_{j}+1$, where $S_j:=$ singular locus of $\mathbb{P}(F_{n-\gamma})\cap Q$;
\item $\dim(S_{\gamma}\cap\mathbb{P}(F_{a_i}))=\min\{\gamma-1,a_i-1\}$, $b_1\leq \gamma\leq b_{k-s}$, $\gamma\neq b_j,b_{j}+1$.
\end{enumerate}
Take a smooth plane curve $C$ of degree $m$ tangent to $Q$ in $\mathbb{P}(F_{n-b_j+1})$ whose span is disjoint from $\mathbb{P}(F_{n-b_j-2})$. Let $Y$ be the cone over $C$ with vertex $\mathbb{P}(F_{n-b_j-2})$. Let $X$ be the Zariski closure of the following locus
\begin{eqnarray}
X^o:=\{\Lambda\in OG(k,n)&|&\dim(\Lambda\cap F_{a_i})\geq i, 1\leq i\leq s,\nonumber\\
& &\dim(\Lambda\cap F_{n-b_j'})\geq k-j'+1, j\neq j\nonumber\\
& &\dim(\mathbb{P}(\Lambda)\cap Y))\geq k-j \}\nonumber
\end{eqnarray}
Then $X$ is irreducible and has class $m\sigma_{a;b}$ but is not a Schubert variety when $m\geq 2$. 
\end{Rem}

\begin{Ex}
Consider the Schubert class $\sigma_{;1}$ with $s=0$ in $OG(1,n)$, $n\geq 5$. Let $S$ be a smooth hypersurface in $\mathbb{P}(V)$ of degree $m$ that is tangent to $Q$ at a point $p$. Let $T$ be the intersection of $S$ and $Q$. Then the locus of points $\{q\in OG(1,n)|q\in T\cap S\}$ has class $m\sigma_{;1}$.
\end{Ex}

Recall also that Hong \cite{Ho1} proves when $n$ is even, the class of a maximal isotropic subspace on a quadric hypersurface is multi-rigid. With Theorem \ref{orthogonal rigid index}, we prove that when $n$ is even, the sub-index corresponding to a maximal isotropic subspace is multi-rigid if it is essential.
\begin{Prop}\label{rigid of m}
Let $\sigma_{a;b}$ be a Schubert class for $OG(k,n)$, $n$ is even. If $a_s=\frac{n}{2}$ and $b_{k-s}\leq \frac{n}{2}-4$, then $a_s$ is multi-rigid. If $b_{k-s}=\frac{n}{2}-1$ and $b_{k-s}\neq \frac{n}{2}-4$, then $b_{k-s}$ is multi-rigid. 
\end{Prop}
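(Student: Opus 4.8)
The plan is to reduce both assertions to rigidity statements on the spinor variety $OG(n/2,n)$, where the needed input is available (Hong's rigidity of the maximal linear space on an even quadric \cite{Ho1}, equivalently the Robles--The classification, Theorem \ref{RTresult}), and then to transport the conclusion back using the compatibility result Theorem \ref{orthogonal rigid index} and, for the second statement, the involution of the quadric $Q$ that interchanges the two families of maximal isotropic subspaces. As usual we may pass to irreducible components and assume $X$ is an irreducible representative of $m\sigma_{a;b}$; and if $s=k$ the class is the point class of $OG(n/2,n)$, which is trivially multi-rigid, so we assume $s<k$.

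For the first assertion ($a_s=n/2$, $b_{k-s}\le n/2-4$) I would argue as follows. For $\Lambda\in X$ the maximal isotropic subspaces containing $\Lambda$ form two families, each a copy of a component of $OG(n/2-k,n-2k)$; accordingly, in the orthogonal partial flag variety $OF(k,n/2;n)$ (choosing the appropriate component, with the conventions of \S\ref{sec-prelim}) with projections $\pi_1\colon OF(k,n/2;n)\to OG(k,n)$ and $\pi_2\colon OF(k,n/2;n)\to OG(n/2,n)$, the fibers of $\pi_1$ are irreducible, so $\pi_1^{-1}(X)$ is irreducible and hence so is $Y_i:=\pi_2(\pi_1^{-1}(X))$ for each choice $i=1,2$ of target component. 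Lemma \ref{class of pushforward in of} and Lemma \ref{class of pullback in of} let me compute that $Y_i$ represents $m_i\sigma_{c^{(i)}}$ for some $m_i\in\mathbb{Z}^+$, where $c^{(i)}$ still carries the condition produced by $a_s=n/2$, and the hypothesis $b_{k-s}\le n/2-4$ guarantees that the corresponding sub-index of $c^{(i)}$ is essential and rigid on $OG(n/2,n)$; by Theorem \ref{RTresult} that sub-index is multi-rigid there. This produces, in each of the two families, a maximal isotropic subspace meeting every member of $Y_i$ in the prescribed dimension. Exactly as in Case II of the proof of Proposition \ref{2k+1}, intersecting the two maximal isotropic subspaces obtained from the two families yields a single isotropic subspace $F_{n/2}$ of dimension $n/2$ with $\dim(\Lambda\cap F_{n/2})\ge s$ for every $\Lambda\in X$ (this is where one uses that each $\Lambda$ lies in maximal isotropics drawn from both families and that $F_{n/2}$ is their common part). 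Finally Theorem \ref{orthogonal rigid index}, applied to the push-forward classes, promotes this inequality from the component in which $a_s$ is essential to all components, so $a_s$ is multi-rigid.

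The step I expect to be the main obstacle is the class bookkeeping on $OF(k,n/2;n)$: keeping track of which irreducible component of $OG(n/2,n)$ a given family of maximal isotropic extensions lands in, of the parity condition $s\equiv n/2\pmod 2$ that controls which spinor Schubert classes are effective, and of the exact form of $c^{(i)}$ so that the relevant sub-index is genuinely essential — this is where the gap $b_{k-s}\le n/2-4$, rather than merely $\le n/2-2$, is needed. The splitting phenomena of Cases II and III of Lemma \ref{class of pullback in of} must be handled with care here, as must the possibility that the two families contribute the same subspace.

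For the second assertion, note that by the convention recalled after Lemma \ref{atob} the condition $\dim(\Lambda\cap F_{n/2-1}^\perp)\ge s+1$ attached to $b_{k-s}=n/2-1$ is a condition relative to a maximal isotropic subspace lying in the family opposite to that of $F_{n/2}$. Applying the involution $\iota$ of $Q$ interchanging the two families of maximal isotropic subspaces (which induces an automorphism of $OG(k,n)$ since $k<n/2$) carries $\sigma_{a;b}$ to a Schubert class for which this sub-index becomes a condition of the first, $a_s=n/2$, type, and carries the hypothesis on $b_{k-s}$ — read together with the condition on the preceding $b$-sub-index, cf. the $b_j$-clause of Theorem \ref{intror} — to the hypothesis $b_{k-s}\le n/2-4$ of the first assertion. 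Hence the first part applies to $\iota_*\sigma_{a;b}$, and pulling back by $\iota$ gives the multi-rigidity of $b_{k-s}$.
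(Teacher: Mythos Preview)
Your reduction of the second assertion to the first via the involution is sound, and the paper also exploits this involution (inside the induction rather than as a separate step). The real problem lies in your proof of the first assertion.

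The direct push to the spinor variety does not work, and the obstacle you flag as ``bookkeeping'' is in fact fatal. When you form $\pi_2(\pi_1^{-1}(X))\subset OG(n/2,n)$, the class you obtain is computed (as in Case (1) of Theorem \ref{aaa}) by completing the $b$-sequence to the maximal admissible one and possibly swapping $a_s=n/2\leftrightarrow b=n/2-1$ for parity. This completion adds every $c\in[0,n/2-1]$ with $c+1\notin a_\bullet$; since nothing is assumed about $a_{s-1}$, the completed sequence will typically contain $n/2-2,\,n/2-3,\dots$, so that on the spinor side the gap condition fails and neither Theorem \ref{RTresult}, nor Proposition \ref{4.6}, nor the present proposition applied to $OG(n/2,n)$, yields multi-rigidity of the relevant sub-index. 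A concrete instance: for $\sigma_{3,7;1}$ in $OG(3,14)$ (which satisfies $a_s=7=n/2$, $b_{k-s}=1\le n/2-4$), the image class on $OG(7,14)$ is $\sigma_{3;0,1,3,4,5,6}$ (after the parity swap); here $b'_{k'-s'-1}=5>n/2-4$, Proposition \ref{4.6} requires $a'_{s'}=6$ which fails, and Theorem \ref{RTresult} fails because $a'=(3)$ does not satisfy $a'_0+1=a'_1$. So there is no available tool to conclude multi-rigidity on the spinor side, and the two-families intersection trick from Proposition \ref{2k+1} does not rescue this --- it presupposes you already have, in each family, a fixed maximal isotropic meeting every member of $Y_i$ correctly.

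What the paper does instead is an induction on $\gamma:=\#\{j\mid b_j\neq n/2-1\}$, interleaving the two assertions. The base cases $\gamma=0$ go through $OF(1,k;n)$ and project to $OG(1,n)$, where the pushforward class is literally $\sigma_{n/2}$ or $\sigma_{;\,n/2-1}$, and Hong's theorem applies directly; Theorem \ref{orthogonal rigid index} then transports the conclusion back. For $\gamma>0$ one passes to $X_p$ for a general $p\in\Phi_X$; the class of $X_p$ has $\gamma$ dropped by one and the roles of $a_s=n/2$ and $b_{k-s}=n/2-1$ exchanged, so the induction hypothesis produces a maximal isotropic $F^{p}_{n/2}$ depending on $p$. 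Letting $p$ vary yields a subvariety $I\subset OG(n/2,n)'$ whose class, after the involution, has the very special shape $\sigma_{1,\dots,b_1,b_1+2,\dots,n/2-1;\,n/2-1,b_1}$ --- a class with consecutive $a$-entries, to which Proposition \ref{4.6} \emph{does} apply (this is exactly where $b_1\le n/2-4$ is used). That produces the global $F_{n/2}$. The point is that the auxiliary class arising from the $p$-variation is forced into the narrow form Proposition \ref{4.6} needs, whereas the direct spinor pushforward is not.
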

\begin{proof}
Let $X$ be an irreducible representative of $m\sigma_{a;b}$, $m\in\mathbb{Z}^+$. We use induction on 
$$\gamma:=\#\{j|b_j\neq \frac{n}{2}-1\}.$$

{\bf Case A-0}. If $k-s=0$ and $a_s=\frac{n}{2}$, consider the incidence correspondence
$$I:=\{(\Lambda_1,\Lambda)|\Lambda_1\subset\Lambda, \Lambda\in X\}\subset OF(1,k;n).$$
By the fiber dimension theorem, $I$ is irreducible. The class of $I$ in $OF(1,k;n)$ is given by $m\sigma_{a^\alpha;}$, where $\alpha=(2,...,2,1)$. Let $\pi_1$ and $\pi_2$ be the two projections to $OG(1,n)$ and $OG(k,n)$ respectively. Then $\pi_1(I)$ has class $m'\sigma_{\frac{n}{2}}$. By \cite{Ho1}, the Schubert class $\sigma_{\frac{n}{2}}$ is multi-rigid, i.e. there exists an isotropic subspace $F_{\frac{n}{2}}$ such that $\Lambda_1\subset F_{\frac{n}{2}}$ for all $\Lambda_1\subset \pi_1(I)$. Apply Theorem \ref{orthogonal rigid index}, we obtain $\dim(F_{\frac{n}{2}}\cap\Lambda)\geq s$ for all $\Lambda\in X$. Therefore $a_s=\frac{n}{2}$ is multi-rigid. 

{\bf Case B-0}. If $k-s=1$ and $b_{k-s}=\frac{n}{2}-1$, then an identical argument as in {\bf Case A-0} follows, except the class of $I$ in $OF(1,k;n)$ is given by $m\sigma_{a^\alpha;b^\beta}$, where $\alpha=(1,...,1,1)$ and $\beta=(2)$, and the class of $\pi_1(I)$ is $m'\sigma_{;\frac{n}{2}-1}$. By \cite{Ho1}, the Schubert class $\sigma_{;\frac{n}{2}-1}$ is also multi-rigid and the statement then follows from Theorem \ref{orthogonal rigid index}.

{\bf Induction-A}. If $\gamma>0$ and $a_s=\frac{n}{2}$, let $\Phi_X$ be the variety swept out by the projective linear spaces parametrized by $X$ and let $p$ be a general point in $\Phi_X$. Consider the locus $X_p:=\{\Lambda\in X|p\in\mathbb{P}(\Lambda)\}$. Then $X_p$ has class $m'\sigma_{a';b'}$, where $s'=s$, $a'_1=1$, $a'_i=a_{i-1}$ for $2\leq i\leq s$, $b'_j=b_{j+1}$ for $1\leq j\leq k-s-1$ and $b'_{k-s}=\frac{n}{2}-1$. In particular, $\gamma'=k-s-1<\gamma$. By induction, the sub-index $b'_{k-s}=\frac{n}{2}-1$ is multi-rigid. Let $F^{'p}_{\frac{n}{2}}$ be the corresponding maximal isotropic subspace that contained in the different component thatn $OG(\frac{n}{2},n)$ such that $\dim(F^{'p}_{\frac{n}{2}}\cap \Lambda)\geq s+1$ for all $\Lambda\in X_p$. As we vary $p\in \Phi_X$, let $I$ be the Zariski closure of $\{F^{'p}_{\frac{n}{2}}\}$ in $OG(\frac{n}{2},n)'$. Take an involution of the quadric that interchanges $OG(\frac{n}{2},n)$ and $OG(\frac{n}{2},n)'$. Let $I'$ be the image of $I$ under the involution. Then $I'$ has class $m''\sigma_{1,...,b_1,b_1+2,...,\frac{n}{2}-1;\frac{n}{2}-1,b_1}$ for some $m''\in\mathbb{Z}^+$. By assumption, $\frac{n}{2}-1-(b_1+2)\geq 1$. By Theorem \ref{4.6}, the sub-index $\frac{n}{2}-1$ in sequence $b_\bullet$ is multi-rigid with respect to the class $\sigma_{1,...,b_1,b_1+2,...,\frac{n}{2}-1;\frac{n}{2}-1,b_1}$. Therefore there exists $F_{\frac{n}{2}}$, contained in $OG(\frac{n}{2},n)$ before the involution, such that for every $F^{'p}_{\frac{n}{2}}\in I$, $\dim(F_{\frac{n}{2}}\cap F^{'p}_{\frac{n}{2}})\geq \frac{n}{2}-1$. Now for every $\Lambda\in X$, $\Lambda\in X_p$ for some $p\in\Phi$ and therefore
$$\dim(F_{\frac{n}{2}\cap\Lambda})\geq \dim(F^{'p}_{\frac{n}{2}})\cap\Lambda)-1\geq s$$

{\bf Induction-B}. If $\gamma>0$ and $b_{k-s}=\frac{n}{2}-1$, then an identical argument as in {\bf Induction-A} follows, except in this case $X_p$ has class $m'\sigma_{a';b'}$, where $s'=s+2$ and $a'_{s+2}=\frac{n}{2}$, and without taking involutions of the quadric, $I$ has class $m''\sigma_{1,...,b_1,b_1+2,...,\frac{n}{2}-1;\frac{n}{2}-1,b_1}$.
\end{proof}

Now we consider the multi-rigidity of sub-indices in sequence $a_\bullet$. Note that Theorem \ref{multirigid of b} and Theorem \ref{atob} directly imply the following results for maximal orthogonal Grassmannians:
\begin{Cor}\label{spinor}
Let $\sigma_{a;b}$ be a Schubert class for $OG(k,n)$, $n=2k$ or $2k+1$. An essential sub-index $a_i$ is multi-rigid if one of the following conditions hold:
\begin{enumerate}
\item $i<s$ and $a_{i-1}+1=a_i\leq a_{i+1}-3$;
\item $n=2k$, $i=s$, $a_s\leq \frac{n}{2}-3$ and $a_s=a_{s-1}+1$;
\item $n=2k+1$, $i=s$, $a_s\leq \left[\frac{n}{2}\right]-2$ and $a_s=a_{s-1}+1$;
\end{enumerate}
\end{Cor}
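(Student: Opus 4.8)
\emph{The plan.} The idea is to convert the statement about a sub-index of the $a$-sequence into a statement about a sub-index of the $b$-sequence, where Theorem~\ref{multirigid of b} applies, and then transport the conclusion back by Lemma~\ref{atob}. The combinatorial input that makes this work is that, for $OG(k,n)$ with $n=2k$ or $2k+1$, a Schubert index is governed by a single partition: reading off the minimal coset representatives described in \S\ref{sec-prelim}, the sets $\{a_1,\dots,a_s\}$ and $\{b_1+1,\dots,b_{k-s}+1\}$ partition $\{1,\dots,k\}$ (for $n=2k$ this is so after the routine two-component bookkeeping, which is irrelevant here since our hypotheses keep us away from the value $\tfrac n2$). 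In each of the three cases one has $a_i\le[\tfrac n2]-2$ (in~(1) because $a_i\le a_{i+1}-3\le[\tfrac n2]-3$; in (2),(3) by assumption) and $a_i+1\notin\{a_1,\dots,a_s\}$ (automatic when $i=s$; forced by $a_{i+1}\ge a_i+3$ when $i<s$). Hence $a_i$ occurs in the $b$-sequence, say $a_i=b_{j_0}$, and since moreover $a_i+2\le[\tfrac n2]\le k$ with $a_i+2\notin\{a\}$, the next $b$ is $b_{j_0+1}=a_i+1$, so $j_0<k-s$.

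\emph{The translated hypotheses.} Using $a_{i-1}+1=a_i$ (present in all three cases): $a_i-1\notin\{b\}$ because $a_i\in\{a\}$, and $a_i-2\notin\{b\}$ because $a_i-1=a_{i-1}\in\{a\}$; hence $b_{j_0-1}\le a_i-3$. Thus $b_{j_0}$ satisfies $b_{j_0-1}+3\le b_{j_0}=b_{j_0+1}-1<\tfrac n2-2$, which is the combinatorial hypothesis of Theorem~\ref{multirigid of b} (the case $j_0=1$ being the one handled there via Proposition~\ref{multirigid b1}). The only remaining hypothesis of that theorem is rigidity of $b_{j_0}$; but each of~(1)--(3) makes $a_i$ rigid by the classification of rigid Schubert classes in orthogonal Grassmannians of \cite{YL3}, and rigidity of $a_i$ is equivalent to rigidity of $b_{j_0}$ by applying Lemma~\ref{atob} with $m=1$ to $a_i=b_{j_0}$ (alternatively one checks the criterion directly: here $j_0=a_i-i+1$, so $\#\{\ell\,:\,a_\ell\le b_{j_0}\}=i=a_i-j_0+1>a_i-j_0$). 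When $i=s$ one also verifies that $a_i$ is essential in the sense of Definition~\ref{def1}: automatic for $n=2k+1$, and for $n=2k$ a consequence of $a_s+b_{k-s}\le(\tfrac n2-3)+(\tfrac n2-1)<n-2$; for $i<s$ essentiality is immediate from $a_i\le a_{i+1}-3$.

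\emph{Conclusion and main obstacle.} Now Theorem~\ref{multirigid of b} gives, for every irreducible representative $X$ of $m\sigma_{a;b}$, an isotropic subspace $F_{b_{j_0}}$ of dimension $b_{j_0}=a_i$ with $\dim(F_{b_{j_0}}^\perp\cap\Lambda)\ge k-j_0+1$ for all $\Lambda\in X$; and since $a_i=b_{j_0}<\tfrac n2-1$, the converse part of Lemma~\ref{atob} upgrades this to $\dim(F_{b_{j_0}}\cap\Lambda)\ge i$ for all $\Lambda\in X$, i.e. $a_i$ is multi-rigid. I expect the only genuine work to be the first two paragraphs: checking that the three superficially different hypotheses all funnel, through the partition $\{a\}\sqcup\{b+1\}=\{1,\dots,k\}$, into exactly the inequalities demanded by Theorem~\ref{multirigid of b} for $b_{j_0}$ — in particular, verifying case by case that one stays clear of the values $\tfrac n2$ and $\tfrac n2-1$ (which is what the bounds $a_i\le a_{i+1}-3$, $a_s\le\tfrac n2-3$, $a_s\le[\tfrac n2]-2$ achieve) and of the attendant two-component ambiguity when $n=2k$. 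Everything after that is a formal chaining of results already in hand.
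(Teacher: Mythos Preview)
Your proposal is correct and follows essentially the same route as the paper: both observe that for $n=2k$ or $2k+1$ an essential $a_i$ with $a_i<\tfrac n2-1$ must coincide with some $b_{j_0}$, verify that the hypotheses of the corollary translate into the gap conditions $b_{j_0-1}+3\le b_{j_0}=b_{j_0+1}-1<\tfrac n2-2$ needed for Theorem~\ref{multirigid of b}, and then invoke Lemma~\ref{atob} to pass the conclusion back to $a_i$. Your write-up is simply a more detailed execution of the paper's two-sentence sketch, with the partition $\{a\}\sqcup\{b+1\}=\{1,\dots,k\}$ made explicit and the rigidity of $b_{j_0}$ justified rather than tacitly assumed.
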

\begin{proof}
Notice that when $n=2k$ or $2k+1$, if $a_i< \frac{n}{2}-1$ is essential, then it is necessary that $a_i=b_j$ for some $1\leq j\leq k-s$. Note also that for all the three cases, the conditions imply $\frac{n}{2}-2\neq b_{j+1}-1=b_j\geq b_{j-1}+3$. The statement then follows from Lemma \ref{atob}, Theorem \ref{multirigid of b}.
\end{proof}
\begin{Rem}
When $n=2k$ and $a_s=\frac{n}{2}$, then by Proposition \ref{4.6}, $a_s$ is multi-rigid if $a_{s-1}=\frac{n}{2}-1$ and $a_{s_2}=\frac{n}{2}-2$. 
\end{Rem}

With Theorem \ref{orthogonal rigid index} and Corollary \ref{spinor}, we prove the rigidity of $a_i$ for general $n$:
\begin{Thm}\label{aaa}
Let $\sigma_{a;b}$ be a Schubert class for $OG(k,n)$. An essential sub-index $a_i$ is multi-rigid if one of the following conditions hold:
\begin{enumerate}
\item $i<s$ and $a_{i-1}+1=a_i\leq a_{i+1}-3$;
\item $n$ is even, $i=s$, $a_s\leq \frac{n}{2}-3$ and $a_s=a_{s-1}+1$;
\item $n$ is even, $i=s$, $a_s= \frac{n}{2}-1$, $a_s=a_{s-1}+1$ and $b_{k-s}\neq\frac{n}{2}-1$;
\item $n$ is even, $i=s$, $a_s= \frac{n}{2}-2$, $a_s=a_{s-1}+1$ and $b_{k-s}\neq \frac{n}{2}-1$;
\item $n$ is even, $i=s$, $a_s=\frac{n}{2}$ and $b_{k-s}\leq \frac{n}{2}-4$;
\item $n$ is odd, $i=s$, $a_s\neq\left[\frac{n}{2}\right]-1$ and $a_s=a_{s-1}+1$;
\item $n$ is odd, $i=s$, $a_s=\left[\frac{n}{2}\right]-1$, $a_s=a_{s-1}+1$ and $b_{k-s}\neq\left[\frac{n}{2}\right]-1$.
\end{enumerate}
\end{Thm}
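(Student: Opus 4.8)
The proof splits along the eight enumerated conditions, and in each case the goal is to reduce the statement for $a_i\in OG(k,n)$ to something already established in the excerpt. There are two engines. The first handles the ``interior'' situation, where $a_i$ is small compared with $n/2$: then essentiality of $a_i$ forces $a_i=b_j$ for some $j$, and after translating hypotheses one applies Theorem \ref{multirigid of b} to $b_j$ and converts back via Lemma \ref{atob}. The second handles the ``boundary'' situation, where $a_s$ (the case $i=s$) sits at $\tfrac n2$, $\tfrac n2-1$, $\tfrac n2-2$ or $[\tfrac n2]-1$: there the argument must go through families of maximal isotropic subspaces, using Propositions \ref{rigid of m}, \ref{4.6}, \ref{2k+1}, with Theorem \ref{orthogonal rigid index} promoting a component-wise conclusion to one over $OG(k,n)$. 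Note first that whenever the auxiliary inequalities $x_j\ge k-j+1-[(n-b_j-(a_{x_j+1}-1))/2]$ ($j$ with $b_j<a_{i+1}$) are in force, conditions (1) and the $a_s=a_{s-1}+1$ parts of (2)--(7) with $a_s$ away from $\tfrac n2$ are already covered by Theorem \ref{arigid1}; so the real content is (i) dealing with the failure of those inequalities, and (ii) the boundary cases near $\tfrac n2$.

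\textbf{Engine one: reducing to the $b$-side.} If $a_i<\tfrac n2-1$ is essential then, as in the proof of Corollary \ref{spinor}, $a_i=b_j$ for some $1\le j\le k-s$. Under condition (1) the inequalities $a_{i-1}+1=a_i\le a_{i+1}-3$ together with $a_i=b_j$ translate into $b_{j-1}+3\le b_j=b_{j+1}-1<\tfrac n2-2$, which is exactly the hypothesis of Theorem \ref{multirigid of b}; conditions (2) and (6) with $a_s$ small give the same thing for $b_{k-s}$. Rigidity of $b_j$, which is needed to invoke Theorem \ref{multirigid of b}, is inherited from rigidity of $a_i$ (established under these hypotheses in the prequel \cite{YL3}, or directly: apply Lemma \ref{atob} with $m=1$, and recall Lemma \ref{oldrigid}). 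Thus Theorem \ref{multirigid of b} produces an isotropic $F_{b_j}$ with $\dim(F_{b_j}^\perp\cap\Lambda)\ge k-j+1$ for all $\Lambda\in X$, and Lemma \ref{atob} turns this into $\dim(F_{a_i}\cap\Lambda)\ge i$, where $F_{a_i}$ may be taken to be the singular locus of $F_{b_j}^\perp\cap Q$. This disposes of (1), and of (2) and (6) when $a_s\le\tfrac n2-3$ resp. $a_s<[\tfrac n2]-1$.

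\textbf{Engine two: the boundary cases.} Case (5), $a_s=\tfrac n2$ with $b_{k-s}\le\tfrac n2-4$, is precisely Proposition \ref{rigid of m}. For (3), (4) and (7), where $a_s$ equals $\tfrac n2-1$, $\tfrac n2-2$ or $[\tfrac n2]-1$ and $b_{k-s}\ne[\tfrac n2]-1$ (which is exactly the essentiality condition in these cases), the orthogonal complement of a general $\Lambda\in X$ meets $Q$ in a pair of maximal isotropic subspaces lying in the two components, and the plan mirrors Proposition \ref{2k+1}: for $n$ odd first cut $X$ by a general hyperplane to land in a quadric of even dimension (Case I of Proposition \ref{2k+1}); then pass to the incidence variety of maximal isotropics containing members of $X$, decompose into its two irreducible components, compute the resulting classes by specialization (Lemma \ref{class of pushforward in of}, Lemma \ref{class of pullback in of}), apply Proposition \ref{4.6} (or Proposition \ref{rigid of m}, or Theorem \ref{multirigid of b}) to the $b$-index $\tfrac n2-1$ or $\tfrac n2-2$ that appears, and finally intersect the two maximal isotropic subspaces so produced to obtain $F_{a_s}$ of the correct dimension. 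For general $n$ the descent to the base cases $n=2k,2k+1$ (Corollary \ref{spinor} and the two bullets of Proposition \ref{4.6}) is carried out by induction on $n$: the general hyperplane section $X_H\subset OG(k,n-1)$ has class obtained from $\sigma_{a;b}$ by decreasing each $a_\mu>\mu$ and each $b_\nu$ by one, so ``$a_i$ essential'' and each of the hypotheses (1)--(7) are stable under the shift; by induction one gets $F^H_{a_i'}$ for each $H$, and sweeping $H$ and invoking a dimension count via Lemma \ref{dim of q} and semicontinuity --- exactly the mechanism of Propositions \ref{multirigid b1}, \ref{multirigid of b}, \ref{2k+1} --- forces these to glue to a single $F_{a_i}$.

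\textbf{Main obstacle.} The delicate point throughout is the bookkeeping near $n/2$: as $a_s$ or $b_{k-s}$ approaches $\tfrac n2$ the orthogonal-complement quadrics degenerate and the Schubert classes of the auxiliary loci split into sums over the two families of maximal isotropics (Cases II and III of Lemma \ref{class of pullback in of}), so one must track which component each swept-out linear space or sub-quadric lands in and respect the parity constraint $d_k\equiv s\pmod 2$; this is exactly what separates the eight sub-cases of the $a$-statement and what forces the side conditions $b_{k-s}\ne\tfrac n2-1$, $b_{k-s}\le\tfrac n2-4$, etc. A secondary difficulty is a short combinatorial lemma, needed in Engine one, showing that when the $x_j$-inequalities of Theorem \ref{arigid1} fail they fail in the way that makes $a_i=b_j$ with $b_j$ satisfying the hypotheses of Theorem \ref{multirigid of b}; this uses the defining constraint $a_i-b_j\ne 1$ on the index.
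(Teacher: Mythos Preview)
Your Engine one contains a genuine gap. The claim ``if $a_i<\tfrac n2-1$ is essential then $a_i=b_j$ for some $j$'' is \emph{not} true in $OG(k,n)$ for arbitrary $n$; it holds only in the maximal cases $n=2k$ or $n=2k+1$, which is exactly the hypothesis of Corollary \ref{spinor} you cite. For instance, in $OG(2,10)$ the class $\sigma_{1,4;}$ has $a_1=1$ essential with $a_0+1=a_1\le a_2-3$, yet the $b$-sequence is empty. So your reduction to Theorem \ref{multirigid of b} via Lemma \ref{atob} simply does not apply to Case (1) (or to the small-$a_s$ parts of (2) and (6)) when $k<\lfloor n/2\rfloor$. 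The paper handles this by an extra step you are missing: for $n\neq 2k,2k+1$ it pulls $X$ back to the partial flag variety $OF(k,[n/2];n)$, pushes forward to the maximal orthogonal Grassmannian $OG([n/2],n)$, and \emph{there} (where the $b$-sequence has been completed and the claim $a_i=b_j$ does hold) applies Corollary \ref{spinor}; the conclusion is then transported back to $OG(k,n)$ by Theorem \ref{orthogonal rigid index}.

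Your Engine two is also not what the paper does. For Cases (3), (4), and (7) the paper runs an induction on $k-s$, not on $n$: it fixes a general point $p\in\Phi_X$, passes to $X_p=\{\Lambda\in X:p\in\mathbb P(\Lambda)\}$ (which has one fewer $b$-index), applies the inductive hypothesis to obtain $F^p_{a_s}$, and then sweeps $p$ over $\Phi_X$ to get a subvariety $I\subset OG(a_s,n)$ whose class is computed and to which Proposition \ref{2k+1} (or Corollary \ref{an}, or Theorem \ref{rigid in g}) applies. Your proposed hyperplane-section induction on $n$ is not obviously wrong, but the class formula you quote for $X_H$ (``decrease each $a_\mu>\mu$ and each $b_\nu$ by one'') is the type A rule, and in the orthogonal setting the behaviour near $n/2$ is more delicate (parity flips between the two families of maximal isotropics, exactly the phenomenon you flag as the main obstacle); you would need to verify that the hypotheses (3), (4), (7) really are stable under this shift, and that the resulting $F^H_{a_i'}$ glue. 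The paper's induction on $k-s$ sidesteps this entirely.
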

\begin{proof}
Let $X$ be an irreducible representative of $m\sigma_{a;b}$, $m\in\mathbb{Z}^+$.

{\bf Case (1)}. If $n=2k$ or $n=2k+1$, then it reduces to Corollary \ref{spinor}. Assume $n\neq 2k,2k+1$. Consider the orthogonal partial flag variety $OF(k,\left[\frac{n}{2}\right];n)$. Let $\pi_1$ and $\pi_2$ be the two projections to $OG(k,n)$ and $OG(\left[\frac{n}{2}\right],n)$ respectively. Let $Y:=\pi_1^{-1}(X)$ and $X':=\pi_2(\pi_1^{-1}(X))$. Then the class $m'\sigma_{a';b'}$ of $X'$ is obtained from $\sigma_{a;b}$ by completing the sequence $b_\bullet$ to the maximal admissible sequence such that $1\notin a_\bullet-b_\bullet$ and then interchange $\frac{n}{2}$ in $a_\bullet$ or $\frac{n}{2}-1$ in $b_\bullet$ according to the parity of $s$. In particular, $a_i=a'_i$ for $i<s$. By Corollary \ref{spinor}, the sub-index $a'_i$ is multi-rigid with respect to the class $\sigma_{a';b'}$, i.e. there exists an isotropic subspace $F_{a_i}$ such that $\dim(F_{a_i}\cap\Lambda')\geq i$ for all $\Lambda'\in X'$. Apply Theorem \ref{orthogonal rigid index}, $\dim(F_{a_i}\cap\Lambda)\geq i$ for all $\Lambda\in X$. Therefore $a_i$ is multi-rigid.

{\bf Case (2)}. Follows from an identical argument as in {\bf Case (1)}. Notice that in this case either $a'_\bullet=(a_1,...,a_s)$ or $a'_\bullet=(a_1,...,a_s,\frac{n}{2})$ depending on the parity of $s$. Since $\frac{n}{2}-a_s\geq 3$, both cases satisfy the conditions in Corollary \ref{spinor}.

{\bf Case (3)}. We use induction on $k-s$. If $k-s=0$, then $a_s$ is multi-rigid by Theorem \ref{rigid in g}. Assume $j\geq 1$. Let $\Phi_X$ be the variety swept out by the projective linear spaces $\mathbb{P}^{k-1}$ parametrized by $X$. Let $p$ be a general point in $\Phi_X$. Let $X_p:=\{\Lambda\in X|p\in\mathbb{P}(\Lambda)\}$. Notice that by assumption $b_j\leq \frac{n}{2}-4=a_s-3$ for all $1\leq j\leq k-s$. The class of $X_p$ is then given by $m'\sigma_{a';b'}$ where $a'_1=1$, $a'_{i'+1}=a_{i'}+1$ if $a_{i'}\leq b_1$, $a'_{i'+1}=a_{i'}$ if $a_{i'}>b_1$ and $b'_{j}=b_{j+1}$ for $1\leq j\leq k-s-1$. In particular, $a'_{s+1}=a_s$ and $a'_{s}=a_{s-1}$. By induction, for every irreducible component $X_p'$ of $X_p$, there exists an isotropic subspace $F^p_{a_s}$ such that $\dim(F^p_{a_s}\cap\Lambda)\geq s+1$ for all $\Lambda\in X_p'$. Let $I$ be the Zariski closure in $OG(a_s,n)$ of the locus of all possible $F_{a_s}^p$ as varying $p\in\Phi_X$. The class of $I$ is then given by $m''\sigma_{1,...,b_1,b_1+2,...,a_s;b_1}$. Since $n=2a_s+2$ and $a_s-b_1\geq 3$, by Proposition \ref{2k+1}, the sub-index $a_s$ is multi-rigid for the class $\sigma_{1,...,b_1,b_1+2,...,a_s;b_1}$. Therefore there exists an isotropic subspace $F_{a_s}$ such that $\dim(F_{a_s}\cap F_{a_s}^p)\geq a_s-1$ for all $F_{a_s}^p\in I$. It is then straight to check for every $\Lambda\in X$, $\Lambda\in X_p$ for some $p\in\Phi_X$ and
$$\dim(F_{a_s}\cap \Lambda)\geq \dim(F^p_{a_s}\cap\Lambda)-1\geq s.$$

{\bf Case (4)}. An identical argument as in {\bf Case (3)} shows $a_s$ is multi-rigid, unless when $b_1=a_s= \frac{n}{2}-2$, the class of $X_p$ is given by $m'\sigma_{a';b'}$, where $a'_{s}=a_{s-1}+1$ and $a'_{s+1}=a_s+1$, and the class of $I$ is given by $m''\sigma_{1,...,a_s;a_s}$ in $OG(a_s+1,n)$. Notice that $i_*(\sigma_{1,...,a_s;a_s})=\sigma_{1,...,a_s,a_s+3}$ where $i:OG(a_s+1,n)\hookrightarrow G(a_s+1,n)$ is the natural inclusion. By Theorem \ref{rigid in g}, the sub-index $a_s$ is multi-rigid with respect to the class $\sigma_{1,...,a_s,a_s+3}$. Therefore there exists an isotropic subspace $F_{a_s}$ such that $F_{a_s}\subset F_{a_s+1}^p$ for all $F_{a_s+1}^p\in I$. It is then straight to check for every $\Lambda\in X$, $\Lambda\in X_p$ for some $p\in\Phi_X$ and
$$\dim(F_{a_s}\cap \Lambda)\geq \dim(F^p_{a_s+1}\cap\Lambda)-1\geq s.$$

{\bf Case (5)}. Follows from Proposition \ref{rigid of m}.

{\bf Case (6)}. If $a_s\leq \left[\frac{n}{2}\right]-2$, then it follows from an identical argument as in {\bf Case (1)}. If $a_s= \left[\frac{n}{2}\right]$, then $b_j\leq \left[\frac{n}{2}\right]-3$ for all $1\leq j\leq k-s$ and it follows from an identical argument as in {\bf Case (3)}.

{\bf Case (7)}. If $a_i=\left[\frac{n}{2}\right]-1$ and $b_{k-s}\neq\left[\frac{n}{2}\right]-1$, then $b_j\leq \left[\frac{n}{2}\right]-4$ for all $1\leq j\leq k-s$ and it follows from an identical argument as in {\bf Case (3)}.
\end{proof}

Unfortunately Theorem \ref{aaa} is not sharp. This can be seen from the following example:
\begin{Ex}
Consider the Schubert class $\sigma_{2;2}$ in $OG(2,n)$, $n\geq 8$. We claim that the sub-index $2$ is multi-rigid. Let $X$ be an irreducible representative of $m\sigma_{2;2}$, $m\in\mathbb{P}$. Let $i:OG(2,n)\hookrightarrow G(2,n)$ be the inclusion morphism. Then the image $i(X)$ has class $2m\sigma_{2,n-3}$ in $G(2,n)$. By Lemma \ref{degree}, there exists a projective curve $C$ such that $C\cap \mathbb{P}(\Lambda)\neq\emptyset$ for all $\Lambda\in X$. Let $W$ be the projective linear space spanned by $C$. Take a general point $p\in C$, and let $X_p:=\{\Lambda\in X|p\in\mathbb{P}(\Lambda)\}$. Then $X_p$ has class $m'\sigma_{1;2}$ for some $m'\in\mathbb{Z}$. Let $Z$ and $Z_p$ be the variety swept out by projectivel linear spaces parametrized by $X$ and $X_p$ respectively. Then $\dim(Z)=\dim(Z_p)=n-4$. This implies $Z=Z_p$ and $Z_p\subset p^\perp$. Therefore $Z\subset W^\perp$. If $C$ is not linear, then $\dim(W)\geq 2$ and therefore $\dim(W^\perp)\leq n-4= \dim(Z)$. Since $W$ can not be contained in $Q$, we reach a contradiction. We conclude that $C$ is linear and therefore the sub-index $2$ is multi-rigid. 
\end{Ex}

To obtain a complete classification, we first generalize Lemma \ref{degree}. In \cite{YL3}, we obtained the following result:
\begin{Thm}\cite[Theorem 5.6]{YL3}\label{d}
Let $X$ be an irreducible subvariety with class $[X]=\sum_{a\in A}c_a\sigma_a$, $c_a\in \mathbb{Z}^+$. Set $m_i=\max\limits_{a\in A}\{a_i\}$. Define $$A_1:=\{a\in A|a_1=m_1\}$$ and inductively
$$A_i:=\{a\in A_{i-1}|a_i=\max\limits_{a'\in A_{i-1}}\{a_i'\}\},\  2\leq i\leq k.$$
If $\max\limits_{a\in A_i}\{a_i\}=m_i$ and there exists an index $a\in A_i$ such that $a_{i-1}+1=m_i\leq a_{i+1}-3$, then $\gamma_i(X)=m_i$.
\end{Thm}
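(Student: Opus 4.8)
The plan is to run the argument of Lemma~\ref{degree} with the single Schubert class $\sigma_a$ replaced by the effective class $[X]=\sum_{a\in A}c_a\sigma_a$, using the lexicographically maximal index of $A_i$ as a pivot for a hyperplane‑section induction; concretely I will realize the assertion $\gamma_i(X)=m_i$ by producing, by a construction refining that of Lemma~\ref{degree}, an irreducible projective variety $Y\subseteq\mathbb{P}(V)$ with $\dim(\mathbb{P}(\Lambda)\cap Y)\ge i-1$ for every $\Lambda\in X$ and with $\dim Y=m_i-1$ exactly. Fix $a^{\ast}\in A_i$ with $a^{\ast}_{i-1}+1=m_i\le a^{\ast}_{i+1}-3$ (such an index exists by hypothesis); the construction of the $A_j$ together with $\max_{a\in A_i}a_i=m_i$ forces $a^{\ast}_j=m_j$ for all $j\le i$. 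I would induct on $m_i-i$. In the base case $m_i=i$ one has $m_1=1,\dots,m_i=i$, so every $a\in A$ begins with $1,2,\dots,i$; since $X$ is irreducible, the specialization/pushforward technique of Proposition~\ref{class of pushforward} applied to an embedding $G(k-i,n-i)\hookrightarrow G(k,n)$ shows that $X$ lies in a single translate of that sub‑Grassmannian, so every $\mathbb{P}(\Lambda)$ contains one fixed $\mathbb{P}^{i-1}$ and $\gamma_i(X)=i=m_i$. (This is the effective‑class refinement of the multi‑rigidity of the sub‑index $i$, Theorem~\ref{rigid in g}.)

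For the inductive step ($m_i>i$) I would form the incidence variety $I=\{(\Lambda,H):\Lambda\in X,\ \mathbb{P}(\Lambda)\subset H\}\subset X\times(\mathbb{P}^{n-1})^{\vee}$ with projections $\pi_1,\pi_2$. For general $H\in\pi_2(I)$ the slice $X_H:=\pi_1(\pi_2^{-1}(H))$ is irreducible (Bertini‑type, as in Lemma~\ref{degree}), and specializing $X$ to a union of Schubert varieties computes $[X_H]=\sum c'_{a'}\sigma_{a'}$, the sum over indices $a'$ obtained from the $a\in A$ by the rule $a'_j=a_j$ if $a_j=j$ and $a'_j=a_j-1$ if $a_j>j$. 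I would then check that the combinatorial data of $[X_H]$ descend as expected: the map $a\mapsto a'$ is coordinatewise monotone, so it carries the lexicographic maximum $a^{\ast}$ to the lexicographic maximum $a^{\ast\prime}$ of the support of $[X_H]$; since $a^{\ast}_i=m_i>i$ and $a^{\ast}_{i-1}=m_i-1\ge i$, one gets $a^{\ast\prime}\in A'_i$, $m'_i=m_i-1$, $\max_{A'_i}a'_i=m'_i$, $a^{\ast\prime}_{i-1}+1=m_i-1=a^{\ast\prime}_i$, and $a^{\ast\prime}_{i+1}\ge a^{\ast}_{i+1}-1\ge m_i+2=m'_i+3$. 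Thus $[X_H]$ satisfies the hypotheses of the theorem with $m_i$ lowered by one.

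By the inductive hypothesis there is an irreducible $Y_H\subset\mathbb{P}(V)$ of dimension $m_i-2$ with $\dim(\mathbb{P}(\Lambda)\cap Y_H)\ge i-1$ for all $\Lambda\in X_H$. Let $Y$ be the Zariski closure of $\bigcup_H Y_H$. Every $\Lambda\in X$ lies in some $X_H$, so $\dim(\mathbb{P}(\Lambda)\cap Y)\ge i-1$; and since $Y_H\subset H$ genuinely moves with $H$, $\dim Y\ge m_i-1$. The remaining and hardest step is the reverse inequality $\dim Y\le m_i-1$, which I would prove by contradiction as in Lemma~\ref{degree}: if $\dim Y\ge m_i$, a general $\mathbb{P}(G_{n-m_i})$ meets $Y$ in finitely many points and hence meets some $Y_H$; enlarging to $G_{n-m_i+2}\supset G_{n-m_i}$ so that $\mathbb{P}(G_{n-m_i})\cap Y_H=\mathbb{P}(G_{n-m_i+2})\cap Y_H$ and intersecting $X_H$ with the Schubert variety dual to $\sigma_{a^{\ast\prime}}$, one produces a $\Lambda\in X$ meeting $\mathbb{P}(G_{n-m_i})$ in dimension $\ge i-1$; the gap $m_i\le a^{\ast}_{i+1}-3$ guarantees that $\mathbb{P}(G_{n-a^{\ast}_{i+1}+2})$ avoids $Y_H$, so the incidences add up and contradict the vanishing $\sigma_{a^{\ast}}\cdot\sigma_c=0$ for the appropriate dual class $\sigma_c$. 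The vanishing uses precisely that $a^{\ast}$ is the lexicographic maximum of $A$, so that no other term of $[X]$ can contribute to that intersection number. This yields $\dim Y=m_i-1$, i.e.\ $\gamma_i(X)=m_i$.

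The main obstacle is exactly this last dimension bound. In the single‑class setting the contradiction only had to deal with one Schubert class; now one must guarantee that the entire effective class $[X]$ — over its whole support $A$, not just $a^{\ast}$ — still gives a vanishing intersection number, which is why the lexicographic‑maximality bookkeeping around $a^{\ast}$ and its stability under hyperplane section has to be carried through every step. Everything else (the incidence correspondences, the class computations via specialization to unions of Schubert varieties, the sweeping‑out) is a faithful transcription of the proof of Lemma~\ref{degree}.
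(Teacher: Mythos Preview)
The paper does not give its own proof of this theorem: it is quoted verbatim as \cite[Theorem~5.6]{YL3} and used as a black box. The only related argument in the paper is the proof of Lemma~\ref{degreegeneral}, which \emph{relaxes} the hypothesis $a_{i-1}+1=m_i$ of Theorem~\ref{d} and whose base case $m_i=i$ simply invokes Theorem~\ref{d}. Your inductive step is essentially a transcription of that proof of Lemma~\ref{degreegeneral} (and hence of Lemma~\ref{degree}); so on the inductive step there is nothing to compare---you and the paper do the same thing.

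There are, however, two genuine issues with your write-up. First, you interpret ``$\gamma_i(X)=m_i$'' as the existence of an irreducible variety $Y$ of dimension $m_i-1$ meeting every $\mathbb{P}(\Lambda)$ in dimension $\ge i-1$. That is precisely the conclusion of Lemma~\ref{degreegeneral}, not of Theorem~\ref{d}. The extra hypothesis $a_{i-1}+1=m_i$ in Theorem~\ref{d} is exactly the condition that, in the single-class case (Theorem~\ref{rigid in g}), upgrades ``variety'' to ``linear space''; the paper explicitly says Lemma~\ref{degreegeneral} is obtained by \emph{releasing} the conditions of Theorem~\ref{d}. So you are very likely proving the weaker Lemma~\ref{degreegeneral}, not Theorem~\ref{d}, and your induction never addresses linearity of $Y$.

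Second, your base case $m_i=i$ is where the actual content of Theorem~\ref{d} from \cite{YL3} lives, and your sketch does not establish it. Proposition~\ref{class of pushforward} computes pushforward classes along projections $F(d_1,\dots,d_k;n)\to G(d_t,n)$; it says nothing about $X$ lying in a translate of a sub-Grassmannian $G(k-i,n-i)\hookrightarrow G(k,n)$. Likewise Theorem~\ref{rigid in g} concerns a single multiple $m\sigma_a$, not an effective combination $\sum_{a\in A}c_a\sigma_a$ with varying $a$. Showing that an irreducible $X$ whose class is supported on indices all beginning $1,2,\dots,i$ must contain a fixed $F_i$ is a genuine multi-rigidity statement for mixed classes, and it is exactly the input the paper imports from \cite{YL3} rather than proves. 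Your inductive machinery reduces to this case but does not supply it.
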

We release the conditions in Theorem \ref{d} and prove a similar statement as Lemma \ref{degree}.
\begin{Lem}\label{degreegeneral}
Let $X$ be an irreducible subvariety of $G(k,n)$ with class $[X]=\sum_{a\in A}c_a\sigma_a$, $c_a\in \mathbb{Z}^+$. Set $m_i=\max\limits_{a\in A}\{a_i\}$. Define $$A_1:=\{a\in A|a_1=m_1\}$$ and inductively
$$A_i:=\{a\in A_{i-1}|a_i=\max\limits_{a'\in A_{i-1}}\{a_i'\}\},\  2\leq i\leq k.$$
If $\max\limits_{a\in A_i}\{a_i\}=m_i$ and $\max\limits_{a\in A_i}\{a_{i+1}\}\geq m_i+3$, then there exists a projective subvariety $Y$ of $\mathbb{P}(V)$ of dimension $m_i-1$ such that for all $\Lambda\in X$,
$$\dim(Y\cap\mathbb{P}(\Lambda))\geq i-1.$$
\end{Lem}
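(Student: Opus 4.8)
\emph{Plan.} This is the extension of Lemma~\ref{degree} from a multiple of a single Schubert class to an arbitrary effective class, and I would prove it by the same mechanism: induction on $m_i-i$. Throughout, since $X$ is irreducible, all Schubert classes in the support of $[X]$ share a common codimension, and this equidimensionality will be used repeatedly. For the base case $m_i=i$, the inequalities $i\le a_i\le m_i=i$ force $a_j=j$ for $1\le j\le i$ and every $a\in A$, so $A_1=\cdots=A_i=A$; the condition ``$a_{i-1}+1=m_i$'' then holds for all $a\in A_i$ and $\max_{a\in A_i}\{a_{i+1}\}\ge m_i+3$ is exactly the hypothesis, so Theorem~\ref{d} applies and yields $\gamma_i(X)=m_i$, i.e.\ a linear $F_i\subseteq V$ of dimension $i$ with $\dim(\Lambda\cap F_i)\ge i$ for all $\Lambda\in X$; take $Y=\mathbb{P}(F_i)$.

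\emph{Inductive step $m_i>i$.} I would form $I:=\{(\Lambda,H)\mid\Lambda\subseteq H,\ \Lambda\in X,\ H\in G(n-1,n)\}$ with projections $\pi_1,\pi_2$, and for general $H\in\pi_2(I)$ set $X_H:=\{\Lambda\in X:\Lambda\subseteq H\}$. Specializing $X$ to a union of Schubert varieties gives $[X_H]=\sum_{a\in A}c_a\,\sigma_{a'}$, where $a'_j=a_j$ if $a_j=j$ and $a'_j=a_j-1$ otherwise. Since the map $a\mapsto a'$ carries Schubert indices to Schubert indices and is weakly monotone in each coordinate, and since every $a\in A_i$ has $a_i=m_i>i$, one gets $m'_i:=\max_{a'}a'_i=m_i-1$; moreover, letting $a^*\in A_i$ attain $\max_{a\in A_i}a_{i+1}$, the image $(a^*)'$ lies in the distinguished set $A'_i$ of $[X_H]$ and satisfies $(a^*)'_i=m'_i$ and $(a^*)'_{i+1}=a^*_{i+1}-1\ge m_i+2=m'_i+3$, where equidimensionality of $[X]$ is what controls which classes can compete for these maxima. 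Thus $[X_H]$ again satisfies the hypotheses of the lemma at the \emph{same} index $i$, with $m'_i-i<m_i-i$, so by induction there is a projective variety $Y_H\subseteq\mathbb{P}(H)$ of dimension $m_i-2$ with $\dim(Y_H\cap\mathbb{P}(\Lambda))\ge i-1$ for all $\Lambda\in X_H$.

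\emph{The sweep-out.} Let $Y$ be the closure of $\bigcup_HY_H$ as $H$ runs over a dense open subset of $\pi_2(I)$. As in Lemma~\ref{degree}, $\dim Y\ge m_i-1$ since a general $H'$ does not contain $Y_H$, forcing $Y_{H'}\ne Y_H$. For $\dim Y\le m_i-1$, I would assume $\dim Y\ge m_i$; pick a general complete flag $G_\bullet$ (so $\mathbb{P}(G_{n-m_i})$ meets $Y$), choose $H$ with $\mathbb{P}(G_{n-m_i})\cap Y_H\ne\emptyset$, and enlarge $G_{n-m_i}$ to $G_{n+2-m_i}$ so that $\mathbb{P}(G_{n-m_i})\cap Y_H=\mathbb{P}(G_{n+2-m_i})\cap Y_H$. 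Taking $\sigma_b$ dual to $\sigma_{(a^*)'}$, positivity of the coefficients of $[X_H]$ prevents cancellation, so $X_H\cap\Sigma_b(G_\bullet)\ne\emptyset$, and a general point $\Lambda$ of this intersection has $\dim(\Lambda\cap G_m)=\#\{j:b_j\le m\}$ for every $m$, in particular $\dim(\Lambda\cap G_{n+2-m_i})=k-i+1$ and $\dim(\Lambda\cap G_{n+2-a^*_{i+1}})=k-i$. Inside $\mathbb{P}(\Lambda)\cong\mathbb{P}^{k-1}$, the set $\mathbb{P}(\Lambda)\cap Y_H$ (dimension $\ge i-1$) must meet the $(k-i)$-plane $\mathbb{P}(\Lambda\cap G_{n+2-m_i})$, but is disjoint from $\mathbb{P}(\Lambda\cap G_{n+2-a^*_{i+1}})$ since $a^*_{i+1}\ge m_i+3$ makes $\mathbb{P}(G_{n+2-a^*_{i+1}})$ miss $Y_H$ for dimension reasons; this produces $z\in\mathbb{P}(\Lambda)\cap Y_H\cap\mathbb{P}(G_{n-m_i})$ with $z\notin\mathbb{P}(G_{n+2-a^*_{i+1}})$, whence $\langle z\rangle\oplus(\Lambda\cap G_{n+2-a^*_{i+1}})$ is a $(k-i+1)$-dimensional subspace of $\Lambda\cap G_{n-m_i}$. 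Hence $X\cap\Sigma_c(G_\bullet)\ne\emptyset$ for general $G_\bullet$, where $\Sigma_c$ parametrizes $k$-planes meeting $G_{n-m_i}$ in dimension $\ge k-i+1$; but $[X]\cdot\sigma_c=0$, because $\sigma_a\cdot\sigma_c=0$ for every $a\in A$ — equidimensionality together with the fact that $a^*$ dominates every $a\in A$ coordinatewise once $\max_{A_i}a_{i+1}\ge m_i+3$ is invoked rules out cancellation — a contradiction. Therefore $\dim Y=m_i-1$.

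\emph{Main obstacle.} The hardest part will be the bookkeeping in the inductive step: one must verify that the degeneration $a\mapsto a'$, which is typically not injective, transports the hypotheses of the lemma at the \emph{same} index $i$ from $[X]$ to $[X_H]$, i.e.\ that the distinguished index $a^*\in A_i$ maps to an element of the analogous set for $[X_H]$ realizing the relevant maxima. The subtlety is that $a\mapsto a'$ does not respect the lexicographic order used to define the $A_j$; it is precisely the equidimensionality of $[X]$ (all $\sigma_a$ in the support having a common codimension) that forces the competing indices to be spread out enough for the argument to close, and the same input drives the vanishing $[X]\cdot\sigma_c=0$ in the dimension estimate.
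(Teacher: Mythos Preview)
Your proof is correct and follows essentially the same route as the paper: induction on $m_i-i$ with the base case handled by Theorem~\ref{d}, the hyperplane incidence correspondence for the inductive step, the sweep-out of the $Y_H$, and the same contradiction via $[X]\cdot\sigma_c=0$. Two small remarks: the paper restricts the support of $[X_H]$ to $A':=\{a\in A:a_{i_0}>i_0\}$ with $i_0:=\min\{i':m_{i'}>i'\}$ (classes with $a_{i_0}=i_0$ do not survive the hyperplane slice), which tidies up the bookkeeping you flag as the main obstacle; and your parenthetical that ``$a^*$ dominates every $a\in A$ coordinatewise'' is not true in general, but the vanishing $[X]\cdot\sigma_c=0$ only needs $a_i\le m_i$ for every $a\in A$, which is immediate from the definition of $m_i$.
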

\begin{proof}
The proof is similar to the proof of Lemma \ref{degree}. We use induction on $m_i-i$. If $m_i=i$, then it reduces to Theorem \ref{d}. 

If $m_i\geq i+1$, then consider the incidence correspondence
$$I:=\{(\Lambda,H)|\Lambda\subset H,\Lambda\in X, H\in G(n-1,n)\}.$$
Set $i_0:=\min\{i'|m_{i'}>i'\}$ and $A':=\{a\in A|a_{i_0}>i_0\}$. For a general hyperplane $H\in\pi_2(I)$, the locus $X_H:=\pi_1(\pi_2^{-1}(H))$ has class $\sum\limits_{a\in A'}c_a\sigma_{a'}$ where $a'_j=a_j$ if $a_j=j$ and $a'_j=a_j-1$ if $a_j\geq j+1$. By induction, there exists an irreducible projective variety $Y_H$ of dimension $m_i-2$ such that $\dim(\mathbb{P}(\Lambda)\cap Y_H)\geq i-1$ for all $\Lambda\in X_H$. As we vary $H$, let $Y$ be the projective variety swept out by $Y_H$. We claim that $\dim(Y)=m_i-1$. 

Suppose, for a contradiction, that $\dim{Y}=m\geq m_i$. Let $G_\bullet$ be a general complete flag. Then $\mathbb{P}(G_{n-m_i})$ will meet $Y$ in finitely many points. By the construction of $Y$, there exists a hyperplane $H$ such that $$\mathbb{P}(G_{n-m_i})\cap Y_H\neq \emptyset.$$ Since $G_{n+2-m_i}$ is a general linear space of dimension $n+2-m_i$ containing $G_{n-m_i}$, we may assume
$$\mathbb{P}(G_{n-m_i})\cap Y_H=\mathbb{P}(G_{n+2-m_i})\cap Y_H.$$ 
Let $a^o\in A_{i+1}$ and let $\sigma_b$ be the dual Schubert class of $\sigma_{({a^{o}})'}$. Let $\Sigma_b(G_\bullet)$ be the corresponding Schubert variety. Since $\sigma_b\cdot [X_H]\neq0$, the intersection of $X_H$ and $\Sigma_b(G_\bullet)$ is non-empty. Let $\Lambda\in X_H\cap\Sigma_b(G_\bullet)$. Then $\dim(\mathbb{P}(\Lambda)\cap Y_H)=i-1$, $\dim(\Lambda\cap G_{n+2-m_{i+1}})=k-i$ and $\dim(\Lambda\cap G_{n+2-m_{i}})=k-i+1$, and therefore $$\mathbb{P}(\Lambda)\cap \mathbb{P}(G_{n+2-m_i})\cap Y_H\neq\emptyset.$$
Since $m_{i+1}\geq a_i+3$, $\mathbb{P}(G_{n+2-m_{i+1}})$ does not meet $Y_H$, 
$$\dim(\Lambda\cap G_{n-m_i})\geq \dim(\Lambda\cap Y_H\cap G_{n+2-m_i})+\dim(\Lambda\cap G_{n+2-m_{i+1}})=k-i+1.$$
Hence we proved that for a general $(n-m_i)$-dimensional vector space $G_{n-m_i}$, there exists a $\Lambda\in X$ that meets $G_{n-m_i}$ in a $(k-i+1)$-dimensional subspace, which is a contradiction since
$$\sum_{a\in A}c_a\sigma_a\cdot \sigma_c=0,$$
where $c_j=n-m_i+j-(k-i+1)$ for $1\leq j\leq k-i+1$ and $c_j=n+j-k$ for $k-i+2\leq j\leq k$. We conclude that $\dim(Y)=m_i-1$ as desired.

\end{proof}

\begin{Prop}\label{-3}
Let $\sigma_{a;b}$ be a Schubert class for $OG(k,n)$. If $a_i\leq a_{i+1}-3$, $a_i=b_j$ for some $j$ and $\frac{n}{2}-2>b_j\geq b_{j-1}+3$, then the sub-index $a_i$ is multi-rigid.
\end{Prop}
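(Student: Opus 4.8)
The plan is to induct on $j$. Fix an irreducible representative $X$ of $m\sigma_{a;b}$, $m\in\mathbb{Z}^+$; under the stated hypotheses $a_i$ is automatically essential (for $i<s$ this is clear, and for $i=s$ one has $a_s+b_{k-s}<n-2$ since $b_j<\tfrac n2-2$ and $b_{k-s}\le\tfrac n2-1$). Write $\Phi_X$ for the variety swept out by the $\mathbb{P}^{k-1}$'s parametrized by $X$; by Lemma~\ref{dim of q} it has dimension $n-b_1-2$, and $\Phi_X\subseteq Q$ because those projective spaces are isotropic.

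\emph{Base case $j=1$, so $b_1=a_i$.} Pushing $X$ forward along $OG(k,n)\hookrightarrow G(k,n)$ yields the class $mc\,\sigma_{\tilde a}$ with $\tilde a=(a_1,\dots,a_s,n-b_{k-s},\dots,n-b_1)$ (the merge is this concatenation since $a_s\le\tfrac n2<n-b_{k-s}$); here $\tilde a_i=a_i$ and $\tilde a_{i+1}$ is $a_{i+1}$ or $n-b_{k-s}$, so $\tilde a_{i+1}\ge a_i+3$. Lemma~\ref{degree} then produces an irreducible $Y\subseteq\mathbb{P}(V)$ of dimension $a_i-1$ with $\dim(\mathbb{P}(\Lambda)\cap Y)\ge i-1$ for all $\Lambda\in X$. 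I first show $Y$ is linear: for $p$ general in $Y$, the locus $X_p:=\{\Lambda\in X\mid p\in\mathbb{P}(\Lambda)\}$ has a class whose $b$-part is still $(b_1,\dots,b_{k-s})$, because $a_i=b_1$ is a shared sub-index, so the linear space carrying the $a$-conditions lies in every $F_{b_\ell}^{\perp}$ and imposing $p\in\Lambda$ only modifies the $a$-part; hence $\dim\Phi_{X_p}=n-b_1-2=\dim\Phi_X$, and as $\Phi_{X_p}\subseteq\Phi_X$ with $\Phi_X$ irreducible, a component of $\Phi_{X_p}$ equals $\Phi_X$ and lies in $p^{\perp}$. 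Letting $p$ vary over $Y$ gives $\Phi_X\subseteq\langle Y\rangle^{\perp}$, so $n-b_1-2\le n-2-\dim_{\mathrm{proj}}\langle Y\rangle$, i.e.\ $\dim_{\mathrm{proj}}\langle Y\rangle\le b_1=a_i$. If this dimension were $a_i$ then $\Phi_X=\langle Y\rangle^{\perp}$, and $\Phi_X\subseteq Q$ would force $\langle Y\rangle^{\perp}$ isotropic, i.e.\ $n-b_1-2\le\tfrac n2-1$, contradicting $b_1<\tfrac n2-2$; thus $\dim_{\mathrm{proj}}\langle Y\rangle=a_i-1$ and $Y$ is a linear space. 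Let $\widehat Y$ be its $a_i$-dimensional span. From $\Phi_X\subseteq\widehat Y^{\perp}$ every $\Lambda\in X$ satisfies $\Lambda\subseteq\widehat Y^{\perp}$, hence $\Lambda\cap\widehat Y\subseteq K:=\widehat Y\cap\widehat Y^{\perp}$, which is isotropic. Fixing once and for all an isotropic subspace $F_{a_i}$ of dimension $a_i$ containing $K$ (possible as $a_i<\tfrac n2$), we get $\dim(F_{a_i}\cap\Lambda)\ge\dim(K\cap\Lambda)=\dim(\widehat Y\cap\Lambda)\ge i$ for all $\Lambda\in X$, so $a_i$ is multi-rigid.

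\emph{Inductive step $j\ge 2$.} For $p$ general in $\Phi_X$, the class of $X_p:=\{\Lambda\in X\mid p\in\mathbb{P}(\Lambda)\}$ is $m'\sigma_{a';b'}$ with $b'_{j'}=b_{j'+1}$ for $j'<j$ and $a'$ obtained by prepending $1$ and shifting, so that $a'_{i+1}=a_i=b_j=b'_{j-1}$; this shared sub-index satisfies the hypotheses of the proposition with $j$ replaced by $j-1$ (its $a$-gap is $a_{i+1}-a_i\ge 3$, its $b$-gap is $b_j-b_{j-1}\ge 3$, and $b_j<\tfrac n2-2$). By induction there is an isotropic $F^{p}_{a_i}$ of dimension $a_i$ with $\dim(F^{p}_{a_i}\cap\Lambda)\ge i+1$ for all $\Lambda\in X_p$. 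Let $I$ be the closure in $OG(a_i,n)$ of the collection of all such $F^{p}_{a_i}$ as $p$ ranges over $\Phi_X$; a dimension count as in the proof of Theorem~\ref{multirigid of b} identifies $[I]$ with a positive multiple of $\sigma_{1,\dots,b_1,b_1+2,\dots,a_i;\,b_1}$. Since $a_i=b_j\ge b_1+3(j-1)\ge b_1+3$ and $n\ge 2a_i+3$, Corollary~\ref{an} produces an isotropic $F_{a_i}$ with $\dim(F_{a_i}\cap F^{p}_{a_i})\ge a_i-1$ for every $p$; hence for each $\Lambda\in X$, choosing $p\in\mathbb{P}(\Lambda)$ general in $\Phi_X$ gives $\dim(F_{a_i}\cap\Lambda)\ge\dim(F^{p}_{a_i}\cap\Lambda)-1\ge i$, and upper semicontinuity of $\dim(F_{a_i}\cap\Lambda)$ extends this to all $\Lambda\in X$. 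Thus $a_i$ is multi-rigid.

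I expect the delicate part to be the base case: the claim that the class of $X_p$ keeps $b_1$ as its leading $b$-index (which is what makes $\dim\Phi_{X_p}=\dim\Phi_X$ and drives the linearity argument), and checking that the hyperplane-section induction inside Lemma~\ref{degree} propagates the gap $a_i\le a_{i+1}-3$, both require careful bookkeeping under specialization to Schubert varieties. The inductive step is essentially a transcription of the argument for $b_j$ in Theorem~\ref{multirigid of b}, the only subtlety being to track where the shared index $a_i=b_j$ migrates under $X\to X_p$ and under $p\mapsto F^{p}_{a_i}$.
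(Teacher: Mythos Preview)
The inductive step $j\ge 2$ matches the paper's argument essentially verbatim and is fine. The genuine gap is in the base case $j=1$.

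You assert that pushing $X$ forward along $OG(k,n)\hookrightarrow G(k,n)$ gives a class $mc\,\sigma_{\tilde a}$ with the single index $\tilde a=(a_1,\dots,a_s,n-b_{k-s},\dots,n-b_1)$. This is false in general: the inclusion $i_*(\sigma_{a;b})$ is typically a nontrivial positive combination $\sum_{\gamma\in\Gamma} c_\gamma\sigma_\gamma$ of Schubert classes in $G(k,n)$, not a multiple of one. The Schubert variety $\Sigma_{a;b}$ sits inside $\Sigma_{\tilde a}\subset G(k,n)$ but with strictly smaller dimension once $k-s>0$, so its class spreads over several $\sigma_\gamma$ with $\gamma_t\le\tilde a_t$. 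Consequently Lemma~\ref{degree}, whose hypothesis is that $[X]$ be a multiple of a \emph{single} Schubert class, does not apply, and you have no $Y$ to start the linearity argument.

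The paper repairs exactly this point in two ways. First, it splits $j=1$ into the subcase $b_2=b_1+1$, disposed of immediately via Proposition~\ref{multirigid b1} (multi-rigidity of $b_1$) together with Lemma~\ref{atob}, and the subcase $b_2\neq b_1+1$. Second, in the latter subcase it invokes Lemma~\ref{degreegeneral} rather than Lemma~\ref{degree}: that lemma is designed for an arbitrary effective combination $\sum c_\gamma\sigma_\gamma$ and only requires that the maxima $m_t=\max_\gamma\gamma_t$ equal $a_t$ for $t\le i$ and that some $\gamma$ in the top stratum has $\gamma_{i+1}\ge a_i+3$. These are the conditions the paper verifies for $i_*[X]$ using $a_i=b_1$ and $b_2\neq b_1+1$. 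Once Lemma~\ref{degreegeneral} hands you $Y$, your linearity argument (comparing $\dim\Phi_{X_p}$ with $\dim\Phi_X$) goes through unchanged; indeed your detour through $K=\widehat Y\cap\widehat Y^{\perp}$ is unnecessary, since $Y\subseteq\Phi_X\subseteq Q$ already forces $\widehat Y$ to be isotropic.
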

\begin{proof}
Let $X$ be an irreducible representative of $m\sigma_{a;b}$, $m\in\mathbb{Z}$. We use induction on $j$. 

First assume $j=1$. If $b_1=b_2-1$, then by Theorem \ref{multirigid b1}, the sub-index $b_1$ is multi-rigid. By Theorem \ref{atob}, the sub-index $a_i=b_1$ is also multi-rigid. Assume $b_2\neq b_1+1$. Let $i:OG(k,n)\hookrightarrow G(k,n)$ be the natural inclusion. Then $i(X)$ has class $\sum_{\gamma\in \Gamma}c_\gamma\sigma_\gamma$ in $G(k,n)$, where $c_\gamma\in\mathbb{Z}$, $m_t=a_t$ for $1\leq t\leq i$ and $(a_1,...,a_i,a_{i+1}',...)\in\Gamma$ where $a_{i+1}'-a_i\geq 3.$. By Lemma \ref{degreegeneral}, there exists an irreducible projective subvariety $Y$ of dimension $a_i-1$ such that $\dim(Y\cap\mathbb{P}(\Lambda))\geq i-1$ for every $\Lambda\in X$. We claim that $Y$ is linear.

Let $\Phi_X$ be the variety swept out by the projective linear spaces parametrized by $X$. By Lemma \ref{dim of q}, $\dim(\Phi_X)=n-b_1-2=n-a_i-2$. Let $p$ be a general point in $Y$. Let $X_p:=\{\Lambda\in X|p\in\mathbb{P}(\Lambda)\}$. Then $X_p$ has class $m'\sigma_{a';b'}$, where $a'_1=1$, $a'_t=a_{t-1}+1$ for $2\leq t<i$, $a'_t=a_t$ for $t>i$, $b'=b$. Let $\Phi_p$ be the variety swept out by the projective linear spaces parametrized by $X_p$. Use Lemma \ref{dim of q} again, $\dim(\Phi_p)=n-a_i-2=\dim(\Phi_X)$. Since $\Phi_X$ is irreducible and $\Phi_p\subset \Phi_X$, we conclude that $\Phi_X=\Phi_p$. Let $T_pQ=p^\perp$ be the tangent space of $Q$ at $p$. Since for every $\Lambda\in X_p$, $\mathbb{P}(\Lambda)\subset T_pQ$, we get $\Phi_X=\Phi_p\subset T_pQ$. On the other hand, let $W$ be the projective linear space spaned by $Y$. From the construction, it is easy to see that $W$ is isotropic. If $Y$ is not linear, then $\dim(W)>\dim(Y)=a_i-1$. Notice that $$\Phi_X\subset \cap_{p\in Y}T_pQ\cap Q=W^\perp\cap Q,$$ while $\dim(\Phi_X)=n-a_i-2\geq n-\dim(W)-2=\dim(W^\perp)>\dim(W^\perp\cap Q)$, we reach a contradiction. We conclude that $Y$ is linear.

Now assume $j\geq 2$. Let $q$ be a general point in $\Phi_X$. Consider the locus $X_q:=\{\Lambda\in X|q\in\mathbb{P}(\Lambda)\}$. $X_p$ has class $\sigma_{a'';b''}$, where $a''_{i+1}=a_i=b_j=b''_{j-1}$ and $a''_{i+2}=a_{i+1}\geq a_i+3$. By induction, the sub-index $a''_{i+1}$ is multi-rigid. Let $F^q_{a''_{i+1}}$ be the corresponding isotropic subspace such that $\dim(F^q_{a''_{i+1}}\cap \Lambda)\geq i+1$ for all $\Lambda\in X_q$. As we vary $q\in Z$, let $I$ be the Zariski closure of $\{F^q_{a''_{i+1}}\}$ in $OG(a_{i},n)$. Then $I$ has class $c\sigma_{1,...,b_1,b_1+2,...,a_i;b_1}$ for some $c\in\mathbb{Z}^+$. By assumption, $a_i-(b_1+2)\geq 1$. By Theorem \ref{an}, the sub-index $a_i$ is multi-rigid. Let $G_{a_i}$ be the corresponding linear space. Then it is easy to check for every $\Lambda\in X$, $\dim(G_{a_i}\cap \Lambda)\geq i$.
\end{proof}
\begin{Cor}\label{cooo}
Let $\sigma_{a;b}$ be a Schubert class for $OG(k,n)$. If $\frac{n}{2}-2>b_j\geq b_{j-1}+3$ and $b_j=a_i$ for some $i$ such that $a_i\leq a_{i+1}-3$, then the sub-index $b_j$ is multi-rigid.
\end{Cor}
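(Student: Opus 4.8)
The plan is to obtain the multi-rigidity of $b_j$ as a formal consequence of the multi-rigidity of the matching sub-index $a_i = b_j$, which is supplied by Proposition \ref{-3} under exactly the present hypotheses, together with the correspondence between $F$ and $F^\perp$ recorded in Lemma \ref{atob}.

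First I would fix $m \in \mathbb{Z}^+$ and an irreducible representative $X$ of $m\sigma_{a;b}$. The hypotheses of Corollary \ref{cooo}, namely $a_i \leq a_{i+1}-3$, $a_i = b_j$, and $\frac{n}{2}-2 > b_j \geq b_{j-1}+3$, are literally those of Proposition \ref{-3}, so that proposition produces an isotropic subspace $F_{a_i}$ of dimension $a_i$ with $\dim(F_{a_i}\cap\Lambda) \geq i$ for every $\Lambda \in X$. Since $a_i = b_j < \frac{n}{2}-2 < \frac{n}{2}-1$, the dimension hypothesis of Lemma \ref{atob} is met, so its first implication gives $\dim(F_{a_i}^\perp\cap\Lambda) \geq k-j+1$ for all $\Lambda \in X$. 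Taking $F_{b_j} := F_{a_i}$, which is isotropic of dimension $b_j$, this is exactly the defining property for $b_j$ to be multi-rigid; as $X$ and $m$ were arbitrary, the corollary follows.

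There is essentially no obstacle here, so the only care required is bookkeeping: one must check that the numerical conditions of Corollary \ref{cooo} coincide verbatim with those of Proposition \ref{-3} (they do), and that the bound $b_j < \frac{n}{2}-1$ needed to invoke Lemma \ref{atob} is implied by $b_j < \frac{n}{2}-2$ (it is). It is also worth remarking that only the forward direction of Lemma \ref{atob} is used; the converse direction would instead transfer multi-rigidity of $b_j$ to $a_i$, which is the content already established in Proposition \ref{-3} and is not needed again.
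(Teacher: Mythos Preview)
Your proposal is correct and follows exactly the paper's approach: the paper's proof is the single line ``Follows directly from Proposition \ref{-3} and Lemma \ref{atob},'' and you have simply unpacked that sentence, correctly verifying that the hypotheses of both results are met.
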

\begin{proof}
Follows directly from Proposition \ref{-3} and Lemma \ref{atob}.
\end{proof}

We thus obtain the following characterization of multi-rigid sub-indices in $OG(k,n)$:
\begin{Thm}\label{rigidb}
Let $\sigma_{a;b}$ be a Schubert class for $OG(k,n)$. An essential sub-index $b_j$ is multi-rigid if $b_j$ is rigid and one of the following conditions hold:
\begin{enumerate}
\item $b_j<\frac{n}{2}-2$, $b_j\neq a_i$ for all $1\leq i\leq s$ and $b_{j+1}-1=b_j\leq b_{j-1}+3$;
\item $b_j<\frac{n}{2}-2$, $b_j=a_i$ for some $1\leq i\leq s$, $a_i\leq a_{i+1}-3$ and $b_j\leq b_{j-1}+3$;
\item $n$ is even, $j=k-s$, $b_{k-s}=a_s=\frac{n}{2}-2$ and $a_{s-1}=\frac{n}{2}-3$;
\item $n$ is even, $j=k-s$, $b_{k-s}=\frac{n}{2}-1$ and $b_{k-s-1}\leq\frac{n}{2}-4$;
\end{enumerate}
\end{Thm}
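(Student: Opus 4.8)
The plan is to prove the theorem by a case analysis on which of the four conditions (1)--(4) is in force, reducing each case to a multi-rigidity statement already established (here or in \cite{YL}, \cite{YL3}). Throughout, let $X$ be an irreducible representative of $m\sigma_{a;b}$ with $m\in\mathbb{Z}^+$, and let $\Phi_X$ be the variety swept out by the $\mathbb{P}^{k-1}$'s parametrized by $X$.

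For conditions (1) and (2) I would argue as follows. If condition (1) holds with $j=1$, the hypotheses are exactly those of Proposition \ref{multirigid b1}. If condition (1) holds with $j\geq2$, then $b_j\neq a_i$ for all $i$, $b_j$ is rigid, and $b_j\leq b_{j-1}+3$; invoking the characterization of rigid $b$-indices (compare Remark \ref{non-rigidex}, which shows a $b$-index with $b_j=b_{j-1}+2$ and $b_j\neq a_i$ is never multi-rigid) forces $b_j=b_{j-1}+3$, so $b_{j-1}+3\leq b_j=b_{j+1}-1<\frac{n}{2}-2$ and Theorem \ref{multirigid of b} applies. If condition (2) holds, then $a_i=b_j<\frac{n}{2}-1$, so by Lemma \ref{atob} the multi-rigidity of $b_j$ is equivalent to that of $a_i$; again the rigidity of $b_j$ together with $b_j\leq b_{j-1}+3$ pins $b_j=b_{j-1}+3$, whence $a_i$ is multi-rigid by Corollary \ref{cooo} (equivalently, Proposition \ref{-3} combined with Lemma \ref{atob}).

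For conditions (3) and (4) I would note first that condition (4) is precisely the second assertion of Proposition \ref{rigid of m}. For condition (3) we have $b_{k-s}=a_s=\frac{n}{2}-2<\frac{n}{2}-1$, so Lemma \ref{atob} reduces the multi-rigidity of $b_{k-s}$ to that of $a_s$; since $a_s=\frac{n}{2}-2=a_{s-1}+1$ and $b_{k-s}=\frac{n}{2}-2\neq\frac{n}{2}-1$, the latter is case (4) of Theorem \ref{aaa}.

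The hard part is not any single geometric argument — all the relevant constructions (cones over plane curves, the sweep $\Phi_X$, the passage to incidence correspondences and to $OF(d_0,d_r,d_k;n)$) have been carried out already — but the bookkeeping needed to see that each listed hypothesis set lands inside an existing result. The delicate step is the interplay between the rigidity hypothesis on $b_j$ and the stated inequalities: one must extract from the characterization of rigid $b$-indices (Lemma \ref{oldrigid} and its analogue for $j>1$ in \cite{YL3}) that ``$b_j$ rigid and $b_j\leq b_{j-1}+3$'' forces $b_j=b_{j-1}+3$, or else supplies the chain $b_{j-1}=a_{i-1},\,b_j=a_i$ that recovers the gap $b_j\geq b_{j-1}+3$ after specializing to a general point of $\Phi_X$ and re-running the induction of Proposition \ref{-3}. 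Near the boundary values $\frac{n}{2}-2,\frac{n}{2}-1,\frac{n}{2}$ one must also keep careful track of which of the two irreducible components of the relevant maximal orthogonal Grassmannian the auxiliary isotropic subspaces lie in.
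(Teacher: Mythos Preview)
Your case-by-case reduction is exactly the paper's proof: condition (1) to Theorem~\ref{multirigid of b}, condition (2) to Corollary~\ref{cooo}, condition (3) to Theorem~\ref{aaa} plus Lemma~\ref{atob}, and condition (4) to Proposition~\ref{rigid of m}. The paper's argument is literally those four one-line citations and nothing more.

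The one place you diverge is the ``delicate step'' you flag: you try to extract $b_j\geq b_{j-1}+3$ from the stated hypothesis $b_j\leq b_{j-1}+3$ together with rigidity. This step is absent from the paper, and the reason is almost certainly a typo in the statement rather than a genuine deduction you are expected to make. Both Theorem~\ref{multirigid of b} and Corollary~\ref{cooo} require $b_{j-1}+3\leq b_j$, and the paper invokes them with no intermediate argument; moreover Remark~\ref{non-rigidex} exhibits non-multi-rigid examples with $b_j=b_{j-1}+2$, which would falsify the theorem as literally stated unless such $b_j$ are never rigid. Your justification for the extraction is also shaky as written: Remark~\ref{non-rigidex} concerns failure of \emph{multi}-rigidity, not rigidity, so it cannot be combined with the hypothesis ``$b_j$ is rigid'' to rule out $b_j=b_{j-1}+2$; you would need the actual rigidity classification from \cite{YL3} and to check it gives what you want, which you do not do. In short, read the inequalities in (1) and (2) as $b_j\geq b_{j-1}+3$ and your proof collapses to the paper's.
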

\begin{proof}
If condition (1) holds, then the statement follows from Theorem \ref{multirigid of b}. If condition (2) holds, then the statement follows from Corollary \ref{cooo}. If condition (3) holds, then the statement follows from Theorem \ref{aaa} and Lemma \ref{atob}. If condition (4) holds, then the statement follows from Proposition \ref{rigid of m}.
\end{proof}
\begin{Rem}\label{sharp}
If $n$ is odd, then Theorem \ref{rigidb} is also sharp.
Conversely, if $b_j$ is not rigid, then it can not be multi-rigid by definition. 

{\bf Case I}. If $b_j<\frac{n}{2}-2$, $b_j\neq a_i$ for all $1\leq i\leq s$ and either $b_{j+1}-b_j\geq 2$ or $b_j\leq b_{j-1}+3$, then the construction in Remark \ref{non-rigidex} gives a counter-example.

{\bf Case II-1}. If $b_j<\frac{n}{2}-2$, $b_j=a_i$ for some $1\leq i\leq s$ and $b_j=b_{j-1}+2$, then it is necessary that $a_{i-1}\leq a_i-2$. Take a partial flag of isotropic subspaces:
$$F_1\subset ... F_{a_i-2}\subset F_{a_i+1}\subset...\subset F_{\left[\frac{n}{2}\right]}.$$
Take a smooth plane curve $C$ of degree $m$ in $\mathbb{P}(F_{a_i+1})$ whose span disjoint from $\mathbb{P}(F_{a_i-2})$. Let $Z$ be the cone over $C$ with vertex $\mathbb{P}(F_{a_i+1})$. Let $X$ be the Zariski closure of the following locus in $OG(k,n)$:
\begin{eqnarray}
X^o:=\{\Lambda\in OG(k,n)&|&\dim(\Lambda\cap F_{a_{i'}})= i', 1\leq i'\leq s, i'\neq i;\nonumber\\
& &\dim(\Lambda\cap F_{b_{j'}}^\perp)= k-j'+1, 1\leq j'\leq k-s, j'\neq j;\nonumber\\
& &\dim(\mathbb{P}(\Lambda)\cap Z)= i-1;\nonumber\\
& &\dim(\mathbb{P}(\Lambda)\cap\mathbb{P}( F_{a_i-2}))=i-2\}\nonumber
\end{eqnarray}
To compute the class of $X$, we specialize $C$ to a union of lines which specializes $Z$ to a union of projective linear spaces $\mathbb{P}(F_{a_i})$. For every $\Lambda\in X^o$, there exists a point $p\in\mathbb{P}(\Lambda)\cap Z$ outside $\mathbb{P}(F_{a_i-2})$. Let $q\in \mathbb{P}(F_{a_i})$ be a point that does not contained in the span of $\mathbb{P}(F_{a_i-2})$ and $p$. Then
$$\dim(\mathbb{P}(\Lambda)\cap \mathbb{P}(F_{a_i}^\perp))=\dim((\mathbb{P}(\Lambda)\cap \mathbb{P}(F_{a_i-2}^\perp))\cap q^\perp)=k-j+1.$$
Therefore $X$ has class $m\sigma_{a;b}$, while $X$ is not a Schubert variety if $m\geq 2$.

{\bf Case II-2}. If $b_j<\frac{n}{2}-2$, $b_j=a_i$ for some $1\leq i\leq s$ and $a_i=a_{i+1}-2$, take a partial flag of isotropic subspaces:
$$F_1\subset ... F_{a_i-1}\subset F_{a_i+2}\subset...\subset F_{\left[\frac{n}{2}\right]}.$$
Take a smooth plane curve $C$ of degree $m$ in $\mathbb{P}(F_{a_i-1}^\perp)$ which is tangent to $Q$ and disjoint from $\mathbb{P}(F_{a_i+2}^\perp)$. Let $Z$ be the cone over $C$ with vertex $\mathbb{P}(F_{a_i+2}^\perp)$. Then $Z$ is tangent to $Q$ along the intersection with $Q$. Let $X$ be the Zariski closure of the following locus in $OG(k,n)$:
\begin{eqnarray}
X^o:=\{\Lambda\in OG(k,n)&|&\dim(\Lambda\cap F_{a_{i'}})= i', 1\leq i'\leq s, i'\neq i;\nonumber\\
& &\dim(\Lambda\cap F_{b_{j'}}^\perp)= k-j'+1, 1\leq j'\leq k-s, j'\neq j;\nonumber\\
& &\dim(\mathbb{P}(\Lambda)\cap Z)= k-j;\nonumber\\
& &\dim(\mathbb{P}(\Lambda)\cap\mathbb{P}( F_{a_i+2}^\perp))=k-j-1\}\nonumber
\end{eqnarray}
By specializing $C$ to a union of lines tangent to $Q$, one obtains that $[X]=m\sigma_{a;b}$.

{\bf Case III}. If $b_j=\frac{n-1}{2}-1$, then $b_j$ is not even rigid by Lemma \ref{oldrigid}. If $a_s\neq\frac{n-1}{2}-1$, then consider the restriction variety $X$ defined by the following sequence:
$$F_{a_1}\subset...\subset F_{a_s}\subset Q_{\frac{n+3}{2}}^{\frac{n-1}{2}-1}\subset Q_{n-b_{j-1}}^{b_{j-1}}\subset...\subset Q_{n-b_1}^{b_1}.$$
Note that this sequence differs from the one defining Schubert varieties in that the quadric in the $s+1$-th position has corank 1 less than the maximal possible corank. Then $X$ has class $\sigma_{a;b}$ but is not a Schubert variety.

If $a_s=\frac{n-1}{2}-1$, then consider the restriction variety $Y$ defined by the following sequence:
$$F_{a_1}\subset...\subset F_{a_s+1}\subset Q_{\frac{n+3}{2}}^{\frac{n-1}{2}-1}\subset Q_{n-b_{j-1}}^{b_{j-1}}\subset...\subset Q_{n-b_1}^{b_1}.$$
Note that this sequence differs from the one defining Schubert varieties in that the $s$-th flag element has dimenison one greater and the quadric in the $s+1$-th position has corank 1 less. Then $X$ has class $\sigma_{a;b}$ but is not a Schubert variety.
\end{Rem}

\begin{Thm}\label{aaaaa}
Let $\sigma_{a;b}$ be a Schubert class for $OG(k,n)$. An essential sub-index $a_i$ is multi-rigid if $a_i$ is rigid and one of the following conditions hold:
\begin{enumerate}
\item $i<s$ and $a_{i-1}+1=a_i\leq a_{i+1}-3$;
\item $i<s$, $a_i=b_j$ for some $1\leq j\leq k-s$, $a_i\leq a_{i+1}-3$ and $b_j\leq b_{j-1}+3$.
\item $n$ is even, $i=s$, $a_s\leq \frac{n}{2}-3$ and $a_s=a_{s-1}+1$;
\item $n$ is even, $i=s$, $a_s= \frac{n}{2}-1$, $a_s=a_{s-1}+1$ and $b_{k-s}\neq\frac{n}{2}-1$;
\item $n$ is even, $i=s$, $a_s= \frac{n}{2}-2$, $a_s=a_{s-1}+1$ and $b_{k-s}\neq \frac{n}{2}-1$;
\item $n$ is even, $i=s$, $a_s=\frac{n}{2}$ and $b_{k-s}\leq \frac{n}{2}-4$;
\item $n$ is odd, $i=s$, $a_s\neq\left[\frac{n}{2}\right]-1$ and $a_s=a_{s-1}+1$;
\item $n$ is odd, $i=s$, $a_s=\left[\frac{n}{2}\right]-1$, $a_s=a_{s-1}+1$ and $b_{k-s}\neq\left[\frac{n}{2}\right]-1$.
\end{enumerate}
\end{Thm}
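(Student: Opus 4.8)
The plan is to deduce the theorem from the results already established, by a case analysis according to which of the eight numbered conditions the essential sub-index $a_i$ satisfies. Fix an irreducible representative $X$ of $m\sigma_{a;b}$ with $m\in\mathbb{Z}^+$; in each case the task is to produce an isotropic subspace $F_{a_i}$ of dimension $a_i$ with $\dim(\Lambda\cap F_{a_i})\ge i$ for all $\Lambda\in X$.

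First I would dispose of conditions (1) and (3)--(8). These are verbatim the hypotheses of Theorem~\ref{aaa}, so in each of these cases multi-rigidity of $a_i$ is already known. The only additional assumption in the present statement is that $a_i$ be rigid, and I would record that this is no restriction: for $i<s$ rigidity is forced by $a_{i-1}+1=a_i$, while for $i=s$ it is forced by the numerical constraints on $a_s$ and $b_{k-s}$ listed in (3)--(8) together with the rigidity criteria established in \cite{YL3} (Lemma~\ref{oldrigid} being a prototype).

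The one genuinely new case is (2): $i<s$, $a_i=b_j$ for some $1\le j\le k-s$, $a_i\le a_{i+1}-3$, and $b_j\le b_{j-1}+3$. Here I would first invoke Lemma~\ref{atob}, valid since $a_i=b_j<\tfrac{n}{2}-1$, which converts the problem into showing that $b_j$ is multi-rigid: a subspace $F_{a_i}$ meeting every $\Lambda\in X$ in dimension $\ge i$ exists precisely when a subspace $F_{b_j}$ with $\dim(F_{b_j}^\perp\cap\Lambda)\ge k-j+1$ for all $\Lambda\in X$ exists. If $a_{i-1}+1=a_i$ we are back in case (1); otherwise the standing hypothesis that $a_i=b_j$ is rigid, combined with $a_i\le a_{i+1}-3$ and with $b_j$ being essential, pins down $b_j=b_{j-1}+3$, and since moreover $b_j<\tfrac{n}{2}-2$ (the value $b_j=\tfrac{n}{2}-2$ cannot occur for an essential $a_i<s$ with $a_i\le a_{i+1}-3$) we are exactly in the situation of Proposition~\ref{-3}, equivalently Corollary~\ref{cooo}, which delivers the multi-rigidity of $a_i$. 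This exhausts all possibilities.

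The real obstacle is not in this assembly but in the input Proposition~\ref{-3}, which carries the only new geometric content. There one applies the generalized degree estimate Lemma~\ref{degreegeneral} to the image of $X$ under $OG(k,n)\hookrightarrow G(k,n)$ to obtain an irreducible subvariety $Y\subset\mathbb{P}(V)$ of dimension exactly $a_i-1$ with $\dim(Y\cap\mathbb{P}(\Lambda))\ge i-1$ for every $\Lambda\in X$; the crux is then to prove $Y$ is \emph{linear}. This is where the quadric geometry enters: writing $W$ for the linear span of $Y$, one uses $\dim(\Phi_X)=n-b_1-2$ from Lemma~\ref{dim of q}, the fact that a general point $p\in Y$ forces $\Phi_X\subset T_pQ=p^\perp$, hence $\Phi_X\subset W^\perp\cap Q$, and a dimension count to conclude $\dim W=a_i-1$, i.e.\ $Y=W$. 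The second delicate point is the induction on $j$ reducing a general $b_j$ to the base case $j=1$: one passes to the fibre $X_q$ over a general $q\in\Phi_X$, applies the inductive hypothesis to the shifted sub-index, and controls the resulting family of isotropic subspaces by recognizing its class as $\sigma_{1,\dots,b_1,b_1+2,\dots,a_i;b_1}$ and invoking Corollary~\ref{an}. Once Proposition~\ref{-3} is in hand, the case split above completes the proof.
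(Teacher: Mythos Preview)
Your proposal is essentially correct and follows the paper's approach: conditions (1), (3)--(8) are read off from Theorem~\ref{aaa}, and condition (2) is handled by Proposition~\ref{-3}. The paper's own proof is literally the two-line citation ``(1), (3)--(8) follow from Theorem~\ref{aaa}; (2) follows from Proposition~\ref{-3}.''

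That said, your treatment of case~(2) takes an unnecessary detour. You first pass through Lemma~\ref{atob} to convert the question to multi-rigidity of $b_j$, then split on whether $a_{i-1}+1=a_i$, and finally invoke Proposition~\ref{-3} anyway---for $a_i$, not for $b_j$. Since Proposition~\ref{-3} already gives multi-rigidity of $a_i$ directly under the hypotheses of condition~(2) (with the inequality on $b_j-b_{j-1}$ matching that of Proposition~\ref{-3}; the printed ``$\le$'' in the statement should be read as ``$\ge$'', as in Proposition~\ref{-3}, Theorem~\ref{multirigid of b}, and Corollary~\ref{cooo}), the detour through Lemma~\ref{atob} and the ``pinning down'' argument can be dropped entirely. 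In particular, your claim that rigidity of $a_i$ together with $a_i\le a_{i+1}-3$ and essentiality of $b_j$ forces $b_j=b_{j-1}+3$ is not justified and not needed: the gap condition on $b_j-b_{j-1}$ is part of the hypothesis, not something to be derived.

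Your final paragraph is an accurate outline of the \emph{proof of Proposition~\ref{-3}} (the linearity of $Y$ via the span/dimension argument against $\Phi_X\subset W^\perp\cap Q$, and the induction on $j$ landing on the class $\sigma_{1,\dots,b_1,b_1+2,\dots,a_i;b_1}$ handled by Corollary~\ref{an}). That is useful commentary, but it belongs to Proposition~\ref{-3} rather than to the present theorem, which merely assembles results already established.
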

\begin{proof}
Condition (1), (3)-(8) follows from Theorem \ref{aaa}. Condition (2) follows from Proposition \ref{-3}.
\end{proof}
\begin{Rem}
When $n$ is odd, then Theorem \ref{aaaaa} is also sharp.

{\bf Case I}. If $i<s$, $a_i\neq b_j$ for all $1\leq j\leq k-s$ and either $a_{i-1}\leq a_i+2$ or $a_{i+1}-a_i=2$, take a partial flag of isotropic subspaces:
$$F_1\subset...\subset F_{a_i-1}\subset F_{a_i+1}\subset...\subset F_{\left[\frac{n}{2}\right]}.$$
Since the sub-index $a_i$ is not multi-rigid with respect to the Schubert class $\sigma_{a_1,...,a_i,a_{i+1}}$ by Theorem \ref{rigid in g}, there exists an irreducible subvariety $Y$ in $G(i+1,F_{a_{i+1}})$ with class $m\sigma_{a_1,...,a_i,a_{i+1}}$ such that $\dim(F_{a_{i'}}\cap\Lambda')\geq i'$ for all $1\leq i'<i$ and $\Lambda'\in Y$, but is not a Schubert variety. Let $X$ be the Zariski closure of the following locus:
$$X^o:=\{\Lambda\in OG(k,n)|\Lambda \cap F_{a_{i+1}}\in Y,\dim(\Lambda\cap F_{a_{i'}})=i',i'\geq i+1,\dim(\Lambda\cap F_{b_j}^\perp)\geq k-j+1\}.$$
Then $X$ has class $m\sigma_{a;b}$ but is not a Schubert variety.

{\bf Case II}. If $i<s$, $a_i=b_j$ for some $1\leq j\leq k-s$ and either $a_i=a_{i+1}-2$ or $b_j=b_{j-1}+2$, then the construction in Remark \ref{sharp} gives counter-examples.

{\bf Case III}. If $i=s$ and $a_s\leq \left[\frac{n}{2}\right]-2$ and $a_s\neq a_{s-1}+1$, then same construction as in {\bf Case I} works. If $a_s=\left[\frac{n}{2}\right]$, take an isotropic subspace $F_{\left[\frac{n}{2}\right]-1}$ of dimension $\left[\frac{n}{2}\right]-1$. Let $Q'=\mathbb{P}(F_{\left[\frac{n}{2}\right]-1}^\perp)\cap Q$ and let $Z$ be a general hyperplane section of $Q'$. By Bertini's theorem, $Z$ is irreducible if $Q'$ has dimension at least two. Let $F_{\left[\frac{n}{2}\right]-2}$ be the singular locus of $Z$. Pick a complete flag in $F_{\left[\frac{n}{2}\right]-2}$:
$$F_1\subset...\subset F_{\left[\frac{n}{2}\right]-2}$$
Let $X$ be the Zariski closure of the following locus
\begin{eqnarray}
\{\Lambda\in OG(k,n)&|&\dim(\mathbb{P}(\Lambda)\cap Z)\geq s-1, \dim(\Lambda\cap F_{a_i})\geq i,1\leq i\leq s-1,\nonumber\\
& &\dim(\Lambda\cap F^\perp_{b_j})\geq k-j+1,1\leq j\leq k-s\}\nonumber
\end{eqnarray}
Then the class of $X$ is $2\sigma_{a;b}$ but is not a Schubert variety.

{\bf Case IV}. If $i=s$, $a_s= \left[\frac{n}{2}\right]-1$ and $a_s=b_{k-s}$, then the sub-index $a_s$ is not even rigid. (See Remark \ref{sharp} Case III). If $a_s\neq b_{k-s}$, take an isotropic subspace $F_{\left[\frac{n}{2}\right]-3}$ contained in a maximal isotropic subspace $F_{\left[\frac{n}{2}\right]}$. Let $C$ be a plane curve in $\mathbb{P}(F_{\left[\frac{n}{2}\right]})$ of degree $m$ whose span is disjoint from $\mathbb{P}(F_{\left[\frac{n}{2}\right]-3})$. Let $Z$ be the cone over $C$ with vertex $\mathbb{P}(F_{\left[\frac{n}{2}\right]-3})$. Take a complete flag of isotropic subspaces contained in $F_{\left[\frac{n}{2}\right]-3}$:
$$F_1\subset...\subset F_{\left[\frac{n}{2}\right]-3}.$$
Let $X$ be the Zariski closure of the following locus
\begin{eqnarray}
\{\Lambda\in OG(k,n)&|&\dim(\mathbb{P}(\Lambda)\cap Z)\geq s-1, \dim(\Lambda\cap F_{a_i})\geq i,1\leq i\leq s-1,\nonumber\\
& &\dim(\Lambda\cap F^\perp_{b_j})\geq k-j+1,1\leq j\leq k-s\}\nonumber
\end{eqnarray}
Then the class of $X$ is $m\sigma_{a;b}$ but is not a Schubert variety.
\end{Rem}

We hereby obtain the multi-rigidity of Schubert classes in orthogonal Grassmannians:
\begin{Cor}
Let $\sigma_{a;b}$ be a Schubert class for $OG(k;n)$. The Schubert class $\sigma_{a;b}$ is multi-rigid if all essential $a_i$ satisfy one of the conditions in Theorem \ref{aaaaa} and all essential $b_j$ satisfy one of the conditions in Theorem \ref{rigidb}. If $n$ is odd, then the converse is also true.
\end{Cor}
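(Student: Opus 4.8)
The plan is to deduce this corollary from Theorem \ref{aaaaa}, Theorem \ref{rigidb}, and a compatibility argument modeled on the proof of Corollary \ref{rigid in f}. Given an irreducible representative $X$ of $m\sigma_{a;b}$ with $m\in\mathbb{Z}^+$, I would first invoke Theorem \ref{aaaaa} to produce, for each essential $a_i$, an isotropic subspace $F_{a_i}$ of dimension $a_i$ with $\dim(\Lambda\cap F_{a_i})\geq i$ for every $\Lambda\in X$, and Theorem \ref{rigidb} to produce, for each essential $b_j$, an isotropic subspace $F_{b_j}$ of dimension $b_j$ with $\dim(\Lambda\cap F_{b_j}^\perp)\geq k-j+1$ for every $\Lambda\in X$; for the non-essential indices the corresponding inequalities hold automatically by the definition of essentiality. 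The goal is then to show that these subspaces can be chosen to form a single isotropic flag $F_\bullet$; once this is known, $X\subseteq \Sigma_{a;b}(F_\bullet)$, and comparing dimensions with the irreducible Schubert variety $\Sigma_{a;b}(F_\bullet)$ forces $X=\Sigma_{a;b}(F_\bullet)$ and $m=1$, which is exactly multi-rigidity.

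The compatibility step I would carry out in three stages. First, within the $a$-indices I would apply the nestedness lemma (the isotropic analogue of Lemma \ref{coincide}, valid since the vanishing of a structure constant depends only on intersection products of Schubert classes): whenever $i<j$ and $a_j$ is essential, $F_{a_i}\subset F_{a_j}$; dually, $F_{b_i}\subset F_{b_j}$ whenever $i<j$ and $b_i$ is essential. Second, at each coincidence $a_i=b_j$ with $a_i<\frac{n}{2}-1$ I would invoke Lemma \ref{atob}, which makes the $a$-incidence at $F_{a_i}$ and the $b$-incidence at $F_{b_j}$ equivalent, so one may take $F_{a_i}=F_{b_j}$ and the two chains splice. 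Third, I would treat the maximal-isotropic borderline cases ($a_s=\frac{n}{2}$, or $b_{k-s}=\frac{n}{2}-1$, or $a_s+b_{k-s}=n-2$) by hand, using the remark following Lemma \ref{atob} and, when $n=2d_k$, the parity requirement $s\equiv k\pmod 2$, to pin down the relevant maximal isotropic subspace and the correct connected component. Assembling these three stages produces the flag $F_\bullet$ and finishes sufficiency.

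For the converse when $n$ is odd, I would argue contrapositively: if some essential sub-index satisfies none of the listed conditions, it fails to be multi-rigid. If it is not even rigid, Lemma \ref{oldrigid} already exhibits a non-Schubert representative of $\sigma_{a;b}$ itself. Otherwise, the explicit constructions in Remark \ref{non-rigidex}, in Remark \ref{sharp}, and in the remark following Theorem \ref{aaaaa} — cones over degree-$m$ plane curves, and general sub-quadric sections — give, for a suitable $m\geq 2$, an irreducible $X$ with $[X]=m\sigma_{a;b}$ that is not a union of Schubert varieties. Oddness of $n$ guarantees that none of the maximal-isotropic exceptional phenomena obstruct these constructions, so $\sigma_{a;b}$ is not multi-rigid.

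I expect the main obstacle to be the compatibility step, and within it the maximal-isotropic cases: each $F_{a_i}$ or $F_{b_j}$ is a priori only an abstract isotropic subspace attached to one projection $\pi_t$, and one must show the whole finite collection — mixing $a$-type and $b$-type conditions and conditions pulled back from several components — can be chosen mutually nested. When the orthogonal complement changes the incidence structure and the two components of the variety of maximal isotropic subspaces are in play, the bookkeeping (via Lemma \ref{atob} together with the parity analysis underlying Lemma \ref{class of pullback in of}) is delicate, and this is where the real work lies.
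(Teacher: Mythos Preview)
Your proposal is correct and follows the approach the paper implicitly intends. The paper in fact states this corollary without proof, treating it as immediate from Theorems \ref{aaaaa} and \ref{rigidb} together with the sharpness constructions in Remark \ref{non-rigidex}, Remark \ref{sharp}, and the remark following Theorem \ref{aaaaa}; your outline fills in precisely the compatibility step (via an orthogonal analogue of Lemma \ref{coincide} and via Lemma \ref{atob}) that the paper leaves to the reader.

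One small correction: in your final paragraph you speak of subspaces ``attached to one projection $\pi_t$'' and of ``conditions pulled back from several components''. That is the language of $OF(d_1,\ldots,d_k;n)$, not of $OG(k,n)$. In the orthogonal Grassmannian there is a single component, so the compatibility issue reduces to nesting the $F_{a_i}$'s among themselves, nesting the $F_{b_j}$'s among themselves, and splicing the two chains where $a_i=b_j$ via Lemma \ref{atob} (with the borderline maximal-isotropic cases handled as you say). No multi-component bookkeeping from Lemma \ref{class of pullback in of} enters, and the step is accordingly less delicate than you anticipate.
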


\section{General rational homogeneous spaces}\label{general case}
In this section, we prove the rigidity of Schubert classes in general rational homogeneous spaces. As a corollary, we also prove the multi-rigidity in these cases.
\subsection{Notations}Let $G$ be a connected, simply-connected, semi-simple algebraic group over an algebraically closed field $k$. Fix a maximal torus $T$ of $G$. Let $W=N(T)/T$ be the Weyl group. Let $R$ be the root system of $G$ relative to $T$.

Let $B$ be a Borel subgroup containing $T$ and let $\Phi\subset R$ be the set of simple roots relative to $B$. Let $U$ be the unipotent radical of $B$. Then $T=B/U$. 

Let $X(T)$ be the character group of $T$, i.e. $X(T):=\text{Hom}(T,\mathbb{G}_m)$, where $\mathbb{G}_m$ is the multiplication group of $k$. Let $\lambda\in X(T)$ be a character of $T$. Then $\lambda$ induces a one-dimensional representation $k_{\lambda}$ of $B$ via the pullback of the natural projection $B\rightarrow T=B/U$. Consider $G\rightarrow G/B$ as a principal $B$-bundle and let $L(\lambda):=G\times_Bk_{-\lambda}$ be the associated line bundle on $G/B$. Then any line bundle on $G/B$ is isomorphic to $L(\lambda)$ for some $\lambda\in X(T)$. 

The space of global sections $H^0(G/B,L(\lambda))$ is nonzero if and only if $\Lambda$ is dominant. The dual module $V(\lambda):=H^0(G/B,L(\lambda))^*$ is called the Weyl module of highest weight $\lambda$.

Let $\mathfrak{g},\mathfrak{h}$ be the Lie algebra of $G$ and $T$ respectively. Then $\mathfrak{g}$ can be decomposed as 
$$\mathfrak{g}=\mathfrak{h}\oplus\oplus_{\alpha\in R}\mathfrak{g}_\alpha,$$
where $\mathfrak{g_\alpha}$ is the root space of $\alpha$. For each $\beta\in R$, there exists a unique one-dimensional unipotent subgroup $U_\beta$ of $G$ with $\mathfrak{g_\beta}$ as its Lie algebra. Such $U_\beta$ is called a root subgroup. The Borel subgroup $B$ is the semidirect product of $U$ and $T$, and 
$$U\cong\Pi_{\beta\in R^+}U_\beta.$$

Let $P$ be a parabolic subgroup containing $B$. There is a bijection between the set of parabolic subgroups containing $B$ and the power set of $\Phi$. For a subset $J\subset B$, let $P_J$ be the parabolic subgroup generated by $B$ and $\{U_{-\alpha}|\alpha=\sum_{\beta\in J}a_\beta\beta,a_\beta\geq 0\}$. Let $H_\alpha$ be the coroot of $\alpha$. The character group $X(P)$ of $P_J$ is a subgroup of $X(T)$ consisting of $\lambda$ such that $\lambda(H_\alpha)=0$ for all $\alpha\in J$. For $\lambda\in X(P)$, we can construct similarly the line bundle $L^P(\lambda):=G\times_Pk_{-\lambda}$ associated to the principal $P$-bundle $G\rightarrow G/P$, and any line bundle on $G/P$ occur in this way. Note that $H^0(G/P,L^P(\lambda))\neq0$ if and only if $\lambda$ is dominant.

\begin{Lem}\cite[Theorem 3.3.4]{BK2005}\label{lemma5.1}
Let $\pi:G/B\rightarrow G/P$ be the natural projection. Let $\lambda\in X(P)$ be a dominant weight. Then the pull-back 
$$\pi^*:H^0(G/P,L^P(\lambda))\rightarrow H^0(G/B,L(\lambda))$$
is an isomorphism.
\end{Lem}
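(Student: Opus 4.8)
The statement to prove is Lemma 5.1 in the excerpt (labeled \texttt{lemma5.1}), which asserts that the pull-back $\pi^*\colon H^0(G/P,L^P(\lambda))\to H^0(G/B,L(\lambda))$ is an isomorphism for a dominant weight $\lambda\in X(P)$. Note that this is cited from \cite[Theorem 3.3.4]{BK2005}, so strictly a reference suffices; but a self-contained plan follows.

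\medskip

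\textbf{Overall approach.} The plan is to exploit the fact that $\pi\colon G/B\to G/P$ is a fiber bundle with fiber $P/B$, which is itself a flag variety (for the Levi/semisimple part of $P$), and to identify the push-forward $\pi_*L(\lambda)$ with $L^P(\lambda)$ by a fiberwise cohomology computation. Concretely, I would first record that $L(\lambda) = G\times_B k_{-\lambda}$ and, since $\lambda\in X(P)$, the character $k_{-\lambda}$ extends to a character of $P$; hence along each fiber $gP/B\cong P/B$ the restriction of $L(\lambda)$ is the line bundle associated to the character $-\lambda$ of $B$ pulled back from $P/B$. The key input is that $-\lambda$, being a character of $P$, is trivial on the derived subgroup of $P$, so its associated line bundle on $P/B$ is \emph{trivial}; therefore $H^0(P/B, L(\lambda)|_{P/B}) = k$ (one-dimensional) and, because $P/B$ is projective with this line bundle trivial, $R^q\pi_* L(\lambda) = 0$ for $q>0$ by cohomology and base change (using flatness of $\pi$ and that the fiberwise $H^0$ has constant rank $1$). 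Then $\pi_* L(\lambda)$ is a line bundle on $G/P$, and a direct check of transition functions (or the projection-formula identification $\pi_*L(\lambda)\cong L^P(\lambda)$ via the universal property of the associated-bundle construction) gives $\pi_* L(\lambda)\cong L^P(\lambda)$. Finally, the Leray spectral sequence for $\pi$ degenerates (since the higher direct images vanish) and yields
\[
H^0(G/P, L^P(\lambda)) \;\xrightarrow{\ \sim\ }\; H^0(G/P, \pi_* L(\lambda)) \;=\; H^0(G/B, L(\lambda)),
\]
and one checks this composite is exactly $\pi^*$.

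\medskip

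\textbf{Key steps in order.} First, reduce to the fiberwise statement: show $L(\lambda)|_{P/B}$ is the line bundle $\mathcal L_{P/B}(-\lambda)$ attached to the $B$-character $-\lambda$. Second, prove $\mathcal L_{P/B}(-\lambda)$ is trivial on $P/B$: since $\lambda(H_\alpha)=0$ for all simple roots $\alpha\in J$ (the definition of $X(P)$ recalled just before the lemma), $\lambda$ kills the coroots generating $[P,P]$, so the character descends trivially to $P/B$; conclude $H^0(P/B,\mathcal L_{P/B}(-\lambda))=k$ and $H^q=0$ for $q>0$ is not even needed for $H^0$, but $H^q$ vanishing of the \emph{higher} direct images along the fibers is, so invoke Kempf vanishing / the fact that a trivial bundle on the flag variety $P/B$ has $H^{>0}=0$. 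Third, apply the theorem on cohomology and base change to get $R^0\pi_*L(\lambda)$ locally free of rank $1$ and $R^{>0}\pi_*L(\lambda)=0$. Fourth, identify $R^0\pi_*L(\lambda)\cong L^P(\lambda)$ by comparing the associated-bundle descriptions (or by noting both are $G$-equivariant line bundles on $G/P$ restricting to the same character on the fiber over the base point $eP$, and $G$-equivariant line bundles on $G/P$ are classified by $X(P)$). Fifth, run the Leray spectral sequence $H^p(G/P, R^q\pi_*L(\lambda))\Rightarrow H^{p+q}(G/B, L(\lambda))$; degeneration gives the edge isomorphism in degree $0$, which is $\pi^*$.

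\medskip

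\textbf{Main obstacle.} The conceptually delicate point — and the one I expect to take the most care — is the \emph{triviality of $L(\lambda)$ restricted to the fibers $P/B$} together with the vanishing of higher direct images: this is where the hypothesis $\lambda\in X(P)$ is essential and where one must correctly track the sign/convention in $L(\lambda)=G\times_B k_{-\lambda}$ and the fact that $X(P)\subset X(T)$ consists precisely of weights trivial on the coroots $H_\alpha$, $\alpha\in J$. Once that is in hand, the base-change and Leray arguments are formal. An alternative, more elementary route that avoids base change entirely: show directly that every section of $L(\lambda)$ over $G/B$ is constant along the fibers of $\pi$ (because its restriction to each fiber is a global section of a trivial line bundle on the \emph{connected projective} variety $P/B$, hence a constant), so it descends to a section over $G/P$; conversely $\pi^*$ is injective since $\pi$ is surjective. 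This gives a clean two-sided argument establishing that $\pi^*$ is a bijection on global sections, and I would present this as the main line of proof with the spectral-sequence picture as a remark.
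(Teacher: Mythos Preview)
Your proposal is correct. The paper does not supply its own proof of this lemma: it is stated with a citation to \cite[Theorem 3.3.4]{BK2005} and used as a black box. Your argument---triviality of $L(\lambda)$ on each fiber $P/B$ because $\lambda\in X(P)$ extends to a character of $P$, hence $\pi_*L(\lambda)\cong L^P(\lambda)$ with vanishing higher direct images, followed either by Leray or by the elementary observation that global sections are fiberwise constant and therefore descend---is the standard proof and is sound.
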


Let $\lambda$ be a dominant weight. The space $H^0(G/B,L(\lambda))$ has $-\lambda$ as its lowest weight and the corresponding weight space is one-dimensional. Let $e_\lambda$ be a weight vector of $-\lambda$. For $w\in W$, fix a representative $n_w$ in $N(T)$. Set $p_w:=n_w\cdot e_\lambda$. Then $p_w$ has weight $-w(\lambda)$. Write $\lambda=\sum a_i\omega_i$, where $\omega_i$ are fundamental weights. Let $P_\lambda$ be the parabolic subgroup with the simple roots $S\setminus\{\alpha_i|a_i\neq0\}$. The following lemma characterizes the Schubert varieties in $G/P_\lambda$:

\begin{Lem}\cite[Lemma 2.11.14]{BL}\label{lemma5.2}
For $\omega,\eta\in W^{P_\lambda}$, 
$$p_\eta|_{\Sigma_{\omega,P_{\lambda}}}\neq 0\Leftrightarrow \omega\geq \eta$$
\end{Lem}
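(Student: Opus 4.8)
The statement to prove is a standard but useful fact: for $\omega, \eta \in W^{P_\lambda}$, the section $p_\eta$ (the extremal weight vector of weight $-\eta(\lambda)$ inside $H^0(G/P_\lambda, L^{P_\lambda}(\lambda))$) restricts nontrivially to the Schubert variety $\Sigma_{\omega, P_\lambda}$ if and only if $\omega \ge \eta$ in the Bruhat order on $W^{P_\lambda}$. The plan is to reduce everything to $G/B$ using Lemma \ref{lemma5.1}, where the corresponding statement for complete flag varieties is classical, and then transport it back along the projection $\pi: G/B \to G/P_\lambda$.

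First I would set up the reduction. By Lemma \ref{lemma5.1}, the pullback $\pi^*: H^0(G/P_\lambda, L^{P_\lambda}(\lambda)) \to H^0(G/B, L(\lambda))$ is an isomorphism, and it is $T$-equivariant, so it carries the weight-$(-\eta(\lambda))$ vector $p_\eta$ on $G/P_\lambda$ to the corresponding extremal weight vector on $G/B$ (which I will also call $p_\eta$). Moreover $\pi^{-1}(\Sigma_{\omega, P_\lambda}) = \Sigma_{\tilde\omega, B}$, where $\tilde\omega = \omega w_0^{P_\lambda}$ is the maximal-length representative of the coset $\omega W_{P_\lambda}$; equivalently $\Sigma_{\omega, P_\lambda} = \pi(\Sigma_{v,B})$ for any $v$ in that coset, and restriction of sections commutes with $\pi^*$. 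Hence $p_\eta|_{\Sigma_{\omega,P_\lambda}} \neq 0$ if and only if $(\pi^* p_\eta)|_{\pi^{-1}(\Sigma_{\omega,P_\lambda})} \neq 0$, i.e. if and only if $p_\eta|_{\Sigma_{\tilde\omega, B}} \neq 0$.

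Next I would invoke the classical fact on $G/B$: for a dominant weight $\lambda$ and $x, y \in W$, the extremal vector $p_y \in H^0(G/B, L(\lambda))$ has nonzero restriction to $\Sigma_{x,B} = \overline{BxB/B}$ precisely when $y \le x$ in the Bruhat order. The cleanest way to see this is that $\Sigma_{x,B}$ is the union of the $T$-fixed points $e_z$ for $z \le x$, the restriction of $L(\lambda)$ to such a fixed point recovers the weight line, and a section vanishing on the open cell $BxB/B$ vanishes on all of $\Sigma_{x,B}$; the precise vanishing/non-vanishing at fixed points is governed by the Bruhat order (this is Chevalley's formula / the theory of Demazure modules — it is available in the references cited, e.g. \cite{BL}). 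Applying this with $x = \tilde\omega$ and $y = \eta$ gives: $p_\eta|_{\Sigma_{\tilde\omega,B}} \neq 0 \iff \eta \le \tilde\omega$.

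Finally I would translate the condition $\eta \le \tilde\omega$ back into the partial flag variety. Since $\eta \in W^{P_\lambda}$ is a minimal coset representative and $\tilde\omega = \omega w_0^{P_\lambda}$ is the maximal representative of $\omega W_{P_\lambda}$, a standard property of the Bruhat order relative to parabolic subgroups (the projection $W \to W^{P_\lambda}$ is order-preserving, and $\eta \le \tilde\omega \iff \eta \le \omega$ when $\eta, \omega \in W^{P_\lambda}$) yields $\eta \le \tilde\omega \iff \omega \ge \eta$, which is the desired conclusion. The main obstacle, or at least the point requiring the most care, is the bookkeeping in this last step — making sure the coset-representative conventions (minimal versus maximal length) line up correctly so that the Bruhat inequality on $W$ descends to the stated inequality on $W^{P_\lambda}$; once that is pinned down the rest is citation of standard results.
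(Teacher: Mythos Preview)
The paper does not prove this lemma at all: it is quoted verbatim as \cite[Lemma 2.11.14]{BL} and used as a black box, so there is no argument in the paper to compare against. Your sketch is a correct outline of the standard proof (reduce to $G/B$ via Lemma~\ref{lemma5.1}, use the classical nonvanishing of extremal weight sections on Schubert varieties in $G/B$, then descend the Bruhat inequality through the coset projection), and your identification of the delicate point---matching minimal versus maximal coset representatives so that $\eta\le \omega w_0^{P_\lambda}$ is equivalent to $\eta\le\omega$ in $W^{P_\lambda}$---is accurate.
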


\subsection{The rigidity problem}
The rational homogeneous space $G/P$ with $G$ semi-simple can be decomposed into a direct product 
$$G/P\cong G_1/P_1\times...\times G_k/P_k,$$
where $G\cong G_1\times ...\times G_k$, $G_i$ are simple and $P\cong P_1\times...\times P_k$, $P_i$ are parabolic subgroups of $G_i$. Let $\pi_i:G/P\rightarrow G_i/P_i$ be the natural projections. The rigidity (resp. multi-rigidity) of Schubert classes in $G/P$ can be deduced from the rigidity (resp. multi-rigidity) of Schubert classes in $G_i/P_i$:
\begin{Thm}
A Schubert class $\sigma_{w,P}$ in $G/P$ is rigid (resp. multi-rigid) if $(\pi_i)_*(\sigma_{w,P})$ are rigid (resp. multirigid) for all $1\leq i\leq k$.
\end{Thm}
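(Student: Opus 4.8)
The plan is to show that an irreducible representative $X$ of a multiple $m\sigma_{w,P}$ must split as a product along the decomposition $G/P \cong G_1/P_1 \times \cdots \times G_k/P_k$, and then apply the hypothesis componentwise. First I would reduce to the case of an irreducible representative: if $Z$ is any representative of $m\sigma_{w,P}$, then each irreducible component has a class which is a positive multiple of some Schubert class (using effectivity of Schubert classes and the fact that they form a basis of $A^*(G/P)$), so it suffices to treat a single irreducible $X$ with $[X] = m\sigma_{w,P}$, $m \in \mathbb{Z}^+$. The central claim is then: $X = \prod_{i=1}^k \pi_i(X)$, where each $\pi_i(X)$ is an irreducible subvariety of $G_i/P_i$.

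To establish the product structure, the key step is to analyze the pushforward classes. By the projection formula and the Künneth decomposition $A^*(G/P) = \bigotimes_i A^*(G_i/P_i)$, the Schubert class $\sigma_{w,P}$ decomposes as an external product $\sigma_{w,P} = \sigma_{w_1,P_1} \otimes \cdots \otimes \sigma_{w_k,P_k}$ (this is a standard fact about products of flag varieties: $W^P = W^{P_1} \times \cdots \times W^{P_k}$ and Schubert varieties multiply). Next I would argue, by specializing $X$ to a union of Schubert varieties as in the proof of Proposition~\ref{class of pushforward} (the method of undetermined coefficients, intersecting with dual Schubert classes pulled back from $G_i/P_i$), that $\pi_i(X)$ has class $m_i (\pi_i)_*(\sigma_{w,P}) = m_i \sigma_{w_i,P_i}$ for some $m_i \in \mathbb{Z}^+$, and moreover that $\dim X = \sum_i \dim \pi_i(X)$, i.e. $X$ surjects onto $\prod_i \pi_i(X)$ via a generically finite (in fact degree-one) map. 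Since $X \subseteq \prod_i \pi_i(X)$ and both are irreducible of the same dimension with $X$ mapping birationally, and $\prod_i \pi_i(X)$ is the closure of the image, we get $X = \prod_i \pi_i(X)$. The degree count comes from $m = \prod_i m_i$ together with the comparison of the class of $X$ against the class of the product $\prod_i \pi_i(X)$, which is $(\prod_i m_i)\sigma_{w,P}$; since $[X]$ and $[\prod_i \pi_i(X)]$ agree and $X \subseteq \prod_i \pi_i(X)$, equality of varieties follows.

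With the product structure in hand, the conclusion is immediate: by hypothesis each $(\pi_i)_*(\sigma_{w,P}) = \sigma_{w_i,P_i}$ is rigid (resp. multi-rigid) in $G_i/P_i$, so each $\pi_i(X)$ — which represents $m_i\sigma_{w_i,P_i}$ with $m_i = 1$ in the rigid case and arbitrary $m_i \in \mathbb{Z}^+$ in the multi-rigid case — is a union of Schubert varieties of $G_i/P_i$; for irreducible $\pi_i(X)$ it is a single Schubert variety $\Sigma_{w_i',P_i}$. Hence $X = \prod_i \Sigma_{w_i',P_i} = \Sigma_{w',P}$ is a Schubert variety in $G/P$, and in the multi-rigid case $m = \prod m_i$ forces, after passing over all components, that $m\sigma_{w,P}$ is represented only by unions of Schubert varieties. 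For the multi-rigidity statement one also notes $m\sigma_{w,P}$ cannot be represented by an irreducible variety for $m \geq 2$ unless some $m_i \geq 2$, contradicting multi-rigidity of the factor.

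\textbf{Main obstacle.} The delicate point is verifying that $\dim X = \sum_i \dim \pi_i(X)$ and that the induced map $X \to \prod_i \pi_i(X)$ has degree one — a priori $X$ could be a proper subvariety of the product, or map with positive-dimensional fibers. The resolution is to compute the class of a general fiber of, say, $\pi_1|_X$ (in the spirit of Corollary~\ref{class of fiber}): the fibers of $\pi_1 : G/P \to G_1/P_1$ are isomorphic to $\prod_{i\geq 2} G_i/P_i$, and specializing $X$ to Schubert varieties shows the general fiber of $\pi_1|_X$ has class a multiple of $\sigma_{w_2,P_2} \otimes \cdots \otimes \sigma_{w_k,P_k}$; iterating, one extracts $\dim X \geq \dim\pi_1(X) + \dim(\text{general fiber}) = \sum_i \dim\pi_i(X)$, and the reverse inequality is trivial since $X \subseteq \prod_i \pi_i(X)$. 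Combining this with the equality of classes $[X] = [\prod_i \pi_i(X)]$ pins down $X = \prod_i \pi_i(X)$ exactly.
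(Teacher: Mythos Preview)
Your proposal is correct and follows essentially the same route as the paper: both arguments use that $X\subseteq \prod_i \pi_i(X)$, that this product is irreducible of the same dimension as $X$ (namely $\sum_i \ell(w_i)$), and hence that $X=\prod_i \pi_i(X)$; rigidity (resp.\ multi-rigidity) of each factor then finishes. The paper orders the steps slightly differently---it first applies the rigidity hypothesis to identify each $\pi_i(X)$ as a translate $g_i\Sigma_{w_i}$, and then observes directly that $\cap_i \pi_i^{-1}(g_i\Sigma_{w_i})=g\Sigma_w$ is already the Schubert variety---but the content is the same. Your ``Main obstacle'' discussion about fiber dimensions and degree-one maps is unnecessary: once you know $[\pi_i(X)]=m_i\sigma_{w_i}$ (which you establish), you have $\dim\pi_i(X)=\ell(w_i)$ and $\dim X=\ell(w)=\sum_i\ell(w_i)$, so the dimension equality is immediate and the containment $X\subseteq\prod_i\pi_i(X)$ of irreducible varieties of equal dimension gives $X=\prod_i\pi_i(X)$ directly.
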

\begin{proof}
Notice that $W^P\cong W^{P_1}\times...\times W^{P_k}$. Let $\theta_i:W^P\rightarrow W^{P_i}$ be the natural projections. Set $w_i:=\theta_i(w)$. Then $(\pi_i)_*(\sigma_{w})=\sigma_{w_i}$. 

First assume $\sigma_{w_i}$ are rigid for all $1\leq i\leq k$. Let $X$ be a representative of $\sigma_{w}$. Since the Schubert classes are indecomposable, $X$ is irreducible. Let $X_i:=\pi_i(X)$ be the image of $X$ under the $i$-th projection. By assumption, $X_i$ is a translate of the Schubert variety $\Sigma_{w_i}$, i.e. $X_i=g_i\Sigma_{w_i}$ for some $g_i\in G_i$, $1\leq i\leq k$. Let $g=(g_1,...,g_k)\in G$. Notice that $\Sigma_{w,P}=\cap_{i=1}^k\pi_i^{-1}(\Sigma_{w_i})$, $$X\subset \cap_{i=1}^k\pi_i^{-1}(X_i)=\cap_{i=1}^k\pi_i^{-1}(g_i\Sigma_{w_i})=g\Sigma_{w}.$$
Since Schubert varieties are irreducible and $X$ and $g\Sigma_{w}$ share the same cohomology class, in particular $\dim(X)=\dim(g\Sigma_{w})$, we conclude that $X=g\Sigma_{w}$ and therefore the Schubert class $\sigma_{w}$ is rigid.

Now assume $\sigma_{w_i}$ are multi-rigid for all $1\leq i\leq k$. Let $X$ be an irreducible representative of $m\sigma_{w}$, $m\in\mathbb{Z}^+$. Then $X_i=\pi_i(X)$ are irreducible with class $m_i\sigma_{w_i}$ for some $m_i\in\mathbb{Z}^+$. Since $\sigma_{w_i}$ are multi-rigid, $m_i=1$ and $X_i$ are translates of the Schubert varieties $\Sigma_{w_i}$. A similar argument shows that $X$ is also Schubert and therefore we conclude that the Schubert class $\sigma_{w}$ is multi-rigid.
\end{proof}
Let $P_m$ be a maximal parabolic subgroup containing $P$. There is a natural projection $$\pi:G/P\rightarrow G/P_m.$$ Inspired by the classical cases, we investigate the multi-rigidity of Schubert classes in $G/P$ by pushing it to $G/P_m$. 

We illustrate the idea for the type A cases. Choose a basis $\{e_1,...,e_n\}$ for $V$, or equivalently choose a complete flag $F_1\subset...\subset F_n$ in $V$, $F_i=<e_1,...,e_i>$. Let $G$ be the general linear group $GL_n(\mathbb{C})$. Let $B$ be the Borel subgroup of upper triangular matrices. There is a bijection between $G/B$ and the set of complete flags in $V$: 
$$\bar{g}=(v_1...v_n)\leftrightarrow <v_1>\subset<v_1,v_2>\subset...\subset <v_1,...,v_n>.$$
For $g\in G$, the right multiplication by $B$ adds some multiple of the $i$-th column of $g$ to $j$-th column for $i\leq j$. Therefore the coset $gB$ has a canonical representative in echelon form: the lowest non-zero entry in each column is 1 and the entries to the right of each leading one are all zeros.

Let $P$ be a parabolic subgroup that consists of upper triangular block matrices of block size $(d_1,d_2-d_1,...,d_k-d_{k-1},n-d_k)$. For two indices $1\leq i\neq j\leq n$, we say that they are of the same block if $d_s+1\leq i,j\leq d_{s+1}$ for some $0\leq s\leq k$ (Here we set $d_0=0$ and $d_{k+1}=n$). For $g\in G$, the right multiplication by $P$ is a composition of column operations from $B$ and switching two columns belonging to the same block. The coset $gP$ has a canonical representative which is in echelon form and such that if $i<j$ belong to the same block then the lowest non-zero entry in $i$-th column is in the northwest of the lowest non-zero entry in $j$-th column.

\begin{Ex}
Let $g=\begin{bmatrix}
    9 & 5 & 9 & 7  \\
    6 & 2 & 4 & 0 \\
    3 & 1 & 0 & 0 \\
    0 & 0 & 2 & 2
\end{bmatrix}$. The canonical representative of $gB$ is 
$\begin{bmatrix}
    3 & 1 & 0 & 0  \\
    2 & 0 & 2 & 1 \\
    1 & 0 & 0 & 0 \\
    0 & 0 & 1 & 0
\end{bmatrix}.$ The canonical representative of $gP$, where $P$ is of block size $(2,2)$, is $\begin{bmatrix}
    1 & 0 & 0 & 0  \\
    0 & 2 & 1 & 0 \\
    0 & 1 & 0 & 0 \\
    0 & 0 & 0 & 1
\end{bmatrix}.$ 
\end{Ex}
We read the row number of the lowest non-zero entry in each column. For example, $(1 3 2 4)$ for $g=\begin{bmatrix}
    1 & 0 & 0 & 0  \\
    0 & 2 & 1 & 0 \\
    0 & 1 & 0 & 0 \\
    0 & 0 & 0 & 1
\end{bmatrix}.$ It is an element in the Weyl group. The Schubert cell $C_{1324}$ is the orbit of $g$ under the left multiplication by $B$. 
$$C_{1324}=\left\{\begin{bmatrix}
    1 & 0 & 0 & 0  \\
    0 & * & 1 & 0 \\
    0 & 1 & 0 & 0 \\
    0 & 0 & 0 & 1
\end{bmatrix}\right\}.$$
Let $\Lambda$ be the subspace spanned by the first two column vectors, then $C_{1324}$ can also be described as
$$C_{1324}=\{\Lambda\in G(2,4)|F_1\subset\Lambda\subset F_3\}.$$

Let $w=(w_1...w_n)\in W_P$. For each $1\leq i\leq n$, we define $w^i$ successively as follows: for $w_1$, if $w_1<w_i$, then replace $w_1$ by $w_i-1$; if $w_1>w_i$, then replace $w_1$ by $n$. Inductively for $w_j$, if $w_j<w_i$, then replace $w_j$ by the largest number less than $w_i$ excluding $w_1,...,w_{j-1}$. If $w_j>w_i$, then replace $w_j$ by the largest number less than $n$ excluding $w_1,...,w_{j-1}$. After replacing $w_n$, rearrange $w_1...w_n$ so that $w_i<w_j$ if $i$ and $j$ belong to the same block corresponding to $P$. We call $w_i$ rigid if for every representative $X$ of the Schubert class $\sigma_w$, $X$ is contained in the Schubert variety $\Sigma_{w^i}$, up to a general translate. One can easily check this definition coincides with the previous definition in Section 3.

Let $P^m_t$ be the parabolic subgroup consisting of upper triangular block matrices of size $(d_t,n-d_t)$, $1\leq t\leq k$. The homogeneous space $G/P^m_t$ is isomorphic to the Grassmannian $G(d_t,n)$. The subgroup $P^m_t$ is a maximal parabolic subgroup containing $P$. There is a natural projection between the homogeneous spaces
$$\pi_t:G/P\rightarrow G/P^m_t$$
and a projection between the Weyl groups
$$\eta_t:W_P\rightarrow W_{P^m_t}.$$
Let $w=(w_1...w_n)\in W_P$. Then $\eta_t(w)$ is obatined from $w$ by re-arranging $w_1...w_{d_t}$ and $w_{d_t+1}...w_n$ in increasing order. 

Assume $w_i$ is rigid for $\eta_t(w)$. Let $X$ be a representative of $\sigma_{w}$. Then there exists a basis such that $\pi_t(X)$ is contained in the Schubert variety $\Sigma_{\eta_t(w)^i}$. We claim that $X\subset \Sigma_{w^i}$ and therefore $w_i$ is rigid for $w$. Set $s:=\#\{j|w_j\leq w_i,j\leq d_t\}$. Then the first $d_t$ entries of $\eta_t(w)^i$ is given by $w_i-s+1\ ...\ w_i-1\ w_i\ n-(d_t-s)+1\ ...\ n-1\ n$. Let $\Lambda$ be a general point in $X$. Let $g$ be the canonical form of $\Lambda$ in $G/P$ and let $g_t$ be the canonical form in $G/P_t^m$. Then the entry of $g_t$ at $(w_i,s)$-place equals $1$ and the entries below $(w_i,s)$ are all zeros. This implies the entry of $g$ at $(w_i,i)$-place equals $1$ and the entries below $(w_i,i)$ are all zeros. Therefore $\Lambda\in \Sigma_{w^i}$.

To prove a general statament, we need the following lemmas:
\begin{Lem}\cite[Theorem 3.1.2]{BK2005}\label{lemma5.6}
For dominant $\lambda,\mu\in X(P)$, the product map
$$H^0(G/P,L^p(\lambda))\times H^0(G/P,L^p(\mu))\rightarrow H^0(G/P,L^p(\lambda+\mu))$$
is surjective.
\end{Lem}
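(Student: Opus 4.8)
The statement to prove is Lemma~\ref{lemma5.6}, which the excerpt cites as \cite[Theorem 3.1.2]{BK2005}: for dominant weights $\lambda,\mu\in X(P)$, the multiplication map
$$H^0(G/P,L^P(\lambda))\otimes H^0(G/P,L^P(\mu))\to H^0(G/P,L^P(\lambda+\mu))$$
is surjective. Since this is quoted as a known result, the natural thing is to explain the standard representation-theoretic proof rather than invent a new one.

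\textbf{Overall approach.} The plan is to reduce to the case of the full flag variety $G/B$ using Lemma~\ref{lemma5.1}, then identify the sections with Weyl modules and use the theory of highest-weight vectors. First I would invoke Lemma~\ref{lemma5.1} to replace $L^P(\lambda)$ by $L(\lambda)$ on $G/B$: the pullback $\pi^*$ along $\pi:G/B\to G/P$ is an isomorphism on global sections, and it is compatible with the multiplication maps (since $\pi^*(st)=\pi^*s\cdot\pi^*t$ and $\pi^*L^P(\lambda)\cong L(\lambda)$). So it suffices to prove surjectivity of $H^0(G/B,L(\lambda))\otimes H^0(G/B,L(\mu))\to H^0(G/B,L(\lambda+\mu))$. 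Dualizing, $H^0(G/B,L(\nu))^*=V(\nu)$ is the Weyl module of highest weight $\nu$, so the claim becomes: the canonical $G$-equivariant map $V(\lambda+\mu)\hookrightarrow V(\lambda)\otimes V(\mu)$ (dual to multiplication) realizes $V(\lambda+\mu)$ as a direct summand, equivalently, multiplication of sections is surjective.

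\textbf{Key steps.} The core argument is the Cartan component / highest-weight-vector computation. Let $e_\lambda\in H^0(G/B,L(\lambda))$ and $e_\mu\in H^0(G/B,L(\mu))$ be lowest-weight vectors (of weights $-\lambda$, $-\mu$ respectively, as in the discussion preceding Lemma~\ref{lemma5.2}). Then $e_\lambda\cdot e_\mu$ is a nonzero section of $L(\lambda+\mu)$ of weight $-(\lambda+\mu)$, hence a lowest-weight vector there; equivalently $e_\lambda\otimes e_\mu$ generates the ``Cartan component'' $V(\lambda+\mu)$ inside $V(\lambda)\otimes V(\mu)$ under the $G$-action. I would then argue that the image of the multiplication map is a $G$-submodule of $H^0(G/B,L(\lambda+\mu))$ containing this lowest-weight line; since in characteristic zero $H^0(G/B,L(\lambda+\mu))^*=V(\lambda+\mu)$ is generated by translates of any lowest-weight vector (it is an irreducible $G$-module, or at least cyclic generated by the extremal line), the image is everything. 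Concretely: the image is $G$-stable and nonzero, it contains the $B^-$-highest (i.e. $B$-lowest) weight vector $e_{\lambda+\mu}$ up to scalar, and applying the universal enveloping algebra of the opposite unipotent we recover all of $H^0$. The pairing with the Borel–Weil description via Lemma~\ref{lemma5.2} makes the non-vanishing on each Schubert variety transparent if one wants a more geometric phrasing.

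\textbf{Main obstacle.} The only delicate point is justifying that $V(\lambda+\mu)$ appears exactly once and as a generated-by-extremal-vector summand in $V(\lambda)\otimes V(\mu)$ — i.e.\ complete reducibility (we are over $\mathbb{C}$, so Weyl's theorem applies) together with the statement that the extremal weight space of a tensor product of Weyl modules is one-dimensional and lies in the Cartan component. In positive characteristic this would require the machinery of Frobenius splitting (which is presumably how \cite{BK2005} proceeds), but since the ambient paper works over $\mathbb{C}$ this reduces to standard facts about finite-dimensional representations of semisimple Lie algebras. I would therefore present the proof as: (i) reduce to $G/B$ via Lemma~\ref{lemma5.1} and compatibility of $\pi^*$ with products; (ii) dualize to Weyl modules; (iii) exhibit the lowest-weight vector $e_\lambda\otimes e_\mu$ in the Cartan component and conclude by $G$-irreducibility/complete reducibility that multiplication is onto, citing \cite{BK2005} for the characteristic-free statement.
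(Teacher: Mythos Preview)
The paper does not prove Lemma~\ref{lemma5.6}; it simply cites it from \cite[Theorem 3.1.2]{BK2005} and uses it as a black box. So there is no ``paper's own proof'' to compare against.

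Your argument is correct over $\mathbb{C}$ (the setting of the paper) and is the standard representation-theoretic route: reduce to $G/B$ via Lemma~\ref{lemma5.1}, observe that $e_\lambda\cdot e_\mu$ is a nonzero lowest-weight vector in $H^0(G/B,L(\lambda+\mu))$, and conclude by irreducibility of the Borel--Weil module in characteristic zero that the $G$-stable image of multiplication is everything. The cited source \cite{BK2005} proves the result characteristic-free using Frobenius splitting of $G/B$ and the diagonal, which is genuinely deeper machinery; your approach trades generality for elementarity, which is entirely appropriate here since the ambient paper works over $\mathbb{C}$ throughout. One small stylistic point: since the lemma is stated as a citation, it would be more natural to present your argument as ``for the reader's convenience we sketch the easy proof in characteristic zero'' rather than as a proof of the cited theorem in full generality.
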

Let $P$ be a parabolic subgroup and let $I_P$ be the set of simple roots relative to $P$. For $\alpha_i\in \Phi$, let $P_i$ be the maximal parabolic subgroup with the set of simple roots $\Phi\setminus\{\alpha_i\}$. Then $P_i$ contains $P$ if and only if $\alpha_i\notin I_P$. Let $\pi_i:G/P\rightarrow G/P_i$ be the natural projections. 
\begin{Cor}\label{maximalP}
Let $X$ be a representative of $\sigma_{\mu}$ in $G/P$. If $\pi_i(X)$ are $B$-stable Schubert varieties for all $i$ such that $\alpha_i\notin I_P$, then $X$ coincides with the Schubert variety $\Sigma_{\mu}$.
\end{Cor}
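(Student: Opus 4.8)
\emph{Proof proposal.} The plan is to reduce the statement to the identity $\pi_i(X)=\Sigma_{\mu_i,P_i}$ for every $\alpha_i\notin I_P$, where $\mu_i$ denotes the image of $\mu$ in $W^{P_i}$, and then to recover $X$ from the vanishing of the standard sections $p_\eta$. First I would observe that each $\pi_i$ is a locally trivial fibration with irreducible fibre $P_i/P$, so the preimage of any Schubert variety $\Sigma_{\nu,P_i}$ is again an irreducible, $B$-stable, closed subvariety of $G/P$, hence a Schubert variety. Writing $\pi_i(X)=\Sigma_{\nu_i,P_i}$ (legitimate, since $\pi_i(X)$ is irreducible, closed and $B$-stable), we thus get $X\subseteq\bigcap_{\alpha_i\notin I_P}\pi_i^{-1}(\Sigma_{\nu_i,P_i})$, a $B$-stable closed set. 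Since $\sigma_{\mu,P}$ is a basis element, $X$ is irreducible, so it lies in one irreducible component $\Sigma_{\tau,P}$ of this intersection; intersecting with a general translate of the opposite Schubert variety Poincaré-dual to $\sigma_{\mu,P}$ forces $\mu\le\tau$ in the Bruhat order, and projecting by $\pi_i$ gives $\mu_i\le\nu_i$ for all $i$.

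The crux is to upgrade $\mu_i\le\nu_i$ to $\nu_i=\mu_i$. Here I would compare $(\pi_i)_*[X]=(\pi_i)_*\sigma_{\mu,P}$, which equals $\sigma_{\mu_i,P_i}$ when $\ell(\mu)=\ell(\mu_i)$ and is $0$ otherwise. In the first case $(\pi_i)_*[X]\ne 0$ forces $\pi_i|_X$ to be generically finite onto its image with $[\pi_i(X)]$ a positive rational multiple of $\sigma_{\mu_i,P_i}$, so $\nu_i=\mu_i$ (and $\deg(\pi_i|_X)=1$). In the second case $(\pi_i)_*[X]=0$ only yields $\dim\pi_i(X)<\dim X$; to conclude that in fact $\dim\pi_i(X)=\dim\Sigma_{\mu_i,P_i}$ — which, combined with $\mu_i\le\nu_i$, gives $\nu_i=\mu_i$ — I would invoke the general-$G/P$ analogue of the fibre-class computation (Corollary \ref{class of fiber} and its orthogonal counterparts): the class, and in particular the dimension, of a general fibre of $\pi_i|_X$ depends only on $[X]=\sigma_{\mu,P}$, which one proves by specializing $X$ to a union of Schubert varieties.

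Once $\nu_i=\mu_i$ for every $\alpha_i\notin I_P$, I would finish via the sections. Put $\lambda=\sum_{\alpha_i\notin I_P}\omega_i$, so that $P_\lambda=P$, and recall from Lemma \ref{lemma5.2} that $p_\eta|_{\Sigma_{\mu,P}}\ne 0\iff\mu\ge\eta$, so $\Sigma_{\mu,P}$ is cut out set-theoretically by $\{\,p_\eta=0:\eta\in W^P,\ \eta\not\le\mu\,\}$. By Lemma \ref{lemma5.6} each $p_\eta$ factors as $\prod_{\alpha_i\notin I_P}\pi_i^*p^{(i)}_{\pi_i(\eta)}$, and $\eta\not\le\mu$ in $W^P$ forces $\pi_i(\eta)\not\le\mu_i$ for some $i$ (the Bruhat order on $W^P$ being detected by the quotients $W^{P_i}$). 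For such an $i$, $p^{(i)}_{\pi_i(\eta)}$ vanishes on $\Sigma_{\mu_i,P_i}=\pi_i(X)$ by Lemma \ref{lemma5.2}, hence $p_\eta$ vanishes on $X$. Therefore $X\subseteq\Sigma_{\mu,P}$, and since $X$ is irreducible of dimension $\ell(\mu)=\dim\Sigma_{\mu,P}$ we conclude $X=\Sigma_{\mu,P}$.

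I expect the main obstacle to be precisely the identity $\nu_i=\mu_i$ in the degenerate case $(\pi_i)_*\sigma_{\mu,P}=0$: ruling out that $\pi_i(X)$ is strictly \emph{larger} than $\Sigma_{\mu_i,P_i}$ amounts to controlling $\dim\pi_i(X)$ purely in terms of the cohomology class, i.e.\ to establishing the fibre-dimension statement for an arbitrary $G/P$ rather than only in the classical cases handled earlier. The remaining ingredients — the preimage-of-a-Schubert-variety observation, the Bruhat-order bookkeeping, and the passage through the sections $p_\eta$ — should be essentially formal given Lemmas \ref{lemma5.1}, \ref{lemma5.2} and \ref{lemma5.6}.
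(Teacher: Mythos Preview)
Your approach and the paper's share the same core: the factorisation $p_\eta=c\prod_i\pi_i^*p_{\eta_i}$ coming from Lemmas~\ref{lemma5.1} and~\ref{lemma5.6}, combined with Lemma~\ref{lemma5.2}, to match the vanishing pattern of the extremal sections on $X$ with that on $\Sigma_\mu$. The paper simply writes the chain
\[
p_\eta|_X=0\;\Leftrightarrow\;p_{\eta_i}|_{\pi_i(X)}=0\text{ for some }i\;\Leftrightarrow\;\eta_i\not\le\mu_i\text{ for some }i\;\Leftrightarrow\;\eta\not\le\mu,
\]
with $\mu_i$ \emph{defined} by $\pi_i(X)=\Sigma_{\mu_i}$ (your $\nu_i$), and concludes. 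You are right that the last biconditional is only a Bruhat-order identity once one knows that this $\mu_i$ coincides with the image of $\mu$ in $W^{P_i}$; the paper leaves that implicit, and you correctly isolate it as the substantive step.

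Where you diverge is in how hard you work to justify that identity. The case split on whether $(\pi_i)_*\sigma_\mu$ vanishes, and the appeal to a general-$G/P$ fibre-class computation, are heavier than necessary. A uniform argument suffices: for any Schubert class $\sigma$ in $G/P_i$, Kleiman transversality on $G/P$ and on $G/P_i$ gives
\[
\sigma_\mu\cdot\pi_i^*\sigma\neq0\;\Leftrightarrow\;X\cap\pi_i^{-1}(g\Sigma)\neq\emptyset\text{ for general }g\;\Leftrightarrow\;\pi_i(X)\cap g\Sigma\neq\emptyset\;\Leftrightarrow\;\sigma_{\mu_i}\cdot\sigma\neq0.
\]
The left-hand side is purely cohomological, so running the same chain with $\Sigma_\mu$ in place of $X$ replaces $\mu_i$ by the projection of $\mu$; hence $\sigma_{\mu_i}$ and $\sigma_{\mathrm{proj}_i(\mu)}$ pair nontrivially with exactly the same classes, forcing $\mu_i=\mathrm{proj}_i(\mu)$. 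With this in hand your final paragraph (equivalently, the paper's last $\Leftrightarrow$) goes through directly, and no separate fibre-dimension control is needed.
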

\begin{proof}
Let $\sigma_{\mu_i}$ be the class of $\pi_i(X)$ in $G/P_i$. Set $\lambda_i=\omega_i$ and $\lambda=\sum_{\alpha_i\notin I_P} \omega_i$, where $\omega_i$ are fundamental weights. Then $P_i=P_{\lambda_i}$ and $P=P_{\lambda}$, where $P_{\lambda_i}$ and $P_{\lambda}$ are defined as before. By Lemma \ref{lemma5.1} and Lemma \ref{lemma5.6}, the map
$$\otimes_{\lambda_i\notin S_P}H^0(G/P_i,L^{P_i}(\lambda_i))\rightarrow H^0(G/P,L^p(\lambda))$$
is surjective. Let $\nu\in W^P$, let $\nu_i$ be the image of $\nu$ under the projection $W^P\rightarrow W^{P_i}$. By Lemma \ref{lemma5.2}, we have
\begin{eqnarray}
p_\nu|_X=0&\Leftrightarrow& p_{\nu_i}|_{\pi_i(X)}=0\text{ for some }i\nonumber\\
&\Leftrightarrow&\nu_i\not\leq \mu_i\text{ for some i}\nonumber\\
&\Leftrightarrow&\nu\not\leq\mu\nonumber
\end{eqnarray}
Therefore $X$ concide with the Schubert variety $\Sigma_{\mu}$.
\end{proof}

Let $\sigma_{w}$ be a Schubert class for $G/P$. Assume $\sigma_{w_i}$ are rigid for all $i$ such that $\alpha_i\notin I_P$. Then $\pi_i(X)$ are translate of Schubert varieties, i.e. $\pi_i(X)=g_i\Sigma_{w_i}$ for some $g_i\in G$. The following lemma characterizes the stabilizer of a Schubert variety:
\begin{Lem}\cite[Proposition 2.1]{HM2}
Let $R_{P^-}:=R^-\cup\{\alpha|\alpha=\sum_{\beta\in I_P}a_\beta\beta\}$. Let $w\in W^P$. The stabilizer of the Schubert variety $\Sigma_{w}$ is the parabolic subgroup $P_{I_w}$ relative to the set of simple roots $I_w:=\Phi\cap w(R_{P^-})$.
\end{Lem}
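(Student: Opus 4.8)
The plan is to identify $\Stab_G(\Sigma_w)$ by combining three ingredients: the general principle that a closed subgroup containing a Borel is parabolic, a reduction from $G/P$ to the full flag variety $G/B$, and a length computation in the Weyl group. First I would observe that $\Sigma_w=\overline{BwP/P}$ is a closed (hence projective) $B$-stable subvariety, so $\{g\in G: g\Sigma_w\subseteq\Sigma_w\}$ is closed and, since $\Sigma_w$ is complete and irreducible, equals $N:=\Stab_G(\Sigma_w)$; thus $N$ is a closed subgroup with $N\supseteq B$, hence $N=P_J$ for a unique $J\subseteq\Phi$. Since $P_J=\langle B, U_{-\alpha}:\alpha\in J\rangle$, we have $\alpha\in J$ precisely when the minimal parabolic $P_\alpha=\langle B,U_{-\alpha}\rangle$ stabilizes $\Sigma_w$, equivalently when $\dot s_\alpha\in N$. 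So the problem reduces to deciding, simple root by simple root, whether $P_\alpha$ stabilizes $\Sigma_w$.

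Next I would reduce to the case $P=B$. Let $\pi:G/B\to G/P$ be the projection and $w_{0,P}$ the longest element of $W_P$; since $w$ is the minimal-length representative of $wW_P$, the element $\widetilde w:=w\,w_{0,P}$ has length $\ell(w)+\ell(w_{0,P})$, and a short computation with the Bruhat decomposition of $P$ (using that $BwBuB=BwuB$ for all $u\in W_P$ because $w\in W^P$) gives $\pi^{-1}(BwP/P)=\bigsqcup_{u\in W_P}BwuB/B$, whose closure is the Schubert variety $\Sigma_{\widetilde w}$ in $G/B$; since $\pi$ is a locally trivial $P/B$-bundle with complete irreducible fibres, $\pi^{-1}(\Sigma_w)=\Sigma_{\widetilde w}$. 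Because $\pi$ is $G$-equivariant and surjective, $g\Sigma_w=\Sigma_w\iff g\,\pi^{-1}(\Sigma_w)=\pi^{-1}(\Sigma_w)$, so $\Stab_G(\Sigma_w)=\Stab_G(\Sigma_{\widetilde w})$ and we may work entirely in $G/B$ with the element $\widetilde w$.

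In $G/B$ the criterion becomes clean. For a fixed simple root $\alpha$, the left $P_\alpha$-orbit of the point $\widetilde wB$ is the image of $(B\cup B\dot s_\alpha B)\,\dot{\widetilde w}\,B$, which by the Bruhat product rule for a simple reflection equals $B\widetilde wB/B\cup Bs_\alpha\widetilde wB/B$. If $\ell(s_\alpha\widetilde w)<\ell(\widetilde w)$ this orbit has dimension $\ell(\widetilde w)$ and lies in the irreducible variety $\Sigma_{\widetilde w}$ of the same dimension, so its closure is $\Sigma_{\widetilde w}$ and $P_\alpha\Sigma_{\widetilde w}=\Sigma_{\widetilde w}$; if instead $\ell(s_\alpha\widetilde w)>\ell(\widetilde w)$, then $s_\alpha\widetilde w\not\le\widetilde w$ in Bruhat order, so the $T$-fixed point $s_\alpha\widetilde wB$ belongs to $P_\alpha\Sigma_{\widetilde w}$ but not to $\Sigma_{\widetilde w}$, and $P_\alpha$ does not stabilize. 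Hence $\alpha\in J\iff\ell(s_\alpha\widetilde w)<\ell(\widetilde w)\iff\widetilde w^{-1}(\alpha)\in R^-\iff\alpha\in\widetilde w(R^-)$, that is $J=\Phi\cap\widetilde w(R^-)$. Finally I would translate $\widetilde w(R^-)$ back: writing $R^-=R_P^-\sqcup(R^-\setminus R_P)$, the longest element $w_{0,P}$ interchanges $R_P^+$ and $R_P^-$, while for $\gamma\in R^-\setminus R_P$ it modifies $\gamma$ only by $\mathbb{Z}$-combinations of $I_P$ and therefore preserves the (nonzero, partly negative) coefficients of $\gamma$ along the simple roots outside $I_P$, so $w_{0,P}(R^-\setminus R_P)=R^-\setminus R_P$; hence $w_{0,P}(R^-)=R_P^+\cup(R^-\setminus R_P)=R^-\cup R_P^+=R_{P^-}$, giving $\widetilde w(R^-)=w\,w_{0,P}(R^-)=w(R_{P^-})$ and $J=\Phi\cap w(R_{P^-})=I_w$.

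The step I expect to require the most care is the dimension/irreducibility argument in $G/B$ — showing that equality of dimensions of the $P_\alpha$-orbit and $\Sigma_{\widetilde w}$ forces the orbit closure to fill $\Sigma_{\widetilde w}$, and conversely pinning down the obstruction to stabilization via the Bruhat order on $T$-fixed points — together with the verification that $\pi^{-1}(\Sigma_w)$ is exactly $\Sigma_{w\,w_{0,P}}$. The root-system bookkeeping in the last step, though fiddly, is routine once one recalls that elements of $W_P$ do not alter the coefficients of a root along the simple roots outside $I_P$.
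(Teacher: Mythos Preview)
The paper does not prove this lemma; it is quoted verbatim from \cite[Proposition 2.1]{HM2}, so there is no proof in the paper to compare against. Your argument is the standard one and is essentially correct.

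There is, however, a genuine slip in the final root-system bookkeeping. You write
\[
w_{0,P}(R^-)=R_P^+\cup(R^-\setminus R_P)=R^-\cup R_P^+=R_{P^-},
\]
but the second equality is false: $R^-\cup R_P^+$ contains $R_P^-$, whereas $R_P^+\cup(R^-\setminus R_P)$ does not. A cardinality count already shows $|w_{0,P}(R^-)|=|R^-|<|R^-|+|R_P^+|=|R_{P^-}|$ whenever $R_P\neq\emptyset$, so $w_{0,P}(R^-)\subsetneq R_{P^-}$. The correct statement is $w_{0,P}(R^-)=R_{P^-}\setminus R_P^-$.

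This does not affect the conclusion. What you actually need is $\Phi\cap\widetilde w(R^-)=\Phi\cap w(R_{P^-})$, and the missing piece is $w(R_P^-)$. Since $w\in W^P$, one has $\ell(ws_\beta)=\ell(w)+\ell(s_\beta)>\ell(w)$ for every $\beta\in R_P^+$, hence $w(R_P^+)\subset R^+$ and $w(R_P^-)\subset R^-$. Therefore $\Phi\cap w(R_P^-)=\emptyset$, and intersecting with $\Phi$ erases the discrepancy. With this correction in place your proof goes through.
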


\begin{Def}
Let $w\in W^P$. The set $I_w:=\Phi\cap w(R_{P^-})$ is called the set of simple roots associated to $w$.
\end{Def}

More generally,
\begin{Lem}\label{translate of Schubert varieties}
Assume $\Sigma_{w_1}\subset\Sigma_{w_2}$ in $G/P$, $w_1,w_2\in W^P$. Let $g\in G$. If $g\Sigma_{w_1}\subset \Sigma_{w_2}$, then $g\in P_{I_{w_1}\cup I_{w_2}}$.
\end{Lem}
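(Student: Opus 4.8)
The plan is to study the closed subvariety $S:=\{g\in G:\ g\Sigma_{w_1}\subseteq\Sigma_{w_2}\}$ of $G$; it is nonempty because $\Sigma_{w_1}\subseteq\Sigma_{w_2}$, so $e\in S$. By the stabilizer lemma just quoted, $\Stab_G(\Sigma_{w_1})=P_{I_{w_1}}$ and $\Stab_G(\Sigma_{w_2})=P_{I_{w_2}}$. For $p\in P_{I_{w_2}}$, $q\in P_{I_{w_1}}$ and $g\in S$ one has $pgq\,\Sigma_{w_1}=p\,g(q\Sigma_{w_1})=p\,g\Sigma_{w_1}\subseteq p\,\Sigma_{w_2}=\Sigma_{w_2}$, so $S$ is stable under left multiplication by $P_{I_{w_2}}$ and right multiplication by $P_{I_{w_1}}$. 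Hence $S$ is a union of $(P_{I_{w_2}},P_{I_{w_1}})$--double cosets, and since $P_{I_{w_1}\cup I_{w_2}}$ is exactly the standard parabolic whose Weyl group is $\langle W_{I_{w_1}},W_{I_{w_2}}\rangle=W_{I_{w_1}\cup I_{w_2}}$, the statement $S\subseteq P_{I_{w_1}\cup I_{w_2}}$ is equivalent to: for every $w\in W$ represented by $n_w\in N(T)$ with $n_w\Sigma_{w_1}\subseteq\Sigma_{w_2}$, one has $w\in W_{I_{w_1}\cup I_{w_2}}$.

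To analyze the inclusion I would pass to a Borel--Weil embedding. Choose a dominant weight $\lambda$ with $P_\lambda=P$ (e.g.\ $\lambda=\sum_{\alpha_i\notin I_P}\omega_i$) and embed $G/P\hookrightarrow\mathbb{P}(V(\lambda))$. By \cref{lemma5.2} together with the standard description of Schubert varieties in this embedding, $\Sigma_{w_2}$ is the intersection of $G/P$ with $\mathbb{P}(D_{w_2})$, where $D_{w_2}=\langle\Sigma_{w_2}\rangle$ is the Demazure submodule spanned by the extremal weight vectors $p_\nu$ with $\nu\le w_2$, and similarly for $w_1$; moreover $\Stab_G(D_{w_i})=\Stab_G(\Sigma_{w_i})=P_{I_{w_i}}$. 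Since a linear automorphism preserves linear spans, $g\Sigma_{w_1}\subseteq\Sigma_{w_2}$ forces $gD_{w_1}\subseteq D_{w_2}$, equivalently, on $H^0(G/P,L^P(\lambda))=V(\lambda)^*$ the transpose $g^{*}$ carries $\operatorname{span}\{p_\nu:\nu\not\le w_2\}$ into $\operatorname{span}\{p_\nu:\nu\not\le w_1\}$ (using \cref{lemma5.2} to identify these spans with the annihilators of $D_{w_2}$, resp.\ $D_{w_1}$). Conversely, because $\Sigma_{w_i}=G/P\cap\mathbb{P}(D_{w_i})$, the inclusion $g\Sigma_{w_1}\subseteq\Sigma_{w_2}$ is now a purely representation-theoretic condition on $g$.

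The heart of the argument is then to deduce $w\in W_{I_{w_1}\cup I_{w_2}}$ from this condition when $g=n_w$. Since $n_w$ permutes the $p_\nu$ up to scalars via $\nu\mapsto$ the minimal representative of $w\nu W_P$, the condition reads: $w\nu\not\le w_1$ for every $\nu\in W^P$ with $\nu\not\le w_2$. Using the characterization of $I_{w_i}$ as the ``left stabilizer set'' $\{\alpha\in\Phi:\ s_\alpha\Sigma_{w_i}=\Sigma_{w_i}\}$ (equivalently a descent-type condition through $R_{P^-}$), I would argue by contradiction: taking $w$ of minimal length in its $(W_{I_{w_2}},W_{I_{w_1}})$--double coset and assuming a reduced word of $w$ uses a simple reflection $s_\alpha$ with $\alpha\notin I_{w_1}\cup I_{w_2}$, one produces (via an exchange/lifting argument on the minimal coset representatives in $W^P$) a $\nu\not\le w_2$ with $w\nu\le w_1$, a contradiction. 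For a maximal parabolic $P=P_i$ this can be read off concretely from the echelon-form coordinates introduced for type $A$ in \S\ref{general case} and their analogues for types $B$, $C$, $D$; one would then like to descend the general case to the maximal quotients $G/P_i$ via \cref{maximalP} (which, applied to a translate $g\Sigma_w$, also yields $I_w=\bigcap_{\alpha_i\notin I_P}I_{\bar w^{(i)}}$), although merging the conditions coming from the different $G/P_i$ requires some care.

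I expect the combinatorial step in the previous paragraph to be the main obstacle. The set $S$ need not be a group, and there is no formal reason for a double coset meeting $S$ to remain inside $P_{I_{w_1}\cup I_{w_2}}$ --- in general $\bigcap_i(A_i\cup B_i)\not\subseteq(\bigcap_iA_i)\cup(\bigcap_iB_i)$, so the naive reduction to maximal parabolics can fail --- and closing the gap genuinely uses the monotonicity of the relation ``$n_w\Sigma_{w_1}\subseteq\Sigma_{w_2}$'' under the Bruhat order on $W^P$ together with the precise description of the $I_{w_i}$. The remaining ingredients (that a Schubert variety equals $G/P$ intersected with the span of its Demazure module, and that the stabilizer of that span equals $P_{I_{w_i}}$) are standard and can be cited.
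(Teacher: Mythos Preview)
Your reduction to Weyl-group elements via the $(P_{I_{w_2}},P_{I_{w_1}})$--double-coset structure is correct and parallels the paper's first move (the paper writes $g=b_1w_gb_2$ via the Bruhat decomposition and uses $B$-stability of Schubert varieties to reduce to $g=w_g\in W$). But your subsequent detour through the Borel--Weil embedding and Demazure modules is both heavier than necessary and, as you yourself concede, incomplete: the ``exchange/lifting argument'' that is supposed to produce a $\nu\not\le w_2$ with $w\nu\le w_1$ is only a wish, and you give no mechanism for actually manufacturing such a $\nu$ from a simple reflection $s_\alpha$ with $\alpha\notin I_{w_1}\cup I_{w_2}$ appearing in a reduced word for $w$. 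That is the entire content of the lemma, so at this point the proposal is a reformulation rather than a proof.

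The paper closes the problem with a much shorter idea that bypasses representation theory entirely. The key observation you are missing is that the orbit-multiplication formula gives
\[
\overline{Bw_gB\cdot Bw_1P}=\overline{\bigcup_{w'\le w_g}Bw'w_1P}\subset\overline{Bw_2P},
\]
so $w'\Sigma_{w_1}\subset\Sigma_{w_2}$ for \emph{every} $w'\le w_g$ in Bruhat order. In particular, every simple reflection $s_i$ occurring in a reduced word $w_g=s_t\cdots s_1$ already satisfies $s_i\Sigma_{w_1}\subset\Sigma_{w_2}$, and it suffices to treat the case $w_g=s$ a single simple reflection. That case is a short dichotomy using the BN-pair axiom $BsB\cdot Bw_1P\subset Bsw_1P\cup Bw_1P$: writing $sw_1=w^Pw_P$ with $w^P\in W^P$, $w_P\in W_P$, either $l(w_P)>0$ or $l(sw_1)=l(w_1)-1$, in which case $s$ stabilizes $\Sigma_{w_1}$ and $\alpha\in I_{w_1}$; or else $sw_1\in W^P$ with $w_1<sw_1\le w_2$, which forces $l(sw_2)<l(w_2)$ and hence $\alpha\in I_{w_2}$. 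Either way each $s_i$ lies in $W_{I_{w_1}\cup I_{w_2}}$, so $w_g\in W_{I_{w_1}\cup I_{w_2}}$ and $g=b_1w_gb_2\in P_{I_{w_1}\cup I_{w_2}}$. The reduction to simple reflections via $s_i\le w_g$ is the step that replaces your missing combinatorics.
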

\begin{proof}
By Bruhat decomposition, $g=b_1w_gb_2$ for some $b_1,b_2\in B$ and $w_g\in W$. Since the Schubert varieties are stable under the left multiplication by the Borel group $B$, we get $w_g\Sigma_{w_1}\subset \Sigma_{w_2}$. 

First assume $w_g=s$ is a simple reflection. We claim that $s\in P_1\cup P_2$. Notice that 
$$BsBw_1P=\begin{cases}
Bsw_1P \ \ \ \ \ \ \ \ \ \ \ \ \ \ \ \text{ if }l^P(sw_1)\geq l^P(w_1),\\
Bsw_1P\cup Bw_1P\ \ \ \text{ if }l^P(sw_1)<l^P(w_1).
\end{cases}$$
In either case $Bsw_1P\subset \overline{Bw_2P}$. Write $sw_1=w^Pw_P$ where $w^P\in W^P$ and $w_P\in W_P$. Then $l(sw_1)=l(w^P)+l(w_P)=l(w_1)\pm 1$. If $l(w_P)>0$ or $l(sw_1)=l(w_1)-1$, then $s\in P_1$. If $l(w_P)=0$ and $l(sw_1)=l(w_1)+1$, then $w_1<sw_1\leq w_2$ and $sw_1$ is a sub-expression of $w_2$. This implies $l(sw_2)<l(w_2)$ and therefore $s\in P_2$.

Now let $w_g=s_t...s_1$ be an irreducible expression of $w$ in terms of simple reflections. From the description of the orbit multiplication, it is easy to check
$$\overline{Bw_gB\cdot Bw_1P}=\overline{\bigcup\limits_{w'\leq w_g}Bw'w_1P}\subset\overline{Bw_2P}.$$
In particular, we have $Bs_iw_1P\subset\overline{Bw_2P}$ and $Bs_i...s_1w_1P\subset\overline{Bw_2P}$ for any $1\leq i\leq t$. By the previous argument, $s_i\in P_1\cup P_2$ for all $1\leq i\leq t$.
\end{proof}

\begin{Def}
Let $w\in W^P$. A simple root $\alpha$ is called {\em essential} relative to $w$ if the associating minimal parabolic subgroup does not stabilize the Schubert variety $\Sigma_w\in G/P$, or equivalently $w^{-1}(\alpha)\notin\Delta(P^-)$.
\end{Def}

\begin{Rem}
Consider the partial flag variety $F(d_1,....,d_k)$ identified with the homogeneous space $G/P$ as described in \S \ref{sec-prelim}. Let $\Phi=\{\beta_i|\beta_i=\epsilon_{i+1}-\epsilon_i, 1\leq i\leq n-1\}$ be the set of simple roots. Let $\sigma_{a^\alpha}$ be a Schubert class for $F(d_1,....,d_k)$ and let $w\in W^P$ be the corresponding coset in $W/W_P$. Then a sub-index $a_\gamma$ is essential with respect to $a^\alpha$ if and only if $\beta_{a_\gamma}$ is essential relative to $w$.
\end{Rem}

Let $E(w)$ be the set of essential roots relative to $w$.

\begin{Thm}\label{deduction}
Let $P$ be a parabolic subgroup with associated set of simple roots $I_P$. Let $w\in W^P$. The Schubert class $\sigma_w\in A^*(G/P)$ is rigid if
\begin{enumerate}
\item $(\pi_\alpha)_*(\sigma_w):=\sigma_{w_\alpha}$ are rigid for all $\alpha\in S:=\Phi\backslash I_P$, where $\pi_\alpha:G/P\rightarrow G/P_{\Phi\backslash\{\alpha\}}$ are natural projections; and
\item there exists an ordering on $S=\{\alpha_1,...\alpha_t\}$ such that 
$$E(w_{\alpha_i})\subset E(\pi_{\alpha_i*}\pi_{\alpha_{i+1}}^*(w_{\alpha_{i+1}})),\ \ 1\leq i\leq t-1$$
\end{enumerate}
\end{Thm}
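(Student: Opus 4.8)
The plan is to take an arbitrary irreducible representative $X$ of $\sigma_w$ (irreducibility being automatic, as Schubert classes are indecomposable), project it down to each generalized Grassmannian $G/P_{\Phi\setminus\{\alpha\}}$, $\alpha\in S$, use hypothesis (1) to see that each projection is a translated Schubert variety $g_\alpha\Sigma_{w_\alpha}$, and then use hypothesis (2) to show that all of the $g_\alpha$ can be taken equal to a single element $g\in G$. Once this is done, $\pi_\alpha(g^{-1}X)=\Sigma_{w_\alpha}$ for every $\alpha\in\Phi\setminus I_P$, so Corollary \ref{maximalP} forces $g^{-1}X=\Sigma_w$, i.e. $X=g\Sigma_w$ is a translate of the Schubert variety, which is exactly rigidity. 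For $X$ a representative of $m\sigma_w$ the identical argument applies once one knows, by the pushforward computation analogous to Proposition \ref{class of pushforward}, that $[\pi_\alpha(X)]=m_\alpha\sigma_{w_\alpha}$ with $m_\alpha\in\mathbb{Z}^+$, and then multi-rigidity of $\sigma_{w_\alpha}$ forces $m_\alpha=1$; this gives multi-rigidity of $\sigma_w$.

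First I would record the dictionary between essential roots and stabilizers. By the cited theorem of Hong--Mok, $\Stab(\Sigma_u)=P_{I_u}$ with $I_u=\Phi\cap u(R_{P^-})$, and by the definition of essential roots a simple root $\beta$ lies in $E(u)$ exactly when $\beta\notin I_u$; thus $I_u=\Phi\setminus E(u)$. Consequently, for Schubert classes $u_1,u_2$ in a homogeneous space with $\Sigma_{u_1}\subset\Sigma_{u_2}$, Lemma \ref{translate of Schubert varieties} says that $h\Sigma_{u_1}\subset\Sigma_{u_2}$ implies $h\in P_{I_{u_1}\cup I_{u_2}}=P_{\Phi\setminus(E(u_1)\cap E(u_2))}$; in particular, whenever $E(u_1)\subset E(u_2)$ we get $h\in P_{\Phi\setminus E(u_1)}=P_{I_{u_1}}=\Stab(\Sigma_{u_1})$, hence $h\Sigma_{u_1}=\Sigma_{u_1}$.

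Now order $S=\{\alpha_1,\dots,\alpha_t\}$ as provided by (2). By (1) there is $g\in G$ with $\pi_{\alpha_t}(X)=g\,\Sigma_{w_{\alpha_t}}$, and I claim $\pi_{\alpha_i}(X)=g\,\Sigma_{w_{\alpha_i}}$ for every $i$, by downward induction on $i$. Assume it for $i+1$. Then $X\subset\pi_{\alpha_{i+1}}^{-1}\big(\pi_{\alpha_{i+1}}(X)\big)=g\,\pi_{\alpha_{i+1}}^{-1}(\Sigma_{w_{\alpha_{i+1}}})$, a translate of the Schubert variety $\pi_{\alpha_{i+1}}^{-1}(\Sigma_{w_{\alpha_{i+1}}})$ of $G/P$ (using $G$-equivariance of $\pi_{\alpha_{i+1}}$ and irreducibility of its fibers). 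Applying $\pi_{\alpha_i}$ and setting $\Sigma_{v_i}:=\pi_{\alpha_i}\big(\pi_{\alpha_{i+1}}^{-1}(\Sigma_{w_{\alpha_{i+1}}})\big)$, which represents $v_i:=\pi_{\alpha_i*}\pi_{\alpha_{i+1}}^{*}(w_{\alpha_{i+1}})$, we get $\pi_{\alpha_i}(X)\subset g\,\Sigma_{v_i}$; moreover $\Sigma_{w_{\alpha_i}}=\pi_{\alpha_i}(\Sigma_w)\subset\Sigma_{v_i}$ since $\Sigma_w\subset\pi_{\alpha_{i+1}}^{-1}(\Sigma_{w_{\alpha_{i+1}}})$. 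On the other hand (1) gives $\pi_{\alpha_i}(X)=g_i\,\Sigma_{w_{\alpha_i}}$ for some $g_i\in G$, whence $g^{-1}g_i\,\Sigma_{w_{\alpha_i}}\subset\Sigma_{v_i}$. Combining this with $\Sigma_{w_{\alpha_i}}\subset\Sigma_{v_i}$ and the hypothesis $E(w_{\alpha_i})\subset E(v_i)$, the previous paragraph yields $g^{-1}g_i\in\Stab(\Sigma_{w_{\alpha_i}})$, so $\pi_{\alpha_i}(X)=g_i\Sigma_{w_{\alpha_i}}=g\,\Sigma_{w_{\alpha_i}}$. This completes the induction, and hence the reduction to Corollary \ref{maximalP} described in the first paragraph.

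The main obstacle is exactly this gluing step: a priori the $\pi_\alpha(X)$ are translated Schubert varieties by unrelated elements $g_\alpha$, and one must propagate a single element — the one attached to the last root of the chain — backward along the chain. This is where the ordering hypothesis (2) and the identity $I_u=\Phi\setminus E(u)$ carry the argument, through Lemma \ref{translate of Schubert varieties} applied inside each intermediate Grassmannian $G/P_{\Phi\setminus\{\alpha_i\}}$. A minor technical point to verify en route is that $\pi_{\alpha_{i+1}}^{-1}(\Sigma_{w_{\alpha_{i+1}}})$ and its image under $\pi_{\alpha_i}$ are honest $B$-stable irreducible Schubert varieties, so that $E(v_i)$ is meaningful and $\pi_{\alpha_i*}\pi_{\alpha_{i+1}}^{*}$ is realized set-theoretically; this is immediate since the projections are $G$-equivariant fibrations with irreducible fibers.
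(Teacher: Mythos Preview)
Your proof is correct and follows essentially the same approach as the paper: reduce via Corollary~\ref{maximalP} to aligning all the projected Schubert varieties by a single group element, then propagate this element along the chain using Lemma~\ref{translate of Schubert varieties} and hypothesis~(2). Your write-up is in fact a bit more explicit than the paper's about the key dictionary $I_u=\Phi\setminus E(u)$ (which is what makes the containment $E(w_{\alpha_i})\subset E(v_i)$ force $g^{-1}g_i\in\Stab(\Sigma_{w_{\alpha_i}})$), and your downward induction on $i$ is just an unfolding of the paper's induction on $|S|$.
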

\begin{proof}
Let $X$ be a representative of $\sigma_w$. Let $X_{\alpha_i}$ be the image of $X$ under $\pi_{\alpha_i}$, $1\leq i\leq t$. By Corollary \ref{maximalP}, it suffices to show that there exists $g\in G$ such that $gX_{\alpha_i}$ are $B$-stable Schubert varieties. We use induction on the cardinality $|S|$:

If $|S|=1$, then $\pi_\alpha$ is the identity morphism and the statement is trivial.

If $|S|=2$, by assumption, the Schubert classes $\sigma_{w_{\alpha_i}}$, $i=1,2$ are rigid, and therefore there exists $g_1,g_2\in G$ such that $X_{\alpha_i}=g_i\Sigma_{w_{\alpha_i}}$. Let $P_1$ and $P_2$ be the stabilizer of the Schubert variety $\Sigma_{w_{\alpha_1}}$ and $\Sigma_{w_{\alpha_2}}$ respectively. The statement then equivelent to find $g\in G$ such that $gg_1\in P_1$ and $gg_2\in P_2$, or equivalently $g_2^{-1}g_1\in P_2P_1$.

WLOG assume $g_2=1$ and $X_{\alpha_2}$ is already a standard Schubert variety. Let $Y:=\pi_{\alpha_1}(\pi_{\alpha_2}^{-1}(\Sigma_{w_{\alpha_2}}))$, which is also a Schubert variety. Let $P_3$ be the stabilizer of $Y$. By assumption, $P_1\supset P_3\supset P_2$. Since both $g_1\Sigma_{w_{\alpha_1}}$ and $\Sigma_{w_{\alpha_1}}$ are contained in $Y$, by Lemma \ref{translate of Schubert varieties}, $g_1\in P_1$.

If $|S|\geq 3$, assume by induction that $X_{\alpha_i}$ are Schubert varieties $\Sigma_{\alpha_i}$, $2\leq i\leq t$. Then the previous argument shows that $g_1\in P_1$ and therefore $X_{\alpha_1}$ is also a standard Schubert variety.
\end{proof}

\begin{Cor}
Let $P$ be a parabolic subgroup with associated set of simple roots $I_P$. Let $w\in W^P$. The Schubert class $\sigma_w\in A^*(G/P)$ is multi-rigid if
\begin{enumerate}
\item $(\pi_\alpha)_*(\sigma_w):=\sigma_{w_\alpha}$ are multi-rigid for all $\alpha\in S:=\Phi\backslash I_P$, where $\pi_\alpha:G/P\rightarrow G/P_{\Phi\backslash\{\alpha\}}$ are natural projections; and
\item there exists an ordering on $S=\{\alpha_1,...\alpha_t\}$ such that 
$$E(w_{\alpha_i})\subset E(\pi_{\alpha_i*}\pi_{\alpha_{i+1}}^*(w_{\alpha_{i+1}})),\ \ 1\leq i\leq t-1$$
\end{enumerate}
\end{Cor}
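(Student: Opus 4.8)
The plan is to follow the proof of Theorem~\ref{deduction} essentially verbatim, the only genuinely new point being the passage from representatives of $\sigma_w$ to irreducible representatives of $m\sigma_w$. First I would reduce to the irreducible case. If $X$ is an effective cycle of class $m\sigma_w$ with irreducible decomposition $X=\bigcup_i X_i$, then each $[X_i]$ is an effective class, hence has non-negative coefficients in the Schubert basis (the coefficient of $\sigma_v$ in $[X_i]$ is the intersection number $[X_i]\cdot\sigma_v^\vee\ge 0$, by Kleiman transversality against a general translate of the complementary Schubert variety). Since $\sum_i[X_i]=m\sigma_w$ and the Schubert classes form a free basis, each $[X_i]$ is a positive integer multiple of $\sigma_w$. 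So it suffices to prove that an irreducible $X$ with $[X]=m\sigma_w$, $m\in\mathbb{Z}^+$, is a $G$-translate of $\Sigma_w$; comparing classes then forces $m=1$, and applying this to every component of an arbitrary representative gives multi-rigidity of $\sigma_w$.

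Next I would analyze the push-forwards. For $\alpha\in S$ set $X_\alpha:=\pi_\alpha(X)\subset G/P_{\Phi\setminus\{\alpha\}}$. By the same method of undetermined coefficients as in Proposition~\ref{class of pushforward} — intersect $X_\alpha$ with Schubert varieties of complementary dimension in $G/P_{\Phi\setminus\{\alpha\}}$, rewrite this as intersecting $X$ with the corresponding pullbacks, and specialize $X$ to a union of Schubert varieties so that only the dual class of $(\pi_\alpha)_*(\sigma_w)=\sigma_{w_\alpha}$ can pair nontrivially — one obtains $[X_\alpha]=m_\alpha\sigma_{w_\alpha}$ for some $m_\alpha\in\mathbb{Z}^+$. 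Since $X$ is irreducible, so is $X_\alpha$; and since $\sigma_{w_\alpha}$ is multi-rigid by hypothesis (1), we conclude $m_\alpha=1$ and $X_\alpha=g_\alpha\Sigma_{w_\alpha}$ for some $g_\alpha\in G$. Thus each projection of $X$ is already a translate of a Schubert variety, which is exactly the situation reached midway through the proof of Theorem~\ref{deduction}.

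Finally I would run the argument of Theorem~\ref{deduction} unchanged: using the ordering on $S$ supplied by hypothesis (2) together with Lemma~\ref{translate of Schubert varieties} and induction on $|S|$ exactly as there, one produces a single $g\in G$ such that $gX_\alpha$ is the $B$-stable Schubert variety $\Sigma_{w_\alpha}$ for every $\alpha\in S$. By Corollary~\ref{maximalP} this forces $gX=\Sigma_w$, so $X$ is a translate of $\Sigma_w$ and $m=1$, completing the proof.

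The step I expect to be the main obstacle is the push-forward computation: one must verify that $\pi_\alpha(X)$ has class a \emph{positive integer} multiple of $\sigma_{w_\alpha}$ and of no other Schubert class, so that multi-rigidity of $\sigma_{w_\alpha}$ may legitimately be invoked. This is where the specialization-to-Schubert-varieties trick is essential, and it tacitly uses that $(\pi_\alpha)_*(\sigma_w)=\sigma_{w_\alpha}$ is exactly the statement that $\pi_\alpha$ restricts to a birational map from $\Sigma_w$ onto its image, so that no cancellation or drop of coefficient can occur. Once the $X_\alpha$ are known to be translates of Schubert varieties, the remainder is a formal repetition of Theorem~\ref{deduction} and introduces no new ideas; indeed one could phrase the whole argument as: the proof of Theorem~\ref{deduction} shows that any irreducible $X$ whose projections $\pi_\alpha(X)$ are all translates of $\Sigma_{w_\alpha}$ is itself a translate of $\Sigma_w$, and multi-rigidity of the $\sigma_{w_\alpha}$ supplies precisely that hypothesis while simultaneously forcing the multiplicities to be $1$.
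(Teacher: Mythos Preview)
Your proposal is correct and follows essentially the same approach as the paper: reduce to an irreducible representative of $m\sigma_w$, observe that each projection $\pi_{\alpha_i}(X)$ has class $m_i\sigma_{w_{\alpha_i}}$ and is irreducible, apply multi-rigidity of $\sigma_{w_{\alpha_i}}$ to force $m_i=1$ and $\pi_{\alpha_i}(X)$ to be a translate of $\Sigma_{w_{\alpha_i}}$, then invoke the argument of Theorem~\ref{deduction} verbatim. The paper's own proof is a three-line sketch saying precisely this; your version simply fills in more detail on the reduction to the irreducible case and on the push-forward computation.
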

\begin{proof}
Let $X$ be an irreducible representative of $m\sigma_w$, $m\in\mathbb{Z}^+$. Then $X_{\alpha_i}:=\pi_{\alpha_i}(X)$ is also irreducible with class $m_i\sigma_{w_{\alpha_i}}$ for some $m_i\in\mathbb{Z}^+$. By assumption, $\sigma_{w_{\alpha_i}}$ are multi-rigid and therefore $m_i=1$ for all $1\leq i\leq t$. The statement then follows from an identical argument as in the proof of Theorem \ref{deduction}.
\end{proof}

\bibliographystyle{plain}
\begin{thebibliography}{10}
\bibitem{BH}
Borel, A. and Haefliger, A. 
\newblock La classe d'homologie fondamentale d'un espace analytique. 
\newblock {\em Bulletin de la Société Mathématique de France }, Volume 89 (1961), pp. 461-513.

\bibitem{BK2005}
Brion, M. and Kumar, S.
\newblock Frobenius Splitting Methods in Geometry and Representation Theory.
\newblock {\em Birkhäuser Boston}, (2005).

\bibitem{RB2000}
Bryant, R.
\newblock Rigidity and quasi-rigidity of extremal cycles in compact Hermitian symmetric spaces.
\newblock {\em math. DG.} /0006186.  

\bibitem{BL}
Billey, S and Lakshmibai V.
\newblock Singular loci of Schubert varieties
\newblock {\em Springer Science+Business Media}, (2000).

\bibitem{Coskun2011RigidAN}
Coskun, I.
\newblock Rigid and non-smoothable Schubert classes.
\newblock {\em Journal of Differential Geometry }, 87:493--514, (2011).

\bibitem{Coskun2011RestrictionVA}
Coskun, I.
\newblock Restriction varieties and geometric branching rules.
\newblock {\em Advances in Mathematics }, 228:2441--2502, (2011).

\bibitem{Coskun2013}
Coskun, I. and Robles, C.
\newblock Flexibility of Schubert classes.
\newblock {\em Differential Geometry and its Applications}, 31:759-774, (2013).

\bibitem{Coskun2014RigidityOS}
Coskun, I.
\newblock Rigidity of Schubert classes in orthogonal Grassmannians.
\newblock {\em Israel Journal of Mathematics }, 200:85--126, (2014).

\bibitem{3264}
Eisenbud, D. and Harris, J.
\newblock 3264 and all that: a second course in algebraic geometry.
\newblock {\em Cambridge University Press}, (2016).

\bibitem{Ho1}
Hong, J. 
\newblock Rigidity of Smooth Schubert Varieties in Hermitian Symmetric Spaces. 
\newblock {\em Transactions of the American Mathematical Society } 359, no. 5 (2007): 2361–81.

\bibitem{Ho2}
Hong, J. 
\newblock Rigidity of singular Schubert Varieties in Gr(m,n). 
\newblock {\em Journal of Differential Geometry } 71, no. 1 (2005): 1–22.

\bibitem{HM}
Hong, J. and Mok, N.
\newblock Characterization of smooth Schubert varieties in rational homogeneous manifolds of Picard number 1.
\newblock {\em Journal of Algebraic Geometry } 22,  (2012): 333–362.

\bibitem{HM2}
Hong, J. and Mok, N.
\newblock Schur rigidity of Schubert varieties in rational homogeneous manifolds of Picard number one.
\newblock {\em Selecta Mathematica} 26,  (2020): 1–27.

\bibitem{YL}
Liu, Y.
\newblock The rigidity problem in orthogonal Grassmannians.
\newblock arXiv:2210.14540

\bibitem{YL2}
Liu, Y.
\newblock The rigidity problem of Schubert varieties,
\newblock Ph.D. thesis, University of Illinois at Chicago, 2023. 

\bibitem{YL3}
Liu, Y. and Sheshmani, A. and Yau, S-T.
\newblock Rigid Schubert classes in partial flag varieties.
\newblock arXiv:2401.11375

\bibitem{RT}
Robles, C. and The, D.
\newblock Rigid Schubert varieties in compact Hermitian symmetric spaces.
\newblock {\em Selecta Mathematica } 18 (2011): 717-777.

\bibitem{Walter}
Walters, M. 
\newblock Geometry and uniqueness of some extreme subvarieties in complex Grassmannians,
\newblock Ph.D. thesis, University of Michigan, 1997. 
\end {thebibliography}

\end{document}